\newtheorem{theorem}{Theorem}
\newtheorem{claim}[theorem]{Claim}
\newtheorem{proposition}[theorem]{Proposition}
\newtheorem{lemma}[theorem]{Lemma}
\newtheorem{remark}[theorem]{Remark}
\numberwithin{equation}{section}
\numberwithin{theorem}{section}
\numberwithin{figure}{section}
\numberwithin{table}{section}
\newcommand{\ZZ}{\mathbb{Z}}
\newcommand{\RR}{\mathbb{R}}
\newcommand{\RP}{\mathbb{RP}}
\newcommand{\CP}{\mathbb{CP}}
\newcommand{\CC}{\mathbb{C}}
\newcommand{\I}{\sqrt{-1}}
\newcommand{\bint}{\oint_{\partial B}}
\newcommand{\DN}{\frac{\partial}{\partial N}}
\newcommand{\oc}{\overset{\circ}}
\newcommand{\h}{\tilde{h}}
\newcommand{\tlambda}{\tilde{\lambda}}
\newcommand{\tmu}{\tilde{\mu}}
\newcommand{\tnu}{\tilde{\nu}}
\newcommand{\tW}{\widetilde{W}}
\begin{document}
\bibliographystyle{amsalpha}
\title[Critical metrics]{Critical metrics on connected sums of \\Einstein four-manifolds}
\author{Matthew J. Gursky}
\address{Department of Mathematics, University of Notre Dame, Notre Dame, IN 46556}
\email{mgursky@nd.edu}
\author{Jeff A. Viaclovsky}
\address{Department of Mathematics, University of Wisconsin, Madison, WI 53706}
\email{jeffv@math.wisc.edu}
\date{March 4, 2013}
\begin{abstract}
 We develop a gluing procedure designed to obtain
canonical metrics on connected sums of Einstein four-manifolds.
The main application is an existence result,
using two well-known Einstein manifolds
as building blocks:
the Fubini-Study metric on $\CP^2$ and the product metric
on $S^2 \times S^2$. Using these metrics in various gluing configurations,
critical metrics are found on connected sums for a
specific Riemannian functional, which depends on the global geometry of the factors.
Furthermore, using certain quotients of $S^2 \times S^2$ as one of the gluing factors,
critical metrics on several non-simply-connected manifolds are also obtained.
\end{abstract}
\maketitle
\setcounter{tocdepth}{1}
\tableofcontents
\section{Introduction}
A Riemannian manifold $(M^4,g)$ in dimension four is critical for the
Einstein-Hilbert functional
\begin{align}
\mathcal{R}(g) = Vol(g)^{-1/2} \int_M R_g dV_g,
 \end{align}
where $R_g$ is the scalar curvature, if and only if it satisfies
\begin{align}
Ric(g) = \lambda \cdot g,
\end{align}
where $\lambda$ is a constant; such Riemannian manifolds
are called {\em{Einstein manifolds}}.
Non-collapsing limits of Einstein manifolds have been
studied in great depth \cite{Anderson, BKN, Tian}.
In particular, with certain geometric conditions, the limit
space is an orbifold, with asymptotically locally Euclidean
(ALE) spaces bubbling off at the singular points.
A natural question is whether it is possible to reverse this
process: can one start with the limit space, and glue on
a bubble in order to obtain an Einstein metric?
A recent article of Olivier Biquard makes great strides
in the Poincar\'e-Einstein setting \cite{Biquard2011}.
In this work it is shown that a $\ZZ/2\ZZ$-orbifold singularity $p$
of a non-degenerate Poincar\'e-Einstein orbifold $(M,g)$
has a Poinar\'e-Einstein resolution obtained by gluing on an
Eguchi-Hanson metric if and only if the condition
\begin{align}
\det( \mathbf{R}^+ (p)) = 0
\end{align}
is satisfied, where $\mathbf{R}^+(p) : \Lambda^2_+ \rightarrow \Lambda^2_+$
is the purely self-dual part of the curvature operator at $p$.
The self-adjointness of this gluing problem is overcome by
the freedom of changing the boundary data of the Poincar\'e-Einstein
metric.

However, there is not much known about gluing compact
manifolds together in the Einstein case.
In this work, we will replace the Einstein equations with
a generalization of the Einstein condition. Namely, we ask
whether it is possible to glue together Einstein metrics
and produce a critical point of a certain Riemannian functional
generalizing the Einstein-Hilbert functional.
It turns out that there is a family of such functionals; this gives
an extra parameter which will allow us to overcome the
self-adjointness of this problem.
The particular functional will then depend on the global geometry
of the gluing factors.

 To describe the functionals,  let $M$ be a closed manifold of dimension $4$.
We will consider
functionals on the space of Riemannian metrics $\mathcal{M}$ which are
quadratic in the curvature. Such functionals have also been widely studied in
physics under the name ``fourth-order,'' ``critical,'' or ``quadratic'' gravity;
see for example \cite{LuPope, Maldacena, Schmidt, Stelle}.
In previous work, the authors have studied rigidity and stability
properties of Einstein metrics for quadratic curvature functionals
\cite{GV11}; these results will play a crucial r\^ole in this paper.

Using the standard decomposition of the curvature tensor $Rm$ into the Weyl, Ricci and scalar curvature
curvature components (denoted by $W$, $Ric$, and $R$, respectively), 
a basis for the space of quadratic curvature functionals is
\begin{align} \label{FgenN}
\mathcal{W} = \int |W|^2\ dV, \ \ \rho = \int |Ric|^2\ dV,
\ \ \mathcal{S} = \int R^2\ dV,
\end{align}
where we use the tensor norm.
In dimension four, the Chern-Gauss-Bonnet formula
\begin{align}
\label{CGB}
32\pi^2 \chi(M) = \int |W|^2\ dV - 2 \int |Ric|^2\ dV + \frac{2}{3} \int R^2\ dV
\end{align}
implies that $\rho$ can be written as a linear combination of the
other two (plus a topological term).
Consequently, we will be interested in the functional
\begin{align} \label{Ftdef1}
\mathcal{B}_{t}[g] = \int |W|^2\ dV + t \int R^2\ dV
\end{align}
(with $t = \infty$ formally corresponding to $\int R^2 dV$).

 The Euler-Lagrange equations of $\mathcal{B}_{t}$ are given by
\begin{align}
\label{Bt}
B^t \equiv B + t C = 0,
\end{align}
where $B$ is the {\em{Bach tensor}} defined by
\begin{align}
\label{Bintro}
B_{ij} \equiv -4 \Big( \nabla^k \nabla^l W_{ikjl} + \frac{1}{2} R^{kl}W_{ikjl} \Big)= 0,
\end{align}
and $C$ is the tensor defined by
\begin{align} \label{gradformS}
C_{ij} =  2 \nabla_i \nabla_j R - 2 (\Delta R) g_{ij}
- 2 R R_{ij} +   \frac{1}{2} R^2 g_{ij}.
\end{align}
It follows that any Einstein metric
is critical for~$\mathcal{B}_{t}$ \cite{Besse}.
We will refer to such a critical metric as a {\em{$B^t$-flat metric}}.
Note that by taking a trace of \eqref{Bt}, it follows that 
the scalar curvature of a $B^t$-flat metric on a compact manifold 
is necessarily constant. Therefore a $B^t$-flat metric satisfies the equation
\begin{align}
\label{bteqn}
B = 2 t R \cdot E,
\end{align}
where $E$ denotes the traceless Ricci tensor. That is, the Bach
tensor is a constant multiple of the traceless Ricci tensor.

The convergence results described above for Einstein metrics 
were generalized to systems of the form 
\begin{align}
\label{dric}
\Delta Ric = Rm * Ric
\end{align}
(of which \eqref{bteqn} is a special case) in  \cite{TV, TV2, TV3}. 
In particular, with certain geometric conditions,  
non-collapsing sequences of metrics satisfying 
an equation of the form \eqref{dric} have orbifold limits. 
Again, the natural question is whether it is possible to 
reverse this bubbling process. 

 The analogous gluing problem for the anti-self-dual equations $W^+ = 0$ in 
dimension four has been very successful  \cite{DonaldsonFriedman,
Floer, Taubes, KovalevSinger, Acheviaclovsky2}.
However, gluing for the $B^t$-flat equations is much more difficult 
because, as in the Einstein case, this is a self-adjoint problem.
The parameter $t$ is the key to overcoming this difficulty. 

 We point out that the linearization of the $B^t$-flat equation $\eqref{Bt}$ is not elliptic
due to diffeomorphism invariance. It will be necessary to
``gauge'' the equation in order to work with an elliptic
operator. This is analogous to the Bianchi gauge for the Einstein
equations. The details of this gauging process appear in Section~\ref{gauge}.

 The main building blocks in this paper are the Fubini-Study
metric $(\CP^2, g_{FS})$, and $(S^2 \times S^2, g_{S^2 \times S^2})$,
the product of $2$-dimensional spheres with unit Gauss curvature.
Both are Einstein, so are $B^t$-flat for all
$t \in \RR$. A key result used in this paper is rigidity of these metrics
for certain ranges of $t$, which was proved in our previous work~\cite{GV11}.
That is, these metrics admit no non-trivial infinitesimal
$B^t$-flat deformations for certain ranges of $t$ (other than scalings).
These rigidity properties will be discussed in Section~\ref{comkersec}.

\subsection{Green's function metric}

Recall that the conformal Laplacian is the operator
\begin{align}
L u = - 6 \Delta u + Ru,
\end{align}
where our convention is to use the analyst's Laplacian (which has negative
eigenvalues). If $(M,g)$ is compact and has positive scalar
curvature, then for any $p \in M$, there exists a unique positive solution to
the equation
\begin{align}
L G &= 0 \ \ \mathrm{ on } \ M \setminus \{p\}\\
G &= \rho^{-2}(1 + o(1))
\end{align}
as $\rho \rightarrow 0$, where $\rho$ is geodesic distance to the basepoint $p$,
which is called the Green's function.
Denote $N = M \setminus \{p\}$ with metric $g_N = G^2 g_M$.
The metric $g_N$ is scalar-flat and asymptotically flat of order $2$.
Recall the mass of an AF space is defined by
\begin{align}
\label{massdef}
{\mathrm{mass}}(g_N) = \lim_{R \rightarrow \infty} \omega_3^{-1}\int_{S(R)} \sum_{i,j}
( \partial_i g_{ij} - \partial_{j} g_{ii} ) ( \partial_i \ \lrcorner \ dV),
\end{align}
with $\omega_3 = Vol(S^3)$.

 A crucial point is the following:
if $(M,g)$ is Bach-flat, then from conformal invariance
of the Bach tensor, $(N,g_N)$ is also Bach-flat.
Also, since the Green's function is used as the conformal
factor, $g_N$ is scalar-flat. Consequently, $g_N$ is $B^t$-flat
for all $t \in \RR$.

The Green's function metric of the Fubini-Study metric $\hat{g}_{FS}$
is also known as the Burns metric, and is completely explicit,
with mass given by
\begin{align}
\mathrm{mass}(\hat{g}_{FS}) = 2.
\end{align}
However, the Green's function metric $\hat{g}_{S^2 \times S^2}$ of
the product metric does not seem to have a known explicit description.
We will denote
\begin{align}
m_1 = \mathrm{mass}(\hat{g}_{S^2 \times S^2}).
\end{align}
By the positive mass theorem of Schoen-Yau, $m_1 > 0$ \cite{SYI, SYII}.
We note that since $S^2 \times S^2$ is spin, this also follows from Witten's
proof of the positive mass theorem~\cite{Witten}.

\subsection{The gluing procedure}

Let $(Z, g_Z)$ and $(Y, g_Y)$ be Einstein manifolds, and assume that
$g_Y$ has positive scalar curvature. Choose basepoints $z_0 \in Z$ and
$y_0 \in Y$. Convert $(Y, g_Y)$
into an asymptotically flat (AF) metric $(N, g_N)$ using the Green's function for
the conformal Laplacian based at $y_0$.
As pointed out above,  $g_N$ is $B^t$-flat for any $t$.

Let $a > 0$ be small, and consider $Z \setminus B(z_0,a)$. Scale the
compact metric to $(Z, \tilde{g} = a^{-4} g_Z)$.
Attach this metric to the metric $ ( N \setminus B(a^{-1}), g_N)$  using
cutoff functions near the boundary, to obtain a smooth metric
on the connect sum $Z \# \overline{Y}$.  Since both $g_Z$ and $g_N$ are
$B^t$-flat, this metric is an ``approximate'' $B^t$-flat metric,
with vanishing $B^t$ tensor away from the ``damage zone'', where cutoff
functions were used. This construction is described in detail in Section \ref{approx},
and is illustrated in Figure~\ref{bubblefig}.

\begin{figure}[h]
\includegraphics{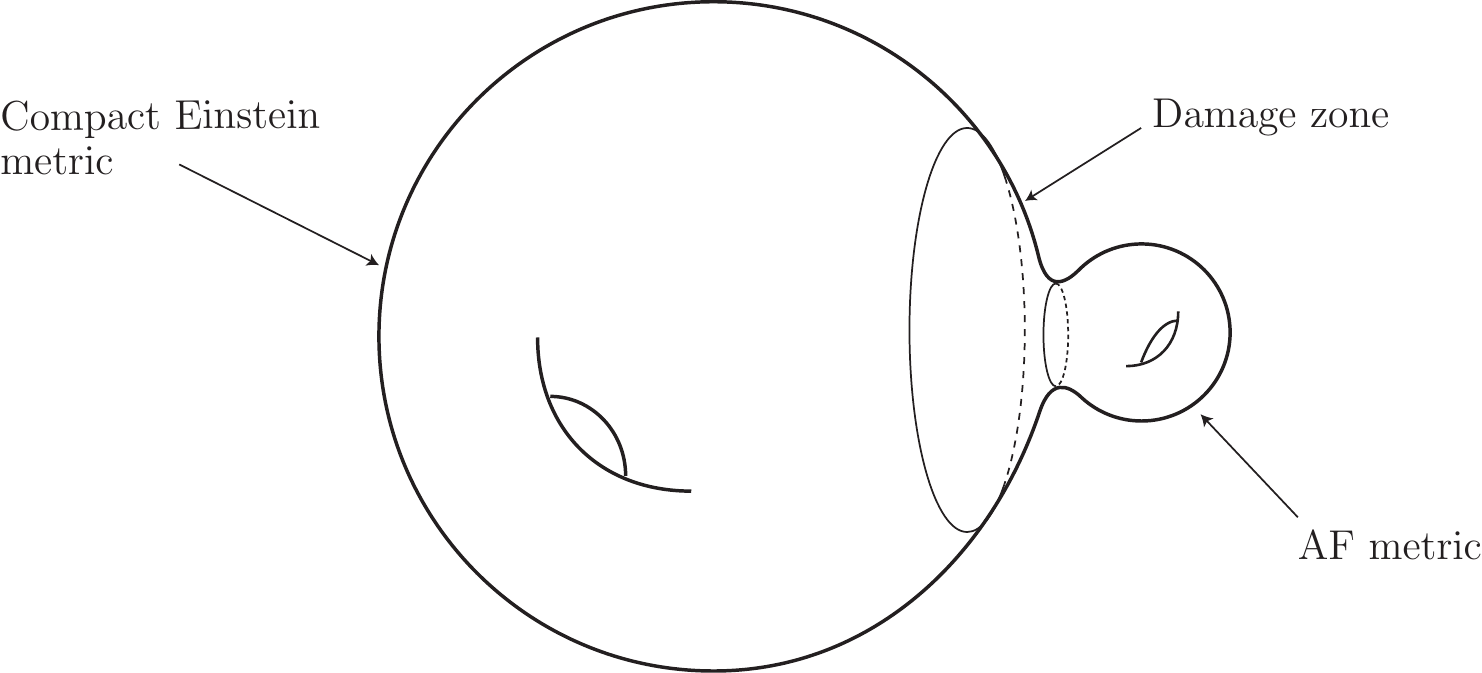}
\caption{The approximate metric.}
\label{bubblefig}
\end{figure}

 This ``na\"ive'' approximate metric is too rough for our purposes -- the
size of the $B^t$ tensor is an order too large in the damage zone.
A refinement of this approximate metric is found by solving linear
equations on each piece to make the metrics match up to highest
order. The $B^t$ tensor of the refined metric is now an order
of magnitude smaller. This step is inspired by the recent work
of Biquard in the Einstein case which was mentioned above \cite{Biquard2011}.
These auxiliary linear equations are solved in Section~\ref{auxiliary},
and the refined approximate metric is constructed in Section~\ref{better}.

Lyapunov-Schmidt reduction is then used to reduce the
problem from an infinite-dimensional problem to a finite-dimensional one.
That is, the problem of
finding a $B^t$-flat metric is reduced
to finding a zero of the Kuranishi map, which is a mapping
between finite-dimensional spaces. This reduction is carried out in Section \ref{Lyap}.

   For the general gluing problem, even if the pieces are rigid,
there can be nonzero infinitesimal kernel elements due to the presence of gluing
parameters. In general, there are infinitesimal kernel elements corresponding
geometrically to freedom of scaling the AF space, rotating the gluing
factor, and moving the base points of the gluing.
The leading term
of the Kuranishi map corresponding to the scaling
parameter, denoted by $\lambda_1(a)$, is given by:
\begin{theorem}
\label{SadjThm} As $a \rightarrow 0$, then for any $\epsilon > 0$,  
\begin{align}
\label{Sadjintro}
\begin{split}
\lambda_1(a)  &= \Big( \frac{2}{3} W(z_0) \circledast W(y_0)
+ 4t R(z_0) \mathrm{mass}(g_N)  \Big) \omega_3 a^4 + O (a^{6 - \epsilon}),
\end{split}
\end{align}
where $\omega_3 = Vol(S^3)$, and the product of the Weyl tensors is given by
\begin{align}
W(z_0) \circledast W(y_0) = \sum_{ijkl} W_{ijkl}(z_0) ( W_{ijkl}(y_0) + W_{ilkj}(y_0)),
\end{align}
where $W_{ijkl}(\cdot)$ denotes the components of the
Weyl tensor in a normal coordinate system at the corresponding point.
\end{theorem}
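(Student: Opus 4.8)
The plan is to identify $\lambda_1(a)$ with a pairing that localizes to the neck, and then to evaluate that pairing by reducing it to a flux integral. By construction of the Lyapunov--Schmidt reduction in Section~\ref{Lyap}, $\lambda_1(a)$ is the component, along the approximate-cokernel direction determined by the scaling parameter, of the $B^t$-tensor of the refined approximate metric $g_a$ after it has been corrected by solving the linearized equation transverse to the obstruction space. Since the $B^t$-flat equation is the Euler--Lagrange equation of $\mathcal{B}_t$ it is self-adjoint, so this obstruction direction is identified with the approximate \emph{kernel} element $\mathcal{O}_1$ generated by infinitesimally rescaling the asymptotically flat factor; equivalently, $\lambda_1(a)$ is, up to the usual normalization, the derivative of $\mathcal{B}_t[g_a]$ along the scaling modulus. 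In either description,
\begin{align}
\lambda_1(a) = \langle B^t(g_a),\, \mathcal{O}_1 \rangle_{L^2(g_a)} + O(a^{6-\epsilon}),
\end{align}
the error absorbing the nonlinear terms and the contribution of the correction, controlled by the weighted estimates of Sections~\ref{auxiliary} and~\ref{better}. So the task is to evaluate this leading inner product.

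The key structural point is that $B^t(g_a)$ is concentrated in the ``damage zone'': on the $Z$-bulk $g_a$ equals the rescaled Einstein metric $a^{-4}g_Z$, for which $B=0$ and $C=0$; on the $Y$-bulk $g_a$ equals the Green's function metric $g_N$, which is Bach-flat and scalar-flat, so again $B^t=0$ (for the refined metric these vanish only up to the order-of-magnitude gain of Section~\ref{better}, which is enough). Hence the inner product localizes near the neck. In the neck $g_a$ is Euclidean to leading order, with two sources of deviation: the normal-coordinate expansion of $a^{-4}g_Z$ at $z_0$, which carries $W_{ijkl}(z_0)$ together with the scalar curvature $R(z_0)$ (the Ricci part being pure trace, $g_Z$ being Einstein), and the asymptotically flat expansion of $g_N$ near its end, which -- because the Weyl tensor and $\mathcal{W}$ are conformally invariant in dimension four -- carries $W_{ijkl}(y_0)$ together with the ADM mass $\mathrm{mass}(g_N)$. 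Writing the neck deviation as $h = h_Z + h_N$, the leading part of the pairing is bilinear in $(h_Z,h_N)$: the $h_Z$-only and $h_N$-only pieces, paired against $\mathcal{O}_1$, give vanishing flux, since they arise from globally $B^t$-flat metrics (on the $Z$-side $a^{-4}g_Z$ extends smoothly over the point with the dilation field vanishing there; on the $N$-side $g_N$ is $B^t$-flat throughout), leaving only the cross-interaction.

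To extract the cross pairing, integrate by parts using that $B^t$ is divergence-free (it is the gradient of the diffeomorphism-invariant functional $\mathcal{B}_t$), which rewrites the relevant contribution as a flux integral over a sphere $S(\sigma)$ at an intermediate neck radius. The Bach part $B$ produces the pairing of $W(z_0)$ with $W(y_0)$; the inversion relating the two coordinate systems across the neck symmetrizes the last two indices of one factor, and the angular integrals $\int_{S^3}$ of the resulting quartic coordinate monomials assemble precisely into $W(z_0)\circledast W(y_0) = \sum W_{ijkl}(z_0)(W_{ijkl}(y_0)+W_{ilkj}(y_0))$ with coefficient $\tfrac23\omega_3$. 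The tensor $C$, entering $B^t = B + tC$ with weight $t$ and built from the scalar curvature, produces the pairing of the constant $Z$-side value $R(z_0)$ with the flux of the $g_N$-deviation, which by its very definition~\eqref{massdef} is the ADM mass; tracking the constants gives $4tR(z_0)\,\mathrm{mass}(g_N)\,\omega_3$. Both surviving terms scale like $a^4$ -- the $Z$-side deviation has size $a^4\cdot(\text{neck scale})^2$, the $N$-side decay supplies the matching power, and the neck annulus has volume $\sim a^{-4}$ -- and the remaining cross terms, together with the discrepancy between $g_a$ and this idealized neck model, are $O(a^{6-\epsilon})$, which gives~\eqref{Sadjintro}.

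The main obstacle is precision in the bookkeeping rather than any isolated conceptual hurdle. One must: (i) use the exact form of the refined approximate metric from Section~\ref{better} to be certain which terms in the neck have genuinely been cancelled and which residual survives; (ii) fix the normalization of $\mathcal{O}_1$ correctly, since an $L^2$-normalization versus a geometric one shifts all the constants; (iii) carry out the integration by parts honestly, and in particular check that the other approximate kernel directions -- rotations of the gluing and translations of the two base points -- contribute nothing to $\lambda_1$ at order $a^4$, which should follow from parity of the relevant angular integrals; and (iv) evaluate the $S^3$ integrals of the quartic monomials that produce $\tfrac23$. The $O(a^{6-\epsilon})$ error, rather than a clean integer power, is the expected loss coming from the weighted function spaces used in the linear theory; I expect (i) and (iii) to be the most delicate.
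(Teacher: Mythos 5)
Your high-level identification of $\lambda_1$ with a pairing of $B^t$ of the approximate metric against the scaling (co)kernel direction is the right starting point, and it matches \eqref{l1form20} in the paper's proof of Theorem~\ref{Kurthm}. But the proposal has a genuine gap at the next step, and it is not a bookkeeping issue. You claim that the pairing localizes to the neck and can be evaluated there as a flux integral, with the correction term absorbed into $O(a^{6-\epsilon})$. These two claims are incompatible. If $g_a$ is the \emph{na\"ive} approximate metric $g^{(0)}$ of Section~\ref{approx}, then $B^t(g^{(0)})$ is indeed supported in the damage zone, but its weighted norm is only $O(a^{2+\delta})$ there; the correction $\theta$ produced by the implicit function theorem is then of that same size, and the term $\int \langle S\theta, \eta o_1\rangle\,dV$ is $O(a^{2-\delta})\cdot O(a^{2+\delta}) = O(a^4)$ --- the same order as the quantity you are trying to compute, so it cannot be absorbed into the error and the flux computation does not isolate the coefficient. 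If instead $g_a$ is the refined metric $g^{(1)}$ of Section~\ref{better}, then the pairing does \emph{not} localize to the neck: by Proposition~\ref{BtsizeN}, on the AF piece $B^t(g^{(1)}) = a^4\lambda k_1^{(0)} + \dots$, where $k_1^{(0)}$ is compactly supported deep inside $N$, and this interior term is precisely the source of the leading $a^4$ contribution (the damage-zone contribution is $O(a^{6-\epsilon})$ by Proposition~\ref{NewBtsize}). In other words, the refinement that makes your error estimate true simultaneously moves the leading term out of the neck and into the obstruction constant $\lambda$ of the auxiliary problem $S(\tilde H_2) = \lambda k_1^{(0)}$ (Proposition~\ref{LamProp}); the paper's proof is exactly this two-step structure: Theorem~\ref{Kurthm} shows $\lambda_1 = \lambda a^4 + O(a^{6-\epsilon})$, and Proposition~\ref{SadjProp} evaluates $\lambda$.

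The second missing ingredient is the evaluation itself. In the paper the Weyl--Weyl term does not come from pairing the two neck deviations $h_Z$ and $h_N$ of the metric directly; it comes from pairing $\tilde H_2$ (carrying $W(z_0)$ after the Einstein condition kills the trace-free Ricci part) against boundary fluxes of $\Delta^2\kappa$, where $\kappa = \mathcal{K}\omega_1$ is the trace-free part of the AF cokernel element and its asymptotics $\kappa_{ij} = \tfrac{2}{3}W_{ikj\ell}(y_0)x^kx^\ell/|x|^4 + O(|x|^{-4+\epsilon})$ are the content of Theorem~\ref{ALEcokethm} --- a nontrivial computation (the Schouten part of the natural leading coefficient cancels, leaving only Weyl). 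Similarly the mass term arises from the trace part $f$ of $o_1$, the reduction of $\int\langle Ric,\mathcal{K}\omega_1\rangle$ to a boundary integral $\oint Ric(N,\omega_1)$ via the second Bianchi identity, the Ricci expansion of $g_N$ (Appendix~\ref{append}), and Proposition~\ref{massprop} identifying $12A - R(y_0)/12$ with $\mathrm{mass}(g_N)$. Your proposal replaces all of this by an appeal to ``$B^t$ is divergence-free, integrate by parts, the cross terms assemble,'' together with the heuristic that $\lambda_1$ is the derivative of $\mathcal{B}_t$ along the scaling modulus; neither is justified, and without the cokernel asymptotics (or an equivalent second-variation cross-term computation carried out in detail) you cannot produce the coefficient $\tfrac{2}{3}\omega_3$, the specific index symmetrization in $\circledast$, or the factor $4t\,\mathrm{mass}(g_N)$. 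So the proposal sketches a plausible heuristic but omits the two pillars of the actual proof: the refined approximate metric/auxiliary linear equations that make the error term genuinely $O(a^{6-\epsilon})$, and the asymptotic analysis of the cokernel element that makes the flux integrals computable.
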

We note that the product $\circledast$  depends upon the coordinate
systems chosen,
and therefore in general depends upon a rotation parameter, and
obviously on the
base points of the gluing.

\subsection{Simply-connected examples}
In the case either of the factors are $(\CP^2, g_{FS})$ or
$(S^2 \times S^2, g_{S^2 \times S^2})$, Theorem \ref{SadjThm}
implies an existence theorem.
Since these manifolds are toric,  we can use the torus
action plus a certain discrete symmetry, called a diagonal symmetry,
to eliminate all gluing parameters except for the scaling parameter.
Theorem~\ref{SadjThm} will then allow us to
obtain critical metrics on the following manifolds
``near'' the indicated approximate metric:
\begin{itemize}
\item (i)
$\CP^2 \# \overline{\CP}^2$; the Fubini-Study metric with a Burns metric attached
at one fixed point. This case admits a $U(2)$-action.
\item(ii)
$S^2 \times S^2 \# \overline{\CP}^2 = \CP^2 \# 2  \overline{\CP}^2$;
the product metric on $S^2 \times S^2$ with a Burns metric attached at
one fixed point. Alternatively, we can view this as the Fubini-Study
metric on $\CP^2$, with a Green's function $S^2 \times S^2$ metric attached at
one fixed point. For this topology, we will therefore construct two
different critical metrics.
\item(iii)
$2 \# S^2 \times S^2$; the product metric on $S^2 \times S^2$ with
a Green's function $S^2 \times S^2$ metric attached at one fixed point.
\end{itemize}
More precisely, we have
\begin{theorem}
\label{thm2}
In each of the above cases, a $B^t$-flat metric exists for some $t$ near the
critical value of
\begin{align}
\label{t0}
t_0 = \frac{-1}{6  R(z_0) \mathrm{mass}(g_N)} W(z_0) \circledast W(y_0).
\end{align}
Furthermore, this metric is invariant under the indicated action(s).
\end{theorem}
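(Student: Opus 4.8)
The plan is to combine the Lyapunov-Schmidt reduction of Section~\ref{Lyap} with the symmetry reduction afforded by the toric structure, and then apply Theorem~\ref{SadjThm} as an intermediate value argument in the parameter $t$. First I would set up the gluing of Section~\ref{better}: in each of the cases (i)--(iii) the two building blocks are $(\CP^2, g_{FS})$ or $(S^2\times S^2, g_{S^2\times S^2})$, with $(Y,g_Y)$ converted to an AF Green's-function metric $(N,g_N)$, so that the approximate metric $g_a$ on the connected sum depends on the scaling parameter $a$ together with the rotation and base-point parameters. The crucial point is that both factors are toric and one selects the fixed point of the torus action as the basepoint of the gluing; then the full torus action together with the diagonal $\ZZ/2$ symmetry acts on the connected sum and preserves the approximate metric, so the gluing can be carried out $G$-equivariantly. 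By the rigidity results of Section~\ref{comkersec}, each factor has no nontrivial infinitesimal $B^t$-flat deformations other than scaling, so in the $G$-invariant category the only obstruction/deformation parameter surviving is the scaling parameter $a$. This is exactly the statement that the Kuranishi map reduces, in the equivariant setting, to the single function $\lambda_1(a)$.

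Next I would invoke Theorem~\ref{SadjThm}: along the one-parameter family the leading term of the Kuranishi map is
\begin{align}
\lambda_1(a) = \Big( \tfrac{2}{3} W(z_0)\circledast W(y_0) + 4t\, R(z_0)\, \mathrm{mass}(g_N)\Big)\omega_3 a^4 + O(a^{6-\epsilon}).
\end{align}
The coefficient of $a^4$ vanishes precisely when $t = t_0$, with $t_0$ as in \eqref{t0}. Since this coefficient is an affine (in fact strictly monotone, as $R(z_0)\,\mathrm{mass}(g_N)\neq 0$) function of $t$, for $t$ slightly above and slightly below $t_0$ the leading coefficient has opposite signs. For fixed such $t\neq t_0$ and $a$ small enough, the error term $O(a^{6-\epsilon})$ is dominated by the $a^4$ term, so $\lambda_1(a)$ is nonzero with a definite sign. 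The idea is then a degree/intermediate-value argument: one shows that as $t$ crosses $t_0$, the sign of $\lambda_1$ (for appropriately small $a = a(t)$) changes, and by continuity of the whole construction in $t$ there must be some $t$ near $t_0$ and some $a>0$ for which the full equivariant Kuranishi map vanishes, yielding a genuine $B^t$-flat metric. Care is needed because $a$ cannot be fixed independently of $t$: one tracks a curve $(a,t)$ in parameter space, using that for each small $a>0$ there is an interval of $t$ near $t_0$ on which the sign of $\lambda_1(a)$ is controlled, and then a connectedness argument on the zero set of the Kuranishi map produces the desired solution with $t\to t_0$ as $a\to 0$.

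Finally, regularity and the invariance claim: the solution produced by Lyapunov-Schmidt is a priori only a weak solution of the gauged $B^t$-flat equation, but elliptic regularity for the gauged operator (Section~\ref{gauge}) upgrades it to a smooth metric, and undoing the gauge (a diffeomorphism) gives a smooth honest $B^t$-flat metric on $Z\#\overline{Y}$. Because the entire construction --- approximate metric, auxiliary linear solves, Lyapunov-Schmidt projection, and the perturbation --- was carried out in the $G$-invariant category, the resulting metric is automatically invariant under the indicated $U(2)$- or torus-plus-diagonal action. The underlying smooth manifolds are identified by the standard facts $S^2\times S^2\#\overline{\CP}^2 \cong \CP^2\#2\overline{\CP}^2$ and the evident diffeomorphism type of $\CP^2\#\overline{\CP}^2$ and $2\#(S^2\times S^2)$.

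I expect the main obstacle to be the coupling between $a$ and $t$ in the intermediate value argument: Theorem~\ref{SadjThm} only controls the scaling component of the Kuranishi map to leading order, and one must ensure that no other infinitesimal kernel directions reappear once symmetry is imposed and that the error term is genuinely uniform in $t$ on a neighborhood of $t_0$, so that the sign change of $\lambda_1$ can be promoted to an actual zero of the full finite-dimensional map along a curve $t\to t_0$. Establishing this uniform control --- essentially that the implicit-function-theorem estimates from the Lyapunov-Schmidt reduction are locally uniform in the parameter $t$ --- is the technical heart of the argument.
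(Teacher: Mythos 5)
Your overall strategy (equivariant gluing, Lyapunov--Schmidt reduction, leading-term expansion of the Kuranishi map, and an intermediate value argument in $t$ near $t_0$) is the same as the paper's, but there is a genuine gap in the step where you claim that ``in the $G$-invariant category the only obstruction/deformation parameter surviving is the scaling parameter $a$,'' so that the Kuranishi map reduces to the single scalar $\lambda_1(a)$. Imposing the torus and diagonal symmetries does eliminate the rotation and basepoint parameters, but it does \emph{not} eliminate the pure-trace directions: by Theorem \ref{comker} the invariant kernel on the compact factor is spanned by $g_Z$ (not zero), and on the AF piece the bounded invariant solutions include $o_2 = g$ in addition to the decaying cokernel element $o_1$. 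Consequently the reduced finite-dimensional problem is two-dimensional: the Lyapunov--Schmidt step only produces $\theta \in \mathcal{D}$ with
\begin{align*}
P_{g^{(1)}}(\theta) = \lambda_1 k_1 + \lambda_2 k_2 ,
\end{align*}
as in \eqref{bigeqn}. The intermediate value argument in $t$ (using Theorem \ref{Kurthm}, i.e.\ that the coefficient $\frac{2}{3}W(z_0)\circledast W(y_0) + 4tR(z_0)\mathrm{mass}(g_N)$ is strictly monotone in $t$ and the error is $O(a^{6-\epsilon})$) kills only $\lambda_1$; your proposal never addresses $\lambda_2$, and without it you have only solved $P_{g^{(1)}}(\theta)=\lambda_2 k_2$, which is not a $B^t$-flat metric.

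The missing idea is the separate, non-symmetry argument that forces $\lambda_2=0$: taking the trace of the equation with respect to $g^{(1)}+\theta$ and integrating, the left-hand side has mean value zero (the trace of $B^t$ integrates to zero since $\mathrm{tr}\,B^t = -6t\Delta R$, and the gauge term is trace-free), so
\begin{align*}
0 = \lambda_2 \int \mathrm{tr}_{g^{(1)}+\theta}\, k_2 \, dV_{g^{(1)}+\theta} = \lambda_2\big(1+O(a^{4+\delta-\epsilon})\big)
\end{align*}
by the normalization \eqref{3normal} and the estimate \eqref{pointwise}, whence $\lambda_2=0$ and Proposition \ref{smoothprop} gives a smooth $B^t$-flat metric. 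Two smaller remarks: the coupling between $a$ and $t$ that you flag as the technical heart is handled more simply than your curve-tracking/degree scheme --- one fixes $a$ small and perturbs $t$ only, since for fixed $a$ the $a^4$ term dominates the remainder uniformly for $t$ in a neighborhood of $t_0$; and there is no need to ``undo the gauge by a diffeomorphism'' at the end, because the integration-by-parts argument in Proposition \ref{smoothprop} shows the gauge term itself vanishes, so $g^{(1)}+\theta$ is already $B^t$-flat.
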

The proof of the theorem appears in Section \ref{Kuranishi},
and the special values of $t_0$ in each case are indicated in Table \ref{table}.
\begin{table}[ht]
\caption{Simply-connected examples with one bubble}
\centering
\begin{tabular}{ll}
\hline\hline
Topology of connected sum& Value(s) of $t_0$ \\
\hline
$\CP^2 \# \overline{\CP}^2$ & $-1/3$ \\
$ S^2 \times S^2 \# \overline{\CP}^2 = \CP^2 \# 2  \overline{\CP}^2$ & $-1/3$, $- (9 m_1)^{-1}$\\
$ 2 \# S^2 \times S^2$ & $-2(9 m_1)^{-1}$ \\
\hline
\end{tabular}\label{table}
\end{table}

With $\CP^2$ as a compact factor, there are three fixed points of
the torus action, and with $S^2 \times S^2$, there are four
fixed points.
Employing various discrete symmetries will also allow us
to obtain critical metrics on connected sums with
more than two factors. Theorem \ref{thm2} extends to the following
cases:
\begin{itemize}
\item(iv)
$3 \# S^2 \times S^2$; the product metric on $S^2 \times S^2$ with
Green's function $S^2 \times S^2$ metrics attached at two fixed points.
In this case, we will impose an additional symmetry called bilateral symmetry.
\item (v) $S^2 \times S^2 \# 2 \overline{\CP}^2
= \CP^2 \# 3  \overline{\CP}^2$; the product metric on $S^2 \times S^2$ with Burns
metrics attached at two fixed points, with bilateral symmetry.
\item (vi) $ \CP^2 \# 3  \overline{\CP}^2$; the Fubini-Study
metric with Burns metrics attached at all fixed points,
with a symmetry called trilateral symmetry.
\item (vii) $ \CP^2 \# 3(S^2 \times S^2) = 4 \CP^2 \# 3 \overline{\CP}^2 $;
the Fubini-Study metric with Green's function $S^2 \times S^2$ metrics
attached at all fixed points, with trilateral symmetry.
\item (viii) $S^2 \times S^2 \# 4 \overline{\CP}^2 =
\CP^2 \# 5 \overline{\CP}^2$;
the product metric on $S^2 \times S^2$ with Burns metrics
attached at all fixed points, with a symmetry called quadrilateral
symmetry.
\item (ix) $5 \# S^2 \times S^2$ viewed as the product metric on
$S^2 \times S^2$ with Greens function $S^2 \times S^2$ metrics
attached at all fixed points, with quadrilateral symmetry.
\end{itemize}
The special values of $t_0$ in each case are indicated in Table \ref{tablesb}.
\begin{table}[ht]
\caption{Simply-connected examples with several bubbles}
\centering
\begin{tabular}{lll}
\hline\hline
Topology of connected sum& Value of $t_0$ & Symmetry\\
\hline
$ 3 \# S^2 \times S^2$ &  $-2(9 m_1)^{-1}$ & bilateral\\
$ S^2 \times S^2 \# 2\overline{\CP}^2 = \CP^2 \# 3  \overline{\CP}^2
 $ & $-1/3$ & bilateral\\
$ \CP^2 \# 3  \overline{\CP}^2$ & $-1/3$  & trilateral \\
$ \CP^2 \# 3(S^2 \times S^2) = 4 \CP^2 \# 3 \overline{\CP}^2 $ &  $- (9 m_1)^{-1}$  & trilateral \\
 $S^2 \times S^2 \# 4 \overline{\CP}^2 =
\CP^2 \# 5 \overline{\CP}^2$ & $-1/3$ & quadrilateral \\
$5 \# S^2 \times S^2$ &  $-2(9 m_1)^{-1}$ & quadrilateral\\
\hline
\end{tabular}\label{tablesb}
\end{table}
\begin{remark}{\em
Since $S^2 \times S^2$ admits an
orientation-reversing diffeomorphism, there is only one
possibility for a connect sum with $S^2 \times S^2$, which is
why $\overline{S^2 \times S^2}$ does not appear in the list of examples.
}
\end{remark}

\subsection{Non-simply-connected examples}
The product metric on $S^2 \times S^2$ admits the Einstein
quotient $S^2 \times S^2 / \ZZ_2$,
where $\ZZ_2$ acts by the antipodal map on both factors,
and  the quotient $\RP^2 \times \RP^2$.
Using one of these metrics as the compact factor or
the Green's function metric of one of these as one of the AF spaces,
we can obtain several non-simply-connected examples. We will denote
\begin{align}
m_2 =   \mathrm{mass}(\hat{g}_{S^2 \times S^2/\ZZ^2}),
\end{align}
and
\begin{align}
m_3 =  \mathrm{mass}(\hat{g}_{\RP^2 \times \RP^2}).
\end{align}
Again, by the positive mass theorem, $m_2 > 0$ and $m_3 > 0$.
Theorem \ref{thm2} holds for these examples as well,
and the special values of $t_0$ in each non-simply-connected
case with one bubble are indicated in Table~\ref{table2}.
We note those without an $\RP^2 \times \RP^2$ factor are
orientable, and those with an $\RP^2 \times \RP^2$ factor are
non-orientable.
Also note that the first, second, fifth and sixth examples
have finite fundamental groups. The others have infinite
fundamental group (in particular, by the Myers Theorem these
manifolds do not admit positive Einstein metrics).
\begin{table}[ht]
\caption{Non-simply-connected examples with one bubble}
\centering
\begin{tabular}{ll}
\hline\hline
Topology of connected sum& Value(s) of $t_0$ \\
\hline
$(S^2 \times S^2 / \ZZ_2) \#  \overline{\CP}^2$ & $-1/3$, $- (9 m_2)^{-1}$ \\
$(S^2 \times S^2 / \ZZ_2) \#  S^2 \times S^2$ & $- 2(9 m_1)^{-1}$,  $- 2(9 m_2)^{-1}$\\
$(S^2 \times S^2 / \ZZ_2) \#(S^2 \times S^2 / \ZZ_2)$ &   $- 2(9 m_2)^{-1}$
\\
$(S^2 \times S^2 / \ZZ_2) \# \RP^2 \times \RP^2$ &  $- 2(9 m_3)^{-1}$ , $- 2(9 m_2)^{-1}$
\\
$\RP^2 \times \RP^2 \# \overline{\CP}^2$ & $-1/3$,  $-(9 m_3)^{-1}$
\\
$\RP^2 \times \RP^2 \# S^2 \times S^2$ & $- 2(9 m_1)^{-1}$, $-2(9 m_3)^{-1}$
\\
$\RP^2 \times \RP^2 \# \RP^2 \times \RP^2$ &  $-2(9 m_3)^{-1}$
\\
\hline
\end{tabular}\label{table2}
\end{table}

As in the simply-connected case, we can take advantage of various
symmetries to obtain non-simply-connected examples with more
than one bubble. For the complete list, see Appendix \ref{appb}.

\subsection{The Bach-flat case}

We remark that Theorem \ref{SadjThm} holds in the
Bach-flat case ($t = 0$), provided one restricts to traceless
tensors throughout the argument (this is necessary due to
conformal invariance of the Bach tensor). This expansion cannot be directly used to
produce Bach-flat metrics,  since the freedom to move
the parameter~$t$ is crucial in the
proof of Theorem~\ref{thm2}.  However, the main argument
does imply the following {\em{non-existence}} result:
\begin{theorem}
\label{nonexist}
Assume that both $(Z, g_Z)$ and $(Y, g_Y)$ are Bach-flat, toric, and
admit a diagonal symmetry. Let $z_0 \in Z$ and $y_0 \in Y$ be fixed
points of the respective torus actions. If
\begin{align}
W(y_0) \circledast W(z_0) \neq 0,
\end{align}
then there is no equivariant Bach-flat metric in a $C^{4, \alpha}$-neighborhood
of the approximate metric.
\end{theorem}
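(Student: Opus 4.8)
The plan is to run the Lyapunov--Schmidt machinery exactly as in the proof of Theorem~\ref{thm2}, but now with $t$ \emph{fixed} at $0$, and extract a contradiction from the leading-order expansion of the Kuranishi map. First I would record that, because $(Z,g_Z)$ and $(Y,g_Y)$ are toric with a diagonal symmetry and $z_0$, $y_0$ are fixed points, the equivariance reductions carried out in Section~\ref{Kuranishi} apply verbatim: all gluing parameters arising from rotations and from moving the base points are killed by the symmetry, so the only surviving parameter is the scaling parameter~$a$. Consequently the Kuranishi map reduces (after projecting onto the one-dimensional space of equivariant obstructions, cf.\ the rigidity discussion of Section~\ref{comkersec}) to the single scalar function $\lambda_1(a)$, and an equivariant Bach-flat metric in a $C^{4,\alpha}$-neighborhood of the approximate metric would have to correspond to a zero of $\lambda_1(a)$ for small~$a>0$.

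Next I would invoke Theorem~\ref{SadjThm} in the Bach-flat case, as remarked in the paragraph preceding the statement: setting $t=0$ and working with traceless tensors throughout (forced by conformal invariance of the Bach tensor), the expansion becomes
\begin{align}
\label{nonexistexp}
\lambda_1(a) = \frac{2}{3}\, W(z_0) \circledast W(y_0)\, \omega_3\, a^4 + O(a^{6-\epsilon})
\end{align}
for every $\epsilon>0$. By hypothesis $W(y_0)\circledast W(z_0)\neq 0$, so the coefficient of $a^4$ in \eqref{nonexistexp} is a nonzero constant; choosing $\epsilon<2$, the error term is genuinely lower order. Hence there is $a_0>0$ such that $\lambda_1(a)\neq 0$ for all $a\in(0,a_0)$: the Kuranishi map has no zero near the approximate metric, so no equivariant Bach-flat metric exists there. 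This is the content of the theorem.

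The one point that requires care—and which I regard as the main obstacle—is making precise that in the Bach-flat setting one must restrict the entire Lyapunov--Schmidt construction to the orthogonal complement of the conformal directions (traceless tensors, modulo the conformal Killing-type deformations that leave the Bach tensor unchanged), and checking that this restriction is compatible with the equivariance reduction, i.e.\ that the symmetry group preserves this splitting and that the remaining equivariant obstruction space is still exactly one-dimensional and detected by $\lambda_1(a)$. Once this bookkeeping is in place, the rigidity of the building blocks from \cite{GV11} (Section~\ref{comkersec}) guarantees there are no extra equivariant kernel elements beyond the scaling mode, and the argument closes. The remainder is a routine transcription of the $t\neq 0$ proof with $t$ held at $0$; no new analytic estimates are needed beyond those already established for Theorem~\ref{SadjThm}.
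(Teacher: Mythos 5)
Your leading-order computation is right --- at $t=0$ the mass term drops out of \eqref{Sadjintro} and the coefficient of $a^4$ is $\tfrac{2}{3}\,W(z_0)\circledast W(y_0)\,\omega_3\neq 0$ --- but the logical structure of your argument has a genuine gap. You treat the Kuranishi map as the function $a\mapsto\lambda_1(a)$ and conclude from $\lambda_1(a)\neq 0$ that no Bach-flat metric exists near $g^{(1)}_{a}$. What this actually shows is only that the one particular metric $g^{(1)}_{a}+\theta(a)$ produced by the implicit function theorem (with $\theta(a)$ lying in the fixed complement $\mathcal{D}$) is not Bach-flat. The theorem, however, asserts non-existence in an entire $C^{4,\alpha}$-neighborhood of the approximate metric for each fixed small $a$, so the Kuranishi map must be a function of the parameters spanning the approximate kernel, not of the gluing parameter $a$. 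After restricting to traceless tensors, the self-adjoint linearized operator has a one-dimensional approximate kernel spanned by $\eta\overset{\circ}{o_1}$ (dual to the obstruction $k_1$), and the reduction has to be run on
$H(s,\lambda_1,\theta)=P_{g^{(1)}}(\theta+s\,\eta\,\overset{\circ}{o_1})-\lambda_1 k_1$
with $\theta\in\mathcal{D}=\{h:\int\langle h,k_1\rangle=0\}$; the Kuranishi map is then $\Psi\colon s\mapsto\lambda_1(s)$, with $a$ fixed.

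Two further ingredients are then needed, and neither appears in your proposal. First, one must show that \emph{every} equivariant Bach-flat metric in a small $C^{4,\alpha}$-neighborhood corresponds to a zero of $\Psi$; this requires the gauging argument of \cite[Section 2.3]{GV11} to put an arbitrary nearby Bach-flat metric into the form $g^{(1)}+s\,\eta\,\overset{\circ}{o_1}+\theta$ solving the gauged equation, together with the uniqueness clause of the implicit function theorem to identify $\theta$ with $\theta(s)$. Second, the leading-term estimate must be uniform in $s$ small, so that $\Psi(s)\neq 0$ for all admissible $s$, not merely at $s=0$. You flag the traceless restriction as ``the main obstacle,'' but the traceless restriction is the easy part; the actual obstacles are the extra kernel parameter and the step showing the reduction captures all nearby metrics. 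Without them the argument rules out only the constructed family, not the neighborhood.
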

This is applicable to all of the above
examples, so we may conclude that there is no Bach-flat metric near the
metrics found in Theorem~\ref{thm2}. In particular, these metrics are not
Einstein. We remark that this non-existence theorem is true without
the equivariance assumption, but a complete proof of this
adds considerable technical details, so is not included.

 Note that in the case of $\CP^2 \# \CP^2$, it is easy to see
that $W(y_0) \circledast W(z_0) = 0$, since there is an orientation-reversal
required when performing the connected sum. This is not surprising, since
it is well-known that there is a $1$-parameter family of self-dual
metrics (which are Bach-flat) near the approximate metric \cite{Poon, LeBrunJDG, ViaclovskyFourier}.

\subsection{Remarks}
The proof of Theorem \ref{thm2} shows the following dichotomy:
either (i) there is a critical metric at exactly the
critical $t_0$, in which case there would necessarily be a 1-dimensional
moduli space of solutions for this fixed $t_0$
(as pointed out above, this indeed happens for $\CP^2 \# \CP^2$,
in which case there is a $1$-parameter family of self-dual metrics).
The other possibility (ii) is that for each value of the gluing parameter
$a$ sufficiently small, there will be a critical metric for a
corresponding value of $t_0 = t_0(a)$. The dependence of $t_0$
on $a$ will depend on the next term in the expansion of \eqref{Sadjintro}.
For example, if this expansion were improved to
\begin{align}
\label{l1formfin2}
\lambda_1 =  \lambda a^4 + \mu a^8 + O(a^{12 - \epsilon}),
\end{align}
with $\mu \neq 0$, then we would have the dependence
\begin{align}
t_0 = \frac{1}{  4 R(z_0) \mathrm{mass}(g_N)  }
\Big( -  \frac{2}{3} W(y_0) \circledast W(z_0)  - \frac{\mu}{\omega_3} a^4 \Big)
+ O(a^{8 - \epsilon}).
\end{align}
as $a \rightarrow 0$.

It should be possible to extend the methods in this paper to compute $\mu$.
If it turns out that $\mu \neq 0$,
then one may conclude that possibility (ii) definitely happens.
The sign of $\mu$ would then determine if solutions are found for
$t > t_0$ or $t < t_0$.  If $\mu = 0$, this would indicate (but not prove)
that possibility (i) is what actually occurs.
The methods in this paper cannot practically be used to determine
that possibility (i) actually happens, since there would be an infinite sequence
of obstructions to check in this eventuality.

We next make some remarks about some relations between K\"ahler geometry 
and the value $t_0 = -1/3$ appearing in the above tables.
Using the Hirzebruch signature theorem, we can write 
\begin{align} \label{Bredef}
\mathcal{B}_{-1/3}[g] = - 48 \pi^2 \sigma(M^4) + 2 \int \big( | W^{+} |^2 - \frac{1}{6} R^2 \big)\ dV.
\end{align}
An immediate corollary of this formula is that if $(M^4,g)$ is K\"ahler, then
\begin{align} \label{Bkahler}
\mathcal{B}_{-1/3}[g] = - 48 \pi^2 \sigma(M^4).
\end{align}
In addition, a constant scalar curvature K\"ahler metric is necessarily 
critical for the value $t_0 = -1/3$ \cite{Derdzinski}. We note that 
important gluing results for constant scalar curvature K\"ahler metrics were 
proved in \cite{API, APII}. 

For the manifolds $\CP^2 \# k \overline{\CP}^2,$ when $k = 1, 2, 3,$ or $5$,
consider the cases when a Burns metric is used for the bubbles 
(the cases when $t_0 = -1/3$). In these cases, it is known that there are extremal
K\"ahler metrics near the n\"aive approximate metric \cite{APS2011, Gabor}.
These extremal metrics do not have constant scalar curvature, so they are not
the same as the critical metrics found in Theorem~\ref{thm2}.
There might be some other relation between these metrics
(such as conformality), but we are not aware of any such relation.
These manifolds are known to admit Einstein metrics \cite{Besse, CLW, LeBrun2012}.

However, on many of the other manifolds considered in this paper, there does not 
exist {\em{any}} K\"ahler metric (for example $2 \# S^2 \times S^2$),
and the critical metrics found in Theorem \ref{thm2} are the first
known ``canonical'' metrics, to the best of the authors' knowledge.

\subsection{Acknowledgements}
This work was done in part at the Institute for Advanced Study in Princeton
and at the Institut Henri Poincar\'e in Paris.
Both authors have been partially supported by the NSF;
the first author under NSF grant DMS-1206661
and the second author under NSF grant DMS-1105187.
The second author was partially supported by the Simons Foundation
as a Simons Fellow of Mathematics. Both authors thank the above
institutions for their generous support.

The authors would like to thank Olivier Biquard for crucial assistance in
completing this work, and to Simon Brendle, Claude LeBrun, Rafe Mazzeo, Frank Pacard, 
and Karen Uhlenbeck for many helpful remarks.

\section{The building blocks}
\label{bb}

In this section, we will derive metric expansions for the
``building blocks'' of our gluing procedure; the Fubini-Study metric
and product metric on $S^2 \times S^2$. We will also give
metric expansions of the associated scalar-flat asymptotically flat metrics,
arising from the Green's function of the conformal Laplacian.

The general gluing problem has many degrees of freedom.
We will take advantage of various symmetries to reduce
eventually to only one degree of freedom. So in this section,
we will also describe the various group actions which will be
used for an equivariant gluing.

\subsection{The Fubini-Study metric}
\label{fssub}
Recall that $\CP^2$ is the set of complex projective lines through
the origin in $\CC^3$. Equivalently, $\CP^2$ is the set of equivalence
classes $\{\CC^3 \setminus \{0\} \}/ \CC^*$, where the action of $\CC^*$ is
defined by, for $\lambda \in \CC^*$,
\begin{align*}
[u_0, u_1, u_2] \mapsto [ \lambda u_0, \lambda u_1, \lambda u_2].
\end{align*}
Let $U_i = \{ [ u_0, u_1, u_2] | u_i \neq 0 \}$, for $i = 0,1,2$.
The Fubini-Study metric is given in $U_0$ by \cite{KobayashiNomizuII}
\begin{align}
\begin{split}
g_{FS} &= \frac{\I}{2} \partial \overline{\partial}( 1 + |u_1|^2 + |u_2|^2)\\
&  =\frac{  (1 + |u|^2) ( du_1 d \bar{u}_1 +  du_2 d \bar{u}_2)
- ( \bar{u}_1 du_1 + \bar{u}_2 du_2)  ( u_1 d \bar{u}_1 + u_2 d\bar{u}_2)}
{(1 + |u|^2)^2}.
\end{split}
\end{align}
This extends to an Einstein metric on $\CP^2$ with $Ric(g) = 6 g$, and
$PU(3)$, the projective unitary group
(the unitary group $U(3)$ modulo its center),
acts by isometries.

We will consider two sub-actions of this group action.
The first is an action of $U(2)$ fixing the point $[1,0,0]$.
Globally, this action is given by, for $A \in U(2)$,
\begin{align}
[u_0,u_1,u_2] \mapsto [ u_0, A(u_1, u_2)].
\end{align}
The point $[1,0,0]$ is the only fixed point of this action.
In $U_0$, this action is given by the standard action of $U(2)$
acting on $\CC^2$.

The second action is the torus action of the form
\begin{align}
[u_0, u_1, u_2] \mapsto [ u_0, e^{\I \theta_1} u_1, e^{\I \theta_2} u_2]
\end{align}
where $\theta_1,\theta_2 \in [0,2\pi]$.
This action has $3$ fixed points $[1,0,0], [0,1,0]$, and $[0,0,1]$.
In $U_0$, this action is given by
\begin{align}
\label{sda}
(u_1, u_2) \mapsto  (e^{\I \theta_1} u_1, e^{\I \theta_2} u_2).
\end{align}
Next, let  $\{\sigma_1, \sigma_2, \sigma_3\}$ be a left-invariant coframing
on $S^3$ such that $\sigma_3$ is a connection form for the
Hopf fibration $\pi : S^3 \rightarrow S^2 = \CP^1$ defined by
\begin{align}
\pi(u_1, u_2) = [u_1 : u_2],
\end{align}
and such that $\pi^* g_{S^2} = 4(\sigma_1^2 + \sigma_2^2)$.
The Fubini-Study metric can then be written as \cite[page 257]{EGH}
\begin{align}
\label{fscof}
g_{FS} = \frac{1}{(1 + r^2)^2} dr^2 + \frac{r^2}{1 + r^2} \Big(
\sigma_1^2 + \sigma_2^2 + \frac{1}{1 + r^2} \sigma_3^2 \Big)
\end{align}
From this expression, the above action of $U(2)$
is seen here here as an action of $\rm{SO}(3) \times \rm{SO}(2)$
where the first factor acts by rotations of $S^2$, and the
second factor acts by rotations of the fiber of the Hopf fibration.
The above torus action is the restricted action where the first factor acts by
a rotation of $S^2$ fixing the north and south pole.

From \eqref{fscof}, we see that $\rho = \arctan(r)$ is the geodesic distance
from the basepoint, and under this radial change of coordinates the
metric is written as \cite{LeBrunNN}
\begin{align}
\label{cofs}
g_{FS} = d\rho^2 + \sin^2(\rho) \big( \sigma_1^2 + \sigma_2^2 + \cos^2(\rho) \sigma_3^2 \big),
\end{align}
with the restriction that $0 < \rho < \pi/2$. Since the coordinate
change is radial, we note the important fact that in these
coordinates, the above action of $U(2)$ is still the standard linear action.

Finally, we let $\{z^i\}$ be Euclidean normal coordinates,
based at $[1,0,0]$, so that $U(2)$ acts linearly,
and that the above torus action acts by
\begin{align}
(z_1, z_2, z_3, z_4)
\mapsto \big( e^{  \I \theta_1} (z_1 + \I z_2), e^{ \I \theta_2}  (z_3 + \I z_4)\big).
\end{align}
In this coordinate system, we have the expansion
\begin{align}
g_{ij} = \delta_{ij} - \frac{1}{3} R_{ikjl}([1,0,0]) z^k z^l + O^{(4)}(|z|^4)_{ij}
\end{align}
as $|z| \rightarrow 0$.
\begin{remark}{\em
We adopt the convention that for a function (or tensor) $f = f(z)$, $f = O^{(m)}(|z|^{\alpha})$ means
$|\partial^{k} f| = O(|z|^{\alpha - k})$ for all $1 \leq k \leq m$
(as $z$ approaches an indicated limit). }
\end{remark}

This metric is invariant under the diagonal symmetry:
\begin{align}
(z_1, z_2, z_3, z_4) \mapsto (z_3, z_4, z_1, z_2),
\end{align}
which is contained in $U(2)$. In the case
of toric invariance, we will impose this as an extra symmetry for the
equivariant gluing problem.
In both cases, there will therefore be only one fixed point, the point $[1,0,0]$.
These symmetries are illustrated in Figure~\ref{cp2fig}.

\begin{figure}[h]
\includegraphics{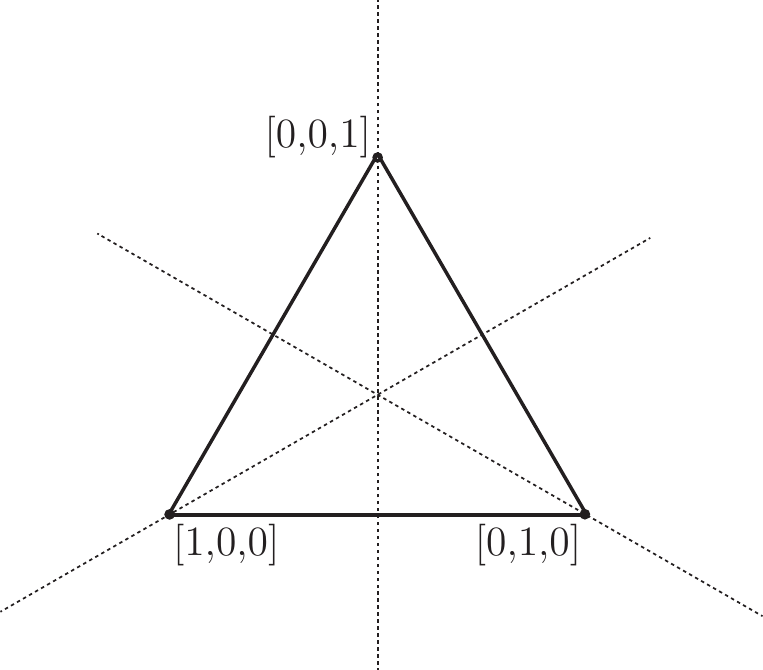}
\caption{Orbit space of the torus action on $\CP^2$. The vertices
of the triangle are fixed points, open edge points are circle orbits,
and interior points are principal orbits. The diagonal
symmetry is a reflection in the dotted diagonal line passing through
[1,0,0]. Invariance under reflection in all dotted diagonal lines will be called
trilateral symmetry (note these reflections correspond to
coordinate flips $u_i \leftrightarrow u_j$ on $\CP^2$, which are
isometries of $g_{FS}$).}
\label{cp2fig}
\end{figure}

\subsection{The Burns metric}
\label{burnssub}
We begin with a general result regarding the Green's function
expansion for a toric Einstein manifold:
\begin{proposition}
\label{greenein}
Let $G$ be the Green's function for the conformal Laplacian at the
point $p \in M$, where $(M,g)$ is an Einstein metric with
positive scalar curvature. If $(M,g)$ admits a non-trivial torus action
fixing the point $p$,
then in a Riemannian normal coordinate system $\{z^i\}$,
we have the following expansion: For any $\epsilon > 0$,
\begin{align}
\label{eing}
G = |z|^{-2} + A + O^{(4)} ( |z|^{2 - \epsilon})
\end{align}
as $|z| \rightarrow 0$, where $A$ is a constant (independent of $\epsilon$).
\end{proposition}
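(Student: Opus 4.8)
The plan is to exploit the standard parametrix expansion for the Green's function of the conformal Laplacian at $p$, together with the Einstein condition and the symmetry, to pin down the structure of the expansion. Recall that for a conformally covariant operator, near the pole one has $G = |z|^{-2} + v$, where $v$ solves $L v = -L(|z|^{-2}\chi)$ for a cutoff $\chi$, and the right-hand side is supported away from $p$ (it vanishes identically near $p$ because $|z|^{-2}$ is already the exact fundamental solution of the flat Laplacian to leading order, and the curvature corrections to $L$ acting on $|z|^{-2}$ produce terms that are $O(|z|^{-2})$, hence integrable but not yet better). The first step is therefore to write $G = |z|^{-2} + w$ and derive the equation $L_g w = f$ where $f = -(L_g - L_{\mathrm{eucl}})(|z|^{-2})$ is smooth away from $p$ and behaves like $O(|z|^{-2})$ as $|z|\to 0$; this uses $R_g(p)$ finite and the normal-coordinate expansion $g_{ij} = \delta_{ij} - \tfrac13 R_{ikjl}(p) z^k z^l + O^{(4)}(|z|^4)$ quoted just above, so that $\Delta_g - \Delta_{\mathrm{eucl}}$ applied to $|z|^{-2}$ is a sum of terms of order $|z|^{-2}$ and better.

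Next I would solve this equation by the usual iteration/Maximum-principle scheme to get $w = c\,|z|^{-2+\delta}$-type a priori bounds, and then improve: since $f = O^{(4)}(|z|^{-2})$, elliptic estimates on dyadic annuli give $w = O^{(4)}(|z|^{-\epsilon})$ for every $\epsilon>0$; that is, $w$ is bounded (up to an arbitrarily small loss) and harmonic-ish to leading order. The core of the argument is then to show that the \emph{bounded} part of $w$ is actually a constant $A$ plus something $O^{(4)}(|z|^{2-\epsilon})$, i.e. that there is \emph{no linear term} $b_i z^i$ and no other homogeneity-$1$ contribution. This is exactly where the torus action is used: $G$ is invariant under the torus action fixing $p$, and a nontrivial such action on $\RR^4$ (in the normal coordinates, which can be chosen so the action is linear and orthogonal) has no nonzero invariant vectors — any $S^1 \subset SO(4)$ with $p$ fixed acts on $\RR^4$ without nonzero fixed vectors — so the degree-one part of any invariant function vanishes. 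Hence the $O(1)$ term in $w$ is forced to be the constant $A := w(p)$ (well-defined as a limit by the elliptic estimates), and $w - A$ vanishes to order $\geq 1$ but, being invariant, actually to order $\geq 2$ (again killing the degree-one piece), with the $O^{(4)}(|z|^{2-\epsilon})$ remainder coming from the annular elliptic estimates applied to $w - A$, whose Laplacian is $O^{(4)}(|z|^{-2})$.

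The main obstacle is the bookkeeping in the last step: one must rule out a $|z|^{-2}\log|z|$ or $\log|z|$ term and, more subtly, justify that the "degree-one part is killed by invariance" argument survives the $\epsilon$-losses — i.e. that invariance of $w$ genuinely forces the coefficient of every degree-one homogeneous harmonic to vanish, rather than merely forcing it to be small. I would handle this by decomposing $w$ into spherical harmonics on each geodesic sphere $S(r)$, noting that invariance means only the torus-invariant harmonics appear, that there are no invariant degree-one harmonics, and that the invariant degree-zero harmonic (the spherical average) satisfies an ODE in $r$ forcing it to be $A + O(r^{2-\epsilon})$; the higher-degree invariant harmonics contribute at order $r^{2}$ or better. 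The Einstein condition enters twice: it makes $g_N = G^2 g$ scalar-flat (so the expansion is clean) and, via $R_g$ constant, controls the subleading terms in $L_g$ so that the inhomogeneity $f$ has no term worse than $|z|^{-2}$, which is what prevents a $\log$ from appearing in $A$.
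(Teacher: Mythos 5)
Your proposal follows essentially the same route as the paper: the paper's (very terse) proof likewise writes a formal expansion $G = |z|^{-2} + A + \dots$, observes that the indicial roots of the Laplacian are $\ZZ\setminus\{-1\}$ and that the root $1$ corresponds to linear solutions killed by torus invariance, and then invokes Lee--Parker-type annular elliptic estimates to upgrade the formal expansion to the actual one with $O^{(4)}(|z|^{2-\epsilon})$ remainder. One small caveat: your parenthetical claim that any $S^1\subset SO(4)$ fixing $p$ has no nonzero fixed vectors is false (rotation in one $2$-plane fixes the orthogonal $2$-plane pointwise); what is actually needed, and what holds for the torus actions used in the paper (isolated fixed points, no invariant tangent vectors), is that the induced action on $T_pM$ has no nonzero invariant vectors --- the paper makes the same implicit assumption.
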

\begin{proof}
A straightforward computation, which we omit, shows that
there is a formal power series solution of the form with leading terms
\begin{align}
\label{formalexpp}
G = |z|^{-2} + A + \dots,
\end{align}
and $A$ is a constant.
Recall that the indicial roots of the Laplacian are
$\ZZ \setminus \{-1\}$. Solutions corresponding to
the indicial root $1$ are linear, and not
invariant under the torus action, so there is no linear term in the
expansion.  It follows from standard
techniques that the formal expansion \eqref{formalexpp}
implies the actual expansion \eqref{eing}. The proof is
identical to \cite[Lemma 6.4]{LeeParker} (using Riemannian
normal coordinates instead of conformal normal coordinates),
so the details are omitted.
\end{proof}
In the case of the Fubini-Study metric, we have the following improved expansion:
\begin{proposition}
Let $G$ be the Green's function for the conformal Laplacian of the
Fubini-Study metric based at $[1,0,0]$, normalized so that $Ric(g) = 6 g$.
Then in the above normal
coordinate system $\{z^i\}$ we have the expansion
\begin{align}
G = |z|^{-2} + \frac{1}{3}+ O^{(4)} ( |z|^2)
\end{align}
as $|z| \rightarrow 0$.
\end{proposition}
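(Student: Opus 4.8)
The plan is to upgrade the general expansion \eqref{eing} from Proposition~\ref{greenein} by pinning down the constant $A$ and, crucially, by killing the potential $|z|^2 \log|z|$ and $|z|^2$ terms that \emph{a priori} could appear at the indicial root $2$. First I would exploit the high degree of symmetry of the Fubini–Study metric: since $G$ is invariant under the $U(2)$-action fixing $[1,0,0]$, and this action is the standard linear action of $U(2)$ on $\RR^4 \cong \CC^2$ in the normal coordinates $\{z^i\}$, the function $G - |z|^{-2}$ must be a $U(2)$-invariant function of $z$ with a known indicial structure. The only $U(2)$-invariant quadratic polynomial is a multiple of $|z|^2$, and there are no $U(2)$-invariant harmonic polynomials of degree $1$; this already recovers the absence of a linear term and isolates the ambiguity to a single coefficient at order $|z|^2$ (plus a possible $|z|^2\log|z|$).

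The cleanest route to the \emph{exact} value of $A$ is to use the explicit closed form \eqref{cofs} of the Fubini–Study metric, $g_{FS} = d\rho^2 + \sin^2\rho(\sigma_1^2 + \sigma_2^2 + \cos^2\rho\,\sigma_3^2)$ with $0 < \rho < \pi/2$, where $\rho$ is geodesic distance from $[1,0,0]$. In these coordinates the conformal Laplacian $L = -6\Delta + R$ with $R = 24$ (since $Ric = 6g$ in dimension four gives $R = 24$) becomes an ODE operator acting on functions of $\rho$ alone, because the Green's function is $U(2)$-invariant hence depends only on $\rho$. So I would compute $\Delta$ on radial functions from the metric \eqref{cofs}: writing the volume density and the radial part of the Laplace–Beltrami operator explicitly in terms of $\sin\rho$ and $\cos\rho$, the equation $LG = 0$ reduces to a second-order linear ODE in $\rho$ which can be integrated in closed form. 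Imposing the normalization $G = \rho^{-2}(1 + o(1))$ as $\rho \to 0$ (equivalently $|z|^{-2}(1+o(1))$, since $|z| = \rho + O(\rho^3)$) selects the solution uniquely, and then I would Taylor-expand that closed-form solution, together with the relation between $\rho$ and the normal-coordinate radius $|z|$ (which I would read off from \eqref{cofs} by computing arclength, getting $|z|^2 = \rho^2 + \frac{1}{6}\rho^4 + \cdots$ or similar), to extract the constant term. The claim is that this constant equals $\tfrac{1}{3}$, and that the first correction is genuinely $O^{(4)}(|z|^2)$ with no logarithm — the absence of the logarithm being automatic here because the radial ODE has no resonance producing one (or, if it formally does, the explicit solution shows the offending coefficient vanishes). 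The $O^{(4)}$ control on derivatives then follows exactly as in Proposition~\ref{greenein}, via the same Lee–Parker-type argument applied to the radial ODE, or simply by differentiating the explicit solution.

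The main obstacle I anticipate is bookkeeping rather than conceptual: one must carefully track the change of variables between the geodesic coordinate $\rho$ and the Riemannian normal coordinates $z^i$ (since "normal coordinates at $p$" means $|z|$ equals geodesic distance along geodesics, so in fact $|z| = \rho$ exactly along radial directions — but then the angular part of \eqref{cofs} must be reconciled with the flat angular part of normal coordinates, which only agree to leading order), and to make sure the constant $A$ is computed in the correct coordinate system and with the stated curvature normalization $Ric = 6g$. A secondary subtlety is justifying rigorously that no $|z|^2\log|z|$ term intrudes: the symmetry argument reduces the function to one depending on $\rho$ only, the radial ODE for $LG=0$ is regular-singular at $\rho = 0$ with indicial roots $\{-2, 0\}$ shifted appropriately, and one checks that the root spacing does not force a logarithm, or alternatively one simply observes that the explicit second solution of the ODE, expanded, contains only even powers of $\rho$ up to the relevant order. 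Once these coordinate and resonance issues are handled, the expansion $G = |z|^{-2} + \tfrac{1}{3} + O^{(4)}(|z|^2)$ drops out of the explicit formula.
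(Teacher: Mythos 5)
Your route is the paper's route: use $U(2)$-invariance (equivalently, uniqueness of the Green's function) to reduce $LG=0$ to a radial ODE in the geodesic coordinate $\rho$ of \eqref{cofs}, solve it explicitly, and Taylor-expand. Two of your anticipated "obstacles" are non-issues: since $\{z^i\}$ are Riemannian normal coordinates, $|z|=\rho$ exactly, so no reconciliation between $\rho$ and $|z|$ is needed for a radial function, and the expansion $(\sin\rho)^{-2}=\rho^{-2}+\tfrac13+\tfrac{1}{15}\rho^2+O^{(4)}(\rho^4)$ transfers verbatim.

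There is, however, one genuine gap. You assert that the normalization $G=\rho^{-2}(1+o(1))$ as $\rho\to 0$ "selects the solution uniquely." It does not. The general solution of the radial equation $G_{\rho\rho}+(3\cot\rho-\tan\rho)G_\rho=4G$ is $C_1\sin^{-2}\rho+C_2\,\log(\cos\rho)\sin^{-2}\rho$, and the second solution tends to $-\tfrac12$ as $\rho\to 0$: it is \emph{bounded} at the pole, hence fully compatible with the prescribed singular asymptotics, and any admixture $C_2\neq 0$ would shift the constant term from $\tfrac13$ to $\tfrac13-\tfrac{C_2}{2}$ — i.e.\ it would change the answer you are trying to compute. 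What actually forces $C_2=0$ is the boundary condition at the \emph{other} end: $\log(\cos\rho)\sin^{-2}\rho\to-\infty$ as $\rho\to\pi/2$, whereas the Green's function is globally defined and smooth across the cut locus $\{\rho=\pi/2\}$ (a $\CP^1$), which the paper encodes as $G_\rho(\pi/2)=0$. Relatedly, your discussion of resonances and potential $|z|^2\log|z|$ terms is aimed at the wrong singular point: the indicial analysis at $\rho=0$ is harmless here precisely because the second solution is explicit and log-free near the pole; the logarithm in the problem lives at $\rho=\pi/2$ and is eliminated by global regularity, not by the local expansion at the basepoint. With that boundary condition supplied, the rest of your argument goes through.
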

\begin{proof}
Since the metric is invariant under $U(2)$, from uniqueness of the
Green's function, $G$ must be radial. Using that $R = 24$, the equation is
\begin{align}
\label{gfe}
\Delta G = 4 G.
\end{align}
We let $\rho = |z|$ denote the radial distance function.
For a radial function, \eqref{gfe} reduces to the ODE
\begin{align}
G_{\rho \rho} + (3 \cot(\rho) - \tan(\rho)) G_{\rho} = 4 G
\end{align}
on the interval $[0,\pi/2]$.
This ODE has the general solution
\begin{align}
G = \frac{C_1}{ \sin^2( \rho)} + C_2 \frac{ \log ( \cos(\rho))}{\sin^2(\rho)}
\end{align}
for constants $C_1$ and $C_2$.
The boundary condition $G = \rho^{-2} (1 + o(1))$ as $\rho \rightarrow 0$
implies that $C_1 = 1$.  For the other boundary condition, in order
to give a smooth global solution, we require that $G_{\rho}(\pi/2) = 0$,
which implies that $C_2 = 0$.
The claimed expansion follows easily from
\begin{align}
(\sin \rho)^{-2} = \rho^{-2} + \frac{1}{3} + \frac{1}{15} \rho^2 +  O^{(4)} ( \rho^{4})
\end{align}
as $\rho \rightarrow 0$.
\end{proof}
Since $g_{FS}$ is Bach-flat (it is self-dual with respect to the
complex orientation), the metric $g_N = G^{2} g_{FS}$ is also
Bach-flat, and scalar-flat. Consequently, $g_N$ is $B^t$-flat
for any $t \in \RR$.
Let $\{ x^i = z^i/|z|^2 \}$ denote
inverted normal coordinates near $[1,0,0]$,and let
\begin{align}
\mathcal{I}(x) = \frac{x}{|x|^2} = z
\end{align}
denote the inversion map. With respect to these
coordinates, we can write the metric $g_N$ in the
complement of a large ball as
\begin{align}
\begin{split}
g_N &= \mathcal{I}^* ( G^2 g_{FS}) \\
& =  (G \circ \mathcal{I})^2 \mathcal{I}^* \Big( \{  \delta_{ij} - \frac{1}{3} R_{ikjl}([1,0,0]) z^k z^l + O^{(4)}(|z|^4)_{ij} \} dz^i dz^j \Big)\\
& = ( |x|^2 + A + O^{(4)}(|x|^{-2}))^2
\big\{  \delta_{ij} - \frac{1}{3} R_{ikjl}([1,0,0]) \frac{x^k x^l}{|x|^4} + O^{(4)}(|x|^{-4})_{ij}\big\}\\
&\cdot \frac{1}{|x|^2} \Big( \delta_{ip} - \frac{2}{|x|^2} x^i x^p \Big) dx^p
\cdot \frac{1}{|x|^2} \Big( \delta_{jq} - \frac{2}{|x|^2} x^j x^q \Big) dx^q,
\end{split}
\end{align}
so we have the expansion
\begin{align}
\begin{split}
\label{burnsexp}
(g_N)_{ij}(x) &= \delta_{ij} - \frac{1}{3} R_{ikjl}([1,0,0]) \frac{x^k x^l}{|x|^4} + 2 A
\frac{1}{|x|^2} \delta_{ij} + O^{(4)}( |x|^{-4})
\end{split}
\end{align}
as $|x| \rightarrow \infty$. Clearly, $g_N$ is asymptotically flat (AF) of order $\gamma = 2$.

Note that this metric is also invariant under the standard linear
action of $U(2)$, now acting in the $\{x\}$-coordinates.

\begin{remark}{\em
As the title of the subsection indicates,
this metric is also known as the Burns metric; it is a K\"ahler scalar-flat
metric on the blow-up of $\CC^2$ at the origin.
By the coordinate change $r = \sin^{-1}(\rho)$, 
and multiplying by $r^4$, one obtains
\begin{align}
\label{glb}
g_{N} = \frac{ dr^2}{ 1 - r^{-2}} +r^2 \Big[ \sigma_1^2 + \sigma_2^2
+ (  1 - r^{-2}) \sigma_3^2 \Big],
\end{align}
which is the expression of the Burns metric obtained in \cite{LeBrunnegative}.
We could instead use this coordinate system for the Burns metric in this paper.
However, since there is not an analogue of this for the
next example, we will remain with the above inverted Riemannian normal coordinates,
 in order to give a unified approach.
}
\end{remark}

We note here the following, which relates the constant $A$ to the
mass of the Green's function metric, and will be used later.
\begin{proposition}
\label{massprop}
Let $(M,g)$ be as in Proposition \ref{greenein}.
Then the mass of the AF metric $g_N = G^2 g$ on $N = M \setminus \{p\}$
is given by
\begin{align}
\mathrm{mass}(g_N) = 12 A - \frac{R(p)}{12}.
\end{align}
\end{proposition}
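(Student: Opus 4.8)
The plan is to compute the mass integral \eqref{massdef} directly from the metric expansion of $g_N$ in inverted normal coordinates. The key input is the general expansion \eqref{eing} from Proposition \ref{greenein}, $G = |z|^{-2} + A + O^{(4)}(|z|^{2-\epsilon})$, together with the standard Riemannian normal coordinate expansion $g_{ij} = \delta_{ij} - \frac{1}{3} R_{ikjl}(p) z^k z^l + O^{(4)}(|z|^4)$. Carrying out exactly the same inversion computation as in the Fubini-Study case (the block of displayed equations culminating in \eqref{burnsexp}, but now with the general constant $A$ rather than the special value $1/3$), one obtains in the coordinates $x = z/|z|^2$ the expansion
\begin{align}
(g_N)_{ij}(x) = \delta_{ij} + \frac{2A}{|x|^2}\delta_{ij} - \frac{1}{3} R_{ikjl}(p) \frac{x^k x^l}{|x|^4} + O^{(4)}(|x|^{-4}).
\end{align}
So $g_N$ is AF of order $2$, and the mass is determined by the $|x|^{-2}$ part of $(g_N)_{ij}$, which has two contributions: the conformally flat term $2A|x|^{-2}\delta_{ij}$ coming from the constant $A$ in the Green's function, and the curvature term $-\frac{1}{3} R_{ikjl}(p) x^k x^l |x|^{-4}$ coming from the normal-coordinate expansion of $g$ itself.

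First I would recall that for a metric of the form $g_{ij} = \delta_{ij} + h_{ij}$ with $h_{ij} = O(|x|^{-2})$, the mass integrand in \eqref{massdef} is $\sum_{i,j}(\partial_i g_{ij} - \partial_j g_{ii})(\partial_i \lrcorner\, dV)$, and only the precise $|x|^{-2}$-homogeneous part of $h_{ij}$ contributes in the limit $R \to \infty$ (the $O(|x|^{-4})$ terms, differentiated once, decay fast enough to drop out). I would then evaluate the two pieces separately. For the conformally flat piece $h_{ij}^{(1)} = 2A|x|^{-2}\delta_{ij}$: this is a standard computation — a metric $(1 + 2A|x|^{-2})\delta_{ij}$ on $\RR^4$ has mass $\omega_3^{-1}$ times the flux integral of $-2\nabla(2A|x|^{-2})$ over $S(R)$, which works out to $+ (\text{explicit constant}) \cdot A$; I expect this constant to be $12$, giving the $12A$ term. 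For the curvature piece $h_{ij}^{(2)} = -\frac{1}{3} R_{ikjl}(p) x^k x^l |x|^{-4}$: I would compute $\partial_i h_{ij}^{(2)} - \partial_j h_{ii}^{(2)}$ using the symmetries of the curvature tensor and the first Bianchi identity, integrate the resulting $|x|^{-3}$-homogeneous expression against $\partial_i \lrcorner\, dV$ over $S(R)$, and use the standard spherical averages $\int_{S^3} x^i x^j\, d\sigma = \frac{|S^3|}{4}\delta_{ij}$. Tracing the curvature terms should collapse everything onto the scalar curvature $R(p)$ (the Ricci and Weyl contributions either vanish by Bianchi/tracelessness or combine appropriately), producing the $-\frac{R(p)}{12}$ term.

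The main obstacle is purely computational bookkeeping: correctly contracting the curvature tensor $R_{ikjl}(p)$ against the angular integrals while keeping track of the $\frac{1}{3}$, the Jacobian factors from the volume form on $S(R)$, and the orientation/sign conventions in \eqref{massdef}, so that the two constants come out as exactly $12A$ and $-R(p)/12$. A useful sanity check, which I would include, is the Fubini-Study case: there $A = 1/3$ and $R(p) = 24$, so the formula predicts $\mathrm{mass}(\hat g_{FS}) = 12\cdot\frac{1}{3} - \frac{24}{12} = 4 - 2 = 2$, matching the known value of the Burns mass stated in the introduction. One should also note that the $O^{(4)}(|z|^{2-\epsilon})$ error in $G$ contributes only $O(|x|^{-4+\epsilon})$ terms to $g_N$, which (being differentiated once and integrated over $S(R)$ with $R \to \infty$) do not affect the mass, so the $\epsilon$-loss in Proposition \ref{greenein} is harmless here.
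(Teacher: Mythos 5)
Your proposal is correct and is exactly the computation the paper has in mind: the paper's proof of Proposition \ref{massprop} simply states that the result follows from \eqref{massdef} in inverted normal coordinates and omits the routine calculation, which is what you carry out. Your expected constants are right (the conformally flat term $2A|x|^{-2}\delta_{ij}$ contributes $12A$ and the curvature term contributes $-R(p)/12$ after the Ricci contractions and the spherical average), and the Burns-metric sanity check confirms the normalization.
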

\begin{proof} This follows from \eqref{massdef} using inverted normal
coordinates; the routine calculation is omitted.
\end{proof}
For the Fubini-Study metric, since $R(p) = 24$, and $A = 1/3$,
this implies that
\begin{align}
\mathrm{mass}({g}_{N}) = 2.
\end{align}
\subsection{The product metric on $S^2 \times S^2$}
\label{s2s2sub}
Next, we consider $S^2 \times S^2$ with metric
$g = g_{S^2} \times  g_{S^2}$ the product of metrics of
constant Gaussian curvature $1$. The torus action we
will consider is just the product of counter-clockwise
$S^1$-rotations fixing the north and south poles. This action has
$4$ fixed points $(n, n), (n,s), (s, n)$, and
$(s, s)$, where $n$ and $s$ are the north and south
poles, respectively.

Taking normal coordinates on each factor
around $(n,n) \in S^2 \times S^2$, yields a normal
coordinate system $(r_1, \theta_1, r_2, \theta_2)$ so that
\begin{align}
g_{S^2 \times S^2} = dr_1^2 + \sin^2(r_1) d\theta_1^2 + dr_2^2 + \sin^2(r_2) d\theta_2^2,
\end{align}
and the radial distance function is given by $\rho = \sqrt{r_1^2 + r_2^2}$.
Finally, we let $\{z^i\}$ be
Euclidean normal coordinates based at $(n,n)$,
so that the above torus action acts by
\begin{align}
(z_1, z_2, z_3, z_4)
\mapsto \big( e^{ \I \theta_1} (z_1 + \I z_2), e^{ \I \theta_2}  (z_3 + \I z_4)\big).
\end{align}
In this coordinate system, we have the expansion
\begin{align}
\label{s2normal}
g_{ij} = \delta_{ij} - \frac{1}{3} R_{ikjl}((n, n)) z^k z^l + O(|z|^4)_{ij},
\end{align}
as $|z| \rightarrow 0$.

In addition to toric invariance, this metric is also invariant under the diagonal
symmetry:
\begin{align}
(z_1, z_2, z_3, z_4) \mapsto (z_3, z_4, z_1, z_2).
\end{align}
We will also impose this as an
extra symmetry for the equivariant gluing problem.
These symmetries, as well as some other symmetries we will
use later, are illustrated in Figure~\ref{s2s2fig}.

\begin{figure}[h]
\includegraphics{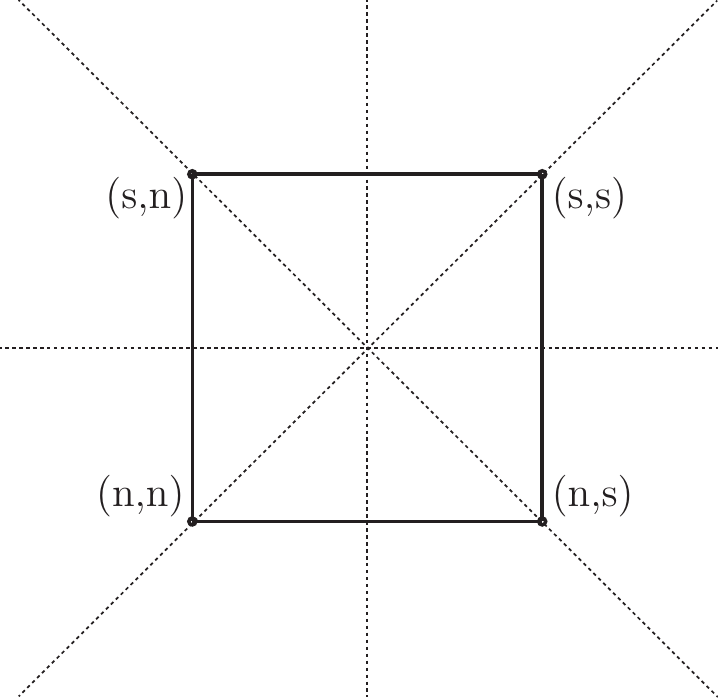}
\caption{Orbit space of the torus action on $S^2 \times S^2$. The vertices
of the square are fixed points, open edge points are circle orbits,
and interior points are principal orbits. The diagonal
symmetry is a reflection in the dotted diagonal line passing
through $(n,n)$. The bilateral symmetry is reflection in the dotted anti-diagonal line.
Reflection in the dotted vertical line is the antipodal map
of the first factor, while reflection in the dotted horizontal line
is the antipodal map of the second factor.
Invariance under all of these reflections
will be called quadrilateral symmetry.}
\label{s2s2fig}
\end{figure}

As mentioned in the introduction, the product metric on $S^2 \times S^2$ admits the Einstein quotient $S^2 \times S^2 / \ZZ_2$,
where $\ZZ_2$ acts by the antipodal map on both factors,
and the quotient $\RP^2 \times \RP^2$.
These quotients are also toric and the
same expansion \eqref{s2normal} holds for these.
The diagonal symmetry also descends to a symmetry of these metrics.

\subsection{Green's function of product metric}
\label{greensub}
Let $G$ be the Green's function for the conformal Laplacian of the
product metric at the point $(n,n)$, normalized so that $R = 4$.
By Proposition \ref{greenein}, in the above normal
coordinate system $\{z^i\}$, for any $\epsilon > 0$, we have the expansion
\begin{align}
\label{s2g}
G = |z|^{-2} + A + O^{(4)} ( |z|^{2 - \epsilon})
\end{align}
as $|z| \rightarrow 0$, where $A$ is a constant (independent of $\epsilon$).

Since $g_{S^2 \times S^2}$ is Bach-flat (it is Einstein),
the metric ${g}_{N} = G^{2} g_{S^2 \times S^2}$ is also
Bach-flat, and scalar-flat. Consequently, ${g}_{N}$ is $B^t$-flat
for any $t \in \RR$.
Letting $\{ x^i = z^i/|z|^2 \}$ denote
inverted normal coordinates, analogous to \eqref{burnsexp},
the metric $g_N$ admits the expansion
\begin{align}
\begin{split}
\label{s2s2exp}
(g_N)_{ij}(x) &= \delta_{ij} - \frac{1}{3} R_{ikjl}((n,n)) \frac{x^k x^l}{|x|^4} + 2 A
\frac{1}{|x|^2} \delta_{ij} + O^{(4)}( |x|^{-4 + \epsilon})
\end{split}
\end{align}
as $|x| \rightarrow \infty$, for any $\epsilon > 0$. Clearly, $g_N$ is AF of order $\gamma = 2$.

This metric is invariant under the above diagonal torus
action, now acting in the $\{x\}$-coordinates, and is also invariant
under the diagonal symmetry
\begin{align}
(x_1, x_2, x_3, x_4) \mapsto (  x_3, x_4, x_1, x_2).
\end{align}
\begin{remark}{\em
Unlike the case of the Burns metric, there is no explicit
description of this metric known (to the best of the authors' knowledge).
Since the metric is invariant under the above torus action,
from uniqueness of the Green's function, $G = G(r_1, r_2)$.
Using that $R = 4$, the equation is
\begin{align}
\Delta G = \frac{2}{3} G.
\end{align}
Since $G = G(r_1, r_2)$, a computation shows that this reduces to the PDE
\begin{align}
G_{r_1 r_1} + \cot(r_1) G_{r_1} + G_{r_2 r_2} + \cot(r_2) G_{r_2}  = \frac{2}{3} G,
\end{align}
on the square $[0,\pi] \times [0, \pi]$. Unlike the case of the Fubini-Study
metric, this does not appear to admit any explicit solution.
}
\end{remark}

\section{The nonlinear map}
\label{gauge}

Let $(M,g)$ be a compact manifold of dimension $4$, and
Let $S^2(T^*M)$ denote the bundle of symmetric
$2$-tensors on $M$.
We recall some important linear operators.
For simplicity of notation, we will treat
the domain and range of an operator as if it were the bundle itself, although
the operator really acts on sections of the bundle.
Let $\delta_g : S^2(T^{*}M) \rightarrow T^{*}M$ denote the divergence operator
\begin{align} \label{deldef}
(\delta_g h)_j = \nabla^i h_{ij},
\end{align}
and $\delta^{*} : T^{*}M  \rightarrow S^2(T^{*}M)$ its $L^2$-adjoint.  Note that
\begin{align} \label{deldual}
\delta^{*} = -\frac{1}{2} \mathcal{L},
\end{align}
where $\mathcal{L}$ is the Killing operator:
\begin{align} \label{Ldef}
(\mathcal{L}_g \omega)_{ij} = \nabla_i \omega_j + \nabla_j \omega_i.
\end{align}
We let $\mathcal{K}_g$ denote the conformal Killing operator, the trace-free part of $\mathcal{L}_g$:
\begin{align} \label{Kef}
(\mathcal{K}_g \omega)_{ij} = \nabla_i \omega_j + \nabla_j \omega_i - \frac{1}{2}(\delta_g \omega)g_{ij}.
\end{align}

Next, for a fixed background metric $g$,
define the nonlinear map $P_g$
\begin{align}
P_g^t : C^{4, \alpha}(S^2(T^*M)) \rightarrow C^{0, \alpha}(S^2(T^*M))
\end{align}
by
\begin{align}
\label{tmap}
P_g^t(\theta) = B^t(g + \theta) + \mathcal{K}_{g + \theta} \delta_g \mathcal{K}_g \delta_g \overset{\circ}{\theta},
\end{align}
where
\begin{align}
\overset{\circ}{\theta} = \theta - \frac{1}{4} tr_g( \theta) g.
\end{align}
\begin{remark}{\em
The domain of $P_g^t$ is not actually the entire space; it is the subset of
$C^{4,\alpha}$ so that $g + \theta$ is a Riemannian metric.
The fact that the image lies in $C^{0,\alpha}$ is a
consequence of $P_g^t$ being analytic as a function of $\theta$ and
its derivatives up to order four.
}
\end{remark}
We let $S_g^t \equiv (P_g^t)'(0)$ denote the linearized operator at $\theta = 0$.
\begin{remark}{\em
When the base metric is clear from the context, we will often omit the subscript in 
the operator $P^t$ and its linearization $S^t$. To further simplify 
notation, we will also often omit the superscript $t$ from both of these 
operators, since it is clear that they depend on $t$.
}
\end{remark}

\begin{proposition}
\label{ellprop}
If $t \neq 0$, then $S^t$ is elliptic.
\end{proposition}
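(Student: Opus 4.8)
The plan is to compute the principal symbol of $S^t$ at an arbitrary nonzero covector $\xi$ and show it is an isomorphism on symmetric $2$-tensors. The operator $P^t_g(\theta) = B^t(g+\theta) + \mathcal{K}_{g+\theta}\delta_g\mathcal{K}_g\delta_g\overset{\circ}{\theta}$ is fourth-order in $\theta$, so I expect the symbol to be a polynomial of degree four in $\xi$. The first step is to recall that $B^t = B + tC$, and that both the Bach tensor $B$ and the tensor $C$ from \eqref{gradformS} are fourth-order in the metric. The linearization of $B$ at an Einstein (or general) background is a known fourth-order operator whose leading symbol is, up to lower-order curvature terms, built from $\Delta^2$ acting on the trace-free part of $h$ together with the familiar ``double divergence'' degeneracy coming from diffeomorphism invariance; similarly $C$ linearizes to a fourth-order operator whose symbol involves $|\xi|^4$ on the trace part (through $\nabla_i\nabla_j R$ and $(\Delta R)g_{ij}$). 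The gauge term $\mathcal{K}_{g+\theta}\delta_g\mathcal{K}_g\delta_g\overset{\circ}{\theta}$ is also fourth-order, and its role is precisely to cancel the degeneracy of $B'$ in the directions $h = \mathcal{L}_\omega g$ coming from diffeomorphisms.

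Concretely, I would decompose $S^2(T^*M) = (S^2_0(T^*M)) \oplus (\mathbb{R}\cdot g)$ into trace-free plus pure-trace pieces and analyze the symbol block by block. On the trace-free part, the symbol of $(\delta_g\overset{\circ}{\theta})$ contributes $\xi$-contractions, $\mathcal{K}_g\delta_g$ gives a second-order contribution, and composing again with $\delta_g\mathcal{K}_g$ yields a fourth-order term whose symbol is (a multiple of) the symbol of the operator $h \mapsto \mathcal{K}\delta\mathcal{K}\delta h$, which is known to be an isomorphism on the image of $\delta^*$ and vanish on its kernel. Added to the symbol of $B'$, which is an isomorphism on the $\delta$-divergence-free trace-free tensors (this is the standard ellipticity of the Bach operator in gauge), the sum becomes an isomorphism on all trace-free tensors. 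On the pure-trace part $h = u\,g$, the Bach tensor contribution vanishes to leading order (the Bach tensor is conformally covariant and trace-free), so ellipticity in this block comes entirely from the $tC$ term: the symbol of $C'$ on $u\,g$ is, to leading order, $\sigma \cdot t\,|\xi|^4 u \,(\text{something nondegenerate})$ from the $2\nabla_i\nabla_j R - 2(\Delta R)g_{ij}$ piece together with the gauge term. This is exactly where the hypothesis $t \neq 0$ enters: when $t = 0$ the trace direction is unconstrained to top order (reflecting conformal invariance of $B$), so the operator fails to be elliptic, but for $t \neq 0$ the coefficient of $|\xi|^4$ in the trace block is a nonzero multiple of $t$ and the block is invertible.

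Assembling these, the full symbol $\sigma_\xi(S^t)$ is block-triangular (or block-diagonal after accounting for the coupling terms, which are lower order in the relevant sense) with invertible diagonal blocks, hence invertible for every $\xi \neq 0$; this is the definition of ellipticity. The main obstacle I anticipate is bookkeeping: correctly computing the fourth-order symbol of $B'$ including the cross-terms between trace and trace-free parts, and verifying that the gauge-fixing term $\mathcal{K}\delta\mathcal{K}\delta\overset{\circ}{\theta}$ precisely restores ellipticity in the directions where $B'$ degenerates. I would handle this by working at a point in normal coordinates, freezing coefficients, and reducing to a constant-coefficient computation on $\mathbb{R}^4$; the key structural inputs are (a) the known ellipticity-in-gauge of the Einstein/Bach operator from our previous work \cite{GV11}, and (b) the algebraic fact that $\delta\mathcal{K}$ has symbol an isomorphism from $T^*M$ onto the range of $\delta^*$ composed with projection, so that $\mathcal{K}\delta\mathcal{K}\delta$ supplies exactly the missing ``longitudinal'' part of the symbol. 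Since only the top-order symbol matters for ellipticity, all curvature and lower-order terms can be discarded, which keeps the computation manageable.
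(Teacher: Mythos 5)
Your overall strategy---compute the fourth-order principal symbol of $S^t$ and check it is invertible for every $\xi \neq 0$---is exactly the paper's: the proof assembles the leading terms of $B'+tC'$ and of the gauge operator into \eqref{Sform1}, reads off the symbol \eqref{Ssymb}, and invokes \cite[Theorem 2.7 (i)]{GV11} for the invertibility. Your diagnosis of where $t\neq 0$ enters is also correct: $B'$ annihilates the pure-trace direction to top order by conformal covariance, and one computes from \eqref{Ssymb} that $\sigma_{\xi}(S)(g) = 6t|\xi|^2\big(|\xi|^2 g - \xi\otimes\xi\big)$, which vanishes identically when $t=0$.

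The one step that would not go through as written is the claim that the symbol is block-triangular with respect to the trace-free/pure-trace splitting, with coupling terms that are ``lower order in the relevant sense.'' They are not: the cross terms are genuinely fourth order, so neither block is invariant. For instance, $\sigma_{\xi}(S)(g)$ has the nonzero trace-free component $-6t|\xi|^2\big(\xi\otimes\xi - \tfrac{|\xi|^2}{4}g\big)$, and applying $\sigma_{\xi}(S)$ to the trace-free tensor $\xi\otimes\xi - \tfrac{|\xi|^2}{4}g$ produces output with trace $-\tfrac{9t}{2}|\xi|^6 \neq 0$. The repair is to use the decomposition that \eqref{Ssymb} actually respects, namely
\begin{align*}
S^2(T^{*}M) = \mathrm{span}\{g,\ \xi\otimes\xi\} \ \oplus\ \{h \ :\ \mathrm{tr}\,h = 0,\ h_{k\ell}\xi^k\xi^{\ell} = 0\}.
\end{align*}
Both summands are invariant under $\sigma_{\xi}(S)$; on the second the symbol is $|\xi|^4\,\mathrm{Id}$ (every $t$-dependent term in \eqref{Ssymb} is multiplied by $\mathrm{tr}\,h$ or $h_{k\ell}\xi^k\xi^{\ell}$), and on the first it is the $2\times 2$ matrix with $\sigma_{\xi}(S)(\xi\otimes\xi) = \tfrac{9}{4}|\xi|^4\,\xi\otimes\xi - \tfrac{9}{16}|\xi|^6 g$ and $\sigma_{\xi}(S)(g)$ as above, whose determinant is $\tfrac{81}{8}\,t\,|\xi|^8$. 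This is nonzero precisely when $t \neq 0$, which completes the argument and reproduces the content of the cited theorem from \cite{GV11}.
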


\begin{proof} This is proved in \cite[Theorem 2.7 (i)]{GV11}, although we provide a brief sketch since some of the
formulas will be needed in subsequent sections.
We also note a difference in notation with our previous paper \cite{GV11}.
In that paper we considered the functional
\begin{align}
\mathcal{F}_{\tau} = \int_M |Ric|^2 dV + \tau \int_M R^2 dV.
\end{align}
From \eqref{CGB}, we obtain the relation
\begin{align}
\mathcal{F}_{\tau} = 16 \pi^2 \chi(M) + \frac{1}{2} \mathcal{B}_{2(\tau + \frac{1}{3})}.
\end{align}
Taking gradients, we obtain the relation
\begin{align}
\label{gradrel}
\nabla \mathcal{B}_{t} = 2 \nabla \mathcal{F}_{\frac{t}{2} - \frac{1}{3}}.
\end{align}
It follows from the formula for $P$ that the linearized operator is given by
\begin{align} \label{linop}
S^t h = (B' + t C')h
+ \mathcal{K}_{g} \delta_g \mathcal{K}_g \delta_g \overset{\circ}{h} ,
\end{align}
where $B'$ and $C'$ are the linearizations of $B$ and $C$ respectively.
Using \eqref{gradrel}, from \cite[Equation (2.54)]{GV11}
the leading terms of $B' + tC'$ are
\begin{align} \label{linop1}
\begin{split}
(B' + t C')h_{ij} &= \Delta^2 h_{ij} - \Delta \big[ \nabla_i \delta_j h + \nabla_j \delta_i h \big]  - (2t - \frac{1}{3}) \nabla_i \nabla_j (\Delta tr\ h) \\
 & \hskip.25in + (2 t + \frac{2}{3}) \nabla_i \nabla_j (\delta^2 h) +
(2t - \frac{1}{3}) \big[\Delta^2 (tr\ h) -\Delta (\delta^2 h)\big] g_{ij} + \cdots.
\end{split}
\end{align}
Also, a simple calculation gives
\begin{align*}
(\mathcal{K}_{g} \delta_g \mathcal{K}_g \delta_g \overset{\circ}{h})_{ij}
&=  \Delta \big[ \nabla_i \delta_j h + \nabla_j \delta_i h \big]
 - \frac{3}{4}\nabla_i \nabla_j (\Delta tr\ h)
+ \nabla_i \nabla_j (\delta^2 h) \\
& \hskip.5in  + \frac{3}{16} \Delta^2 (tr\ h) g_{ij} - \frac{3}{4}\Delta( \delta^2 h) g_{ij} + \cdots.
\end{align*}
Consequently,
\begin{align} \label{Sform1} \begin{split}Sh  &= \Delta^2 h - 2\big(t + \frac{5}{24}\big) \nabla^2 (\Delta tr\ h) - 2 \big( t + \frac{5}{24}\big) \Delta (\delta^2 h) g \\
& \hskip.2in + 2\big( t + \frac{5}{6}\big) \nabla^2 (\delta^2h) + 2\big(t - \frac{7}{96}\big) \Delta^2 (tr\ h)g + \cdots.
\end{split}
\end{align}
It follows from \eqref{Sform1} that the symbol of $S$ is
\begin{align} \label{Ssymb} \begin{split}
(\sigma_{\xi}S)h_{ij}  &= |\xi|^4 h - 2\big(t + \frac{5}{24}\big) \xi_i \xi_j |\xi|^2 ( tr\ h) - 2 \big( t + \frac{5}{24}\big) |\xi|^2 h_{k \ell}\xi_k \xi_{\ell} g_{ij} \\
& \hskip.2in + 2\big( t + \frac{5}{6}\big) \xi_i \xi_j h_{k \ell} \xi_k \xi_{\ell}
+ 2\big(t - \frac{7}{96}\big) |\xi|^4 (tr\ h)g_{ij},
\end{split}
\end{align}
which is elliptic for $t \neq 0$, according to \cite[Theorem 2.7 (i)]{GV11}.
\end{proof}
\begin{remark}{\em
For purposes below, it will be useful to rewrite \eqref{Sform1} as
\begin{align} \label{Sform}
Sh = \Delta^2 [h - \frac{1}{4}(tr\ h)g] + \mathcal{K}\big[ d (\mathcal{D}_2(h)) \big] + \frac{3}{2}t \Big[ \Delta^2 (tr\ h) - \Delta (\delta^2 h) \Big]g + \cdots
\end{align}
where $\mathcal{D}_2 : S^2T^{*}M \rightarrow C^{\infty}$ is a second-order operator given by
\begin{align} \label{D2def}
\mathcal{D}_2(h) = \big( t + \frac{5}{6}\big) \delta^2 h - \big( t + \frac{5}{24}\big) \Delta (tr\ h).
\end{align}
}
\end{remark}
The following proposition shows that the zeroes of $P$ are in fact
$B^t$-flat metrics:
\begin{proposition}
\label{smoothprop}
Assume $t \neq 0$.  If $P ( \theta) =0 $ and $\theta \in C^{4,\alpha}$ for some $0 < \alpha < 1$,
then $B^t( g + \theta) = 0$ and $\theta \in C^{\infty}$.
\end{proposition}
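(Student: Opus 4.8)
The plan is to prove this in two steps: first a gauge-breaking argument showing that $P(\theta)=0$ implies $B^t(g+\theta)=0$, and then an elliptic regularity bootstrap giving smoothness.

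\emph{Step 1: the vanishing of the gauge term.} Set $\tg = g + \theta$. From the definition \eqref{tmap}, $P(\theta) = 0$ means
\begin{align}
\label{vanish1}
B^t(\tg) + \ck_{\tg}\, \delta_g \ck_g \delta_g \oc{\theta} = 0.
\end{align}
Since $B^t(\tg)$ is the Euler--Lagrange tensor of a Riemannian functional, it is divergence-free with respect to $\tg$: $\delta_{\tg} B^t(\tg) = 0$ (this is the Bianchi-type identity for $\mathcal{B}_t$, valid for any metric). Apply $\delta_{\tg}$ to \eqref{vanish1}. Writing $\omega = \delta_g \ck_g \delta_g \oc{\theta} \in C^{1,\alpha}(T^*M)$, we obtain
\begin{align}
\label{vanish2}
\delta_{\tg} \ck_{\tg}\, \omega = 0.
\end{align}
Now $\delta_{\tg}\ck_{\tg}$ is (up to sign) the conformal vector Laplacian of $\tg$, a second-order elliptic operator on $1$-forms whose kernel consists of conformal Killing fields of $\tg$. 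Pairing \eqref{vanish2} with $\omega$ in $L^2(\tg)$ and integrating by parts gives $\|\ck_{\tg}\omega\|_{L^2(\tg)}^2 = 0$, so $\omega$ is a conformal Killing field of $\tg$; in particular $\ck_{\tg}\omega = 0$, hence the gauge term $\ck_{\tg}\delta_g\ck_g\delta_g\oc\theta$ in \eqref{vanish1} vanishes identically, and therefore $B^t(\tg) = 0$ as claimed.

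\emph{Step 2: regularity.} Once we know $\ck_{\tg}\omega = 0$, equation \eqref{vanish1} becomes simply $B^t(\tg) = 0$. The operator $\theta \mapsto B^t(g+\theta)$ is not elliptic on its own, but we can recover ellipticity by exploiting the gauge. Indeed, $\omega = \delta_g \ck_g \delta_g \oc\theta$ is a conformal Killing field of $\tg$, and conformal Killing fields of a $C^{4,\alpha}$ metric are automatically smooth (they satisfy an overdetermined elliptic system with $C^{4,\alpha}$ coefficients, and one bootstraps in the standard way); so $\omega \in C^{\infty}$. This means $\theta$ satisfies $B^t(g+\theta) = 0$ together with the fourth-order gauge-fixing equation $\delta_g \ck_g \delta_g \oc\theta = \omega$ with $\omega$ smooth. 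Equivalently $\theta$ is a solution of $S^t_g(\theta) + Q(\theta) = \omega + (\text{lower order})$, where $S^t_g$ is the elliptic linearized operator of Proposition \ref{ellprop} and $Q$ collects the quadratic-and-higher terms (analytic in $\theta$ and its derivatives up to order four, as noted in the Remark following \eqref{tmap}). Because $\theta \in C^{4,\alpha}$, the nonlinear terms lie in $C^{0,\alpha}$ and Schauder estimates for the fourth-order elliptic operator $S^t_g$ upgrade $\theta$ to $C^{5,\alpha}$; iterating, $\theta \in C^{k,\alpha}$ for all $k$, hence $\theta \in C^{\infty}$. (Analyticity of the coefficients even gives real-analytic $\theta$, but $C^\infty$ is all that is claimed.)

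\emph{Main obstacle.} The delicate point is Step 1: justifying the integration by parts in \eqref{vanish2} and concluding $\ck_{\tg}\omega = 0$ requires knowing that $\omega$ has enough regularity for the boundary-term-free integration by parts on the closed manifold $M$ — which it does, since $\oc\theta \in C^{4,\alpha}$ forces $\omega \in C^{1,\alpha} \subset L^2$ with $\ck_{\tg}\omega \in C^{0,\alpha}$ — and, more substantively, that the contracted second Bianchi identity $\delta_{\tg}B^t(\tg) = 0$ genuinely holds for a merely $C^{4,\alpha}$ metric rather than only for smooth metrics. This follows by a density/approximation argument (the identity is a polynomial differential identity in the metric and its derivatives up to order five, so it holds distributionally for $C^{4,\alpha}$ metrics, which is enough) or, more cleanly, by first running Step 2's bootstrap for the \emph{gauged} equation $P(\theta)=0$ — which \emph{is} elliptic, so $\theta$ is already smooth before we even discuss the Bianchi identity — and only then performing the divergence computation. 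In the write-up I would order things that way: use ellipticity of $P$ (Proposition \ref{ellprop}) to get $\theta \in C^{\infty}$ first, and then the Bianchi argument in Step 1 is entirely classical.
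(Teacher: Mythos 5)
Your Step 1 is essentially the paper's argument: the paper also kills the gauge term by an $L^2$ pairing against the image of $\mathcal{L}_{\tilde g}$, using the fact that the gradient of a Riemannian functional is divergence-free, and it resolves exactly the regularity obstacle you flag by mollifying $\theta$ (pairing the \emph{undifferentiated} equation with $\mathcal{L}_{\tilde g_\epsilon}[\delta_g\mathcal{K}_g\delta_g\theta_\epsilon]$ and moving the divergence onto the smooth tensor $B^t(\tilde g_\epsilon)$, rather than ever forming $\delta_{\tilde g}B^t(\tilde g)$ for a $C^{4,\alpha}$ metric). So that half is sound, and your proposed fix is the one actually used.

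Step 2 has a genuine gap. The claim that conformal Killing fields of a $C^{4,\alpha}$ metric are ``automatically smooth'' is false and circular: a (conformal) Killing field satisfies a prolonged ODE/elliptic system whose coefficients involve the curvature of $\tilde g$, so its regularity is capped at roughly one order above that of the metric; you cannot conclude $\omega\in C^\infty$ before knowing $\tilde g\in C^\infty$, which is what you are trying to prove. Your fallback --- bootstrap the gauged equation $P(\theta)=0$ directly using Proposition \ref{ellprop} --- is also not quite available as stated: that proposition establishes ellipticity of the linearization only at $\theta=0$, whereas a quasilinear Schauder bootstrap needs ellipticity of the frozen-coefficient operator at the solution $\theta$ itself (true for $\theta$ small by perturbation, but the proposition makes no smallness hypothesis). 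The paper's regularity argument avoids both issues by exploiting the structure of the equation $B^t(\tilde g)=0$ rather than ellipticity of $P$: taking the trace gives $-6t\Delta R_{\tilde g}=0$, so $R_{\tilde g}$ is constant (here $t\neq 0$ is used); the equation then shows $\Delta_{\tilde g}\mathrm{Ric}_{\tilde g}\in C^{2,\alpha}$, hence $\mathrm{Ric}_{\tilde g}\in C^{4,\alpha}$, and the harmonic-coordinate expression of the Ricci tensor as a quasilinear second-order operator on $\tilde g$ upgrades $\tilde g$ to $C^{5,\alpha}$; iterating gives smoothness. You should replace your Step 2 with an argument of this type.
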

\begin{proof}
The equation is
\begin{align}
\label{Peqn}
B^t(g + \theta) + \mathcal{K}_{g + \theta} \delta_g \mathcal{K}_g \delta_g \overset{\circ}{\theta} =0.
\end{align}
We claim that both terms on the left hand side of \eqref{Peqn} vanish.
The proof involves an integration by parts argument, but this presents a difficulty since $\theta \in C^{4, \alpha}$ only implies that
$P_g(\theta)$ is $C^{\alpha}$, and not necessarily differentiable.  To get around this problem we mollify $\theta$; i.e., let $\{ \theta_{\epsilon}\}$ be a family
of smooth tensor fields such that $\theta_{\epsilon} \rightarrow \theta$ in $C^{4,\alpha}$ as $\epsilon \rightarrow 0$, and let $\tilde{g}_{\epsilon} = g + \theta_{\epsilon}$.  From (\ref{Peqn}) and the  continuity of $P$ it follows that
\begin{align} \label{zedPe}
\eta_{\epsilon} =  B^t(g + \theta_{\epsilon}) + \mathcal{K}_{g + \theta_{\epsilon}}[ \Box_{\mathcal{K}_g} \beta_g \theta_{\epsilon}],
\end{align}
where $\eta_{\epsilon} \rightarrow 0$ in $C^{\alpha}$.  Pair both sides of (\ref{zedPe}) with $\mathcal{L}_{\tilde{g_{\epsilon}}}[\Box_{\mathcal{K}_g} \beta_g \theta_{\epsilon}] $ (with respect to the $L^2$-inner product defined by $\tilde{g_{\epsilon}}$), where $\mathcal{L}$ is the
Killing operator defined in (\ref{Ldef}):
\begin{align*}
\langle \mathcal{L}_{\tilde{g}_{\epsilon}}[\Box_{\mathcal{K}_g} \beta_g \theta_{\epsilon}] , \eta_{\epsilon}\rangle_{L^2} &= \big \langle \mathcal{L}_{\tilde{g}_{\epsilon}}[\Box_{\mathcal{K}_g} \beta_g \theta_{\epsilon}] , B^t(\tilde{g}_{\epsilon}) +  \mathcal{K}_{\tilde{g}_{\epsilon}}[ \Box_{\mathcal{K}_g} \beta_g \theta_{\epsilon}]  \big \rangle_{L^2} \\
&= \big \langle \mathcal{L}_{\tilde{g}_{\epsilon}}[\Box_{\mathcal{K}_g} \beta_g \theta_{\epsilon}] ,
B^t (\tilde{g}_{\epsilon}) \big \rangle_{L^2} +  \| \mathcal{K}_{\tilde{g}_{\epsilon}} [\Box_{\mathcal{K}_g} \beta_g \theta_{\epsilon}] \|_{L^2}^2.
\end{align*}
Integrating by parts in the first term on the right-hand side, we get
\begin{align*}
\big \langle \mathcal{L}_{\tilde{g}_{\epsilon}}[\Box_{\mathcal{K}_g} \beta_g \theta_{\epsilon}] , B^t (\tilde{g}_{\epsilon}) \big \rangle_{L^2} &= - 2 \big \langle \Box_{\mathcal{K}_g} \beta_g \theta_{\epsilon} , \delta_{\tilde{g}_{\epsilon}} \big( B^t (\tilde{g}_{\epsilon}) \big) \big \rangle_{L^2} = 0,
\end{align*}
since $\mathcal{L}^{*} = -2 \delta$ and the gradient of a Riemannian functional is always divergence-free (see \cite{Besse}, Proposition 4.11). Therefore,
\begin{align*}
\langle \mathcal{L}_{\tilde{g}_{\epsilon}}[\Box_{\mathcal{K}_g} \beta_g \theta_{\epsilon}] , \eta_{\epsilon}\rangle_{L^2} = \frac{1}{2} \| \mathcal{K}_{\tilde{g}_{\epsilon}} [\Box_{\mathcal{K}_g} \beta_g \theta_{\epsilon}] \|_{L^2}^2.
\end{align*}
Letting $\epsilon \rightarrow 0$, the left-hand side converges to zero, while the right-hand side converges to $\mathcal{K}_{\tilde{g}}[\Box_{\mathcal{K}_g} \beta_g \theta ]$, which
consequently vanishes.  We conclude that
\begin{align}
\label{fteqn3}
B^t (\tilde{g}) = 0
\end{align}
as claimed.

 Next, taking a trace of \eqref{fteqn3}, yields
\begin{align}
\label{reqn}
-6 t\Delta R_{\tilde{g}}  =0,
\end{align}
which implies that the scalar curvature of $\tilde{g}$ is constant.
The equation \eqref{fteqn3} then implies that $\Delta_{\tilde{g}} Ric_{\tilde{g}} \in
C^{2, \alpha}$ (more precisely, around any point $p \in M$, there exists a
coordinate system $\{x^i\}$ such that the components are in $C^{2,\alpha}$),
which implies that $Ric_{\tilde{g}} \in C^{4,\alpha}$.
Since $\tilde{g} \in C^{4, \alpha}$, there exists a harmonic coordinate
system $\{y^i\}$ around $p$ such that the equation
\begin{align}
\frac{1}{2}\tilde{g}^{ij}\partial^{2}_{ij} \tilde{g}_{kl}+Q_{kl}(\partial \tilde{g}, \tilde{g})
=-Ric_{kl}(\tilde{g}) \label{eqsrt12}
\end{align}
holds, where $Q(\partial \tilde{g},\tilde{g})$ is an expression that is quadratic in
$\partial \tilde{g}$,
polynomial in $\tilde{g}$ and has  $\sqrt{|\tilde{g}|}$ in its denominator
\cite{Petersen}. From this we conclude that $\tilde{g}_{ij} \in C^{5, \alpha}$.
A bootstrap argument shows that $\tilde{g}_{ij} \in C^{\ell, \alpha}$
for any $\ell > 0$.

\end{proof}

Later, we will view the nonlinear map in \eqref{tmap} as a mapping from
\begin{align}
P_g : C^{4,\alpha}_{\delta} \rightarrow C^{0,\alpha}_{\delta-4},
\end{align}
where the spaces are certain weighted H\"older spaces with
weight function $w > 0$. Of course, since $w > 0$ and
$M$ is compact, these norms
are equivalent to the usual H\"older norms.
However, in the gluing construction, the weight function
will become large, and these norms will then not be
uniformly equivalent to the usual norms.

Next, we define the weighted norms we will
use. For $\delta \in \RR$, and a positive weight function $w > 0$,
\begin{align}
\Vert h \Vert_{C^0_{\delta}} \equiv \Vert w^{-\delta} h \Vert_{C^0}
= \sup_{x \in M} |w^{-\delta}(x) h(x)|.
\end{align}
For $0 < \alpha < 1$, define the semi-norm
\begin{align}
| h |_{C^{0, \alpha}_{\delta}} \equiv
\sup_{x \in M} \Big( w^{- \delta + \alpha}(x) \sup_{0<4 d(x,y) \leq w(x)}
\frac{|h(x) - h(y)|}{d(x,y)^{\alpha} }\Big).
\end{align}
Finally, define the norm
\begin{align}
\Vert h \Vert_{C^{k, \alpha}_{\delta}} \equiv \sum_{i = 0}^{k} \Vert \nabla^i h \Vert_{C^{0}_{\delta -i}}
+ | \nabla^k h |_{C^{0, \alpha}_{\delta}}.
\end{align}

\begin{remark}{\em
For the remainder of the paper, we fix $\alpha \in \RR$ satisyfing 
$0 < \alpha < 1$.
}
\end{remark}
\subsection{Estimate on the nonlinear terms}
The following proposition regarding the nonlinear structure of the
operator $P_g$ is crucial and will be used througout the paper.
 
\begin{proposition} \label{quadest} Write
\begin{align} \label{Pexp}
P_g(h) = P_g(0) + S_gh + Q_g(h),
\end{align}
where $S_g$ is the linearization of $P$. Then we have the following:  \\

\noindent $(i)$  If $h \in C^{4, \alpha}$ with $\Vert h \Vert_{C^{0}} < s_0$ small, then there exists a constant $C_1 = C_1(s_0)$ so that $Q_g$ satisfies
\begin{align} \label{Qsize} \begin{split}
|Q_g(h)| \leq C_1 &\Big\{ (|\nabla^2 Rm_g| + |Rm_g|^2) |h|^2 + |\nabla Rm_g||h||\nabla^2 h| + |\nabla Rm_g||h||\nabla h|  \\
&  + |Rm_g| |h| |\nabla^2 h|  + |Rm_g| |h| |\nabla h|^2  + |h||\nabla^4 h| \\
& + |\nabla h||\nabla^3 h|
+ |\nabla^2 h |^2 + |\nabla h|^2 |\nabla^2 h|   + |\nabla h|^4  \Big\}.
\end{split}
\end{align}

\noindent $(ii)$  Let $w$ denote a weight function, and assume
\begin{align} \label{wdoes} \begin{split}
w & \geq 1, \\
\delta &< 0.
\end{split}
\end{align}
In addition, assume there is a constant $C_0 > 0$ such that
\begin{align} \label{Rmw} \begin{split}
w^2 |Rm_g| &\leq C_0, \\
w^3 |\nabla_g Rm_g| & \leq C_0, \\
w^4 |\nabla_g^2 Rm_g | & \leq C_0.
\end{split}
\end{align}
Then, for $h_i \in C^{4, \alpha}_{\delta}$ with 
$\|h_i \|_{C^{4,\alpha}_{\delta}} < s_0$ small, 
there exists a constant $C_2 = C_2(s_0)$ so that $Q_g$ satisfies the following estimate:
\begin{align} \label{quadstructure}
\Vert Q_g(h_1) -  Q_g(h_2)\Vert_{C^{0,\alpha}_{\delta - 4}} \leq
C_2(  \Vert h_1 \Vert_{C^{4, \alpha}_{\delta}}
+ \Vert h_2 \Vert_{C^{4, \alpha}_{\delta}}) \cdot
\Vert  h_1 - h_2 \Vert_{C^{4, \alpha}_{\delta}}.
\end{align}
\end{proposition}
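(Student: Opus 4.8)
The plan is to treat $(i)$ and $(ii)$ in sequence, with $(i)$ being the analytic input and $(ii)$ a weighted-norm packaging of it. For $(i)$, I would start from the fact that $P_g(h) = B^t(g+h) + \mathcal{K}_{g+h}\delta_g\mathcal{K}_g\delta_g\overset{\circ}{h}$ is a (real-)analytic function of $h$ and its derivatives up to order four, with coefficients that are smooth functions of $g$ and its derivatives up to order two (the latter because $B^t$ involves two derivatives of the curvature, which is itself two derivatives of $g$). Expanding in a Taylor series about $h = 0$ and collecting the constant and linear terms into $P_g(0)$ and $S_g h$, the remainder $Q_g(h)$ is a sum of terms that are \emph{at least quadratic} in $(h,\nabla h,\nabla^2 h,\nabla^3 h,\nabla^4 h)$. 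One then has to track which pairings of factors can actually occur, using that $P_g$ is fourth order (so no term carries more than four derivatives total on the $h$-factors combined) and that every monomial not purely in $h$-derivatives must be accompanied by a curvature factor to soak up the remaining derivatives of $g$ — this is exactly the bookkeeping that produces the list of ten term types on the right side of \eqref{Qsize}. The coefficient $C_1(s_0)$ appears because the analytic coefficient functions are evaluated at $g + h$ with $\|h\|_{C^0} < s_0$, so they stay in a compact range; I would make this precise by Taylor's theorem with integral remainder, bounding the remainder coefficients uniformly for $\|h\|_{C^0} < s_0$.

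For $(ii)$, the plan is to upgrade the pointwise bound \eqref{Qsize} to the weighted estimate \eqref{quadstructure} by a standard telescoping/interpolation argument. First I would prove the ``diagonal'' bound $\|Q_g(h)\|_{C^{0,\alpha}_{\delta-4}} \le C\|h\|_{C^{4,\alpha}_\delta}^2$ by inserting the weight powers: each term in \eqref{Qsize} is a product of factors, and one checks that the total weight exponent matches $\delta - 4$ (recalling $\nabla^k h$ carries weight $\delta - k$, and each curvature factor contributes an extra negative power controlled by \eqref{Rmw}); since $\delta < 0$ and $w \ge 1$, the ``extra'' factors of $w^{\delta}$ coming from replacing a quadratic-in-$h$ expression's natural weight $2\delta$ by the required $\delta$ are $\le 1$ and can simply be dropped. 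The Hölder seminorm part is handled the same way, distributing the difference quotient over the product via the Leibniz-type identity $|ab(x) - ab(y)| \le |a(x)||b(x)-b(y)| + |b(y)||a(x)-a(y)|$ and using that the cutoff $4d(x,y) \le w(x)$ in the definition of the weighted seminorm keeps the comparison points in a region where $w$ is comparable. Then, to get the difference estimate, write $Q_g(h_1) - Q_g(h_2)$ and use that $Q_g$ is (at least) quadratic: schematically each monomial $M(h_1) - M(h_2)$ telescopes as a sum of terms in which one factor is a difference $\nabla^j(h_1 - h_2)$ and the remaining factors are derivatives of $h_1$ or $h_2$; bounding the difference factor in $\|\cdot\|_{C^{4,\alpha}_\delta}$ and the others by $\|h_i\|_{C^{4,\alpha}_\delta} < s_0$ gives exactly \eqref{quadstructure}, with $C_2$ absorbing $C_1$ and the combinatorial constants.

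The main obstacle I expect is the Hölder-seminorm bookkeeping in the weighted setting — in particular making sure that when one factor in a product is estimated in its $C^{0,\alpha}_{\delta-k}$ seminorm and the others in their $C^0$ weighted norms, the weight exponents add up correctly to $\delta - 4 + \alpha$ and that the scale cutoff $4d(x,y) \le w(x)$ in one factor's seminorm is compatible with the cutoffs implicitly needed for the others (this uses that $w$ varies slowly on balls of radius $\sim w$, which follows from \eqref{Rmw} bounding $|\nabla g|$ and hence $|\nabla w|$ at the relevant scale, or is simply part of the hypotheses on the weight function to be verified later when $w$ is constructed). A secondary, more routine issue is verifying that no monomial in $Q_g$ has been omitted from \eqref{Qsize}: here I would rely on the structural form of $B'$ and $C'$ recorded in \eqref{linop1} and the fact that $P_g$ is polynomial (after clearing the $\sqrt{|g+h|}$ denominators, which are themselves analytic in $h$) in the metric, its inverse, and derivatives up to order four — so the second-order-in-$h$ part of the Taylor expansion is a finite explicit list whose terms all fall into the ten displayed types. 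The rest is routine and I would omit it, as the authors likely do.
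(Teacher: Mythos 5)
Your plan is correct and follows essentially the same route as the paper's proof: the paper also identifies $Q_g$ as the at-least-quadratic remainder of the schematic expansion of $B^t(g+h)$ plus the gauge term (using the Christoffel-symbol difference formula and the geometric series for $(g+h)^{-1}$ in place of your appeal to analyticity and Taylor's theorem), and then proves \eqref{quadstructure} exactly as you describe — inserting weights term by term, discarding the surplus factors $w^{\delta}\le 1$ since $\delta<0$ and $w\ge 1$, and telescoping products to isolate a single $h_1-h_2$ factor, with the H\"older seminorm treated by the same Leibniz-type splitting. The only difference is one of explicitness, not of method.
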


\begin{proof}
Since the proof involves a rather lengthy calculation we begin with a brief overview.  The tensor $B + t C$ can be schematically expressed
as
\begin{align} \label{Btscheme}
B_g + t C_g = g* g^{-1} * g^{-1} * \nabla_g^2 Rm_g + g*g^{-1} * g^{-1} * Rm_g * Rm_g,
\end{align}
where $Rm_g$ denotes the curvature tensor of $g$, $g^{-1} * \cdots * g^{-1} * A * B$ denotes any
linear combination of terms involving contractions of the tensor product $A \otimes B$, and $g^{-1} * \cdots * g^{-1} * \nabla_g^k * A$ denotes linear combinations of contractions of the $k$-th iterated covariant derivative of $A$.
Since the mapping $P$ is defined by
\begin{align} \label{Pdef2}
P_g(h) = B_{g + h} + t C_{g+h} + \mathcal{K}_{g+h}\Box_g \beta_g h,
\end{align}
the first step in proving the estimates is to analyze the curvature term
\begin{align} \label{leadexp} \begin{split}
B_{g + h} + t C_{g+h} &= (g+h)*(g+h)^{-1} * (g+h)^{-1} * \nabla_{g+h}^2 Rm_{g+h} \\
& \hskip.25in  + (g+h)*(g+h)^{-1} * (g + h)^{-1} * Rm_{g+h} * Rm_{g+h}.
\end{split}
\end{align}

The starting point is the formula
\begin{align} \label{CSdiff}
\Gamma(g + h)^{k}_{ij} = \Gamma(g)^{k}_{ij} + \frac{1}{2} (g+h)^{km}
\left\{ \nabla_j h_{im} + \nabla_i h_{jm} - \nabla_m h_{ij} \right\},
\end{align}
where $\Gamma(\cdot)$ denotes the Christoffel symbols of a metric.  In the following, any covariant derivative without a subscript
will mean with respect to the fixed metric $g$.  Using this formula and the notation introduced above, we can express the covariant derivative with respect to the metric $g+h$ as
\begin{align} \label{DTId}
\nabla_{g+ h} T = \nabla_g T + (g +h)^{-1} * \nabla_g h * T,
\end{align}
where $T$ is any tensor field. Also, by the standard formula for the $(1,3)$-curvature tensor in terms of the Christoffel symbols we have
\begin{align}
\label{e1rm}
Rm_{g + h} = Rm_g + (g +h)^{-1} * \nabla^2 h +
(g + h)^{-2} * \nabla h * \nabla h.
\end{align}

Taking the covariant derivative $\nabla_{g+h}$ of $Rm_{g+h}$ and repeatedly using (\ref{DTId}), we obtain
\begin{align} \label{DRmh}
\begin{split}
\nabla_{g+h} Rm_{g+h}& = \nabla Rm_g + (g +h)^{-1} * Rm_g * \nabla h + (g +h)^{-1} * \nabla^3 h\\
& + (g + h)^{-2} * (\nabla^2 h * \nabla h)
+ (g + h)^{-3} * ( \nabla h * \nabla h  * \nabla h).
\end{split}
\end{align}
Differentiating again, repeating the above
procedure and collecting terms we have
\begin{align} \label{D2Rmh}
\begin{split}
\nabla^2_{g+h} Rm_{g+h} & = \nabla^2 Rm_g
+ (g + h)^{-1} * \nabla Rm_g *\nabla h + (g+h)^{-1} * Rm_g * \nabla^2 h  \\
& \hskip.2in + (g+h)^{-2} * Rm_g * \nabla h * \nabla h + (g+h)^{-1} * \nabla_g^4 h \\
& \hskip.2in  + (g+h)^{-2} * \nabla^3 h * \nabla h
+ (g+h)^{-2} * \nabla^2 h  * \nabla^2 h  \\
& \hskip.2in + (g+h)^{-3} *  \nabla^2 h * \nabla h * \nabla h + (g + h)^{-4} *   \nabla h * \nabla h  * \nabla h * \nabla h.
\end{split}
\end{align}
Therefore,
\begin{align} \label{divtermscheme}
\begin{split}
(g&+h)*(g+h)^{-2} * \nabla^2_{g+h} Rm_{g+h}  =  \\
& (g+h)* \Big\{ (g+h)^{-2} * \nabla^2 Rm_g
+ (g + h)^{-3} * \nabla Rm_g *\nabla h \\
& + (g+h)^{-3} * Rm_g * \nabla^2 h + (g+h)^{-4} * Rm_g * \nabla h * \nabla h\\
& + (g+h)^{-3} * \nabla^4 h + (g+h)^{-4} * \nabla^3 h * \nabla h\\
&+ (g+h)^{-4} * \nabla^2 h  * \nabla^2 h + (g+h)^{-5} *  \nabla^2 h * \nabla h * \nabla h + \\
&+ (g + h)^{-6} *   \nabla h * \nabla h  * \nabla h * \nabla h \Big\}.
\end{split}
\end{align}
Using (\ref{e1rm}), we have a similar expression for the second term in (\ref{leadexp}):
\begin{align} \label{quadtermscheme}
\begin{split}
&(g+h)*(g+h)^{-2} *  Rm_{g+h} * Rm_{g+h} \\
& =(g+h)* \Big\{ (g+h)^{-2} * Rm_g * Rm_g + (g+h)^{-3} * Rm_g * \nabla^2 h\\
& + (g+h)^{-4} * Rm_g * \nabla h * \nabla h + (g+h)^{-4} * \nabla^2 h * \nabla^2 h  \\
& + (g+h)^{-5} * \nabla^2 h * \nabla h * \nabla h + (g+h)^{-6}*  \nabla h * \nabla h  * \nabla h * \nabla h \Big\}.
\end{split}
\end{align}
Combining (\ref{divtermscheme}) and (\ref{quadtermscheme}) gives
\begin{align} \label{Btscheme2}
\begin{split}
&(g +h) * \big\{ (g+h)^{-2} * \nabla^2_{g+h} Rm_{g+h} + (g+h)^{-2} *  Rm_{g+h} * Rm_{g+h} \big\} \\
& = (g +h) * \big\{  (g+h)^{-2} * \nabla^2 Rm_g + (g+h)^{-2} * Rm_g * Rm_g \\
&+ (g + h)^{-3} * \nabla Rm_g *\nabla h + (g+h)^{-3} * Rm_g * \nabla^2 h \\
&+ (g+h)^{-4} * Rm_g * \nabla h * \nabla h + (g+h)^{-3} * \nabla^4 h \\
&+ (g+h)^{-4} * \nabla^3 h * \nabla h + (g+h)^{-4} * \nabla^2 h  * \nabla^2 h  \\
& + (g+h)^{-5} *  \nabla^2 h * \nabla h * \nabla h + (g + h)^{-6} *   \nabla h * \nabla h  * \nabla h * \nabla h \Big\}.
\end{split}
\end{align}

Returning to the formula (\ref{Pdef2}), the gauge-fixing term can be written
\begin{align} \label{GF1} \begin{split}
\mathcal{K}_{g+h}&\Box_g \beta_g h =  (g+h)^{-1} * (g+h) *\nabla_{g+h} ( \Box_g \beta_g h)\\
& =  (g+h)^{-1} * (g+h) * ( \nabla_g + (g+h)^{-1} * \nabla h) * ( \Box_g \beta_g h)\\
& = (g+h)^{-1} * (g+h) * g^{-3} * g * \nabla^4 h \\
& + (g+h)^{-2} * (g+h)* g^{-3} * g * \nabla h * \nabla^3 h.
\end{split}
\end{align}
Combining (\ref{Btscheme2}) and (\ref{GF1}) we finally have
\begin{align} \label{PS1} \begin{split}
P_g(h) = (g+&h) * \Big\{ (g+h)^{-2} * \nabla^2 Rm_g + (g+h)^{-2} * Rm_g * Rm_g  \\
& + (g + h)^{-3} * \nabla Rm_g *\nabla h + (g+h)^{-3} * Rm_g * \nabla^2 h \\
& + (g+h)^{-4} * Rm_g * \nabla h * \nabla h + (g+h)^{-3} * \nabla^4 h \\
& + (g+h)^{-1}* g^{-3} *g * \nabla^4 h    + (g+h)^{-4} * \nabla^3 h * \nabla h \\
& + (g+h)^{-2} * g^{-3} * g* \nabla^3 h * \nabla h
+ (g+h)^{-4} * \nabla^2 h  * \nabla^2 h  \\
&+ (g+h)^{-5} *  \nabla^2 h * \nabla h * \nabla h + (g + h)^{-6} *   \nabla h * \nabla h  * \nabla h * \nabla h \Big\}.
\end{split}
\end{align}

Since we are trying to estimate the remainder terms in the Taylor expansion of $P(h)$, we want to write the above expression in terms of its linearization; i.e.,
\begin{align*}
P_g(h) &= P_g(0) + Sh + \cdots \\
&= B_g + t C_g + Sh + \cdots
\end{align*}
To do this, we use the identity (which holds for $h$ small)
\begin{align}
\label{id}
(g + h)^{-1} - g^{-1} =  g^{-2} * h + \sum_{k \geq 2} g^{-k-1} * h^k,
\end{align}
which follows from the usual geometric series formula.  Therefore,
\begin{align} \label{geomdiff}
(g+h_1)^{-1} - (g+h_2)^{-1} = g^{-2} * (h_1 - h_2) + \sum_{k \geq 2} g^{-k-1} * \big( h_1^k - h_2^k \big).
\end{align}
Each term in the sum in (\ref{geomdiff}) can be written
\begin{align} \label{diffhk}
 g^{-k-1} * h_1^k - g^{-k-1} * h_2^k = g^{-k-1} * (h_1 - h_2) * \sum_{i+j = k-1} h_1^i * h_2^j.
 \end{align}
Therefore, for $h$ small we can write
\begin{align} \label{rem1}
(g + h)^{-1} - g^{-1} = g^{-2} * h + r_1(h),
\end{align}
where $r_1$ satisfies
\begin{align} \label{r1diff}
| r_1(h_1) - r_1(h_2) | \leq C(g) \big( |h_1| + |h_2| \big) |h_1 - h_2|
\end{align}
for $h_1, h_2$ small. In general we can write
\begin{align} \label{remk}
(g+h)^{-k} - g^{-k} = g^{-k - 1} * h + r_k(h),
\end{align}
where the remainder satisfies
\begin{align} \label{rkdiff}
| r_k(h_1) - r_k(h_2) | \leq C_k(g) \big( |h_1| + |h_2| \big) |h_1 - h_2|,
\end{align}
with a similar estimate for the H\"older norm.

We note that, using the restrictions on the weight function assumed in \eqref{wdoes}, 
the assumption that $\|h_i \|_{C^{4,\alpha}_{\delta}}$ is small 
implies that the $C^0$-norm of $h_i$ is also small, so we are free
to employ \eqref{rkdiff} in the following. 

Next, we substitute (\ref{remk}) into each term of (\ref{PS1}) involving a power of $(g+h)^{-1}$, then collect all terms which are zeroth order in $h$ (which combine to give $P_g(0)$),
those which are linear in $h$ (which combine to give $Sh$), and those which are higher order in $h$.
For example, consider the term
\begin{align*}
(g+h)* (g+h)^{-3} * \nabla^4 h
&= (g + h)* ( g^{-3} + g^{-4} * h + r_3(h)) * \nabla^4 h\\
&= g * g^{-3} * \nabla^4 h + g * g^{-4} * h * \nabla^4 h
+ g* r_3(h) * \nabla^4 h\\
& + g^{-3} *h * \nabla^4 h + g^{-4}* h * h * \nabla^4 h + r_3(h) * h * \nabla^4 h.
\end{align*}

Next, apply (\ref{remk}) to each term in (\ref{PS1}) in a similar fashion, and write the resulting expression as
\begin{align} \label{PS2}
P_g(h)= P_g(0) + Sh + Q(h),
\end{align}
where $Q$ is
\begin{align} \label{Qexp} \begin{split}
Q(h) & = (g+h) * \Big\{ r_2(h) * \nabla^2 Rm_g + r_2(h) * Rm_g * Rm_g + g^{-4} * Rm_g * h * \nabla^2 h \\
& + g^{-4} * \nabla Rm_g *  h * \nabla h + r_3(h) * \nabla Rm_g * \nabla h + g^{-4} * Rm_g *h * \nabla^2 h  \\
& + r_3(h) * Rm_g * \nabla^2 h + g^{-5} * Rm_g * h * \nabla h * \nabla h  \\
& + r_4(h) * Rm_g * \nabla h * \nabla h + g^{-4} * h * \nabla^4 h + g^{-5} * g * h * \nabla^4 h \\
&+ g^{-3}*g * r_1(h) * \nabla^4 h + r_3(h) * \nabla^4 h \\
& + g^{-4} * \nabla^3 h * \nabla h + g^{-5} * h * \nabla h * \nabla^3 h + r_4(h) * \nabla h * \nabla^3 h \\
& + g^{-5} * g * \nabla^3 h * \nabla h +  g^{-6}*g * h * \nabla h * \nabla^3 h + g^{-3}*g * r_2(h) * \nabla h * \nabla^3 h  \\
& + g^{-4} * \nabla^2 h * \nabla^2 h + g^{-5} * h * \nabla^2 h * \nabla^2 h + r_4(h) * \nabla^2 h * \nabla^2 h \\
& + g^{-3} * \nabla h * \nabla h * \nabla^2 h + g^{-6} * h * \nabla h * \nabla h * \nabla^2 h \\
& + r_5(h) * \nabla h * \nabla h * \nabla^2 h + g^{-6} * \nabla h * \nabla h * \nabla h * \nabla h   \\
& + g^{-7} * h * \nabla h * \nabla h * \nabla h * \nabla h + r_6(h)* \nabla h * \nabla h * \nabla h * \nabla h \Big\}.
\end{split}
\end{align}
The estimate (\ref{Qsize}) follows from considering each term in
\eqref{Qexp}, inequality (\ref{rkdiff}), and the smallness of $h$.

We can then prove (\ref{quadstructure}) by a fairly straightforward---but, due to the number of terms, very lengthy---process.  We will provide the details for estimating some representative terms; the rest can be handled similarly.

For example, consider the term
\begin{align} \label{Ttdef}
T(h) = g* g^{-4} * h * \nabla^4 h.
\end{align}
Then
\begin{align*}
T(h_1) - T(h_2) &= g* g^{-4} * h_1 * \nabla^4 h_1 - g*g^{-4} * h_2 * \nabla^4 h_2 \\
&= g* g^{-4} * (h_1 - h_2) * \nabla^4 h_1 + g*g^{-4} * h_2 * \nabla^4 (h_1 - h_2)
\end{align*}
If $w$ denotes the weight, then this implies
\begin{align*}
|T(h_1) - T(h_2)|w^{4 - \delta} & \leq |h_1 - h_2| |\nabla^4 h_1| w^{4 - \delta} + |h_2| | \nabla^4 (h_1 - h_2)| w^{4 - \delta} \\
&=  \big\{ |h_1 - h_2| w^{-\delta} \big\} \big\{ |\nabla^4 h_1| w^{4 - \delta}\big\} w^{\delta}  \\
& \hskip.5in + \big\{ |h_2| w^{-\delta} \big\} \big\{ | \nabla^4 (h_1 - h_2)| w^{4 - \delta} \big\}
w^{\delta}.
\end{align*}
Since $w \geq 1$ and $\delta < 0$, taking the supremum gives
\begin{align} \label{semi} \begin{split}
\|T(h_1) - T(h_2)\|_{C^0_{\delta - 4}} &\leq \Big\{ \|h_1 - h_2\|_{C^0_{\delta}}  \| h_1\|_{C^4_{\delta - 4}}  + \|h_2\|_{C^0_{\delta}} \| h_1 - h_2\|_{C^4_{\delta - 4}} \Big\} \\
& \leq C(  \Vert h_1 \Vert_{C^{4}_{\delta}}
+ \Vert h_2 \Vert_{C^{4}_{\delta}}) \cdot
\Vert  h_1 - h_2 \Vert_{C^{4}_{\delta}},
\end{split}
\end{align}

Next, consider the term
\begin{align*}
\rho(h) = g * g^{-4} * Rm_g * h * \nabla^2 h.
\end{align*}
Taking differences as we did above yields
\begin{align*}
|\rho(h_1) - \rho(h_2)| \leq |Rm_g||h_1 - h_2| |\nabla^2 h_1| + |Rm_g||h_2||\nabla^2 (h_1 - h_2)|.
\end{align*}
Multiplying by the appropriate power of the weight,
\begin{align*}
|\rho(h_1) - \rho(h_2)|w^{4 - \delta} & \leq \big\{w^2 |Rm_g| \big\} \big\{ |h_1 - h_2|w^{-\delta} \big\} \big\{ |\nabla^2 h_1|w^{2-\delta} \big\} w^{\delta} \\
 & \hskip.5in + \big\{ w^2 |Rm_g|\big\} \big\{ |h_2| w^{-\delta} \big\} \big\{ |\nabla^2 (h_1 - h_2)| w^{2 - \delta} \big\} w^{\delta}.
\end{align*}
Using (\ref{Rmw}), we arrive at an estimate similar to (\ref{semi}).

Finally, let us consider a term in $Qh$ which has a higher order of homogeneity,
\begin{align}
K(h) = g * r_6(h) * \nabla h * \nabla h * \nabla h * \nabla h.
\end{align}
Then
\begin{align*}
&K(h_1) - K(h_2) = g*[ r_6(h_1) - r_6(h_2)] * \nabla h_1 * \nabla h_1 * \nabla h_1 * \nabla h_1 \\
& + g*r_6(h_2) \big\{ \nabla(h_1 - h_2) * \nabla h_1 * \nabla h_1 * \nabla h_1 + \nabla h_2 * \nabla (h_1 - h_2) * \nabla h_1 * \nabla h_1  \\
& + \nabla h_2 * \nabla h_2 * \nabla (h_1 - h_2) * \nabla h_1 + \nabla h_2 * \nabla h_2 * \nabla h_2 * \nabla (h_1 - h_2) \big\}
\end{align*}
Multiplying by the weight,
\begin{align*}
|K(h_1) - K(h_2)| &w^{4 - \delta} \leq C \{ | h_1 - h_2| w^{-\delta}\}  \{ |\nabla h_1| w^{1-\delta}\}^4 w^{4 \delta} \\
& + C |\nabla(h_1 - h_2)|w^{1 - \delta} \Big[  \{  |\nabla h_1| w^{1-\delta} \}^3  + \{ |\nabla h_2| w^{1-\delta} \} \{ |\nabla h_1|w^{1-\delta}\}^2 \\
& + \{|\nabla h_1| w^{1-\delta}\} \{ |\nabla h_2|w^{1-\delta}\}^2 +  \{  |\nabla h_1| w^{1-\delta} \}^3  \Big]w^{3 \delta},
\end{align*}
which gives an estimate as in (\ref{semi}).

Similar arguments (estimating difference quotients) give the Holder estimate in (\ref{quadstructure}).

\end{proof}

Since the operator $P_g$ differs from $B^t$ only by the gauge term, 
a similar estimate holds for $B^t$, see the 
following Proposition. This fact will be used in several
places below (e.g., Proposition \ref{NewBtsize}).
\begin{proposition}
\label{QRemark}
Let $(B^t_g)'$ denote the linearization of the $B^t$ tensor:
\begin{align*}
(B^t_g)' h = \frac{d}{ds} B^t(g+ sh) \big|_{s=0}.
\end{align*}
If we write
\begin{align}
B^t(g + h) = B^t(g) + (B^t_g)' h  + \mathcal{Q}_g(h),
\end{align}
then under the same assumptions as 
in (i) of Proposition \ref{quadest},
the remainder $\mathcal{Q}$ satisfies the 
estimate \eqref{Qsize}.
\end{proposition}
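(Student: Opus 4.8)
The plan is to observe that $B^t$ and $P_g$ differ only by the gauge-fixing term $\mathcal{K}_{g+h}\Box_g\beta_g h = \mathcal{K}_{g+h}\delta_g\mathcal{K}_g\delta_g\oc{h}$, and that this gauge term is \emph{already fully accounted for} in the Taylor expansion carried out in the proof of Proposition~\ref{quadest}. Specifically, in \eqref{PS1} the gauge contribution appears as the two schematic terms
\begin{align*}
(g+h)*\big\{ (g+h)^{-1}* g^{-3} * g * \nabla^4 h + (g+h)^{-2}*g^{-3}*g* \nabla^3 h * \nabla h \big\},
\end{align*}
and after substituting \eqref{remk} these contribute exactly the terms $g^{-3}*g*r_1(h)*\nabla^4 h$, $g^{-5}*g*h*\nabla^4 h$, $g^{-6}*g*h*\nabla h *\nabla^3 h$, $g^{-3}*g*r_2(h)*\nabla h*\nabla^3 h$, and $g^{-5}*g*\nabla^3 h*\nabla h$ to the remainder $Q(h)$ in \eqref{Qexp}. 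All of these are estimated by the bound \eqref{Qsize} (they involve only the terms $|h||\nabla^4 h|$, $|\nabla h||\nabla^3 h|$, and $|\nabla h|^2|\nabla^2 h|$-type schematic structures, up to harmless factors of $g$, $g^{-1}$, $r_k(h)$).

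Concretely, I would argue as follows. Write $B^t(g+h) = B^t(g) + (B^t_g)'h + \mathcal{Q}_g(h)$ and $P_g(h) = P_g(0) + S_g h + Q_g(h)$. By definition of $P_g$,
\begin{align*}
P_g(h) = B^t(g+h) + \mathcal{K}_{g+h}\Box_g\beta_g h,
\end{align*}
and since $\mathcal{K}_{g+h}\Box_g\beta_g h$ is linear in the fourth-order operator $\Box_g\beta_g h$ with coefficients depending on $g+h$, its Taylor expansion in $h$ has the form $\mathcal{K}_g\delta_g\mathcal{K}_g\delta_g\oc{h} + R_g(h)$, where the linear part is the gauge term in $S_g$ (cf. the computation of $(\mathcal{K}_g\delta_g\mathcal{K}_g\delta_g\oc h)_{ij}$ in the proof of Proposition~\ref{ellprop}) and $R_g(h)$ is exactly the collection of higher-order gauge terms displayed above. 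Subtracting, $\mathcal{Q}_g(h) = Q_g(h) - R_g(h)$, and since both $Q_g(h)$ and $R_g(h)$ consist of terms already appearing in \eqref{Qexp} and obeying \eqref{Qsize}, so does $\mathcal{Q}_g(h)$. The smallness hypothesis $\|h\|_{C^0} < s_0$ from part (i) of Proposition~\ref{quadest} is precisely what is needed to control the $r_k(h)$ via \eqref{rkdiff}.

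The only mild subtlety — and the one step I would be careful about — is making sure the schematic bookkeeping is honest: one must check that the gauge term genuinely expands into terms of the \emph{shape} already present in \eqref{Qexp}, i.e. that no new schematic monomial (e.g. a term with a worse balance of derivatives like $|\nabla^2 h||\nabla^3 h|$ or $|\nabla^4 h|^2$) is produced. This follows because $\Box_g\beta_g h$ is a fixed fourth-order linear operator in $h$ built from the \emph{fixed} metric $g$, so $\nabla_{g+h}(\Box_g\beta_g h)$ introduces at worst one extra derivative on $h$ (giving $\nabla h * \nabla^3 h$ via \eqref{DTId}) and otherwise only factors of $(g+h)^{-1}$ and $\nabla h$; there is no iterated curvature or high-order product generated. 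Hence the list of gauge-remainder terms is a \emph{subset} of the list in \eqref{Qexp}, and the estimate \eqref{Qsize} for $\mathcal{Q}_g$ is immediate from the corresponding termwise bounds already established in the proof of Proposition~\ref{quadest}. I would present this as a one-paragraph proof referring back to \eqref{PS1}, \eqref{Qexp}, and \eqref{Qsize}, with no new computation required.
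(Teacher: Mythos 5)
Your proposal is correct and follows exactly the route the paper intends: the paper offers no separate proof of Proposition~\ref{QRemark}, only the remark that $P_g$ and $B^t$ differ by the gauge term, and your argument — that the nonlinear remainder of $\mathcal{K}_{g+h}\Box_g\beta_g h$ consists precisely of the terms $g^{-5}*g*h*\nabla^4 h$, $g^{-3}*g*r_1(h)*\nabla^4 h$, $g^{-5}*g*\nabla^3 h*\nabla h$, $g^{-6}*g*h*\nabla h*\nabla^3 h$, and $g^{-3}*g*r_2(h)*\nabla h*\nabla^3 h$ already listed in \eqref{Qexp}, so that $\mathcal{Q}_g = Q_g - R_g$ inherits the bound \eqref{Qsize} — is the intended justification, with the details (in particular the check that no new schematic monomial arises) filled in carefully.
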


\section{Cokernel on a compact manifold}
\label{comkersec}
On a compact manifold $(Z,g_Z)$, with basepoint $z_0$,
we define the weight function to be a smooth function
satisfying
\begin{align}
w(z) =
\begin{cases}
d(z,z_0)  &   d(z,z_0) < 1/2\\
1  &   d(z,z_0) \geq 1,
\end{cases}
\end{align}
and $1/2 \leq w(z) \leq 1$ when $1/2 \leq d(z,z_0) \leq 1$.

\begin{theorem}
\label{comker}
Let $(Z, g_Z)$ be either $\CP^2$ with the Fubini-Study metric $g_{FS}$,
or $S^2 \times S^2$ with the product metric $g_{S^2} \times g_{S^2}$.
Assume that
\begin{align}
t < 0,
\end{align}
and let $h \in C^{4,\alpha}_{\delta}$ solve the equation
\begin{align}
S^t(h)  = 0
\end{align}
for $\delta < 0$ with $|\delta|$ small. If $h$ is toric-invariant and
diagonally invariant, then $h = c \cdot g_Z$ for some constant $c \in \RR$.
Consequently, if $h$ satisfies
\begin{align}
h = O(|z|^{\delta})
\end{align}
as $|z| \rightarrow 0$, for $\delta > 0$, then $h \equiv 0$.
\end{theorem}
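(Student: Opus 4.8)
The plan is to separate the argument into two parts mirroring the structure of the statement: first, a rigidity statement for decaying solutions on the compact factor, and second, the reduction of the general $C^{4,\alpha}_\delta$ case to the decaying case by analyzing the possible asymptotic behavior near $z_0$. Since the weight function $w$ is bounded above and below on $Z$, the weighted space $C^{4,\alpha}_\delta$ with $\delta<0$ and $|\delta|$ small is, as a set, the same as ordinary $C^{4,\alpha}$; the weight is only used to control the rate of vanishing at $z_0$. So a solution $h$ of $S^t h = 0$ in $C^{4,\alpha}_\delta$ is in particular a smooth solution on $Z$ (by elliptic regularity, using Proposition~\ref{ellprop}, $S^t$ being elliptic for $t\neq 0$, which holds since $t<0$).

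First I would handle the genuinely decaying case. If $h = O(|z|^\delta)$ with $\delta>0$ near $z_0$, then $h$ vanishes at $z_0$, and one wants to conclude $h\equiv 0$. The natural tool is the integration-by-parts/self-adjointness argument: $S^t$ is formally self-adjoint (being the linearization of the gradient of $\mathcal{B}_t$ plus the gauge term $\mathcal{K}_g\delta_g\mathcal{K}_g\delta_g$, both self-adjoint), so pairing $S^t h = 0$ with $h$ over $Z$ and integrating by parts should express $0$ as a sum of squares of various covariant derivative quantities of $h$ (this is essentially the stability/rigidity inequality from \cite{GV11}, which the paper says is the source of these results). The toric- and diagonal-invariance of $h$ is what rules out the would-be kernel elements $c\cdot g_Z$ as well as any conformal-Killing contributions, and guarantees the boundary terms at $z_0$ vanish since $h$ decays. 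The conclusion should be that all these squared terms vanish, forcing $h$ to be a (trivial) deformation, and combined with $h(z_0)=0$ this gives $h\equiv 0$. This step imports the hard analysis of \cite{GV11} and is where the hypothesis $t<0$ enters essentially (to make the relevant quadratic form definite).

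Next I would treat the general case $h\in C^{4,\alpha}_\delta$, $\delta<0$, $|\delta|$ small. The point is that such an $h$ is an honest smooth symmetric $2$-tensor on all of $Z$ solving $S^t h = 0$. I would then invoke rigidity of $g_{FS}$ and $g_{S^2\times S^2}$ for $B^t$-flat deformations (Section~\ref{comkersec}'s stated rigidity results from \cite{GV11}): modulo the gauge, the only solutions of $S^t h = 0$ on these compact Einstein manifolds for $t<0$ are the trivial ones, i.e.\ $h$ is tangent to the orbit of the scaling and diffeomorphism group, hence of the form $h = c\, g_Z + \mathcal{L}_X g_Z$ for some vector field $X$. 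Imposing toric and diagonal invariance on $h$, together with the gauge condition built into $S^t$ (the $\delta_g\mathcal{K}_g\delta_g\mathring h$ term), should kill the Lie derivative part: the equivariant conformal Killing fields on $\CP^2$ and $S^2\times S^2$ that are invariant under the torus and the diagonal symmetry are trivial (the torus action plus diagonal symmetry has an isolated fixed point $z_0$, and any equivariant Killing field must vanish there and be invariant, forcing it to be zero by the classification of isometries of these symmetric spaces). Hence $h = c\cdot g_Z$. Finally, if additionally $h = O(|z|^\delta)$ for some $\delta>0$, then $c\cdot g_Z$ decays at $z_0$, which forces $c=0$, so $h\equiv 0$, completing the proof.

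\textbf{Main obstacle.} The crux is the second step: showing that toric plus diagonal invariance eliminates \emph{all} nontrivial elements of the (finite-dimensional) space of $B^t$-flat deformations — not just the conformal Killing / Lie-derivative directions but genuinely the full infinitesimal Einstein-type deformation space — so that only the scaling direction $c\cdot g_Z$ survives. This relies crucially on the rigidity theorems for $g_{FS}$ and $g_{S^2\times S^2}$ from \cite{GV11} being available in exactly the range $t<0$ (with $|\delta|$ small so that weighted decay does not enlarge the solution space via indicial-root phenomena at $z_0$), and on a careful bookkeeping of which symmetry-invariant deformations exist. I would expect the representation-theoretic/symmetry reduction — identifying the invariant part of the deformation space and checking it is spanned by $g_Z$ — to be the part requiring the most care, with the elliptic regularity and integration-by-parts being comparatively routine given the cited results.
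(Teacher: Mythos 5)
Your proposal has a genuine gap at its central step: the claim that, because ``the weight function $w$ is bounded above and below on $Z$,'' the space $C^{4,\alpha}_{\delta}$ with $\delta<0$ coincides with ordinary $C^{4,\alpha}$, so that $h$ is automatically a smooth global solution on $Z$. This is false. The weight on the compact factor is $w(z)=d(z,z_0)$ near the basepoint, so it vanishes at $z_0$; with $\delta<0$ the condition $h\in C^{4,\alpha}_{\delta}$ only controls $|h|\lesssim |z|^{\delta}$, i.e.\ it \emph{permits} blow-up at $z_0$, and a priori $h$ is only a solution on the punctured manifold $Z\setminus\{z_0\}$. The entire substance of the theorem (beyond quoting the rigidity results of \cite{GV11}, which the paper does exactly as you suggest, with diagonal invariance needed only to kill $g_1-g_2$ at $t=-1/3$) is the removable-singularity statement that an invariant solution with mild blow-up at $z_0$ actually extends smoothly across $z_0$ after subtracting $c\cdot g_Z$. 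Your argument skips this entirely, so it proves only the easy half.

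The paper's route to the missing step is: (a) show the indicial roots of $S^t$ at the puncture are integers (Proposition~\ref{indclaim}), so with $-1<\delta<0$ the leading behavior of $h$ at $z_0$ is governed by the degree-zero homogeneous solutions on $\RR^4\setminus\{0\}$; (b) compute via the relative index theorem of Lockhart--McOwen that this space is $20$-dimensional, and observe that toric plus diagonal invariance cuts it down to two elements, the constant tensor $\delta_{ij}$ and one log-type solution; (c) use the relative index theorem again on the compact manifold to rule out the log-type solution extending globally; hence $h=c\cdot g_Z+O(|z|^{\epsilon})$, and $h-c\cdot g_Z$ extends to a weak, hence smooth, solution on all of $Z$, at which point Theorems~\ref{cp2rig} and~\ref{s2s2rig} finish the argument. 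Note also that your first step (re-deriving the rigidity of $g_{FS}$ and $g_{S^2\times S^2}$ by an integration-by-parts identity) is not needed --- those theorems are imported wholesale --- and the final clause of the statement ($h=O(|z|^{\delta})$ with $\delta>0$ implies $h\equiv 0$) is the trivial consequence $c=0$, not a separate analytic step. The hypothesis that $|\delta|$ is small is what confines the analysis to the single indicial root $0$; it is not, as your write-up suggests, a cosmetic normalization.
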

\begin{proof}
For $t \neq 0$, we define $H^1_t$ to be
the kernel of the linearization of $P_g$:
\begin{align} \label{H1def}
H^1_t = H^1_t(M,g) = \big\{ h \in \overline{S}_{0}^2(T^{*}M)\ \big|\ S^t_g h = 0 \big\},
\end{align}
where
\begin{align}
\label{S20}
\overline{S}_0^2(T^{*}M) = \Big\{ h \in C^{4, \alpha} (S^2(T^{*}M)) \ :\ \int (tr_g\ h)\ dV_g = 0
\Big\}.
\end{align}
For $t = 0$ (the Bach tensor), we restrict to
traceless tensors:
\begin{align} 
\label{BachH1def}
H^1_{0} = H^1_{0}(M,g) = \big\{
h \in C^{4,\alpha}(S_{0}^2(T^{*}M))\ \big|\ S^0_g h = 0 \big\}.
\end{align}
If $H^1_t(M,g) = \{0\}$, we say that $(M,g)$ is {\em{infinitesimally $B^t$-rigid}}.
We next quote two crucial rigidity theorems from
\cite{GV11} with the following caveat:
as pointed out in the proof of Proposition \ref{ellprop},
a different parametrization $\tau$ was used in \cite{GV11}. The
relation between $\tau$ and $t$ is given by
\begin{align}
\tau = \frac{t}{2} - \frac{1}{3}.
\end{align}
The following is then a direct consequence of \cite[Theorem 7.8]{GV11}:
\begin{theorem}[\cite{GV11}]
\label{cp2rig}
On $(\CP^2, g_{FS})$, $H^1_t = 0$ provided that $t < 1$.
\end{theorem}
The following is a direct consequence of \cite[Theorem~7.13]{GV11}:
\begin{theorem}[\cite{GV11}]
\label{s2s2rig}
On $(S^2 \times S^2, g_{S^2 \times S^2})$,
$H^1_t = 0$ provided that $t < 2/3$ and $t \neq - 1/3$.
If $t = - 1/3$, then $H^1_t$ is one-dimensional and spanned by
the element $g_1 - g_2$.
\end{theorem}

If one knows that $h \in C^{4,\alpha}(Z)$, 
then Theorem \ref{comker} follows immediately
from Theorems \ref{cp2rig} and \ref{s2s2rig}. The only symmetry
needed for this part is the diagonal invariance for $t = -1/3$, which rules
out the kernel element $g_1 - g_2$.  We will next employ the
symmetries, in a crucial way, to prove smoothness.

\begin{proposition}
\label{indclaim}
If $t \neq 0$,
the indicial roots of $S^t$ are contained in $\ZZ$.
\end{proposition}
\begin{proof}
To determine the indicial
roots of $S^t$, we need to analyze homogeneous solutions of the equation
\begin{align} \label{SformEuc}
\begin{split}
S_0 h &\equiv \Delta_0^2 h - 2\big(t + \frac{5}{24}\big) \nabla_0^2 (\Delta_0 tr\ h) - 2 \big( t + \frac{5}{24}\big) \Delta_0 (\delta_0^2 h) g_0 \\
& \ \ \ \  + 2\big( t + \frac{5}{6}\big) \nabla_0^2 (\delta_0^2 h) + 2\big(t - \frac{7}{96}\big) \Delta_0^2 (tr\ h)g_0  = 0
\end{split}
\end{align}
on Euclidean space $(\RR^4 \setminus \{0\}, g_0)$.
Assume by contradiction that $h$ solves
\eqref{SformEuc} in $\RR^4 \setminus \{0\}$,
with $h$ corresponding to an
indicial root of $u + \I v\in \CC \setminus \{ \ZZ \}$,
and $u, v \in \RR$. This means that
$h$ has components of the form $r^{u} \cos(v r)$,
$r^{u} \sin(vr)$, or a polynomial in $\log(r)$ times one
of these (we say such a solution is
homogeneous of degree $u + \I v$).

Taking the trace of \eqref{SformEuc} gives
\begin{align*}
\Delta_0 \big[ \Delta_0 (tr\ h) - \delta_0^2 h \big] = 0,
\end{align*}
with $\Delta_0 (tr\ h ) - \delta_0^2 h $ homogeneous
of degree $u -2 + \I v$.
Since the indicial roots of the Laplacian are $\ZZ \setminus \{-1\}$,
it follows that
\begin{align} \label{treqn}
\Delta_0 (tr\ h ) - \delta_0^2 h  = 0.
\end{align}
Substituting this into (\ref{SformEuc}) implies that
\begin{align*}
\Delta_0^2 h  + \frac{5}{4}\nabla_0^2(\Delta_0 tr\ h ) - \frac{9}{16}\Delta_0^2(tr\ h )\cdot g_0 = 0.
\end{align*}
Applying the operator $\delta_0^2$ and using (\ref{treqn}) we get
\begin{align*}
\Delta_0^2 (\delta_0^2 h) = 0,
\end{align*}
which implies $\Delta_0 tr\  h  = \delta_0^2 h  = 0$, hence  $\Delta_0^2 h \equiv 0$.
We note that the indicial roots of $\Delta_0$ on symmetric tensors
are the same as those of the Laplacian on functions, which is $\ZZ \setminus \{-1\}$.
Since $u + \I v$ is not an indicial root of $\Delta_0^2$ on symmetric tensors,
we have a contradiction.
\end{proof}

 To analyze the indicial root at $0$, we first note that
any constant tensor on $\RR^4$ is a homogeneous degree zero solution,
and the dimension of the space of these solutions is $10$.
We claim that the space of all homogeneous solutions
of degree zero is of dimension $20$.
To see this, choose weight function on $\RR^4$ to be given by
\begin{align}
w(x) =
\begin{cases}
|x|  &   |x| \geq 1\\
1  &   d(x,x_0) < 1.
\end{cases}
\end{align}
With this weight function, for $\delta > 0$ small but nonzero,
consider the operator as mapping from
\begin{align}
S^t_{g_0} : C^{4,\alpha}_\delta \rightarrow C^{0,\alpha}_{\delta-4}.
\end{align}
With obvious notation, the relative index theorem of \cite{LockhartMcOwen}
states that
\begin{align}
\label{indjump}
Ind(\delta) - Ind(-\delta) &= N(0),
\end{align}
where $N(0)$ is the space of all homogeneous solutions of degree zero
on $\RR^4 \setminus \{0\}$.
We note the important fact that any bounded solution globally
defined on $\RR^4$ must be constant,
the proof is as in  \cite[Proposition 5.4]{AV12}
(the key being that the flat metric is rigid).
This implies that any globally defined decaying solution is trivial,
so we have $\dim Ker(-\delta) = 0$.
Since the adjoint weight of $\delta$ is $- \delta$, \eqref{indjump} may
then be written as
\begin{align}
2 \cdot \dim Ker ( \delta) = N(0).
\end{align}
If $\delta$ is sufficiently small, it is not an indicial root,
so any kernel element defined on all of $\RR^4$ satisfying
$h = O(|x|^{\delta})$ as $|x| \rightarrow \infty$ is constant.
Therefore $\dim Ker(\delta) = 10$, which implies that $N(0) = 20$.

The only symmetric constant tensors invariant under the
standard diagonal torus action are multiples of the identity matrix,
or multiples of the matrix
\begin{align}
\left(
\begin{matrix}
I_2  &  0 \\
0  & - I_2 \\
\end{matrix}
\right),
\end{align}
where $I_2$ is the $2 \times 2$ identity matrix.
It is easy to see that this element is not invariant under the diagonal symmetry.
Consequently, there are only $2$ invariant degree zero solutions on
$\RR^4 \setminus \{0\}$: the identity matrix, and another solution with
log-type growth (we will not need the explicit formula).
Another application of the relative index theorem applied to
the compact manifold (details are similar to above) shows that, since $c \cdot g$ extends to a global
solution, the log-type solution does not extend to a global solution
on $Z \setminus \{z_0\}$.

To finish the proof, if $h \in C^{4,\alpha}_{\delta}$ is a solution
on $Z \setminus \{z_0\}$ for $\delta < 0$ with $|\delta|$ sufficiently
small which is invariant under the group action,
then there is an expansion
\begin{align}
h = c \cdot g + O(|z|^{\epsilon})
\end{align}
for some constant $c \in \RR$ and $ \epsilon > 0 $ as $|z| \rightarrow 0$.
Since the leading term is a global solution, we then have that
$\tilde{h} = h - c \cdot g$ is solution on $Z \setminus \{z_0\}$
satisfying $\tilde{h} = O(|z|^{\epsilon})$ as $|z| \rightarrow 0$.
A standard integration-by-parts argument shows that
$\tilde{h}$ extends to a weak solution on all of $Z$, and
is therefore smooth by elliptic regularity. By the above,
$\tilde{h} \equiv 0$.
\end{proof}

\section{Cokernel on an asymptotically flat manifold}
\label{afkersec}

Let $(N,g)$ be the Green's function metric of a 
compact manifold $(Y,g_Y)$ with positive scalar curvature: more precisely,
\begin{align} \label{Nprops} \begin{split}
N &= Y \setminus \{ y_0 \}, \ y_0 \in Y; \\
g &= G^2 g_Y,
\end{split}
\end{align}
where $G$ is the Green's function of the conformal Laplacian with pole at
$y_0 \in Y$.  Assume $(Y,g_Y)$ is Bach-flat and infinitesimally Bach-rigid,
that is, $H^1_0(Y,g_Y) = \{0\}$.

Let $\{x^i\}$ denote an inverted normal coordinate system,
and choose weight function $w = w(x)$ to be given by
\begin{align}
w(x) =
\begin{cases}
|x|  &   |x| \geq R_0\\
1  &   d(x,x_0) < 1,
\end{cases}
\end{align}
where $R_0$ is large, and $x_0 \in N$ is a basepoint.

\begin{theorem}
\label{afker}

Assume $\delta < 0$ with $|\delta|$ small, and let $h \in C^{4, \alpha}_{\delta}$ solve the equation
\begin{align}
\label{cokeh}
S^t(h) = B'(h) + t C'(h) + \mathcal{K} \delta \mathcal{K} \delta (\overset{\circ}{h}) = 0,
\end{align}
where $t \neq 0$.

Then
\begin{align} \label{o1def}
h = \mathcal{K} \omega_1 + f \cdot g_N,
\end{align}
where $\omega_1$ and $f$ satisfy
\begin{align} \label{o1facts} \begin{split}
\Box \omega_1 &= 0, \\
\Delta f &= -\frac{1}{3} \langle Ric, \mathcal{K} \omega_1 \rangle,
\end{split}
\end{align}
where $\Box = \delta \mathcal{K}$.

Furthermore, suppose $(N,g)$ is either the Burns metric or the Green's function
metric of the product metric on $S^2 \times S^2$.  If $h$ is toric invariant and
diagonally invariant and $\delta > 0$, then $\omega$ and $f$ can also 
be chosen to be toric invariant and diagonally invariant, with
\begin{align} \label{asymo}
\omega & = c \cdot x^i dx^i + O(|x|^{-1 + \epsilon}),\\
\label{fexp}
f(x) &= c_0 + \frac{c'}{|x|^2} + O'(|x|^{-4 + \epsilon}),
\end{align}
where $c_0, c, c' \in \RR$ are constants, as $r \rightarrow \infty$,
for any $\epsilon > 0$.
\end{theorem}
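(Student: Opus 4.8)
## Proof proposal for Theorem \ref{afker}

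The plan is to mimic the structure of the proof on the compact manifold (Theorem \ref{comker}), adapting it to the asymptotically flat setting where the cokernel is governed by the Bach-flat rigidity hypothesis $H^1_0(Y,g_Y)=\{0\}$ together with the conformal invariance of the Bach tensor. First I would decompose the solution $h$. Since $t\neq 0$, the operator $S^t$ is elliptic, and the gauge term $\mathcal{K}\delta\mathcal{K}\delta(\overset{\circ}{h})$ is the key to extracting geometric information. The first step is the standard integration-by-parts argument (as in the proof of Proposition \ref{smoothprop}): pairing $S^t(h)=0$ against an appropriate gauge-type test tensor and using that the divergence of the gradient of a Riemannian functional vanishes, one concludes $\mathcal{K}\delta\mathcal{K}\delta(\overset{\circ}{h})=0$ and $B^t$-linearized part vanishes separately. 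The weighted decay $h=O(|x|^\delta)$ with $\delta<0$ (and the decay of the AF metric to order $2$) ensures all boundary terms at infinity vanish. From $\Box_{\mathcal{K}}(\delta\overset{\circ}{h})=0$ with $\Box=\delta\mathcal{K}$, one gets that $\delta\overset{\circ}{h}$ is an element $\omega_1$ in the kernel of $\Box$, and then $h-\mathcal{K}\omega_1$ is trace-like, giving the form \eqref{o1def}; the equation $\Delta f = -\tfrac13\langle Ric,\mathcal{K}\omega_1\rangle$ follows by taking a trace of the linearized $B^t$-flat equation and using that $g_N$ is scalar-flat (so the scalar-curvature linearization simplifies).

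The second step is to promote \eqref{o1def} to the asymptotic expansions \eqref{asymo}, \eqref{fexp} under the symmetry hypotheses. Here I would run the same indicial-root analysis as in Proposition \ref{indclaim} and the discussion following it: the indicial roots of $\Box$ (a second-order operator of Laplace type on $1$-forms) are integers, and the decaying solutions of $\Box\omega_1=0$ that are toric- and diagonally-invariant are, after matching against the list of homogeneous degree-$(-1)$ and degree-$0$ solutions on $\RR^4\setminus\{0\}$, forced to have leading term $c\cdot x^i dx^i$ (the dilation/scaling conformal Killing-type form), with the next terms decaying like $|x|^{-1+\epsilon}$. The $\epsilon$ loss comes from the fact that the AF metric is only $O^{(4)}(|x|^{-4+\epsilon})$ away from its model expansion \eqref{burnsexp}/\eqref{s2s2exp} (for $S^2\times S^2$; exact for Burns), so the error terms in the elliptic equation are not exactly homogeneous. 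For $f$, solving $\Delta f=-\tfrac13\langle Ric,\mathcal{K}\omega_1\rangle$ with $\mathcal{K}\omega_1$ decaying: the inhomogeneity decays fast enough that $f$ is harmonic to leading order, so $f=c_0+c'|x|^{-2}+O'(|x|^{-4+\epsilon})$ by the standard expansion of AF harmonic functions, using again that the only invariant constant tensors are multiples of $g$ (the matrix $\mathrm{diag}(I_2,-I_2)$ being ruled out by the diagonal symmetry, exactly as in Theorem \ref{comker}). The invariance of $\omega_1$ and $f$ themselves follows by averaging over the (compact) symmetry group, using uniqueness in each indicial sector.

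The third step is to verify the compatibility/solvability of the system, i.e., that a toric- and diagonally-invariant $h$ really does decompose with invariant $\omega_1,f$ — this is where one uses that $\delta>0$ (decay at infinity) removes the would-be extra solutions, combined with the relative index theorem of \cite{LockhartMcOwen} to count invariant homogeneous solutions of degree $0$ on $\RR^4\setminus\{0\}$, just as in the proof of Theorem \ref{comker}. The main obstacle I anticipate is the bookkeeping of indicial roots and the matching at infinity: one must carefully distinguish the genuine geometric kernel elements (scaling of the AF space, encoded by $c\cdot x^i dx^i$ and $c'|x|^{-2}$) from the log-type and higher solutions, and show that the symmetry assumptions kill everything except the claimed leading terms. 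A secondary technical point is tracking the $\epsilon$-losses consistently through the perturbation from the model flat operator $S^t_{g_0}$ to $S^t_{g_N}$, ensuring the remainder estimates in Proposition \ref{quadest} (and Proposition \ref{QRemark}) are strong enough that the fixed-point/expansion argument closes at order $O(|x|^{-1+\epsilon})$ and $O'(|x|^{-4+\epsilon})$ respectively. The integration-by-parts step itself is essentially identical to Proposition \ref{smoothprop} and should go through with only the modification of checking decay of boundary terms, which the weight hypotheses guarantee.
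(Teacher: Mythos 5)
Your first step is broadly in the right spirit (the paper splits the equation by taking the divergence and using the linearized Bianchi identity $\delta[(B^t)'h]=0$ to get $\Box^2\delta\overset{\circ}{h}=0$, then invokes the absence of decaying kernel elements of $\Box$ twice to conclude $\delta\overset{\circ}{h}=0$, hence that the gauge term and $B'(h)+tC'(h)$ vanish separately), but your central decomposition step contains a genuine gap. You write that from $\Box(\delta\overset{\circ}{h})=0$ ``one gets that $\delta\overset{\circ}{h}$ is an element $\omega_1$ in the kernel of $\Box$, and then $h-\mathcal{K}\omega_1$ is trace-like.'' This is not correct: what one actually concludes is $\delta\overset{\circ}{h}=0$, i.e.\ $\overset{\circ}{h}$ is \emph{transverse-traceless} --- the orthogonal complement of $\mathrm{Im}\,\mathcal{K}$ in the York decomposition --- which gives no reason whatsoever for $\overset{\circ}{h}$ to lie in $\mathrm{Im}\,\mathcal{K}$. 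The assertion $\overset{\circ}{h}=\mathcal{K}\omega_1$ is the real content of \eqref{o1def} and cannot be extracted formally from the gauge term. The paper obtains it by: (a) conformal invariance of the linearized Bach operator, so that $B'(\overset{\circ}{h})=0$ and $\tilde{h}=G^{-2}\overset{\circ}{h}$ satisfies $B'_{g_Y}\tilde{h}=0$ on $Y\setminus\{y_0\}$; (b) the quadratic decay of decaying transverse-traceless kernel elements of $B'$ (from \cite{AV12}), which makes $\tilde{h}$ vanish quadratically at $y_0$ and hence extend to $C^{1,\alpha}(Y)$; (c) the York splitting $\tilde{h}=\mathcal{K}_{g_Y}\omega_0+h_0$ carried out on the \emph{compact} manifold $Y$; and (d) the infinitesimal Bach-rigidity hypothesis $H^1_0(Y,g_Y)=\{0\}$, which kills the transverse-traceless piece $h_0$ after a removable-singularity argument. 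You name the rigidity hypothesis and conformal invariance in your opening sentence but never actually deploy them, and without (a)--(d) the decomposition does not follow.

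A secondary issue: in your asymptotics step you analyze ``decaying solutions of $\Box\omega_1=0$'' by an indicial-root analysis on the AF end, but $\omega_1$ is not decaying --- it grows linearly, and a priori could grow quadratically. The paper controls its expansion not at infinity but through the regularity of $\omega_0=G^{-2}\omega_1$ on the compact side: $\omega_0\in C^{2,\alpha}(Y)$ admits a Taylor expansion at $y_0$, toric invariance kills the constant term, and the diagonal symmetry together with subtraction of the global Killing forms $\rho_i\,d\theta_i$ reduces the linear term to $c\,\rho\,d\rho$, which becomes $c\,x^i dx^i$ after inversion. Your treatment of $f$ (harmonic to leading order, expansion $c_0+c'|x|^{-2}+O'(|x|^{-4+\epsilon})$ via weighted mapping properties of $\Delta$) and of the final case via the relative index theorem is consistent with the paper, but these pieces rest on the decomposition you have not established.
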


 The remainder of this section will be devoted to the proof of Theorem \ref{afker}.
Since the Bach tensor is conformally invariant it follows that $(N,g)$ is also Bach-flat.  Also, since $(N,g)$ is scalar
flat it is also $B^t$-flat, for any value of $t$.  We also note that $h$ is smooth since $S^t$ is elliptic.

The splitting in (\ref{o1def}) reflects the fact that each term in the linearization must vanish:

\begin{proposition}  \label{prop1}  Each term in (\ref{cokeh}) vanishes; i.e.,
\begin{align} \label{splitzed} \begin{split}
B^{\prime}(h) & = 0, \\
C'(h) &= 0, \\
\mathcal{K} \delta \mathcal{K} \delta (\overset{\circ}{h}) &= 0.
\end{split}
\end{align}
Furthermore,
\begin{align} \label{zdivfree}
\delta (\overset{\circ}{h}) = 0.
\end{align}
\end{proposition}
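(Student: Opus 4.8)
The plan is to exploit two structural features of the situation: $(N,g)$ is $B^t$-flat (it is scalar-flat and Bach-flat, so $B^t_g=0$), and $h$ decays at infinity, so every integration by parts over $N$ — carried out over the exhaustion by balls $B_R$ — has boundary contributions over $S(R)$ that tend to $0$ as $R\to\infty$ (the integrands are $O(R^{2\delta})$ or smaller, and $\delta<0$). I also use at the outset that $h$ is smooth, as already noted.

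First I would extract the gauge term and prove $\delta\overset{\circ}{h}=0$. Since $B^t=B+tC$ is the Euler--Lagrange tensor of $\mathcal{B}_t$, the generalized Bianchi identity (the gradient of a Riemannian functional is divergence-free) gives $\delta_g(B^t_g)=0$ for every metric $g$; differentiating at $g=g_N$, where $B^t_{g_N}=0$, kills the term involving $B^t_{g_N}$ and leaves the linearized identity $\delta_g\big(B'(h)+tC'(h)\big)=0$. Set $\beta=\delta\mathcal{K}\delta\overset{\circ}{h}$, so the gauge term in \eqref{cokeh} is $\mathcal{K}_g\beta$. Pairing $S^t(h)=0$ in $L^2(N,g)$ with $\mathcal{L}_g\beta=-2\delta_g^{*}\beta$ and integrating by parts, the curvature terms contribute only vanishing boundary terms (using the linearized Bianchi identity and $\delta_g^{*}=-\tfrac{1}{2}\mathcal{L}_g$), while $\langle\mathcal{K}_g\beta,\mathcal{L}_g\beta\rangle_{L^2}=\|\mathcal{K}_g\beta\|_{L^2}^2$ since $\mathcal{L}_g\beta=\mathcal{K}_g\beta+\tfrac{1}{2}(\delta_g\beta)g$ and $\mathcal{K}_g\beta$ is trace-free. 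Hence $\|\mathcal{K}_g\beta\|_{L^2}^2=0$, so $\mathcal{K}\delta\mathcal{K}\delta\overset{\circ}{h}=0$; moreover $\beta$ is then a conformal Killing $1$-form, and since $\beta$ decays it vanishes identically. Thus $\delta\mathcal{K}\delta\overset{\circ}{h}=0$, and pairing this with $\delta\overset{\circ}{h}$ and integrating by parts the same way gives $\|\mathcal{K}_g\delta\overset{\circ}{h}\|_{L^2}^2=0$; so $\delta\overset{\circ}{h}$ is again a decaying conformal Killing $1$-form, whence $\delta\overset{\circ}{h}=0$, which is \eqref{zdivfree}, and the gauge term $\mathcal{K}\delta\mathcal{K}\delta\overset{\circ}{h}$ vanishes trivially.

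It remains to split $B'(h)+tC'(h)=0$ into $B'(h)=0$ and $C'(h)=0$. Taking the $g$-trace: the Bach tensor is trace-free for every metric, so $\operatorname{tr}_g B'(h)=0$; and since $\operatorname{tr}_g C_g=-6\Delta_g R_g$ identically, linearizing at the scalar-flat metric $g_N$ gives $\operatorname{tr}_g C'(h)=-6\Delta_g\big(R'(h)\big)$, where $R'(h)$ is the linearized scalar curvature. Therefore $-6t\,\Delta_g R'(h)=0$, and since $t\neq0$, $R'(h)$ is harmonic on $N$; as it decays at infinity (because $h$ does), the maximum principle forces $R'(h)\equiv0$. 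Inspecting \eqref{gradformS} term by term, every summand of $C'(h)$ carries a factor that is one of $R_g$, $\nabla_gR_g$, $\Delta_gR_g$ (all zero on $g_N$) or $R'(h)$, $\nabla_gR'(h)$, $\Delta_gR'(h)$ (all just shown to vanish); hence $C'(h)=0$, and then $B'(h)=-tC'(h)=0$. This establishes \eqref{splitzed}.

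The main obstacle I expect is the fact used twice above: a conformal Killing $1$-form on the asymptotically flat manifold $(N,g)$ that decays at infinity must vanish identically. I would prove this by expanding such a field $\xi$ at infinity; the equation $\mathcal{K}_g\xi=0$ forces the leading homogeneous term of $\xi$ to be a conformal Killing field of the flat metric on $\RR^4\setminus\{0\}$, but by Liouville's theorem in dimension $\geq 3$ the only such fields are homogeneous of degree $0$, $1$, or $2$, so a decaying $\xi$ has trivial leading term; iterating this and invoking unique continuation for the overdetermined elliptic system $\mathcal{K}_g\xi=0$ forces $\xi\equiv0$. A secondary point, routine but requiring care, is to check that in each integration by parts the boundary integrals over $S(R)$ vanish as $R\to\infty$, using $h\in C^{4,\alpha}_{\delta}$ with $\delta<0$ together with the decay of $Rm_g$ on the asymptotically flat end.
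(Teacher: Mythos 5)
Your proposal is correct, and its second half (taking traces, using $tr\, B'(h)=0$ and $tr\, C'(h)=-6\Delta R'(h)$ at the scalar-flat metric, concluding $R'(h)=0$ from harmonicity plus decay, and then $C'(h)=0$, $B'(h)=-tC'(h)=0$) coincides with the paper's argument. Where you genuinely diverge is in killing the gauge term and proving $\delta\overset{\circ}{h}=0$. The paper takes the divergence of \eqref{cokeh}: the linearized Bianchi identity gives $\delta\big[(B^t)'h\big]=0$, so $\Box^2\delta\overset{\circ}{h}=0$, and then it invokes Proposition \ref{boxker} (no decaying elements in $\ker\Box$, proved by an indicial-root analysis of $\Box$ on $\RR^4\setminus\{0\}$, an upgrade of the decay to $O(r^{-2})$, and the absence of decaying conformal Killing fields) twice to conclude $\Box\delta\overset{\circ}{h}=0$ and then $\delta\overset{\circ}{h}=0$. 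You instead run a direct $L^2$ energy argument on the AF manifold — pairing the equation with $\mathcal{L}_g\beta$ for $\beta=\delta\mathcal{K}\delta\overset{\circ}{h}$ to extract $\|\mathcal{K}_g\beta\|_{L^2}^2=0$, and then pairing $\Box\delta\overset{\circ}{h}=0$ with $\delta\overset{\circ}{h}$ — which is essentially the gauge-breaking computation the paper performs in Proposition \ref{smoothprop} on the compact manifold, transplanted to $N$ with the boundary terms over $S(R)$ checked to be $O(R^{2\delta-4})$ and $O(R^{2\delta})$, hence vanishing since $\delta<0$. Both routes ultimately rest on the same fact that a decaying conformal Killing $1$-form on an AF space is trivial (which the paper also asserts without proof; your Liouville-plus-unique-continuation sketch is a reasonable way to supply it). The trade-off: your argument is shorter and avoids the indicial-root analysis, at the cost of boundary terms that vanish only at the borderline rate $R^{2\delta}$; the paper's route through Proposition \ref{boxker} is heavier but yields a reusable mapping statement about $\Box$ (sharper $O(r^{-2})$ decay and solvability in weighted spaces) that is needed again later, e.g.\ when solving $\Box\omega'=\Box\omega$ in the asymptotics-of-the-cokernel section. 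One small imprecision to note: $tr_g B'(h)=0$ follows from linearizing $tr_g B_g=0$ only because $B_{g_N}=0$ kills the term where the variation hits the trace, so you should cite Bach-flatness of $g_N$ there, exactly as the paper does.
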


\begin{proof} Since $(N,g)$ is $B^t$-flat, if we linearize the identity
\begin{align*}
\delta B^t = 0
\end{align*}
at $g$ we find
\begin{align*}
\delta [(B^t)'h] + (\delta_h') B^t = 0 \ \Rightarrow \ \delta [(B^t)'h] = 0.
\end{align*}
Therefore, taking the divergence of both sides of (\ref{cokeh}) gives
\begin{align} \label{box2}
\Box^2 \delta (\overset{\circ}{h}) = 0.
\end{align}

\begin{proposition}
\label{boxker} There are no decaying elements in the kernel of $\Box$.
\end{proposition}
\begin{proof}
To see this, we note the formula
\begin{align}
\Box \omega = \frac{3}{2} d \delta \omega + \delta d \omega + 2 Ric(\omega, \cdot).
\end{align}
Since the Ricci tensor decays, to determine the indicial
roots of $\Box$, we need to analyze homogeneous solutions of the operator
\begin{align}
\label{boxEuc}
\Box \omega = \frac{3}{2} d \delta \omega + \delta d \omega
\end{align}
on Euclidean space $(\RR^4, g_0)$. We claim that the indicial roots
are contained in $\ZZ \setminus \{-1\}$. To prove this,
assume by contradiction that $\omega$ solves
$\Box \omega = 0$ in $\RR^4 \setminus \{0\}$,
with $\omega$ corresponding to an
indicial root of $u + \I v\in \CC \setminus \{ \ZZ \setminus \{-1\}\}$,
and $u, v \in \RR$. This means that
$\omega$ has components of the form $r^{u} \cos(v r)$ or
$r^{u} \sin(vr), r^u \log (r)$ (similarly to above, we say such a solution is
homogeneous of degree $u + \I v$).
Applying $d$ to \eqref{boxEuc} yields that
\begin{align}
d \delta d \omega = (d \delta + \delta d) d \omega = - \Delta_H d \omega.
\end{align}
We note that the indicial roots of $\Delta_H$ are exactly $\ZZ \setminus \{-1\}$
(this is easily seen since the leading term is the rough Laplacian, so the
indicial roots are the same as for the Laplacian on functions).
Since $d \omega$ is homogeneous of degree $u - 1 + \I v$, which is not
an indicial root of $\Delta_H$, we conclude that $d \omega = 0$.
A similiar argument shows that $\delta \omega =0$.
Since both $d \omega = 0$ and $\delta \omega = 0$, we have that
$\Delta_H(\omega) = 0$, which is a contradiction since
$u + \I v$ was chosen to not be an indicial root of $\Delta_H$.

Consequently, by standard weighted space theory,
any decaying solution of $\Box \xi = 0$ on an AF space
must satisfy $\xi = O(r^{-2})$ as $r \rightarrow \infty$
\cite{Bartnik}.
An elementary integration by parts argument then shows that
$\mathcal{K}\xi = 0$. As there are no decaying conformal
Killing fields on an AF space, we conclude that $\xi = 0$.
\end{proof}
\begin{remark}{\em
By a separation of variables argument as in \cite[Section~4.1]{AV12},
it is straightforward to show that the indicial roots of
$\Box$ are in fact exactly $\ZZ \setminus \{-1\}$, although
we will not need this fact.
}
\end{remark}

By this proposition and \eqref{box2},
\begin{align*}
\Box \delta (\overset{\circ}{h}) = 0.
\end{align*}
Applying the result once again gives (\ref{zdivfree}):
\begin{align} \label{doc}
\delta \oc{h} = 0.
\end{align}
In particular,
\begin{align*}
\mathcal{K} \delta \mathcal{K} \delta (\overset{\circ}{h}) = 0
\end{align*}
and consequently
\begin{align} \label{Btpz}
 B'(h) + t C'(h) = 0.
\end{align}

If we linearize the trace-free property of the Bach tensor at $g$ it follows that
\begin{align*}
tr B'(h) = 0.
\end{align*}
Therefore, taking the trace of (\ref{Btpz}) gives
\begin{align} \label{trzed} \begin{split}
0 &= tr\ B'(h) + t\ tr\ C'(h) \\
&= t\ tr\ C'(h).
\end{split}
\end{align}

\begin{lemma}  \label{lemma1} If $(X^4,g)$ is either scalar-flat or Einstein, then
\begin{align} \label{trCp} \begin{split}
tr\ C'(h) &= - 6 \Delta R'(h) \\
&= - 6 \Delta [ - \Delta(tr\ h) + \delta^2 h - \langle Ric, h \rangle],
\end{split}
\end{align}
where $R'$ denotes the linearization of the scalar curvature.
\end{lemma}

\begin{proof}
Since $C = 0$ for scalar-flat or Einstein metrics, we have
\begin{align*}
(tr\ C)' &= (tr\ )' C + tr (C') \\
&= tr (C').
\end{align*}
Also, $tr\ C = - 6 \Delta R$, so
\begin{align*}
tr (C') = (tr\ C)' = -6 (\Delta)' R - 6 \Delta R',
\end{align*}
and since $R$ is constant we get
\begin{align*}
tr (C') = - 6 \Delta R',
\end{align*}
as claimed.
\end{proof}
In view of (\ref{trzed}) and the preceding lemma we have
\begin{align*}
\Delta R'(h) = \Delta [ - \Delta(tr\ h) + \delta^2 h - \langle Ric, h \rangle] = 0.
\end{align*}
Since
\begin{align*}
|\nabla^2 h| = O(|x|^{\delta - 2}), \quad \langle Ric, h \rangle = O(|x|^{\delta - 4}),
\end{align*}
it follows that $R'(h)$ is a decaying harmonic function.  Therefore,
\begin{align} \label{Rpzed}
R'(h) = - \Delta(tr\ h) + \delta^2 h - \langle Ric, h \rangle = 0.
\end{align}

Recall $C$ is given by
\begin{align} \label{Cform}
C = 2 \nabla^2 R - 2 (\Delta R)g - 2 R \big( Ric - \frac{1}{4}R g \big).
\end{align}
Linearizing this at $g$ (which is scalar-flat) gives
\begin{align*}
C'(h) = 2 \nabla^2 R'(h) - 2 \Delta R'(h) g - 2 R'(h) \cdot Ric.
\end{align*}
From (\ref{Rpzed}), it follows that $C'(h) = 0$, which completes the proof of Proposition \ref{prop1}.
\end{proof}

Write
\begin{align} \label{hsplit}
h = \overset{\circ}{h} + f g,
\end{align}
where $f = (tr\ h)/4$.  The conformal invariance of the Bach tensor leads to the formula
\begin{align*}
B'( \phi g) = - 2 \phi B
\end{align*}
for any function $\phi$.  Since $g$ is Bach-flat this implies
\begin{align*}
0 &= B'(\oc{h} + fg ) \\
&= B'(\oc{h}).
\end{align*}
It follows from  \cite[Proposition 2.1]{AV12} that any decaying, transverse-traceless element in the kernel of $B'$ must decay quadratically,
hence
\begin{align} \label{2decay}
| \oc{h}| = O(|x|^{-2}),
\end{align}
as $|x| \rightarrow \infty$.

Conformal invariance of the Bach tensor also implies the invariance of its linearization:
\begin{align}
0 = B'_{g} (\oc{h}) = B'_{G^2 g_0}(\oc{h}) = G^{-2} B'_{g_0}(G^{-2} \oc{h}).
\end{align}
Denote
\begin{align} \label{thdef}
\h = G^{-2} \oc{h}.
\end{align}
Then $\h \in C^{\infty}(Y \setminus \{ y_0 \})$, and
\begin{align} \label{thk}
B_{g_0}' \h = 0.
\end{align}
In addition, since $\oc{h}$ decays quadratically at infinity, $\h$ vanishes quadratically at $y_0$.  To see this, first note that
\begin{align} \label{thsize} \begin{split}
| \h |_{g_0}^2 &= (g_0)^{ik} (g_0)^{k \ell} \h_{ij} \h_{k \ell} \\
&= G^4 g^{ik} g^{j \ell} \big( G^{-2} \oc{h}_{ij} \big)\big( G^{-2} \oc{h}_{k\ell} \big) \\
&= | \oc{h} |_{g}^2.
\end{split}
\end{align}
We note the relation between $r = |x|$ and $\rho = |y|$:
\begin{align*}
r \sim \rho^{-1},
\end{align*}
so that (\ref{2decay}) and (\ref{thsize}) together imply
\begin{align} \label{2vanish}
| \h |_{g_0} = O( \rho^2),
\end{align}
as $\rho \rightarrow 0$. In particular, $\h \in C^{1,\alpha}(Y)$.

We now use the standard splitting of a trace-free symmetric tensor into the image of the conformal Killing operator and the space of
transverse-traceless tensors.  More precisely, we first solve
\begin{align} \label{Boxsplit}
\Box_{g_0} \omega_0 = \delta_{g_0} \h
\end{align}
with $\omega_0 \in C^{2,\alpha}(Y)$.
Since $\Box$ is self-adjoint with kernel given by the space of conformal Killing forms $\mathcal{C}(Y,g_0)$, this equation is solvable
whenever the right-hand side is orthogonal to $\mathcal{C}(Y,g_0)$.  However, if $\eta \in \mathcal{C}(Y,g_0)$, then
\begin{align*}
\int \langle \delta_{g_0} \h, \eta \rangle\ dV_0 &= -\frac{1}{2} \int \langle \h, \mathcal{K}_{g_0} \eta \rangle\ dV_0 = 0.
\end{align*}
It follows that (\ref{Boxsplit}) is always solvable, although the solution $\omega_0$ is only unique up to the space of conformal Killing fields. This fact will actually be crucial when
we impose toric and diagonal invariance, in which case we will need to solve (\ref{Boxsplit}) equivariantly and study the space of invariant forms (see the end of this section).

Let
\begin{align} \label{hzeddef}
h_0 = \h  - \mathcal{K}_{g_0} \omega_0.
\end{align}
Then $h_0 \in C^{1,\alpha}(Y)$, and is smooth away from $y_0$. By (\ref{Boxsplit}), $h_0$ is transverse-traceless, and on $Y \setminus \{ y_0 \}$
\begin{align*}
B_{g_0}'(h_0 ) &= B_{g_0}'( \h  - \mathcal{K}_{g_0} \omega_0 ) \\
&= B_{g_0}'(\h) \ \ \mbox{(since Im $\mathcal{K} \subset $Ker$ B'$)} \\
&= 0.
\end{align*}
A standard integration by parts argument shows that $h_0$ is a global weak solution of $B_{g_0}'h_0 = 0$, and from elliptic theory it follows that
$h_0$ is smooth on $Y$.  Since $(Y,g_0)$ is assumed to be infinitesimally 
Bach-rigid, $h_0 = 0$, and we conclude that
\begin{align}
\h = \mathcal{K}_{g_0} \omega_0.
\end{align}
By conformal invariance of the conformal Killing operator,
\begin{align}
\mathcal{K}_{g_0} \omega_0 = G^{-2} \mathcal{K}_g [ G^2 \omega_0].
\end{align}
Hence,
\begin{align*}
G^{-2} \oc{h} = \h = G^{-2} \mathcal{K}_g [ G^2 \omega_0 ],
\end{align*}
which implies
\begin{align} \label{upstairszed}
\oc{h} = \mathcal{K}_g [ G^2 \omega_0].
\end{align}
Also, by (\ref{doc}), $\omega_1 = G^2 \omega_0$ satisfies
\begin{align}
0 = \delta \oc{h} = \Box \omega_1,
\end{align}
which gives the first equation in (\ref{o1facts}). To prove the second equation, use the splitting $h = \oc{h} + fg$
in (\ref{Rpzed}); this gives
\begin{align}
- 3 \Delta f - \langle Ric, \oc{h} \rangle = 0
\end{align}
(note we have used the scalar-flat condition again).

Up to this point we have not used the invariance of $h$.  In general, the form $\omega_1$ can grow quadratically on $N$; however, using
invariance we can choose a solution $\omega_0$ of (\ref{Boxsplit}) so that the resulting form $\omega_1$ has linear growth on $N$,
with highest order given by (\ref{asymo}).
To see this, we argue as follows. Since $\omega_0 \in C^{2, \alpha}(Y)$, it
admits an expansion
\begin{align}
\omega_0 = \omega^{(0)} + \omega^{(1)} + \omega^{(2)} + O(\rho^{2 + \alpha}),
\end{align}
as $\rho \rightarrow 0$, where
\begin{align}
\omega^{(0)} &= \omega^{(0)}_{i} dy^i\\
\omega^{(1)} &= \omega^{(1)}_{ij}y^i dy^j\\
\omega^{(2)} &= \omega^{(2)}_{ijk}y^iy^j dy^k,
\end{align}
where the $\{y\}$-coordinates are local normal coordinates near $y_0$
with torus action
\begin{align}
\label{gpact}
(y_1, y_2, y_3, y_4)
\mapsto \big( e^{  \I \theta_1} (y_1 + \I y_2), e^{ \I \theta_2}  (y_3 + \I y_4)\big).
\end{align}
Denote $\rho_1^2 = y_1^2 + y_2^2$, $\rho_2^2 = y_3^2 + y_4^2$, and
$\rho^2 = \rho_1^2 + \rho_2^2$,
and let $\theta_1$, $\theta_2$ denote the corresponding angular coordinates.
Since the group is compact, we can average over the group to
find a solution of \eqref{Boxsplit} which is also invariant under the
group action \eqref{gpact}, as well as the diagonal symmetry.
It is elementary to see that there is no form with
constant coefficients which is invariant under the torus action (\ref{gpact}).
The only toric-invariant $1$-forms with linear coefficients are
\begin{align}
c_1 d\rho_1 + c_2 \rho_1 d \theta_1 + c_3 d\rho_2 + c_4 \rho_2 d \theta_2.
\end{align}
The forms $\rho_1 d \theta_1$ and $\rho_2 d \theta_2$ extend to global
Killing forms, so we may assume that $c_2 = c_4 = 0$.
Invariance under the diagonal symmetry implies that $c_1 = c_3$,
so we have that
\begin{align}
\omega_0 =  c \cdot \rho d \rho + \omega^{(2)} + O(\rho^{2 + \alpha}),
\end{align}
for some constant $c \in \RR$. This implies the expansion
\begin{align}
\omega_1 = c \cdot x^i dx^i + O(|x|^{-\alpha}),
\end{align}
as $|x| \rightarrow \infty$. Averaging over the group, we may assume 
$\omega_1$ is also invariant under the group action. 

To obtain the expansion for $f$, extend the function $|x|^{-2}$ to
all of $N$ by a cutoff function (which we supress).
It is not hard to see that $\Delta(|x|^{-2}) = O(|x|^{-6})$ as $|x| \rightarrow \infty$,
and there exists a constant $c' \in \RR$ so that
\begin{align}
\int_N \Big( -\frac{1}{3} \langle Ric, \mathcal{K} \omega_1 \rangle - c' \Delta |x|^{-2}
\Big) dV =0.
\end{align}
Next, consider $\Delta:  C^{2,\alpha}_{-4 + \epsilon}(N) \rightarrow  C^{0,\alpha}_{-6 + \epsilon}(N)$.
The adjoint weight is $2 - \epsilon$, so from toric invariance, the kernel of the adjoint contains only constants.
We may then solve the equation
\begin{align}
\Delta \tilde{f} &= -\frac{1}{3} \langle Ric, \mathcal{K} \omega_1 \rangle
 - c' \Delta ( |x|^{-2})
\end{align}
with $\tilde{f} \in  C^{2,\alpha}_{-4 + \epsilon}$.
Equivalently,
\begin{align}
\Delta ( \tilde{f} + c' |x|^{-2}) =  -\frac{1}{3} \langle Ric, \mathcal{K} \omega_1 \rangle.
\end{align}
Since there are no decaying harmonic functions, we must have
\begin{align} \label{fdecay}
f =  \tilde{f} + c' |x|^{-2}
\end{align}
with $\tilde{f} = O(|x|^{-4 + \epsilon})$ as $|x| \rightarrow \infty$
for any $\epsilon > 0$.
Again, averaging over the group, we may assume that $f$ is invariant under
the group action. 

 Finally, we consider the case that $\delta > 0$. Using the
same argument as in the proof of Theorem \ref{comker} involving
the relative index theorem, the
toric and diagonal symmetries imply that the only
possible leading terms are $c \cdot g_N$ and a log-type solution.
Since $c \cdot g_N$ extends to a global solution, again the
relative index theorem implies that the log-type
solution does not occur. Consequently, after subtracting
a multiple of the metric, the solution is decaying, and \eqref{fexp}
follows from the previous expansion.

\section{Asymptotics of the cokernel}

Denote the (normalized) cokernel element described in Theorem \ref{afker} by
\begin{align} \label{odef}
o_1 = \kappa + fg,
\end{align}
where
\begin{align}
\kappa &= \mathcal{K}[\omega_1],
\end{align}
with
\begin{align}
(\omega_1)_i &= x^i + O(1).
\end{align}
 This section will be devoted to proving the following
\begin{theorem}
\label{ALEcokethm}
The tracefree part of AF-cokernel element $o_1$ satisfies
\begin{align} \label{kay}
\kappa_{ij} = \frac{2}{3} W_{ikj\ell}(y_0) \frac{x^k x^{\ell}}{|x|^4} + O(|x|^{-4 + \epsilon})
\end{align}
as $|x| \rightarrow \infty$, for any $\epsilon > 0$.
\end{theorem}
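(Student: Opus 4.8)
The description of $\kappa$ in Theorem~\ref{afker} reduces matters to an asymptotic analysis of the $1$-form $\omega_1$: on the end of $N$ we have $\kappa = \mathcal{K}_{g_N}\omega_1$, with $\omega_1$ toric and diagonally invariant, $\Box_{g_N}\omega_1 = 0$, and, after the normalization \eqref{odef}, $(\omega_1)_i = x^i + O(|x|^{-1+\epsilon})$. The plan is to promote this to a two-term expansion of $\omega_1$ and then apply $\mathcal{K}_{g_N}$. The key point is that $x^i\,dx^i$ is dual to the dilation field of Euclidean space, so $\mathcal{K}_{g_0}(x^i\,dx^i) = 0$ and $\Box_{g_0}(x^i\,dx^i) = 0$; hence everything is driven by the deviation $\gamma := g_N - g_0$, whose leading part is the curvature term of \eqref{burnsexp} (resp. \eqref{s2s2exp}),
\begin{align*}
\gamma_{ij} = -\tfrac13 R_{ikjl}(y_0)\,\frac{x^k x^l}{|x|^4} + \frac{2A}{|x|^2}\,\delta_{ij} + O(|x|^{-4+\epsilon}).
\end{align*}
Writing $\omega_1 = x^i\,dx^i + \psi$ and expanding $\Box_{g_N} = \Box_{g_0} + L_\gamma + (\text{higher order})$, with $L_\gamma$ linear in $\gamma$ and its first two covariant derivatives, the equation $\Box_{g_N}\omega_1 = 0$ becomes $\Box_{g_0}\psi = -L_\gamma(x^i\,dx^i) + (\text{higher order})$. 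Since $\nabla_{g_0}(x^i\,dx^i)$ equals $g_0$ and is therefore parallel, the right-hand side is an explicit $1$-form homogeneous of degree $-3$, with coefficients polynomial in $R_{ikjl}(y_0)$ (and $A$), plus an $O(|x|^{-4+\epsilon})$ error coming from the lower-order part of $\gamma$.

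By Proposition~\ref{boxker} and the remark following it, the indicial roots of $\Box_{g_0}$ on $1$-forms are exactly $\ZZ\setminus\{-1\}$; in particular $-1$ is not one, so the indicial polynomial of $\Box_{g_0}$ does not vanish at $-1$ in any spherical-harmonic type. Consequently the degree $-3$ source $-L_\gamma(x^i\,dx^i)$ has a unique particular solution $\psi_{-1}$, homogeneous of degree $-1$, with no logarithm, computed type by type from $R_{ikjl}(y_0)$; and since the next indicial root below $-1$ is $-2$, we get $\psi = \psi_{-1} + O(|x|^{-2+\epsilon})$. Applying $\mathcal{K}_{g_N}$, and using $\mathcal{K}_{g_0}(x^i\,dx^i) = 0$ together with the size bounds $\mathcal{K}_{g_0}(\psi - \psi_{-1}) = O(|x|^{-3+\epsilon})$ and $(\mathcal{K}_{g_N} - \mathcal{K}_{g_0})\psi = O(|x|^{-4})$, we obtain
\begin{align*}
\kappa = \big(\mathcal{K}_{g_N} - \mathcal{K}_{g_0}\big)(x^i\,dx^i) + \mathcal{K}_{g_0}\psi_{-1} + O(|x|^{-4+\epsilon}),
\end{align*}
where each of the two surviving terms is homogeneous of degree $-2$ with coefficients determined by $R_{ikjl}(y_0)$ and $A$ --- the first through the Christoffel and metric corrections in $\gamma$, the second through $\psi_{-1}$.

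It remains to add the two degree $-2$ tensors and recognize the sum. Into this one feeds the Einstein decomposition $R_{ikjl}(y_0) = W_{ikjl}(y_0) + \tfrac{R(y_0)}{12}\big(\delta_{ik}\delta_{jl} - \delta_{il}\delta_{jk}\big)$, valid because $\CP^2$ and $S^2 \times S^2$ are Einstein, together with Proposition~\ref{massprop} relating $A$ to $R(y_0)$. A convenient way to organize the bookkeeping is to observe that $\kappa$ is, a priori, trace-free, divergence-free, decaying, and in the kernel of the elliptic operator $B'_{g_N}$; hence its degree $-2$ homogeneous part is a trace-free divergence-free solution of the flat linearized Bach operator invariant under the torus and diagonal symmetries. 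Such solutions have the form $W_{ikjl}\frac{x^kx^l}{|x|^4}$ for $W$ an algebraic Weyl tensor, and for $\CP^2$ and $S^2 \times S^2$ the invariant ones are multiples of $W_{ikjl}(y_0)\frac{x^kx^l}{|x|^4}$ (the Weyl tensor of $g_Y$ at $y_0$ being one such). Thus the $R(y_0)$- and $A$-dependent contributions must cancel and $\kappa_{ij} = c\,W_{ikjl}(y_0)\frac{x^kx^l}{|x|^4} + O(|x|^{-4+\epsilon})$, and one is left to extract the single scalar $c$ --- by contracting the explicit degree $-2$ expansion against a convenient constant test tensor, or by evaluating it along a symmetry axis --- obtaining $c = \tfrac23$. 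As a cross-check one can argue downstairs: if $\widetilde{h} := G^{-2}\oc{h} = \mathcal{K}_{g_Y}\omega_0$ has leading Taylor term $c_0\,W_{ikjl}(y_0)\,y^k y^l$ at $y_0$, then transforming $\widetilde{h}$ to the inverted coordinates $x = y/|y|^2$ and multiplying by $G^2$ gives
\begin{align*}
\kappa_{ab}(x) = c_0\,\Big(\delta^i_a - \tfrac{2x^ix^a}{|x|^2}\Big)\Big(\delta^j_b - \tfrac{2x^jx^b}{|x|^2}\Big)\,W_{ikjl}(y_0)\,\frac{x^kx^l}{|x|^4} + O(|x|^{-4+\epsilon}),
\end{align*}
and the cross terms vanish because $W_{ikjl}(y_0)x^ix^k = 0 = W_{ikjl}(y_0)x^jx^l$, so $\kappa_{ab} = c_0\,W_{akbl}(y_0)\frac{x^kx^l}{|x|^4} + O(|x|^{-4+\epsilon})$ and the claim is equivalent to $c_0 = \tfrac23$. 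The main obstacle is precisely this final step: the tensorial computation producing the value $\tfrac23$ and exhibiting the cancellation of the Ricci- and mass-dependent terms is lengthy, and the structural reduction above is what keeps it tractable; a subsidiary point is to confirm that the expansion of $\kappa$ carries no $|x|^{-2}\log|x|$ term (which follows from $-1$ not being an indicial root of $\Box_{g_0}$) and no $|x|^{-3}$ term (by the evenness of the metric expansions).
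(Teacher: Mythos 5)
Your structural reduction is essentially the one the paper uses: write $\omega_1 = x^i\,dx^i + (\text{degree }{-1}\text{ correction}) + O(|x|^{-2+\epsilon})$, justify the absence of a log term by the fact that $-1$ is not an indicial root of $\Box$, upgrade the approximate solution to the exact $\omega_1$ by solving away an $O(|x|^{-4})$ error with no cokernel (Proposition \ref{boxker}), and then apply $\mathcal{K}$ and watch the Ricci-- and $A$--dependent terms cancel. The paper implements this by exhibiting the correction explicitly, $\omega_j = x^j + b_{jk}x^k/|x|^2$ with $b_{ij} = -\tfrac13 S_{ij}(y_0) + 2A\,\delta_{ij}$ ($S$ the Schouten tensor), verifying $\Box\omega = O(|x|^{-4})$ in Lemma \ref{Lemma2}, and computing $\mathcal{K}[\omega]$ in Lemma \ref{Lemma1}; the value $\tfrac23$ and the cancellation then drop out of the curvature decomposition. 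Your proposal stops exactly at this point: you establish that a unique homogeneous degree $-1$ correction $\psi_{-1}$ exists, but you never determine it, and the entire content of the theorem is the explicit coefficient of the resulting degree $-2$ tensor.

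The shortcut you offer in place of that computation does not close the gap. It rests on two unestablished claims: (i) that every homogeneous degree $-2$, trace-free, divergence-free solution of $(B_0)'h=0$ has the form $W_{ikj\ell}\,x^kx^\ell/|x|^4$ for an algebraic Weyl tensor $W$ --- a classification that is neither proved in the paper nor by you (the reference \cite[Prop.\ 2.1]{AV12} gives only the decay rate); and (ii) that the symmetry--invariant such solutions are multiples of $W(y_0)$. Claim (ii) fails for $S^2\times S^2$: the torus action together with the diagonal symmetry preserves a two--dimensional family of algebraic Weyl tensors (independent scalings of the self--dual and anti--self--dual blocks $\mathrm{diag}(2,-1,-1)$), and $W(y_0)$ is only the point with equal scalings, so symmetry alone cannot pin down even the direction of the answer, let alone the constant. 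Even granting (i), you are still left having to extract one or two scalars by an explicit contraction, which requires knowing $\psi_{-1}$ and $(\mathcal{K}_{g_N}-\mathcal{K}_{g_0})(x^i dx^i)$ explicitly --- i.e.\ precisely the ``lengthy tensorial computation'' you defer. The downstairs cross--check has the same character: it shows the claim is \emph{equivalent} to $c_0 = \tfrac23$ but gives no independent route to that value. So the proposal is a correct strategy outline whose decisive step is missing and whose proposed substitute for it is unsound.
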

Recall in inverted normal coordinates at the point $y_0$, the AF metric
has the expansion
\begin{align} \label{gform}
g_{ij} = \delta_{ij} - \frac{1}{3}R_{ikj \ell}(y_0)\frac{x^k x^{\ell}}{|x|^4} + \frac{2A}{|x|^2} \delta_{ij} + O(|x|^{-3}).
\end{align}
In the following, we will need to have expansions for the Christoffel symbols:
\begin{lemma} \label{CSLemma} In inverted normal coordinates,
\begin{align} \label{Gamma1}  \begin{split}
\Gamma_{ij}^k =& -\frac{1}{3} R_{i \alpha k j}(y_0) \frac{x^{\alpha}}{|x|^4} - \frac{1}{3} R_{j \alpha k i}(y_0) \frac{x^{\alpha}}{|x|^4} \\
& \hskip.2in  + \frac{2}{3} R_{i \alpha k \beta }(y_0) \frac{x^j x^{\alpha} x^{\beta}}{|x|^6} + \frac{2}{3} R_{j \alpha k \beta }(y_0) \frac{x^i x^{\alpha} x^{\beta}}{|x|^6}
- \frac{2}{3} R_{i \alpha j \beta }(y_0) \frac{x^k x^{\alpha} x^{\beta}}{|x|^6} \\
& \hskip.2in - 2A \frac{x^i \delta_{jk}}{|x|^4} - 2A \frac{x^j \delta_{ik}}{|x|^4} + 2A \frac{x^k \delta_{ij}}{|x|^4} + O(|x|^{-4}).
\end{split}
\end{align}
\end{lemma}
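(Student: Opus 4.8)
The plan is a direct computation from the standard formula
$\Gamma_{ij}^k = \tfrac12 g^{km}\big(\partial_i g_{jm} + \partial_j g_{im} - \partial_m g_{ij}\big)$,
starting from the metric expansion \eqref{gform}. Write $g_{ij} = \delta_{ij} + h_{ij}$ with $h_{ij} = -\tfrac13 R_{ikj\ell}(y_0)\tfrac{x^k x^\ell}{|x|^4} + \tfrac{2A}{|x|^2}\delta_{ij} + O(|x|^{-3})$. Since $|h| = O(|x|^{-2})$ and the expansions \eqref{burnsexp}, \eqref{s2s2exp} may be differentiated (so $|\partial h| = O(|x|^{-3})$), we have $g^{km} = \delta^{km} + O(|x|^{-2})$, hence
$\Gamma_{ij}^k = \tfrac12\big(\partial_i h_{jk} + \partial_j h_{ik} - \partial_k h_{ij}\big) + O(|x|^{-5})$,
and the $O(|x|^{-3})$ remainder in \eqref{gform} contributes only $O(|x|^{-4})$ after one differentiation, which is precisely the error allowed in \eqref{Gamma1}. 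So the first step I would record explicitly is that the expansion \eqref{gform} may be differentiated once with the expected loss of one power of $|x|$; this licenses dropping everything below order $|x|^{-4}$ from the start.

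Next I would differentiate the two explicit pieces. For the conformal-factor piece, $\partial_m\big(2A|x|^{-2}\delta_{ij}\big) = -4A\,x^m|x|^{-4}\delta_{ij}$, and assembling $\tfrac12(\partial_i(\cdot)_{jk} + \partial_j(\cdot)_{ik} - \partial_k(\cdot)_{ij})$ reproduces verbatim the last line of \eqref{Gamma1} (the three $A$-terms). For the curvature piece I would use $\partial_m\big(x^a x^b|x|^{-4}\big) = \delta_{ma}x^b|x|^{-4} + \delta_{mb}x^a|x|^{-4} - 4x^a x^b x^m|x|^{-6}$; the triple-$x$ contributions assemble directly into the three terms $\tfrac23 R_{i\alpha k\beta}\tfrac{x^j x^\alpha x^\beta}{|x|^6}$, $\tfrac23 R_{j\alpha k\beta}\tfrac{x^i x^\alpha x^\beta}{|x|^6}$, $-\tfrac23 R_{i\alpha j\beta}\tfrac{x^k x^\alpha x^\beta}{|x|^6}$ of \eqref{Gamma1}, using only that $R_{j\alpha k\beta}x^\alpha x^\beta$ is symmetric under $\alpha \leftrightarrow \beta$.

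The only slightly delicate bookkeeping is the single-$x$ terms: after collecting the $\delta$-contractions, the contribution is $-\tfrac16\big(R_{jik\alpha} + R_{j\alpha ki} + R_{ijk\alpha} + R_{i\alpha kj} - R_{ikj\alpha} - R_{i\alpha jk}\big)\tfrac{x^\alpha}{|x|^4}$, and I would show this equals $-\tfrac13\big(R_{i\alpha kj} + R_{j\alpha ki}\big)\tfrac{x^\alpha}{|x|^4}$ using only the symmetries of the curvature tensor (the first Bianchi identity is not needed): $R_{jik\alpha} = -R_{ijk\alpha}$ cancels two of the six terms, while $-R_{ikj\alpha} = R_{j\alpha ki}$ (pair exchange followed by antisymmetry in the last pair) and $-R_{i\alpha jk} = R_{i\alpha kj}$ (antisymmetry in the last pair) recombine the remaining four. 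Adding the two pieces and the $O(|x|^{-4})$ remainders gives \eqref{Gamma1}. There is no genuine obstacle here — the computation is mechanical — so the thing to watch is purely the index bookkeeping in this last symmetrization and keeping consistent track of which remainders survive at order $|x|^{-4}$.
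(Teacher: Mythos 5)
Your proposal is correct and follows essentially the same route as the paper: both start from $\Gamma_{ij}^k = \tfrac12 g^{km}(\partial_i g_{jm}+\partial_j g_{im}-\partial_m g_{ij})$, note that $g^{km}=\delta^{km}+O(|x|^{-2})$ contributes only at order $|x|^{-5}$, differentiate the expansion \eqref{gform} term by term, and recombine the single-$x$ curvature terms using the pair symmetry and antisymmetries of $Rm$. Your explicit bookkeeping of the six single-$x$ terms checks out and matches the paper's (slightly more condensed) rearrangement.
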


\begin{proof} Recall
\begin{align*}
\Gamma_{ij}^k = \frac{1}{2} g^{km} \big( \partial_i g_{jm} + \partial_j g_{im} - \partial_m g_{ij} \big).
\end{align*}
By (\ref{gform}),
\begin{align*}
\partial_i g_{jm} &= \partial_i \big\{ -\frac{1}{3} R_{j \alpha m \beta}(y_0) \frac{x^{\alpha} x^{\beta}}{|x|^4} + \frac{2A}{|x|^2} \delta_{jm} \big\} + \cdots \\
&= -\frac{1}{3} R_{j i m \beta}(y_0) \frac{x^{\beta}}{|x|^4} - \frac{1}{3} R_{j \alpha m i}(y_0) \frac{x^{\alpha}}{|x|^4} + \frac{4}{3} R_{j \alpha m \beta}(y_0) \frac{x^i x^{\alpha} x^{\beta}}{|x|^6} \\
& \hskip.2in - 4A \frac{x^i \delta_{jm}}{|x|^4}   + \cdots
\end{align*}
Therefore, after combining terms and rearranging,
\begin{align*}
\partial_i g_{jm} + \partial_j g_{im} &- \partial_m g_{ij} = -\frac{2}{3} R_{j \alpha m i}(y_0) \frac{x^{\alpha}}{|x|^4} - \frac{2}{3} R_{i \alpha m j}(y_0)\frac{x^{\alpha}}{|x|^4} \\
& + \frac{4}{3}R_{j \alpha m \beta}(y_0) \frac{x^i x^{\alpha} x^{\beta}}{|x|^6} + \frac{4}{3} R_{i \alpha m \beta}(y_0) \frac{x^j x^{\alpha} x^{\beta}}{|x|^6} - \frac{4}{3} R_{j \alpha i \beta}(y_0) \frac{x^m x^{\alpha} x^{\beta}}{|x|^6} \\
& - 4A \frac{x^i \delta_{jm}}{|x|^4} - 4A \frac{x^j \delta_{im}}{|x|^4} + 4 A \frac{ x^m \delta_{ij}}{|x|^4} + \cdots
\end{align*}
It follows from (\ref{gform}) that the inverse matrix $g^{km}$ is given by
\begin{align} \label{ginv}
g^{km} = \delta_{km} + \frac{1}{3} R_{k \alpha m \beta}(y_0) \frac{x^{\alpha} x^{\beta}}{|x|^4} - \frac{2A}{|x|^2}\delta_{km}  + \cdots
\end{align}
Consequently,
\begin{align} \label{Gamma0} 
\begin{split}
\Gamma_{ij}^k 
&=  - \frac{1}{3} R_{j \alpha k i}(y_0) \frac{x^{\alpha}}{|x|^4} -\frac{1}{3} R_{i \alpha k j}(y_0) \frac{x^{\alpha}}{|x|^4} \\
& \hskip.2in  + \frac{2}{3} R_{j \alpha k \beta }(y_0) \frac{x^j x^{\alpha} x^{\beta}}{|x|^6}  + \frac{2}{3} R_{i \alpha k \beta }(y_0) \frac{x^j x^{\alpha} x^{\beta}}{|x|^6}
- \frac{2}{3} R_{j \alpha i \beta }(y_0) \frac{x^k x^{\alpha} x^{\beta}}{|x|^6} \\
& \hskip.2in - 2A \frac{x^i \delta_{jk}}{|x|^4} - 2A \frac{x^j \delta_{ik}}{|x|^4} + 2 A \frac{ x^k \delta_{ij}}{|x|^4}+ O(|x|^{-4}),
\end{split}
\end{align}
which is the same as (\ref{Gamma1}) after rearranging and re-indexing.
\end{proof}
Next, we consider the form $\omega = \omega_j dx^j$ with
\begin{align} \label{wform}
\omega_j = x^j + b_{jk}\frac{x^k}{|x|^2},
\end{align}
where
\begin{align} \label{bhform0}
b_{ij} &= -\frac{1}{3} S_{ij}(y_0) + 2A \delta_{ij},
\end{align}
where
\begin{align}
S_{ij}(y_0) =   \frac{1}{2}\Big( R_{ij}(y_0) - \frac{1}{6}R(y_0) \delta_{ij} \Big)
\end{align}
is the Schouten tensor.
We extend $\omega$ to be a globally defined form on all of $N$
by cutting it off at some finite distance from the basepoint.
Since this cutoff will not matter in the following, we will suppress it
from the following computations.
\begin{lemma}  \label{Lemma1} In inverted normal coordinates,
\begin{align} \label{Kform} \begin{split}
\mathcal{K}[\omega] &=  2 \frac{b_{ij}}{|x|^2} - 2\frac{b_{jk}x^k x^i}{|x|^4} - 2 \frac{b_{ik}x^k x^j}{|x|^4} - \frac{1}{2}\frac{b_{kk}}{|x|^4}\delta_{ij} + \frac{b_{k\ell}x^k x^{\ell}}{|x|^4}\delta_{ij} \\
&\hskip.2in  + \frac{2}{3}R_{ikj \ell}(y_0)\frac{x^k x^{\ell}}{|x|^4}
 - \frac{1}{6}R_{k\ell}(y_0)\frac{x^k x^{\ell}}{|x|^4}\delta_{ij} \\
& \hskip.2in + \frac{2A}{|x|^4} \big[ 4 x^i x^j - |x|^2 \delta_{ij} \big] + O(|x|^{-4}),
\end{split}
\end{align}
as $|x| \rightarrow \infty$.
\end{lemma}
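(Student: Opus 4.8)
The plan is to expand the defining formula
\[
(\mathcal{K}_g\omega)_{ij} = \nabla_i\omega_j + \nabla_j\omega_i - \tfrac12(\delta_g\omega)\,g_{ij}
\]
in inverted normal coordinates, plugging in the explicit form \eqref{wform}, \eqref{bhform0} of $\omega$ (recall $b_{ij}$ is symmetric, being a multiple of the identity plus the Schouten tensor), the Christoffel expansion of Lemma \ref{CSLemma}, and the metric and inverse-metric expansions \eqref{gform}, \eqref{ginv}, and then to collect everything at order $|x|^{-2}$.

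First I would record the flat symmetrized derivative: from $\omega_j = x^j + b_{jk}x^k/|x|^2$,
\[
\partial_i\omega_j + \partial_j\omega_i = 2\delta_{ij} + 2\frac{b_{ij}}{|x|^2} - 2\frac{b_{jk}x^kx^i}{|x|^4} - 2\frac{b_{ik}x^kx^j}{|x|^4},
\]
which already yields the $2\delta_{ij}$ term and the first three $b$-dependent terms of \eqref{Kform}. Next, $\nabla_i\omega_j = \partial_i\omega_j - \Gamma_{ij}^k\omega_k$; since $\Gamma_{ij}^k = O(|x|^{-3})$ while $\omega_k - x^k = O(|x|^{-1})$, we have $\Gamma_{ij}^k\omega_k = \Gamma_{ij}^k x^k + O(|x|^{-4})$, so only the contraction of \eqref{Gamma1} with $x^k$ is needed. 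Here the algebraic symmetries of $R$ do the work: the terms $R_{i\alpha k\beta}\,x^jx^\alpha x^\beta/|x|^6$ and $R_{j\alpha k\beta}\,x^ix^\alpha x^\beta/|x|^6$ die on contracting $x^k$ (antisymmetry of $R$ in its last pair of indices), while the identity $R_{i\alpha kj}(y_0)x^\alpha x^k = -R_{ikj\ell}(y_0)x^kx^\ell$ (again from the last-pair antisymmetry) together with the pair symmetry shows that the three remaining curvature contributions to $\Gamma_{ij}^k x^k$ sum to zero; thus $\Gamma_{ij}^k x^k = -4A\,x^ix^j/|x|^4 + 2A\,\delta_{ij}/|x|^2 + O(|x|^{-3})$, and the $A$-part of the first line of \eqref{Kform} is generated. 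Then I would compute $\delta_g\omega = g^{ij}\nabla_i\omega_j$ with the help of \eqref{ginv}: the flat contribution is $4 + \mathrm{tr}(b)/|x|^2 - 2b_{k\ell}x^kx^\ell/|x|^4$, the $(g^{ij}-\delta^{ij})$ correction adds $\tfrac13 R_{k\ell}(y_0)x^kx^\ell/|x|^4 - 8A/|x|^2$ (using $\delta^{ij}R_{ikj\ell} = R_{k\ell}$), and $-g^{ij}\Gamma_{ij}^k\omega_k$ contributes $-4A/|x|^2$, so $\delta_g\omega = 4 + \mathrm{tr}(b)/|x|^2 - 2b_{k\ell}x^kx^\ell/|x|^4 + \tfrac13 R_{k\ell}(y_0)x^kx^\ell/|x|^4 - 12A/|x|^2 + O(|x|^{-3})$. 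Finally I would assemble the three pieces, writing $-\tfrac12(\delta_g\omega)g_{ij} = -2g_{ij} - \tfrac12(\delta_g\omega - 4)\delta_{ij} + O(|x|^{-4})$ and expanding $g_{ij}$ via \eqref{gform}: the piece $-2(g_{ij}-\delta_{ij})$ produces exactly the $\tfrac23 R_{ikj\ell}(y_0)x^kx^\ell/|x|^4$ term plus $-4A\,\delta_{ij}/|x|^2$, while $-\tfrac12(\delta_g\omega - 4)\delta_{ij}$ produces the remaining diagonal terms $-\tfrac12\mathrm{tr}(b)\,\delta_{ij}/|x|^2 + b_{k\ell}x^kx^\ell\,\delta_{ij}/|x|^4 - \tfrac16 R_{k\ell}(y_0)x^kx^\ell\,\delta_{ij}/|x|^4 + 6A\,\delta_{ij}/|x|^2$; after combining the $A$-contributions into $2A(4x^ix^j - |x|^2\delta_{ij})/|x|^4$, \eqref{Kform} follows.

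The main obstacle is purely organizational rather than conceptual: there are four separate sources of $O(|x|^{-2})$ terms -- the nonflat part of $\Gamma_{ij}^k$ contracted with $x^k$, the inverse-metric correction and the Christoffel correction inside $\delta_g\omega$, and the metric $g_{ij}$ itself inside the trace term -- and the bookkeeping must keep the traceless part and the $\delta_{ij}$ part separate while reducing every contraction of a Riemann component against $x^kx^\ell$ to one of the shapes $\delta_{ij}$, $x^ix^j$, $R_{ikj\ell}x^kx^\ell$, or $R_{k\ell}x^kx^\ell$ using only the pair symmetry and the antisymmetry within each pair (no Bianchi identity is required). The error stays $O(|x|^{-4})$ because, in the cases in which this lemma is applied (the Burns metric and the Green's function metric of $S^2\times S^2$), the expansion \eqref{gform} has no genuine $|x|^{-3}$ term, so no odd-order piece leaks into $\Gamma_{ij}^k x^k$ or into $\delta_g\omega$.
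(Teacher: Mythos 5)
Your computation is correct and follows essentially the same route as the paper: expand $\partial_i\omega_j$, use Lemma \ref{CSLemma} together with the curvature symmetries to see that all curvature contributions to $\Gamma_{ij}^k\omega_k$ cancel leaving only the $A$-terms, compute $\delta_g\omega$ via the inverse-metric expansion, and assemble $\nabla_i\omega_j+\nabla_j\omega_i-\tfrac{1}{2}(\delta_g\omega)g_{ij}$; your splitting of the trace term as $-2g_{ij}-\tfrac{1}{2}(\delta_g\omega-4)\delta_{ij}$ is only a cosmetic reorganization of the paper's direct multiplication. Note also that the $-\tfrac{1}{2}\,b_{kk}\,\delta_{ij}/|x|^2$ you obtain carries the correct power of $|x|$ (the $|x|^{-4}$ in the displayed statement is a typo, as the paper's own later substitution of $b_{ij}$ into this formula confirms), and your closing remark correctly identifies why the error is $O(|x|^{-4})$ for the metrics actually used.
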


\begin{proof}  We begin by noting
\begin{align*}
\nabla_i \omega_j = \partial_i \omega_j - \Gamma_{ij}^k \omega_k.
\end{align*}
Using (\ref{wform}),
\begin{align} \label{pomega}
\partial_i \omega_j = \delta_{ij} + \frac{b_{ji}}{|x|^2} - 2 \frac{b_{jk}x^k x^i}{|x|^4} + \cdots,
\end{align}
while
\begin{align} \label{Gamomega} 
\begin{split}
\Gamma_{ij}^k \omega_k 
&= -\frac{1}{3} R_{i \alpha k j}(y_0) \frac{x^{\alpha} x^k }{|x|^4} - \frac{1}{3} R_{j \alpha k i}(y_0) \frac{x^{\alpha} x^k }{|x|^4} \\
& \hskip.2in  + \frac{2}{3} R_{i \alpha k \beta }(y_0) \frac{x^j x^k x^{\alpha} x^{\beta}}{|x|^4} + \frac{2}{3} R_{j \alpha k \beta }(y_0) \frac{x^j x^k x^{\alpha} x^{\beta}}{|x|^4}
- \frac{2}{3} R_{i \alpha j \beta }(y_0) \frac{ x^{\alpha} x^{\beta}}{|x|^2} \\
& \hskip.2in - 4A \frac{x^i x^j}{|x|^4} + 2A \frac{\delta_{ij}}{|x|^2} + O(|x|^{-3}).
\end{split}
\end{align}
By the symmetries of the curvature tensor, the third and fourth terms above obviously vanish.  If we re-index in the first two terms, $k \leftrightarrow \beta$, then we can rewrite them
as
\begin{align} \label{reind} \begin{split}
-\frac{1}{3} R_{i \alpha k j}(y_0) \frac{x^{\alpha} x^k }{|x|^4} - \frac{1}{3} R_{j \alpha k i}(y_0) \frac{x^{\alpha} x^k }{|x|^4} &=
-\frac{1}{3} R_{i \alpha \beta j}(y_0) \frac{x^{\alpha} x^{\beta} }{|x|^4} - \frac{1}{3} R_{j \alpha \beta i}(y_0) \frac{x^{\alpha} x^{\beta} }{|x|^4} \\
&= \frac{1}{3} R_{i \alpha j \beta }(y_0) \frac{x^{\alpha} x^{\beta} }{|x|^4} + \frac{1}{3} R_{i \beta j \alpha}(y_0) \frac{x^{\alpha} x^{\beta} }{|x|^4} \\
&= \frac{2}{3} R_{i \alpha j \beta }(y_0) \frac{x^{\alpha} x^{\beta} }{|x|^4}.
\end{split}
\end{align}
Substituting this back into (\ref{Gamomega}), we find that
\begin{align*}
\Gamma_{ij}^k \omega_k =  - 4A \frac{x^i x^j}{|x|^4} + 2A \frac{\delta_{ij}}{|x|^2} + O(|x|^{-3}).
\end{align*}
Therefore,
\begin{align} \label{Domega}
\nabla_i \omega_j = \delta_{ij} + \frac{b_{ji}}{|x|^2} - 2 \frac{b_{jk}x^k x^i}{|x|^4} + 4A \frac{x^i x^j}{|x|^4} - 2A \frac{\delta_{ij}}{|x|^2} + O(|x|^{-3}).
\end{align}
The divergence of $\omega$ is
\begin{align} \label{divo} \begin{split}
\delta \omega &= g^{ij} \nabla_i \omega_j \\
&= \big\{ \delta_{ij} + \frac{1}{3} R_{i \alpha j \beta}(y_0)\frac{x^{\alpha} x^{\beta} }{|x|^4} - \frac{2A}{|x|^2} \delta_{ij}  + \cdots \big\}\\
 & \hskip.25in \times \big\{  \delta_{ij} + \frac{b_{ji}}{|x|^2} - 2 \frac{b_{jk}x^k x^i}{|x|^4} - 2 \frac{b_{jk}x^k x^i}{|x|^4} + 4A \frac{x^i x^j}{|x|^4} - 2A \frac{\delta_{ij}}{|x|^2} + \cdots \big\} \\
&= 4 + \frac{b_{kk}}{|x|^2} - 2 \frac{b_{k \ell}x^k x^{\ell}}{|x|^4} + \frac{1}{3} R_{k \ell}(y_0) \frac{x^{k} x^{\ell}}{|x|^4} - \frac{12A}{|x|^2} + \cdots.
\end{split}
\end{align}
Hence,
\begin{align} \label{divtimesg} \begin{split}
(\delta \omega)g_{ij} &= \big\{ 4 + \frac{b_{kk}}{|x|^2} - 2 \frac{b_{k \ell}x^k x^{\ell}}{|x|^4} + \frac{1}{3} R_{k \ell}(y_0) \frac{x^{k} x^{\ell}}{|x|^4} - \frac{12A}{|x|^2} + \cdots \big\} \\
& \hskip.25in \times \big\{ \delta_{ij} - \frac{1}{3}R_{ikj \ell}(y_0)\frac{x^k x^{\ell}}{|x|^4} + \frac{2A}{|x|^2} \delta_{ij} +  \cdots \big\} \\
&= 4 \delta_{ij} + \frac{b_{kk}}{|x|^2}\delta_{ij} - 2 \frac{ b_{k \ell} x^k x^{\ell}}{|x|^4}\delta_{ij} - \frac{4}{3} R_{i k j \ell}(y_0) \frac{x^k x^{\ell}}{|x|^4}
+ \frac{1}{3} R_{k\ell}(y_0)  \frac{x^k x^{\ell}}{|x|^4} \delta_{ij} \\
& \hskip.3in - \frac{4A}{|x|^2} \delta_{ij} + \cdots
\end{split}
\end{align}

Finally, combining (\ref{Domega}) and (\ref{divtimesg}) we get
\begin{align} \label{Ko1} \begin{split}
\mathcal{K}[\omega]_{ij} &= \nabla_i \omega_j + \nabla_j \omega_i - \frac{1}{2} (\delta \omega)g_{ij} \\
&= 2 \frac{b_{ij}}{|x|^2} - 2\frac{b_{jk}x^k x^i}{|x|^4} - 2 \frac{b_{ik}x^k x^j}{|x|^4} - \frac{1}{2}\frac{b_{kk}}{|x|^4}\delta_{ij} + \frac{b_{k\ell}x^k x^{\ell}}{|x|^4}\delta_{ij} \\
&\hskip.2in  + \frac{2}{3}R_{ikj \ell}(y_0)\frac{x^k x^{\ell}}{|x|^4}
 - \frac{1}{6}R_{k\ell}(y_0)\frac{x^k x^{\ell}}{|x|^4}\delta_{ij} + 8A \frac{x^i x^j}{|x|^4} - 2A \frac{\delta_{ij}}{|x|^2} + \cdots,
\end{split}
\end{align}
which completes the proof.
\end{proof}
This implies the following decay rate for $\Box \omega$:
\begin{lemma}  \label{Lemma2}   In inverted normal coordinates,
\begin{align} \label{Boxform} \begin{split}
\Box \omega = O (|x|^{-4}),
\end{split}
\end{align}
as $|x| \rightarrow \infty$.
\end{lemma}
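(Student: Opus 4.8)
The plan is to compute $\Box\omega = \delta_g\mathcal{K}[\omega]$ directly from the expansion of $\mathcal{K}[\omega]$ recorded in Lemma~\ref{Lemma1}, and to show that the term which is \emph{a priori} of order $|x|^{-3}$ vanishes identically; this is precisely the reason the coefficient tensor in \eqref{bhform0} was chosen to be $b_{ij} = -\tfrac13 S_{ij}(y_0) + 2A\,\delta_{ij}$.

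First I would reduce the problem to flat space. Let $\Phi_{ij}$ denote the part of $\mathcal{K}[\omega]$ that is homogeneous of degree $-2$, i.e. the terms displayed in \eqref{Kform}, so that $\mathcal{K}[\omega]_{ij} = \Phi_{ij} + O(|x|^{-4})$, $|\Phi| = O(|x|^{-2})$ and $|\partial\Phi| = O(|x|^{-3})$. Writing the $g$-divergence as $(\delta_g T)_j = g^{ik}\big(\partial_i T_{kj} - \Gamma_{ik}^m T_{mj} - \Gamma_{ij}^m T_{km}\big)$ and using $g^{ik} - \delta^{ik} = O(|x|^{-2})$ from \eqref{ginv} together with $\Gamma_{ij}^k = O(|x|^{-3})$ from Lemma~\ref{CSLemma}, each correction to the Euclidean divergence is $O(|x|^{-5})$, while $\delta_g$ applied to the $O(|x|^{-4})$ remainder in Lemma~\ref{Lemma1} is $O(|x|^{-4})$. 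Hence $\Box\omega = \partial_i\Phi_{ij} + O(|x|^{-4})$, with $\partial_i$ now the flat divergence, and the lemma reduces to checking that $\partial_i\Phi_{ij}$, which is homogeneous of degree $-3$, is identically zero.

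To verify this I would differentiate the terms of \eqref{Kform} using $\partial_i(|x|^{-2}) = -2x^i|x|^{-4}$, $\partial_i(x^kx^\ell|x|^{-4}) = (\delta_{ik}x^\ell + \delta_{i\ell}x^k)|x|^{-4} - 4x^ix^kx^\ell|x|^{-6}$, and Euler's identity for homogeneous functions, then contract on the divergence index $i$. The curvature terms collapse using the symmetries of $R_{ikj\ell}(y_0)$ (in particular $R_{ikj\ell}\delta^{ik} = R_{ikj\ell}x^ix^k = 0$) and the Ricci trace $R_{ikj\ell}\delta^{i\ell} = R_{kj}(y_0)$, while the $b$-terms produce contributions proportional to $b_{jk}x^k|x|^{-4}$, $b_{kk}x^j|x|^{-4}$ and $b_{k\ell}x^kx^\ell x^j|x|^{-6}$. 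Collecting everything into a single multiple of $x^j|x|^{-4}$, the resulting coefficient is a fixed linear combination of $R_{jk}(y_0)x^k|x|^{-4}$, $R(y_0)x^j|x|^{-4}$ and $A\,x^j|x|^{-4}$ whose numerical factors vanish exactly because $b_{ij} = -\tfrac13 S_{ij}(y_0) + 2A\,\delta_{ij}$ with $S_{ij}(y_0) = \tfrac12\big(R_{ij}(y_0) - \tfrac16 R(y_0)\delta_{ij}\big)$. Therefore $\partial_i\Phi_{ij} \equiv 0$, and $\Box\omega = O(|x|^{-4})$.

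The only genuine obstacle is organizational: differentiating the several groups of terms in \eqref{Kform}, keeping track of all the resulting degree-$(-3)$ tensors, and confirming their complete cancellation. No ideas are needed beyond Euler's identity, the first Bianchi identity together with the symmetries of the curvature tensor, and the definitions \eqref{wform}--\eqref{bhform0}; but this is the one step where the precise value of $b_{ij}$ is used, and hence where care is required.
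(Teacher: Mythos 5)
Your proposal is correct and follows essentially the same route as the paper: reduce $\Box\omega$ to the flat divergence of the degree $-2$ part of $\mathcal{K}[\omega]$ (the Christoffel and metric corrections being $O(|x|^{-5})$ and $O(|x|^{-5})$ respectively), compute that divergence termwise, and observe that the homogeneous degree $-3$ contribution cancels identically upon substituting $b_{ij} = -\tfrac13 S_{ij}(y_0) + 2A\,\delta_{ij}$. The paper's proof of Lemma~\ref{Lemma2} records exactly this computation in \eqref{Boxformlong} before substituting \eqref{bhform0}, so there is nothing further to add.
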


\begin{proof}  Recall that $\Box$ is given by
\begin{align*}
\Box \omega_j = \big( \delta \mathcal{K}[\omega]\big)_j
= g^{i k} \nabla_k \mathcal{K}[\omega]_{i j} = g^{i k} \partial_k \mathcal{K}[\omega]_{ij} + \mathcal{K}[\omega] \ast \Gamma.
\end{align*}
Note that
\begin{align*}
(\mathcal{K}[\omega]) \ast \Gamma \sim |x|^{-5},
\end{align*}
so it is much lower order than the derivative term.  Also,
\begin{align*}
g^{i k} \partial_k \mathcal{K}[\omega]_{ij} &= \big( \delta_{i k} + O(|x|^{-2}) \big) \partial_k \mathcal{K}_{ij}[\omega] \\
&= \partial_i \mathcal{K}[\omega]_{ij} + (\mbox{lower}).
\end{align*}
Consequently, we obtain
\begin{align} \label{Boxformlong} \begin{split}
(\Box \omega)_j
&= -2 \frac{b_{ij}x^i}{|x|^4} - 4 \frac{b_{jk}x^k}{|x|^4} - \frac{b_{kk}}{|x|^4}x^j + 4 \frac{b_{k\ell} x^k x^{\ell}x^j}{|x|^6} \\
&\hskip.2in + \frac{2}{3}R_{k\ell}(y_0)\frac{x^k x^{\ell} x^j}{|x|^6} - R_{jk}(y_0)\frac{x^k}{|x|^4} +12A \frac{x^j}{|x|^4} + O(|x|^{-4})
\end{split}
\end{align}
as $|x| \rightarrow \infty$.
Substituting \eqref{bhform0} into this completes the proof.
\end{proof}
Consider $\Box: C^{2, \alpha}_{-2 + \epsilon}(N) \rightarrow C^{0, \alpha}_{-4 + \epsilon}(N)$,
with $\epsilon >0$ small, and consider the equation
\begin{align}
\label{boeq}
\Box ( \omega') = \Box \omega
\end{align}
The cokernel of this operator has domain weight $- \epsilon$, so from
Proposition \ref{boxker}, there is no cokernel. Consequently,
\eqref{boeq} has a solution $\omega' \in  C^{2, \alpha}_{-2 + \epsilon}$.
The form $ \tilde{\omega} = \omega - \omega'$ is then a solution
of $\Box \tilde{\omega} = 0$ with expansion
\begin{align}
\tilde{\omega}_j = x^j + b_{jk}\frac{x^k}{|x|^2} + O(|x|^{-2 + \epsilon}),
\end{align}
for any $\epsilon > 0$. Since $\tilde{\omega}$ and $\omega_1$
have the same leading term, and their difference is decaying,
we must have $\tilde{\omega} = \omega_1$, so
of course $\omega_1$ admits the same expansion.
\begin{proof}[Proof of Theorem \ref{ALEcokethm}] Substituting (\ref{bhform0}) into (\ref{Kform}) and using the decomposition of the curvature tensor into Weyl and Schouten parts gives
\begin{align*}
\mathcal{K}[\omega]_{ij} &= 2 \big\{ -\frac{1}{3}S_{ij}(y_0) + 2A \delta_{ij} \big\}\frac{1}{|x|^2} - 2\big\{ -\frac{1}{3}S_{jk}(y_0) + 2 A \delta_{jk} \big\}\frac{x^i x^k}{|x|^4} \\
& - 2\big\{ -\frac{1}{3}S_{ik}(y_0) + 2 A \delta_{ik} \big\}\frac{x^j x^k}{|x|^4} - \frac{1}{2} \big\{ -\frac{1}{18}R(y_0) + 8A \big\}\frac{\delta_{ij}}{|x|^2} \\
& + \big\{ -\frac{1}{3}S_{k\ell}(y_0) + 2A \delta_{k \ell} \big\}\frac{x^k x^{\ell}}{|x|^4}\delta_{ij} + \frac{2}{3} \big\{ W_{ik j \ell}(y_0) + ( \delta_{ij} S_{k\ell}(y_0)
- \delta_{i \ell} S_{jk}(y_0) \\
& - \delta_{jk} S_{i \ell}(y_0) + \delta_{k\ell} S_{ij}(y_0) ) \big\} \frac{x^k x^{\ell}}{|x|^4} + 8A \frac{x^i x^j}{|x|^4}- \frac{1}{6}R_{k\ell}(y_0)\frac{x^k x^{\ell}}{|x|^4}\delta_{ij}  - 2A \frac{\delta_{ij}}{|x|^2} + \cdots \\
&= \frac{2}{3} W_{ikj \ell}(y_0) \frac{x^k x^{\ell}}{|x|^4} + \cdots
\end{align*}
\end{proof}

\section{Some auxiliary linear equations}
\label{auxiliary}

In this section, we solve two linear equations.
First, an equation on the AF metric $(N, g_N)$, and
second, an equation on the compact manifold $(Z, g_Z)$.
The ``group action'' will refer to the $U(2)$-action in
the cases $g_Z$ is the Fubini-Study
metric and $g_N$ is the Burns metric,
and to the toric action plus diagonal symmetry in
the case $g_Z$ is $g_{S^2 \times S^2}$ and $g_N$ is
the corresponding Green's function metric.
\subsection{A linear equation on $(N,g_N)$}
On the compact manifold $(Z, g_Z)$, in normal
coordinates $\{z^i\}$ around $z_0$, we have the expansion
\begin{align}
g_Z = (g_Z)_{ij} dz^i dz^j = (\delta_{ij} + H_2(z)_{ij} + O(|z|^4)_{ij}) dz^i dz^j,
\end{align}
where
\begin{align}
H_2(z)_{ij} =  - \frac{1}{3} R_{ikjl}(z_0) z^k z^l.
\end{align}
Again let $(N,g_N)$ be the conformal blow-up of the Bach-flat
manifold $(Y, g_Y)$, as above.
Consider the quadratic tensor
\begin{align}
H_2(x) = (- \frac{1}{3} R_{ikjl}(z_0) x^k x^l) dx^i dx^j.
\end{align}
This tensor $H_2(x)$ of course does not live on all of $N$, since it is
only defined in the AF coordinate system. To extend
$H_2(x)$ to all of $N$, let $0 \leq \phi \leq 1$ be a cut-off function satisfying
\begin{align} \label{cutdef}
\phi(t) =
\begin{cases}
1 & t \leq 1 \\
0 & t \geq 2,\\
\end{cases}
\end{align}
and consider $(1 -\phi(R_0^{-1} x)) H_2(x)$,
where $R_0$ is very large.
\begin{proposition} \label{SH2decayProp}
Let $S$ denote the linearized operator on $N$, then
\begin{align}
S ( (1 -\phi(R_0^{-1} x) H_2(x)) = O( |x|^{-4})
\end{align}
as $|x| \rightarrow \infty$
\end{proposition}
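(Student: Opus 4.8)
The plan is to compare $S$ with its flat model and exploit that $H_2$ is a quadratic polynomial. First note that the cutoff $\phi(R_0^{-1}x)$ equals $1$ on $\{|x|\le R_0\}$ and $0$ on $\{|x|\ge 2R_0\}$, so $(1-\phi(R_0^{-1}x))H_2(x)$ agrees with $H_2(x)$ on $\{|x|\ge 2R_0\}$. Since the assertion concerns only the behavior as $|x|\to\infty$, it suffices to prove $S(H_2) = O(|x|^{-4})$ on that region. Write $S_{g_N} = S_{g_0} + \mathcal{E}$, where $S_{g_0}$ is the constant-coefficient Euclidean operator of \eqref{SformEuc} and $\mathcal{E} = S_{g_N} - S_{g_0}$.

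The first step is to check that $S_{g_0}H_2 = 0$. On the flat metric every Christoffel symbol and every curvature term in $S$ vanishes, so $S_{g_0}$ is a \emph{purely fourth-order} constant-coefficient operator: indeed \eqref{SformEuc} exhibits $S_{g_0}h$ as a fixed linear combination of fourth-order partial derivatives of the components of $h$ (equivalently, for $h = H_2$ the quantities $\Delta_0(\mathrm{tr}_0 H_2)$ and $\delta_0^2 H_2$ are constants equal to contractions of $R(z_0)$, so the seemingly lower-order terms $\nabla_0^2(\Delta_0\mathrm{tr}_0 H_2)$, $\Delta_0(\delta_0^2 H_2)g_0$ and $\nabla_0^2(\delta_0^2 H_2)$ all vanish). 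Since $H_2(x)_{ij} = -\tfrac13 R_{ikjl}(z_0)x^k x^l$ is a polynomial of degree $2$ in $x$, all of its fourth partial derivatives vanish, whence $S_{g_0}H_2 = 0$.

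The second step is to estimate $\mathcal{E}H_2$. By \eqref{burnsexp} and \eqref{s2s2exp} we have $g_N - g_0 = O^{(4)}(|x|^{-2})$, hence $\partial^j(g_N - g_0) = O(|x|^{-2-j})$, the Christoffel symbols satisfy $\partial^j\Gamma_{g_N} = O(|x|^{-3-j})$ (consistent with Lemma \ref{CSLemma}), and $Rm_{g_N} = O(|x|^{-4})$, $\nabla Rm_{g_N} = O(|x|^{-5})$, $\nabla^2 Rm_{g_N} = O(|x|^{-6})$. The coefficient of $\partial^k h$ in the operator $S_g$ is a universal expression in $g^{-1}$ and the derivatives of $g$ of order $\ge 1$; for $k = 4$ it depends only on $g^{-1}$, and since (by the first step) $S_{g_0}$ has no terms of order $\le 3$, each coefficient multiplying $\partial^k h$ with $k\le 3$ vanishes in the flat case and in fact carries metric-derivative decay: the $\partial^2 h$-coefficient is of the type $\partial^2 g + (\partial g)^2 = O(|x|^{-4})$ (from differentiated Christoffels and from $Rm$), the $\partial^1 h$-coefficient is of the type $\nabla Rm = O(|x|^{-5})$, and the $h$-coefficient is of the type $\nabla^2 Rm + Rm\ast Rm = O(|x|^{-6})$. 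Applying $\mathcal{E}$ to $H_2$ and using $H_2 = O(|x|^2)$, $\partial H_2 = O(|x|)$, $\partial^2 H_2 = O(1)$, $\partial^k H_2 = 0$ for $k\ge 3$, the $\partial^4 h$ and $\partial^3 h$ contributions vanish, and the rest is $O(|x|^{-4})\cdot O(1) + O(|x|^{-5})\cdot O(|x|) + O(|x|^{-6})\cdot O(|x|^2) = O(|x|^{-4})$. Combining the two steps gives $S(H_2) = \mathcal{E}H_2 = O(|x|^{-4})$ on $\{|x|\ge 2R_0\}$. (Alternatively, one may expand $S_{g_N}$ directly about $g_0$ using the schematic formulas \eqref{Btscheme} and \eqref{CSdiff}--\eqref{D2Rmh} and collect terms.)

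The main obstacle is the bookkeeping in the second step: one must rule out any term of $\mathcal{E}H_2$ of order $|x|^{-2}$ or $|x|^{-3}$. The potential $|x|^{-2}$ terms are exactly those in which the top derivative $\partial^4 H_2$ appears, and these vanish because $H_2$ is quadratic; the potential $|x|^{-3}$ terms are likewise killed by $\partial^3 H_2 = 0$. The structural fact that makes this clean is precisely that $S_{g_0}$ carries no lower-order terms, so every coefficient of $S_g$ multiplying $\partial^k h$ with $k\le 3$ already comes with enough metric-derivative decay.
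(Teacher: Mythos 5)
Your proposal is correct and follows essentially the same route as the paper: compare $S$ on $(N,g_N)$ with the flat-model operator $S_0$, observe that $S_0 H_2 = 0$ because $S_0$ is purely fourth order and $H_2$ is a quadratic polynomial, and bound the difference using the order-$2$ asymptotic flatness of $g_N$ (so $\Gamma = O(|x|^{-3})$, $Rm = O(|x|^{-4})$, etc.). Your bookkeeping by coefficients of $\partial^k h$ is just a reorganization of the paper's term-by-term estimate of $\nabla^4 h - \partial^4 h$ and the curvature terms in \eqref{Sgenform}.
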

\begin{proof}
From \eqref{PS1}, the linearized operator has the general form
\begin{align}
\begin{split} \label{Sgenform}
S h &= ( g^{-2} + g* g^{-3} ) *\nabla^4 h + g* g^{-3} *Rm * \nabla^2 h
+ g* g^{-3}* \nabla Rm * \nabla h\\
&+ ( g^{-2} + g * g^{-3})*( \nabla^2 Rm + Rm * Rm) *h.
\end{split}
\end{align}
It is easy to see that for $|x|$ sufficiently large and any tensor $h$,
\begin{align}
\begin{split}
\nabla^4 h &= \partial^4 h + \Gamma * \partial^3 h + ( \partial \Gamma + \Gamma * \Gamma) * \partial^2 h\\
& + ( \partial^2 \Gamma + \Gamma * \partial \Gamma) * \partial h
+ ( \partial^3 \Gamma + \partial \Gamma * \partial \Gamma
+ \Gamma * \partial^2 \Gamma) * h,
\end{split}
\end{align}
where $\partial$ denotes coordinate partial derivatives.
If $h$ grows quadratically, then since $g_N$ is AF of order $2$,
we see that
\begin{align}
\nabla^4 h = \partial^4 h + O (|x|^{-4}).
\end{align}
Since $(g_N)_{ij} = \delta_{ij} + O(|x|^{-2})$, it follows that
\begin{align*}
( g^{-2} + g* g^{-3} ) *\nabla^4 h = S_0 h + O (|x|^{-4}),
\end{align*}
where $S_0$ is the linearized operator with respect to the flat metric.
Estimating the other terms on the right-hand side of (\ref{Sgenform}) in a
similar manner, we find
\begin{align}
S(h) = S_{0} h + O(|x|^{-4})
\end{align}
as $|x| \rightarrow \infty$.
Since $H_2$ has quadratic leading term and $S_0$ is a fourth-order
operator, we clearly have
\begin{align} \label{SH20}
S_0 (H_2) = 0.
\end{align}
Therefore,
\begin{align}
S(H_2) = O(|x|^{-4})
\end{align}
as $|x| \rightarrow \infty$.
\end{proof}
Next, given $\epsilon > 0$, consider
\begin{align}
\label{smap}
S: C_{\epsilon}^{4,\alpha}(N) \rightarrow C^{0,\alpha}_{\epsilon - 4}(N).
\end{align}
The cokernel of this mapping is the kernel of
\begin{align}
S^*: C_{- \epsilon}^{4,\alpha}(N) \rightarrow C_{-\epsilon - 4}^{0,\alpha}(N),
\end{align}
which consists of the decaying elements.

By Theorem \ref{afker}, $Ker(S^*)$ is $1$-dimensional, and spanned by the element
\begin{align}
o_1 = \mathcal{K} \omega_1 + f \cdot g_N.
\end{align}
Since $Ker(S^*)$ is nontrivial,
this means the map in \eqref{smap} is not surjective, that is,
$S( C_{\epsilon}^{4,\alpha} ) \subset C_{\epsilon - 4}^{0,\alpha}$ is a proper subset,
and the quotient space
\begin{align}
C_{\epsilon - 4}^{0,\alpha} / S( C_{\epsilon}^{4,\alpha} )
\end{align}
is $1$-dimensional.
A tensor $h \in C_{\epsilon - 4}^{0,\alpha}$ is in the image of $C_{\epsilon}^{4, \alpha}$ under
$S$ if and only if it pairs trivially with $Ker(S^*)$ under the $L^2$ pairing.
That is
\begin{align}
h \in S(C_{\epsilon}^{4,\alpha}) \Longleftrightarrow \int_N \langle h, o_1 \rangle dV = 0.
\end{align}
Since the quotient space is $1$-dimensional, we
choose $k_1^{(0)} \in C_{\epsilon - 4}^{0,\alpha}$ having compact support in $B(x_0,R_0)$ 
(where $x_0$ is a basepoint) satisfying
\begin{align}
\int \langle o_1, k_1^{(0)} \rangle dV = 1,
\end{align}
and we can write
\begin{align}
C_{\epsilon-4}^{0,\alpha} = S(  C_{\epsilon}^{4,\alpha}) \oplus \mathbb{R} \cdot k_1^{(0)}.
\end{align}
By averaging over the group, we may assume that $k_1^{(0)}$ is invariant
under the group action. Therefore, we can write
\begin{align}
S (  (1 -\phi(R_0^{-1} x)) H_2(x)) =  S ( h_{\epsilon}) + \lambda k_1^{(0)},
\end{align}
where $ h_{\epsilon} \in  C_{\epsilon}^{4,\alpha}$, and $\lambda \in \mathbb{R}$.
Again, by averaging over the group, we may assume that $h_{\epsilon}$ is
invariant under the group action. Rewriting this as
\begin{align}
S (  (1 -\phi(R_0^{-1} x)) H_2 - h_{\epsilon} ) = \lambda k_1^{(0)},
\end{align}
we now define
\begin{align}
\tilde{H}_2  \equiv  (1 -\phi(R_0^{-1} x)) H_2- h_{\epsilon}.
\end{align}
Since  $h_{\epsilon} \in  C_{\epsilon}^{4,\alpha}$, clearly
$\tilde{H}_2$ has leading term {\em{exactly}} equal to $H_2$ as $|x| \rightarrow \infty$.
To summarize, we have solved

\begin{proposition} \label{LamProp}
On $(N,g_N)$, there exists a solution of
\begin{align} \label{lamDefEqn} \begin{split}
S (\tilde{H}_2) &=  \lambda k_1^{(0)}, \\
\tilde{H}_2(x) &= H_2(x) + O^{(4)}(|x|^{\epsilon}), \mbox{ as } |x| \rightarrow \infty,
\end{split}
\end{align}
where $k_1^{(0)}$ is a tensor with compact support on $(N, g_N)$ satisfying
\begin{align}
\int_N \langle o_1, k_1^{(0)} \rangle\ dV = 1.
\end{align}
Furthermore, $\tilde{H}_2$ can be chosen to be invariant under the group action.
\end{proposition}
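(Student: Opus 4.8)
The proof assembles the Fredholm picture set up in the discussion preceding the statement. The plan is as follows. First I would take as the starting point the globally defined tensor $(1-\phi(R_0^{-1}x))H_2(x)$, which by construction agrees \emph{exactly} with $H_2(x)$ for $|x|\geq 2R_0$. Since $H_2$ is the quadratic form whose coefficients are the components $R_{ikjl}(z_0)$ of the curvature of $(Z,g_Z)$ at the fixed point $z_0$, written in normal coordinates in which the group acts linearly, it is invariant under that linear action; this linear action on the $\{x^i\}$-coordinates is precisely the group action under which $g_N$ is invariant (the $U(2)$-action on the Burns metric, or the torus-plus-diagonal action on the Green's function metric of $S^2\times S^2$), and $\phi(R_0^{-1}x)$ is radial, so $(1-\phi(R_0^{-1}x))H_2$ is group-invariant. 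Applying Proposition \ref{SH2decayProp} gives $S((1-\phi(R_0^{-1}x))H_2) = O(|x|^{-4})$ as $|x|\to\infty$, and since $S$ commutes with the isometric group action this error tensor is also group-invariant; in particular it lies in $C^{0,\alpha}_{\epsilon-4}(N)$ for any small $\epsilon>0$ (as $-4<\epsilon-4$).

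Next I would invoke the weighted Fredholm theory for $S\colon C^{4,\alpha}_{\epsilon}(N)\to C^{0,\alpha}_{\epsilon-4}(N)$, which applies because $S$ is elliptic for $t\neq 0$ (Proposition \ref{ellprop}) and asymptotic at infinity to the homogeneous flat-space operator $S_0$. By Proposition \ref{indclaim} no $\epsilon\in(0,1)$ is an indicial root of $S^t$, so the range of $S$ is closed of finite codimension equal to the dimension of the kernel of the adjoint $S^{*}$ on the dual weight; by Theorem \ref{afker} this cokernel is one-dimensional and spanned by $o_1=\mathcal{K}\omega_1 + f g_N$. I would then pick a smooth tensor of compact support that pairs nontrivially with $o_1$ in $L^2$ (possible since $o_1\not\equiv 0$) and rescale it to have $L^2$-pairing $1$ with $o_1$; averaging it over the (compact) group produces $k_1^{(0)}$, still smooth, compactly supported, and normalized — the normalization being preserved because $o_1$ is group-invariant (last part of Theorem \ref{afker}) and the group acts by isometries. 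This yields the splitting $C^{0,\alpha}_{\epsilon-4}(N) = S(C^{4,\alpha}_{\epsilon}(N))\oplus\mathbb{R}\,k_1^{(0)}$.

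Finally, I would decompose $S((1-\phi(R_0^{-1}x))H_2) = S(h_\epsilon) + \lambda k_1^{(0)}$ for some $h_\epsilon\in C^{4,\alpha}_{\epsilon}(N)$ and $\lambda\in\mathbb{R}$; since both the right-hand side and $\ker(S|_{C^{4,\alpha}_\epsilon})$ (which contains $\mathbb{R}\,g_N$) are group-invariant, averaging $h_\epsilon$ over the group gives an invariant representative, which I relabel $h_\epsilon$. Setting $\tilde{H}_2 = (1-\phi(R_0^{-1}x))H_2 - h_\epsilon$ then gives $S(\tilde{H}_2)=\lambda k_1^{(0)}$, while $h_\epsilon\in C^{4,\alpha}_\epsilon$ means $h_\epsilon = O^{(4)}(|x|^{\epsilon})$ and $(1-\phi(R_0^{-1}x))H_2$ equals $H_2$ for $|x|\geq 2R_0$, so $\tilde{H}_2 = H_2 + O^{(4)}(|x|^{\epsilon})$; and $\tilde{H}_2$ is group-invariant. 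The only point requiring genuine care, rather than deep new work, is the equivariance bookkeeping — in particular that averaging $k_1^{(0)}$ over the group does not spoil the normalization $\int_N\langle o_1, k_1^{(0)}\rangle\,dV = 1$, which relies precisely on the group-invariance of $o_1$ established in Theorem \ref{afker}; beyond that, the statement follows directly from Propositions \ref{SH2decayProp} and \ref{indclaim} together with the Fredholm setup already in place.
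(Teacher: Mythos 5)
Your proposal is correct and follows essentially the same route as the paper: cut off $H_2$ at finite radius, apply Proposition \ref{SH2decayProp} to place the error in $C^{0,\alpha}_{\epsilon-4}(N)$, use the weighted Fredholm theory with the one-dimensional cokernel spanned by $o_1$ from Theorem \ref{afker} to split off $\lambda k_1^{(0)}$, and average over the group to obtain equivariance. The extra care you take with the equivariance bookkeeping (invariance of $o_1$ preserving the normalization of $k_1^{(0)}$ after averaging) is consistent with, and slightly more explicit than, the paper's treatment.
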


\subsection{A linear equation on $(Z,g_Z)$}
Next we return to the compact metric $(Z, g_Z)$.
Recall on $(N,g_N)$, we have an AF-coordinate system
satisfying
\begin{align} \label{gNext}
g_N = (g_N)_{ij}dx^i dx^j = ( \delta_{ij} + H_{-2}(x)_{ij} + O(|x|^{-4 + \epsilon})_{ij}) dx^i dx^j,
\end{align}
where
\begin{align}
H_{-2}(x) = \Big(- \frac{1}{3} R_{ikjl}(y_0) \frac{x^k x^l}{|x|^4}
+ 2 A \frac{1}{|x|^2} \delta_{ij} \Big) dx^i dx^j
\end{align}
is a $2$-tensor with components
\begin{align} \label{7Hminus2form}
H_{-2}(x)_{ij} = - \frac{1}{3} R_{ikjl}(y_0) \frac{x^k x^l}{|x|^4}
+ 2 A \frac{1}{|x|^2} \delta_{ij}.
\end{align}
Consider the inverse quadratic tensor
\begin{align}
H_{-2}(z)= \Big(- \frac{1}{3} R_{ikjl}(y_0) \frac{z^k z^l}{|z|^4}
+ 2 A \frac{1}{|z|^2} \delta_{ij} \Big) dz^i dz^j.
\end{align}
Extend this tensor to all of $Z$ by $ \phi( (R')^{-1}z) H_{-2}(z)$,
where $b < R' < inj_{z_0} (g_Z)$ is some fixed radius.

We will need the following technical lemma both in this Section, and later in Section~\ref{better}:

\begin{lemma} \label{SBKer}  Let $S_0$ denote the linearized operator with respect to the flat metric. Then
\begin{align} \label{S0H2}
S_0 (H_{-2}) = 0,
\end{align}
where $H_{-2}$ is viewed as a tensor on $\mathbb{R}^4 \setminus \{0\}$.

Furthermore, if $(B_0^t)'$ denotes the linearization of the $B^t$-tensor at the flat metric, then
\begin{align} \label{B0H2}
(B_0^t)'(H_{-2}) = 0.
\end{align}
\end{lemma}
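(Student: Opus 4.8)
The plan is to derive both identities from the scaling behavior of the linearized operators, using in an essential way that the asymptotically flat metric $(N,g_N)$ is an \emph{exact} $B^t$-flat metric. First I would observe that at the flat background both $(B_0^t)' = B_0' + tC_0'$ and $S_0$ are \emph{constant-coefficient, fourth-order} operators: every lower-order term in \eqref{linop1} (hence in \eqref{linop}) carries a factor of $Rm_g$ or $\nabla_g Rm_g$, which vanishes when $g = g_0$. Thus on $\RR^4 \setminus \{0\}$ each of these operators is homogeneous of degree $-4$ under $x \mapsto \lambda x$, and since $H_{-2}$ (see \eqref{7Hminus2form}) is homogeneous of degree $-2$, both $(B_0^t)'(H_{-2})$ and $S_0(H_{-2})$ are homogeneous of degree $-6$. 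A tensor on $\RR^4 \setminus \{0\}$ that is homogeneous of degree $-6$ and is $o(|x|^{-6})$ as $|x| \to \infty$ vanishes identically, so it is enough to produce this decay.

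For \eqref{B0H2}: since $g_N$ is scalar-flat and Bach-flat we have $B^t(g_N) = 0$, and in inverted normal coordinates $g_N = g_0 + H_{-2} + O^{(4)}(|x|^{-4+\epsilon})$ by \eqref{burnsexp} and \eqref{s2s2exp}. Set $h = g_N - g_0 = O^{(4)}(|x|^{-2})$. Using the decomposition of Proposition \ref{QRemark}, $0 = B^t(g_N) = B^t(g_0) + (B_0^t)'h + \mathcal{Q}_{g_0}(h) = (B_0^t)'h + \mathcal{Q}_{g_0}(h)$. Applying \eqref{Qsize} with the \emph{flat} background — so that every term involving $Rm_{g_0}$, $\nabla Rm_{g_0}$ or $\nabla^2 Rm_{g_0}$ drops out — and $|\nabla_0^j h| = O(|x|^{-2-j})$ for $0 \le j \le 4$, each surviving term ($|h||\nabla^4 h|$, $|\nabla h||\nabla^3 h|$, $|\nabla^2 h|^2$, and the higher-order ones) is $O(|x|^{-8})$; hence $(B_0^t)'h = O(|x|^{-8})$. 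As $(B_0^t)'$ is fourth-order with constant coefficients and $h - H_{-2} = O^{(4)}(|x|^{-4+\epsilon})$, we get $(B_0^t)'(h - H_{-2}) = O(|x|^{-8+\epsilon})$, and subtracting, $(B_0^t)'(H_{-2}) = O(|x|^{-8+\epsilon})$. Taking $\epsilon < 2$ this is $o(|x|^{-6})$, so $(B_0^t)'(H_{-2}) = 0$.

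For \eqref{S0H2}: by \eqref{linop}, $S_0 h = (B_0^t)'h + \mathcal{K}_{g_0}\delta_{g_0}\mathcal{K}_{g_0}\delta_{g_0}\overset{\circ}{h} = (B_0^t)'h + \mathcal{K}_{g_0}\Box_0\big(\delta_{g_0}\overset{\circ}{h}\big)$, with $\Box_0 = \delta_{g_0}\mathcal{K}_{g_0}$, so by \eqref{B0H2} it remains to check $\mathcal{K}_{g_0}\Box_0\big(\delta_{g_0}\overset{\circ}{H_{-2}}\big) = 0$. Here a short direct computation suffices: the terms $2A|x|^{-2}\delta_{ij}$ cancel upon passing to the trace-free part, leaving $\overset{\circ}{H_{-2}}_{ij} = \big(-\tfrac13 R_{ikj\ell}(y_0) + \tfrac1{12}R_{k\ell}(y_0)\delta_{ij}\big)\tfrac{x^kx^\ell}{|x|^4}$; then, using the antisymmetries and the first Bianchi identity of the curvature tensor, $\partial^i(x^kx^\ell|x|^{-4}) = \delta^{ik}x^\ell|x|^{-4} + x^k\delta^{i\ell}|x|^{-4} - 4x^ix^kx^\ell|x|^{-6}$, and the fact that $g_Y$ is Einstein (so $R_{k\ell}(y_0) = \lambda\delta_{k\ell}$, which is the case in all of our applications), one obtains $\delta_{g_0}\overset{\circ}{H_{-2}} = -\tfrac{\lambda}{12}\,d(|x|^{-2})$. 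Since $|x|^{-2}$ is harmonic on $\RR^4 \setminus \{0\}$ this $1$-form is closed and co-closed, hence lies in the kernel of $\Box_0 = \tfrac32 d\delta + \delta d$; therefore $\mathcal{K}_{g_0}\Box_0\big(\delta_{g_0}\overset{\circ}{H_{-2}}\big) = 0$ and $S_0(H_{-2}) = 0$.

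The step I expect to be the main obstacle is the decay accounting in \eqref{B0H2}: one must use that $(N,g_N)$ is \emph{exactly} $B^t$-flat — not merely approximately — together with the precise structure \eqref{Qsize} of the quadratic remainder, all of whose curvature terms vanish at the flat metric, in order to beat the degree $-6$ homogeneity; a crude estimate would give only $O(|x|^{-6})$. For \eqref{S0H2} the only extra content beyond \eqref{B0H2} is the short, convention-sensitive curvature computation identifying $\delta_{g_0}\overset{\circ}{H_{-2}}$ with a multiple of $d(|x|^{-2})$, and hence recognizing it as $\Box_0$-harmonic.
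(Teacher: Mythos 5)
Your proof is correct and follows essentially the same route as the paper: both arguments exploit the exact $B^t$-flatness of $g_N$, the structure of the quadratic remainder $\mathcal{Q}$ at the flat metric, the fact that a constant-coefficient fourth-order operator sends the degree $-2$ homogeneous tensor $H_{-2}$ to a degree $-6$ homogeneous tensor, and the harmonicity of $\tfrac{x^j}{|x|^4}\,dx^j$ to kill the gauge term. The only (cosmetic) difference is that you establish \eqref{B0H2} first and then dispatch the gauge contribution by an exact computation of $\delta_{g_0}\overset{\circ}{H_{-2}}$, whereas the paper proves \eqref{S0H2} first via an asymptotic estimate of the gauge term applied to $\theta = g_N - g_0$ and notes that \eqref{B0H2} follows similarly.
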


\begin{proof}  To prove the Lemma we use the expansion (\ref{gNext}):
\begin{align}
g_N = g_0 + H_{-2} + O(|x|^{-4 + \epsilon})
\end{align}
where $g_0$ is the flat metric.  Let $\theta = g_N - g_0$.
Since $g_N$ is $B^t$-flat,
\begin{align} \label{exP1} \begin{split}
P_{g_0}( \theta) &= B^t(g_N) + \mathcal{K}_{g_N} \delta_0 \mathcal{K}_0 \delta_0 \overset{\circ}{\theta} \\
&= \mathcal{K}_{g_N} \delta_0 \mathcal{K}_0 \delta_0 \overset{\circ}{\theta}.
\end{split}
\end{align}
We can also use the expansion of $P$ at the flat metric $g_0$ to write
\begin{align} \label{exP2} \begin{split}
P_{g_0}( \theta) &= P_{g_0}(0) + S_0 (\theta) + Q(\theta) \\
&= B^t(g_0) + S_0 (\theta) + Q(\theta) \\
&= S_0 (\theta) + Q(\theta).
\end{split}
\end{align}
Combining (\ref{exP1}) and (\ref{exP2}) we find
\begin{align}
S_0 \theta = \mathcal{K}_{g_N} \delta_0 \mathcal{K}_0 \delta_0 \overset{\circ}{\theta} - Q(\theta).
\end{align}
Since
\begin{align} \label{thform}
\theta = H_{-2} + O(|x|^{-4 + \epsilon})
\end{align}
and $S_0$ is fourth order,
\begin{align}
S_0 \theta = S_0 (H_{-2}) + O(|x|^{-8 +\epsilon}),
\end{align}
hence
\begin{align}
S_0 (H_{-2}) = \mathcal{K}_{g_N} \delta_0 \mathcal{K}_0 \delta_0 \overset{\circ}{\theta} - Q(\theta) + O(|x|^{-8+ \epsilon}).
\end{align}
Also, using (\ref{Qsize}) we have
\begin{align}
 Q(\theta) = O(|x|^{-8}),
 \end{align}
so that
\begin{align} \label{SH28}
S_0 (H_{-2}) = \mathcal{K}_{g_N} \delta_0 \mathcal{K}_0 \delta_0 \overset{\circ}{\theta} + O(|x|^{-8 + \epsilon}).
\end{align}

It remains to estimate the gauge-fixing operator acting on $\theta$. By (\ref{7Hminus2form}),
\begin{align}
\overset{\circ}{\theta}_{ij} = - \frac{1}{3} W_{ikjl}(y_0) \frac{x^k x^l}{|x|^4} + \frac{R(y_0)}{36} \Big\{ \frac{x^i x^j }{|x|^4} - \frac{\delta_{ij}}{|x|^2} \Big\} + O(|x|^{-4}).
\end{align}
 Using the skew-symmetry of the Weyl tensor we find
 \begin{align}
 (\delta_0 \overset{\circ}{\theta})_j = \frac{R(y_0)}{24}\frac{x^j}{|x|^4} + O(|x|^{-5}).
 \end{align}
 We next calculate
 \begin{align*}
 \delta_0 \mathcal{K}_0 \delta_0 \overset{\circ}{\theta} = \Box \delta_0 \overset{\circ}{\theta}.
 \end{align*}
 It is easy to check that the form
\begin{align}
\omega = \frac{x^j}{|x|^4}dx^j
\end{align}
is harmonic.  Therefore, using the formula (\ref{boxEuc}) for $\Box$ on Euclidean space,
 \begin{align*}
\Box \omega &= \frac{3}{2} d \delta \omega + \delta d \omega = 0.
\end{align*}
Consequently,
 \begin{align} \label{gftH}
 \delta_0 \mathcal{K}_0 \delta_0 \overset{\circ}{\theta} &= \Box \delta_0 \overset{\circ}{\theta}= \Box \Big(  \frac{R(y_0)}{24} \omega + O(|x|^{-5}) \Big)=  O(|x|^{-7}).
 \end{align}
 It follows that
\begin{align}
\mathcal{K}_{g_N} \delta_0 \mathcal{K}_0 \delta_0 \overset{\circ}{\theta}   = O(|x|^{-8}),
\end{align}
which, using (\ref{SH28}),  implies
\begin{align} \label{Szed8}
 S_0 ( H_{-2}) = O(|x|^{-8 + \epsilon}).
\end{align}
However, since $H_{-2}$ is homogeneous of degree $-2$,  $S_0(H_{-2})$ must be
homogeneous of degree $-6$.  Therefore, (\ref{Szed8}) implies that $S_0(H_{-2})$ vanishes.

A similar argument (expanding the $B^t$-tensor as in Proposition \ref{QRemark}) gives (\ref{B0H2}).
\end{proof}

\begin{proposition}  \label{SZProp}
Let $S$ denote the linearized operator on $Z$, then
\begin{align}
S ( \phi((R')^{-1} z) H_{-2}(z)) = O( |z|^{-4})
\end{align}
as $|z| \rightarrow 0$.
\end{proposition}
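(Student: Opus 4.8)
The plan is to mirror, on the compact side, the argument just used for $S(\tilde H_2)$ on the AF side in Proposition~\ref{SH2decayProp}, replacing the role of the flat-metric estimate $S_0(H_2)=0$ with the new kernel statement $S_0(H_{-2})=0$ from Lemma~\ref{SBKer}. First I would expand the linearized operator $S$ on $(Z,g_Z)$ near $z_0$ using the schematic form \eqref{Sgenform}, together with the normal-coordinate expansion $(g_Z)_{ij}=\delta_{ij}+H_2(z)_{ij}+O(|z|^4)$ and the corresponding Christoffel bounds $\Gamma=O(|z|)$, $\partial\Gamma=O(1)$. The tensor $\phi((R')^{-1}z)H_{-2}(z)$ has leading term $H_{-2}(z)$ which blows up like $|z|^{-2}$ as $|z|\to 0$, so I must track carefully which correction terms are genuinely lower order. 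Applied to a tensor $h$ homogeneous of degree $-2$: $\nabla^4 h=\partial^4 h + \Gamma*\partial^3 h + \cdots$, and since each $\Gamma$ contributes a factor $|z|$ (i.e.\ gains one degree) while each derivative loses one degree, the curvature-correction terms in $\nabla^4 h$ are of size $O(|z|^{-4})$ relative to $\partial^4 h$ which is $O(|z|^{-6})$; wait—more precisely $\partial^4 h = O(|z|^{-6})$, and the correction terms gain degrees so are $O(|z|^{-5})$ or better, hence $\nabla^4 h = \partial^4 h + O(|z|^{-5})$. Combined with $(g_Z)^{-1}=\delta+O(|z|^2)$, the principal part $(g^{-2}+g*g^{-3})*\nabla^4 h$ equals $S_0 h$ up to errors; I need to check that $S_0 H_{-2} = 0$ (Lemma~\ref{SBKer}) kills the leading $|z|^{-6}$ piece, so that what remains is $O(|z|^{-4})$.

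The key steps, in order, are: (1) write $S(h) = S_0 h + (\text{correction operators})h$ where the correction operators involve $Rm_{g_Z}$, $\nabla Rm_{g_Z}$, $\nabla^2 Rm_{g_Z}$ (all bounded near $z_0$) and the difference between $g_Z$-covariant derivatives and flat ones; (2) estimate each correction term acting on $H_{-2}(z)$, which is homogeneous of degree $-2$ — the terms $g*g^{-3}*Rm*\nabla^2 h$ give $O(|z|^{-4})$ since $\nabla^2 h = O(|z|^{-4})$ and $Rm$ is bounded, the terms $g*g^{-3}*\nabla Rm*\nabla h$ give $O(|z|^{-3})$, and the zeroth-order terms $(\nabla^2 Rm + Rm*Rm)*h$ give $O(|z|^{-2})$, all of which are $O(|z|^{-4})$ or better, the worst being exactly $O(|z|^{-4})$; wait, I should double check: $O(|z|^{-2})$ is \emph{larger} than $O(|z|^{-4})$ as $|z|\to0$ is false — as $|z|\to 0$, $|z|^{-2}$ is \emph{smaller} than $|z|^{-4}$, so those terms are fine and the bound $O(|z|^{-4})$ is the dominant one coming from the $Rm*\nabla^2 h$ term; (3) for the principal part, use the expansion $g_Z = g_0 + H_2 + O(|z|^4)$ and the identity-chasing exactly as in Lemma~\ref{SBKer}: since $g_Z$ is $B^t$-flat, $P_{g_0}(g_Z - g_0) = \mathcal{K}_{g_Z}\delta_0\mathcal{K}_0\delta_0\overset{\circ}{(g_Z-g_0)}$, and expanding the left side via $S_0$ plus the quadratic remainder $Q$ (controlled by Proposition~\ref{quadest}), one isolates the homogeneity-$(-6)$ component; (4) conclude by the homogeneity argument: $S_0(H_{-2})$ is homogeneous of degree $-6$, and the collected error is $O(|z|^{-4})$ — combined with Lemma~\ref{SBKer}'s $S_0(H_{-2})=0$, only the $O(|z|^{-4})$ remainder survives.

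Actually the cleanest route is to invoke Lemma~\ref{SBKer} directly as a black box: it already states $S_0(H_{-2}) = 0$ on $\mathbb{R}^4\setminus\{0\}$. So the proof reduces to: (a) show $S(H_{-2}) = S_0(H_{-2}) + O(|z|^{-4}) = O(|z|^{-4})$ as $|z|\to 0$, using the curvature-correction estimates above and boundedness of $Rm_{g_Z}$ near $z_0$; and (b) handle the cutoff $\phi((R')^{-1}z)$, which equals $1$ identically for $|z| \le R'$ (choosing $R'$ so that the ball of radius $R'$ lies in the normal coordinate chart), so the cutoff contributes nothing to the asymptotics as $|z|\to 0$ — the support of $\nabla\phi$ is an annulus bounded away from $z_0$, where everything is smooth and bounded, so $S$ of the cutoff tensor is bounded there, in particular $O(|z|^{-4})$ trivially on that region. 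Stitching (a) and (b) gives $S(\phi((R')^{-1}z)H_{-2}(z)) = O(|z|^{-4})$ everywhere.

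\textbf{Main obstacle.} The delicate point is (a): verifying that \emph{all} the correction terms arising from replacing flat-metric operations by $g_Z$-operations in the fourth-order operator $S$ — in particular the terms with the \emph{most} derivatives falling on the singular tensor $H_{-2}$ — decay to $O(|z|^{-4})$ rather than $O(|z|^{-5})$ or worse. This requires the bookkeeping that each factor of $\Gamma$ (or of $g_Z - g_0 = O(|z|^2)$, or of $Rm_{g_Z}$ entering the explicit curvature terms in $S$) raises the homogeneity degree by enough to compensate the degree lowering from derivatives, so that the net effect on $H_{-2}$ (degree $-2$) never drops below degree $-4$. The schematic identity \eqref{PS1}/\eqref{Sgenform} makes this tractable, and the quadratic estimate \eqref{Qsize} from Proposition~\ref{quadest} handles the nonlinear remainder uniformly; but it is genuinely a term-by-term check, and one must be careful that the $\nabla^4 h$ term, which is the most dangerous, really does have its leading $O(|z|^{-6})$ piece exactly equal to $\partial^4 H_{-2}$, which is then annihilated by the flat operator via Lemma~\ref{SBKer}.
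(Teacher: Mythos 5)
Your proposal is correct and follows essentially the same route as the paper: expand $S$ as $S_0$ plus Christoffel/curvature corrections in Riemannian normal coordinates, check that each correction applied to the inverse-quadratic tensor is $O(|z|^{-4})$, invoke Lemma~\ref{SBKer} as a black box for $S_0(H_{-2})=0$, and observe that the cutoff only matters on an annulus bounded away from $z_0$ where everything is smooth. One small quantitative slip: the term $\Gamma \ast \partial^3 H_{-2}$ is $O(|z|^{-4})$, not $O(|z|^{-5})$, since $\Gamma=O(|z|)$ and $\partial^3 H_{-2}=O(|z|^{-5})$ — but this is harmless, as $O(|z|^{-4})$ is exactly the bound needed.
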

\begin{proof}
As above, for $|z|$ sufficiently small and any tensor $h$,
\begin{align}
\begin{split}
\nabla^4 h &= \partial^4 h + \Gamma * \partial^3 h + ( \partial \Gamma + \Gamma * \Gamma) * \partial^2 h\\
& + ( \partial^2 \Gamma + \Gamma * \partial \Gamma) * \partial h
+ ( \partial^3 \Gamma + \partial \Gamma * \partial \Gamma
+ \Gamma * \partial^2 \Gamma) * h,
\end{split}
\end{align}
where $\partial$ denotes coordinates partial derivatives.
If $h$ blows-up inverse quadratically, then since $\{z^i\}$ are
Riemannian normal coordinates,
we see that
\begin{align}
\nabla^4 h = \partial^4  h + O (|z|^{-4}).
\end{align}
Arguing as we did in the proof of Proposition \ref{SH2decayProp}, we find that
\begin{align} \label{SvSflat}
S(h) = S_{0} h + O(|z|^{-4}),
\end{align}
where $S_0$ is the linearized operator with respect to the flat metric.  If we take $h = H_{-2}$ in (\ref{SvSflat}),
then (\ref{S0H2}) of Lemma \ref{SBKer} gives
\begin{align}
S(H_{-2}) = O(|z|^{-4})
\end{align}
as $|z| \rightarrow 0$, and the Proposition follows.
\end{proof}

 Next, for $\epsilon > 0$, we have
\begin{align}
S : C_{-\epsilon}^{4,\alpha}(Z) \rightarrow C_{- \epsilon - 4}^{0,\alpha}(Z),
\end{align}
with adjoint mapping
\begin{align}
S^* : C_{\epsilon}^{4,\alpha}(Z) \rightarrow C_{\epsilon - 4}^{0,\alpha}(Z).
\end{align}
By Theorem \ref{comker}, there is no (invariant) cokernel.
Thus there exists $h_{- \epsilon} \in C_{- \epsilon}^{4,\alpha} $ such that
\begin{align}
S (\phi( (R')^{-1} z) H_{-2}(z)) = S( h_{\epsilon}),
\end{align}
or rather
\begin{align}
S (  \phi( (R')^{-1} z) H_{-2}(z) - h_{\epsilon}) = 0.
\end{align}
Averaging over the group, we may assume that $h_{\epsilon}$
is invariant under the group action. We then define
\begin{align}
\tilde{H}_{-2}(z) =  \phi( (R')^{-1} z) H_{-2}(z) - h_{\epsilon}.
\end{align}
To summarize, we have proved
\begin{proposition} \label{Hminus2}
On $(Z, g_Z)$, there exists a solution $\tilde{H}_{-2}$ of
\begin{align}
S (\tilde{H}_{-2}(z) ) &= 0\\
\tilde{H}_{-2}(z) &= H_{-2}(z) + O(|z|^{-\epsilon}), \mbox{ as } |z| \rightarrow 0.
\end{align}
Furthermore, $\tilde{H}_{-2}$ can be chosen to be invariant under the group action.
\end{proposition}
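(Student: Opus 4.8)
The plan is to mirror the construction already carried out on the AF side in Proposition \ref{LamProp}, but now on the compact manifold $(Z,g_Z)$, where the situation is simpler because Theorem \ref{comker} guarantees that there is \emph{no} invariant cokernel. First I would set up the extended tensor: since $H_{-2}(z)$ is only defined near $z_0$, I multiply it by the cutoff $\phi((R')^{-1}z)$ to obtain a globally defined tensor on $Z$, with leading term $H_{-2}(z)$ as $|z| \to 0$ and compactly supported error. The first key step is to check that $S(\phi((R')^{-1}z)H_{-2}(z)) \in C^{0,\alpha}_{-\epsilon-4}(Z)$, i.e.\ that it blows up no faster than $|z|^{-4-\epsilon}$ near $z_0$; this is exactly Proposition \ref{SZProp}, whose proof reduces $S$ to the flat-space operator $S_0$ modulo $O(|z|^{-4})$ errors and then invokes $S_0(H_{-2}) = 0$ from Lemma \ref{SBKer}. (Note that $S$ applied to the compactly supported part of the cutoff contributes only a smooth, hence harmless, term.)

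Next I would invoke the mapping properties of $S$ on weighted H\"older spaces. Viewing $S : C^{4,\alpha}_{-\epsilon}(Z) \to C^{0,\alpha}_{-\epsilon-4}(Z)$, the cokernel of this map is the kernel of the adjoint $S^* : C^{4,\alpha}_{\epsilon}(Z) \to C^{0,\alpha}_{\epsilon-4}(Z)$, which consists of solutions decaying like $O(|z|^\epsilon)$ near $z_0$; these are precisely the objects ruled out by Theorem \ref{comker} (using $t < 0$, the weight condition $\delta = -\epsilon < 0$ with $|\delta|$ small, and the toric plus diagonal invariance). Since there is no invariant cokernel, the (invariant part of the) map $S$ is surjective onto the invariant part of $C^{0,\alpha}_{-\epsilon-4}(Z)$, so there exists $h_\epsilon \in C^{4,\alpha}_{-\epsilon}(Z)$ with $S(h_\epsilon) = S(\phi((R')^{-1}z)H_{-2}(z))$. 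Then $\tilde H_{-2}(z) := \phi((R')^{-1}z)H_{-2}(z) - h_\epsilon$ satisfies $S(\tilde H_{-2}) = 0$, and because $h_\epsilon = O(|z|^{-\epsilon})$ near $z_0$ while the cutoff term equals $H_{-2}(z)$ exactly there, we get the asymptotics $\tilde H_{-2}(z) = H_{-2}(z) + O(|z|^{-\epsilon})$. Finally, since the right-hand side of the equation and the leading data are invariant under the group action and the group is compact, averaging $h_\epsilon$ over the group preserves the equation and produces an invariant solution; this gives the last assertion.

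The main technical obstacle is ensuring that the weighted-space theory and the equivariant surjectivity of $S$ actually apply: one must know that $S$ on $C^{4,\alpha}_{-\epsilon}(Z)$ has closed range with finite-dimensional cokernel (standard Fredholm theory for elliptic operators on weighted spaces with non-indicial weight, using Proposition \ref{ellprop} and Proposition \ref{indclaim} to see $-\epsilon$ is not an indicial root), and that restricting to the invariant subspaces of domain and range is compatible with this Fredholm structure so that ``no invariant cokernel'' genuinely yields an invariant preimage. Both points are routine given the machinery developed earlier in the excerpt — the indicial root analysis in Proposition \ref{indclaim}, the relative index theorem argument in Theorem \ref{comker}, and the group-averaging remarks already used in Proposition \ref{LamProp} — so the proof is essentially a transcription of those ideas to the compact factor, with the vanishing $S_0(H_{-2}) = 0$ from Lemma \ref{SBKer} doing the real work of controlling the error term.
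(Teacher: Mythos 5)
Your proposal is correct and follows essentially the same route as the paper: apply Proposition \ref{SZProp} (via Lemma \ref{SBKer}) to place $S(\phi((R')^{-1}z)H_{-2})$ in $C^{0,\alpha}_{-\epsilon-4}(Z)$, use the absence of invariant cokernel from Theorem \ref{comker} to solve away the error with some $h_{\epsilon}\in C^{4,\alpha}_{-\epsilon}$, and average over the group to get invariance. No gaps.
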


\begin{remark} \label{epvsdel} {\em
From now on, we will fix $\epsilon > 0$ small.
}
\end{remark}

\section{Computation of the leading term}

 In this section we compute the constant $\lambda$ which
arose above in Proposition \ref{LamProp}.
As the title of this section indicates, we will refer to this
constant as ``the leading term'' for reasons which will become
clear later in Section \ref{Kuranishi}.

Recall from Proposition \ref{LamProp} that $\lambda$ was defined via equation (\ref{lamDefEqn}):
\begin{align} \label{SH2lam}
S (\tilde{H}_2) =  \lambda k_1^{(0)} \ \ \mbox{on $N$},
\end{align}
with
\begin{align}
\label{H2Hdiff}
\tilde{H}_2(x) &= H_2(x) + O^{(4)}(|x|^{\epsilon})
\end{align}
as $|x| \rightarrow \infty$, and
\begin{align} \label{H2def}
(H_2)_{ij} = -\frac{1}{3}R_{ikj\ell}(z_0)x^k x^{\ell}.
\end{align}
Pairing both side of the defining equation for $\lambda$ with the cokernel element $o_1$ and integrating gives
\begin{align} \label{laminit}
\lambda = \int_{N} S (\tilde{H}_2), o_1 \rangle\ dV,
\end{align}
since
\begin{align*}
\int \langle k_1^{(0)}, o_1 \rangle\ dV = 1.
\end{align*}

\begin{proposition} \label{SadjProp}
The constant $\lambda$ is given by
\begin{align} \label{Sadj}
\lambda  &=  \frac{4}{9} \omega_3 \big[ W_{ikj\ell}(y_0) W_{ikj\ell}(z_0) + W_{ikj\ell}(y_0) W_{i\ell j k}(z_0) \big]
+ 4t \omega_3 R(z_0) \mathrm{mass}(g_N),
\end{align}
where $\omega_3 = Vol(S^3)$.
\end{proposition}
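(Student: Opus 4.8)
The plan is to evaluate the integral in \eqref{laminit} by converting it to a boundary integral at infinity. First I would choose a large radius $\Sigma_R = \{|x| = R\}$ and write $\lambda = \lim_{R\to\infty} \int_{B_R} \langle S(\tilde H_2), o_1 \rangle\, dV$. The key structural fact is that $S$ is (essentially) formally self-adjoint up to divergence terms: since $S^t h = (B^t)'h + \mathcal K\delta\mathcal K\delta\overset{\circ}{h}$ and both $(B^t)'$ and the gauge term arise as second variations / linearizations of operators with a variational structure, integration by parts produces
\begin{align*}
\int_{B_R} \langle S(\tilde H_2), o_1 \rangle\, dV - \int_{B_R} \langle \tilde H_2, S(o_1) \rangle\, dV = \int_{\Sigma_R} \mathcal{Q}(\tilde H_2, o_1)\, dA,
\end{align*}
where $\mathcal Q$ is a bilinear expression in $\tilde H_2$, $o_1$ and their derivatives up to order three. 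Since $S(o_1) = 0$ by Theorem~\ref{afker} (more precisely $o_1$ is in the kernel of the adjoint, and on an AF manifold $S$ is formally self-adjoint), the volume term on the left involving $o_1$ vanishes, so $\lambda = \lim_{R\to\infty}\int_{\Sigma_R}\mathcal Q(\tilde H_2, o_1)\, dA$.

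Next I would insert the known asymptotics. By \eqref{H2Hdiff} and \eqref{H2def}, $\tilde H_2$ has leading term $-\frac13 R_{ikj\ell}(z_0) x^k x^\ell$, growing quadratically, with error $O^{(4)}(|x|^\epsilon)$. By Theorem~\ref{ALEcokethm} and \eqref{fexp}, the cokernel element satisfies $o_1 = \kappa + fg$ with $\kappa_{ij} = \frac23 W_{ikj\ell}(y_0)\frac{x^kx^\ell}{|x|^4} + O(|x|^{-4+\epsilon})$ and $f = c_0 + c'|x|^{-2} + O(|x|^{-4+\epsilon})$. The constant part $c_0 g$ of $o_1$ contributes the scalar-curvature/mass term: pairing $S(\tilde H_2)$ against a constant multiple of the metric, after integration by parts, picks out (a multiple of) the scalar curvature linearization integrated against $\tilde H_2$, and the $|x|^{-2}$-coefficient in $g_N$ together with the leading $H_2$ term feeds into the ADM mass integral via Proposition~\ref{massprop}; this should produce the $4t\,\omega_3 R(z_0)\,\mathrm{mass}(g_N)$ term (the factor $t$ appearing because the trace part of $S$ carries the coefficient $t$, cf. \eqref{Sform}). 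The tracefree part $\kappa$ pairs with the tracefree leading part of $\tilde H_2$ through the fourth-order part $\Delta_0^2$ of $S$; on the boundary this reduces to a contraction of $W_{ikj\ell}(y_0)$ against $R_{ikj\ell}(z_0)$ over the sphere. Using the standard spherical integral identities $\int_{S^3} \frac{x^ax^bx^cx^d}{|x|^4}\, dA = \frac{\omega_3}{8}(\delta^{ab}\delta^{cd}+\delta^{ac}\delta^{bd}+\delta^{ad}\delta^{bc})$, and the first Bianchi identity for $W$, this collapses to $\frac49\omega_3\big[W_{ikj\ell}(y_0)W_{ikj\ell}(z_0) + W_{ikj\ell}(y_0)W_{i\ell jk}(z_0)\big]$ — here the two terms $W_{ikj\ell}$ and $W_{i\ell jk}$ arise precisely because the symmetrization over indices in the spherical average produces both orderings, and the Ricci part of $R_{ikj\ell}(z_0)$ drops out against the trace-free $W(y_0)$.

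The main obstacle I anticipate is bookkeeping the boundary integrand $\mathcal Q$ correctly and confirming that all the intermediate-order terms cancel. Specifically: (i) $\tilde H_2$ grows like $|x|^2$ and $\kappa$ decays like $|x|^{-2}$, so the product $\tilde H_2 \cdot o_1$ is $O(1)$ pointwise while the boundary has area $O(R^3)$ — naively the integral could diverge, so one must verify that the genuinely divergent pieces are total derivatives on the sphere (hence integrate to zero) or are killed by the precise form of $\mathcal Q$ (which involves derivatives, improving decay); the surviving $O(R^{-3})$ pointwise contributions against $O(R^3)$ area give the finite answer. (ii) One must track the error terms: $\tilde H_2 = H_2 + O^{(4)}(|x|^\epsilon)$ and $o_1$'s errors are $O(|x|^{-4+\epsilon})$, and the claim is that these contribute $O(R^{\epsilon'})\to$ nothing after the leading cancellations — but this needs the errors to enter $\mathcal Q$ only in combinations that decay, which follows from $S_0(H_2)=0$ (Lemma~\ref{SBKer}-type reasoning) and $\Box\omega_1 = 0$ exactly. (iii) Getting the numerical constants $\frac49$ and $4t$ right requires carefully using the explicit leading-symbol form \eqref{Sform1}/\eqref{Sform} of $S$, separating the trace and tracefree channels, and not dropping factors of $2$ from the $\mathcal K\delta\mathcal K\delta$ gauge term and from $\delta^* = -\frac12\mathcal L$. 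I would organize the computation by first doing the purely tracefree Weyl$\times$Weyl part using the flat-space operator $S_0$ and the spherical integrals, then separately handling the $f = c_0 + c'|x|^{-2}$ part where the mass appears, and finally checking the cross terms vanish by the Bianchi identity and trace-freeness of $W(y_0)$.
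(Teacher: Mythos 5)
Your proposal follows essentially the same route as the paper's proof: integrate $\int\langle S\tilde H_2,o_1\rangle$ by parts, use $S(o_1)=0$ to dispose of the interior terms, and evaluate the surviving boundary contributions at large radius from the asymptotics of $\tilde H_2$, $\kappa$, and $f$, with the tracefree bi-Laplacian channel producing the Weyl--Weyl term via the spherical average (the Einstein condition at $z_0$ and skew-symmetry of $W(y_0)$ killing the dangerous and Ricci pieces) and the $t$-weighted trace channel producing the mass term, which the paper extracts from the flux $\oint\partial_N f$ combined with the Ricci asymptotics of $g_N$ and Proposition \ref{massprop}. One small correction to your bookkeeping: the spherical identity is $\int_{|\xi|=1}\xi^k\xi^\ell\xi^\alpha\xi^\beta\,dS=\frac{\omega_3}{24}\big(\delta_{k\ell}\delta_{\alpha\beta}+\delta_{k\alpha}\delta_{\beta\ell}+\delta_{k\beta}\delta_{\alpha\ell}\big)$, not $\frac{\omega_3}{8}(\cdots)$, as in \eqref{SH}.
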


We prove this formula through a series of lemmas.   To begin, let 
\begin{align}
\label{balldef}
B = \{ x \in N \ : \ |x| < a^{-1} \}, 
\end{align}
(where we extend $|x|$ to be defined on all of $N$ by letting it be a constant 
outside of the AF region of $N$), and use (\ref{Sform}) to write
\begin{align} \label{3terms} \begin{split}
\int_{B} \langle S \tilde{H}_2 , o_1 \rangle &=  \\
& \hskip-.5in \int_{B} \langle \Delta^2 (\overset{\circ}{\tilde{H}_2}), o_1 \rangle + \int_{B} \langle \mathcal{K}[ d( \mathcal{D}_2( \tilde{H}_2))], o_1 \rangle
+ \frac{3}{2}t \int_{B} \langle \big[ \Delta^2 (tr\ \tilde{H}_2) - \Delta (\delta^2 \tilde{H}_2) \big]g, o_1 \rangle,
\end{split}
\end{align}
where $\overset{\circ}{T}$ denotes the trace-free part of the symmetric two-tensor $T$.

\begin{lemma} \label{T1Lemma} As $a \rightarrow 0$,
\begin{align} \label{Term1}  \begin{split}
\int_{B} \langle \Delta^2 \overset{\circ}{\tilde{H}_2}, o_1 \rangle &= \int_{B} \langle \tilde{H}_2, \Delta^2 \kappa \rangle \\
& \hskip.5in + \frac{4}{9} \omega_3 \big[ W_{ikj\ell}(y_0) W_{ikj\ell}(z_0) + W_{ikj\ell}(y_0) W_{i\ell j k}(z_0) \big] + o(1).
\end{split}
\end{align}
\end{lemma}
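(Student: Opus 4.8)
\textbf{Proof plan for Lemma \ref{T1Lemma}.}

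The plan is to integrate by parts twice, moving $\Delta^2$ from $\overset{\circ}{\tilde{H}_2}$ onto $o_1$, and then carefully account for the boundary terms on $\partial B = \{|x| = a^{-1}\}$. Since $B$ is a compact manifold with boundary, Green's formula for the bi-Laplacian gives
\begin{align*}
\int_B \langle \Delta^2 \overset{\circ}{\tilde H_2}, o_1\rangle
= \int_B \langle \overset{\circ}{\tilde H_2}, \Delta^2 o_1 \rangle
+ \oint_{\partial B}\Big( \langle \Delta \overset{\circ}{\tilde H_2}, \nabla_N o_1\rangle - \langle \nabla_N \Delta \overset{\circ}{\tilde H_2}, o_1\rangle + \langle \nabla_N \overset{\circ}{\tilde H_2}, \Delta o_1\rangle - \langle \overset{\circ}{\tilde H_2}, \nabla_N \Delta o_1\rangle\Big),
\end{align*}
where $N$ is the outward unit normal. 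First I would observe that $\Delta^2 o_1 = \Delta^2(\kappa + f g)$, and since we only need to pair against the trace-free $\overset{\circ}{\tilde H_2}$, the $fg$ part contributes a term that can be absorbed (using that $\Delta f$ decays and $\overset{\circ}{\tilde H_2}$ is traceless), leaving $\int_B \langle \tilde H_2, \Delta^2\kappa\rangle$ as the stated interior term — here I also replace $\overset{\circ}{\tilde H_2}$ by $\tilde H_2$ in the pairing with the traceless $\Delta^2\kappa$, since the trace part drops out.

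The heart of the computation is the boundary integral. Using the expansions established earlier — $\tilde H_2 = H_2 + O^{(4)}(|x|^\epsilon)$ with $(H_2)_{ij} = -\tfrac13 R_{ikj\ell}(z_0)x^kx^\ell$ from \eqref{H2def}, and $\kappa_{ij} = \tfrac23 W_{ikj\ell}(y_0)\tfrac{x^kx^\ell}{|x|^4} + O(|x|^{-4+\epsilon})$ from Theorem \ref{ALEcokethm} — one computes $\Delta_0 H_2$, $\nabla_N H_2$, $\Delta_0 \kappa$, $\nabla_N\Delta_0\kappa$, etc., on the sphere of radius $a^{-1}$ with respect to the flat metric (the corrections from $g_N$ versus $g_0$ are lower order because $g_N$ is AF of order $2$ and all the relevant tensors are homogeneous, so they produce terms that vanish as $a\to 0$). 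Each of the four boundary terms scales like $a^{-3}\cdot(\text{integrand})$ against the volume form of $S^3(a^{-1})$ which is $\omega_3 a^{-3}$; the homogeneity bookkeeping shows the leading pieces are $O(1)$ (not divergent), with the $\epsilon$-error terms going to zero. Collecting these and doing the angular integrals over $S^3$ — using the standard identities $\int_{S^3}\theta^i\theta^j\,d\theta = \tfrac{\omega_3}{4}\delta_{ij}$ and $\int_{S^3}\theta^i\theta^j\theta^k\theta^\ell\,d\theta = \tfrac{\omega_3}{24}(\delta_{ij}\delta_{k\ell}+\delta_{ik}\delta_{j\ell}+\delta_{i\ell}\delta_{jk})$ — and invoking the first Bianchi identity and the trace-free, symmetry properties of the Weyl tensor of $g_Z$ at $z_0$, the surviving contraction collapses to $\tfrac{4}{9}\omega_3\big[W_{ikj\ell}(y_0)W_{ikj\ell}(z_0) + W_{ikj\ell}(y_0)W_{i\ell jk}(z_0)\big]$.

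The main obstacle I anticipate is precisely this last bookkeeping: there are four boundary terms, each a product of two tensor expansions each with a leading part and an error, so a priori sixteen pieces, and one must check that (i) all genuinely divergent contributions cancel among the four terms (this cancellation is forced by the fact that $\kappa$ is built to satisfy $\Box\omega_1 = 0$ to the relevant order, equivalently that the flat-space operator annihilates the homogeneous degree-$2$ pieces — cf. Lemma \ref{SBKer}), and (ii) the finite pieces combine, after the $S^3$ angular integration and Bianchi/trace manipulations, into exactly the symmetric Weyl contraction claimed. A secondary technical point is justifying that replacing the metric $g_N$ and its connection by the flat ones throughout the boundary computation only changes things by $o(1)$; this follows from the order-$2$ AF decay of $g_N$ together with the explicit homogeneities, but it needs to be stated cleanly. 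I would organize the proof so that the divergence-cancellation is isolated as its own short sub-step before grinding out the finite part.
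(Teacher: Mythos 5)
Your plan is essentially the paper's proof: replace $o_1$ by its trace-free part $\kappa$, integrate by parts twice to produce the interior term $\int_B \langle \tilde{H}_2, \Delta^2\kappa\rangle$ plus four boundary integrals on $\partial B$, and evaluate the boundary integrals using the expansions $\tilde{H}_2 = H_2 + O^{(4)}(|x|^{\epsilon})$, $\kappa_{ij} = \tfrac{2}{3}W_{ikj\ell}(y_0)x^kx^{\ell}|x|^{-4} + O(|x|^{-4+\epsilon})$, and the fourth-moment identity on $S^3$.

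One correction to the obstacle you flag as the heart of the matter: no divergent boundary contributions arise, so there is no cancellation to arrange, and the equation $\Box\omega_1 = 0$ plays no role in this lemma. The power counting on $\partial B$ (namely $|\overset{\circ}{\tilde{H}_2}| \sim a^{-2}$, $|\kappa| \sim a^{2}$, $\mathrm{area}(\partial B) \sim a^{-3}$, with each derivative costing one power of $a$) already makes every boundary term at worst $O(1)$. The two terms involving $\Delta \overset{\circ}{\tilde{H}_2}$ are in fact $O(a^{2-\epsilon})$, but for a reason you do not identify: $\Delta_0 \overset{\circ}{H_2} = -\tfrac{2}{3}\big(Ric(z_0) - \tfrac14 R(z_0)\delta\big)$, which vanishes precisely because $(Z,g_Z)$ is Einstein. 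That Einstein condition is the key input here, and it is needed a second time at the end, to write $R_{i\alpha j\beta}(z_0) = W_{i\alpha j\beta}(z_0) + \tfrac{1}{12}R(z_0)(\delta_{ij}\delta_{\alpha\beta} - \delta_{i\beta}\delta_{j\alpha})$ and discard the constant-curvature part by skew-symmetry of $W(y_0)$. The entire finite contribution then comes from the remaining pair $\oint_{\partial B}\big(\langle \nabla_N \overset{\circ}{\tilde{H}_2}, \Delta\kappa\rangle - \langle \overset{\circ}{\tilde{H}_2}, \nabla_N \Delta\kappa\rangle\big)$, which by homogeneity of the leading terms equals $-48\oint_{\partial B}|x|^{-3}\langle \overset{\circ}{\tilde{H}_2},\kappa\rangle + O(a)$ and collapses to the stated Weyl contraction via the $S^3$ moment identity, exactly as you describe. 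With that reorientation your plan goes through.
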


\begin{proof} Since $\kappa$ is the trace-free part of $o_1$,
\begin{align*}
\int_{B} \langle \Delta^2 \overset{\circ}{\tilde{H}_2}, o_1 \rangle = \int_{B} \langle \Delta^2 \overset{\circ}{\tilde{H}_2} , \kappa \rangle.
\end{align*}
Integrating by parts,
\begin{align} \label{IBP1} \begin{split}
\int_{B} \langle \Delta^2 \overset{\circ}{\tilde{H}_2} , \kappa \rangle &= \int_{B} \langle \overset{\circ}{\tilde{H}_2}, \Delta^2 \kappa \rangle + \bint \langle \nabla_N (\Delta \overset{\circ}{\tilde{H}_2}), \kappa \rangle - \bint \langle \Delta \overset{\circ}{\tilde{H}_2}, \nabla_N \kappa \rangle \\
& \hskip.25in  + \bint \langle \nabla_N \overset{\circ}{\tilde{H}_2}, \Delta \kappa \rangle - \bint \langle \overset{\circ}{\tilde{H}_2}, \nabla_N(\Delta \kappa) \rangle,
\end{split}
\end{align}
where $N$ is the outward unit normal to $N$. All the boundary integrals in (\ref{IBP1}) are with respect to the approximate metric $g$.  To estimate each boundary term we use
the fact that on $\partial B$, the metric and Christoffel symbols satisfy
\begin{align} \label{ALEg} \begin{split}
g &= \delta + O(a^2), \\
\Gamma &= O(a^3), \\
\partial \Gamma &= O(a^4),
\end{split}
\end{align}
where $\delta$ denotes the flat metric.  For a symmetric $2$-tensor $T = T_{ij}$,
\begin{align} \label{LapT}
\Delta T = g^{\alpha \beta} \nabla_{\alpha} \nabla_{\beta} T,
\end{align}
and
\begin{align} \label{HessT}
\nabla_{\alpha} \nabla_{\beta} T = \partial_{\alpha} \partial_{\beta} T + \Gamma * \partial T + \partial \Gamma * T + \Gamma * \Gamma * T,
\end{align}
hence
\begin{align} \label{LapT2}
\Delta T_{ij} = \Delta_0 T_{ij} + O(a^2)* \partial^2 T + O(a^3)* \partial T + O(a^4)* T,
\end{align}
where $\Delta_0$ denotes the flat Laplacian.

Taking $T = \overset{\circ}{\tilde{H}_2}$ and using (\ref{H2Hdiff}) we first note
\begin{align} \label{TFH2} \begin{split}
(\overset{\circ}{\tilde{H}_2})_{ij} &= (\tilde{H}_2)_{ij} - \frac{1}{4} \big[ g^{\alpha \beta} (\tilde{H}_2)_{\alpha \beta} \big]g_{ij} \\
&= -\frac{1}{3} R_{ikj\ell}(z_0) x^k x^{\ell} + \frac{1}{12}R_{k\ell}(z_0) x^k x^{\ell} \delta_{ij} + O(|x|^{\epsilon}).
\end{split}
\end{align}
Therefore,
\begin{align} \label{HessDiff2} \begin{split}
\partial_{\alpha} (\overset{\circ}{\tilde{H}_2})_{ij} &=  -\frac{1}{3} R_{i \alpha j \ell}(z_0) x^{\ell} - \frac{1}{3} R_{ikj \alpha}(z_0) x^k + \frac{1}{6} R_{\alpha k}(z_0) x^k \delta_{ij} + O(|x|^{\epsilon-1}), \\
\partial_{\alpha} \partial_{\beta} (\overset{\circ}{\tilde{H}_2})_{ij} &= -\frac{1}{3}R_{i \alpha j \beta}(z_0) - \frac{1}{3} R_{i \beta j \alpha}(z_0) + \frac{1}{6} R_{\alpha \beta}(z_0) \delta_{ij} + O(|x|^{\epsilon-2}),
\end{split}
\end{align}
hence
\begin{align} \label{LapDiff1}
(\Delta \overset{\circ}{\tilde{H}_2})_{ij} = -\frac{2}{3}\big[ R_{ij}(z_0)- \frac{1}{4}R(z_0)\delta_{ij} \big] + O(|x|^{\epsilon-2}).
\end{align}
Assuming $(Z,g_{Z})$ is Einstein, then
\begin{align*}
 R_{ij}(z_0)- \frac{1}{4}R(z_0)\delta_{ij} = 0.
\end{align*}
It follows that
\begin{align} \label{LapH20} \begin{split}
(\Delta \overset{\circ}{\tilde{H}_2})_{ij} &= O(|x|^{\epsilon-2}), \\
\nabla_N (\Delta \overset{\circ}{\tilde{H}_2})_{ij} &= O(|x|^{\epsilon-3}),
\end{split}
\end{align}
as $|x| \rightarrow \infty$.

By (\ref{kay}), on $\partial B$ we have
\begin{align} \label{ksize} \begin{split}
|\kappa| &= O(a^2), \\
|\nabla \kappa| &= O(a^3).
\end{split}
\end{align}
Using these estimates along with those of (\ref{LapH20}) we find
\begin{align} \label{bdyT1} \begin{split}
\big| \bint \langle \nabla_N (\Delta \overset{\circ}{\tilde{H}_2}), \kappa \rangle \big| &= O(a^{2-\epsilon}), \\
\big| \bint \langle \Delta \overset{\circ}{\tilde{H}_2} , \nabla_N \kappa \rangle \big| &= O(a^{2-\epsilon}).
\end{split}
\end{align}

Next, we take $T = \kappa_{ij}$.  Using (\ref{kay}),
\begin{align} \label{dkay} \begin{split}
\partial_{\alpha} \kappa_{ij} &= \frac{2}{3}W_{i \alpha j \ell}(y_0)\frac{x^{\ell}}{|x|^4} + \frac{2}{3}W_{ikj\alpha}(y_0)\frac{x^k}{|x|^4} - \frac{8}{3}W_{ikj\ell}(y_0)\frac{x^k x^{\ell} x^{\alpha}}{|x|^6} + O(a^4), \\
\partial_{\alpha} \partial_{\beta} \kappa_{ij} &= \frac{2}{3}W_{i\alpha j \beta}(y_0)\frac{1}{|x|^4} + \frac{2}{3}W_{i\beta j \alpha}(y_0)\frac{1}{|x|^4}
 + 16 W_{ikj\ell}(y_0)\frac{x^k x^{\ell} x^{\alpha} x^{\beta}}{|x|^8} \\
 &\hskip.1in - \frac{8}{3}W_{i \alpha j \ell}(y_0)\frac{x^{\ell}x^{\beta}}{|x|^6} - \frac{8}{3}W_{i k j \alpha }(y_0)\frac{x^{k}x^{\beta}}{|x|^6} - \frac{8}{3}W_{i \beta j \ell}(y_0)\frac{x^{\ell}x^{\alpha}}{|x|^6} \\
 &\hskip.2in - \frac{8}{3}W_{i k j \beta}(y_0)\frac{x^k x^{\alpha}}{|x|^6} - \frac{8}{3}W_{ikj\ell}(y_0) \frac{x^k x^{\ell}}{|x|^6} \delta_{\alpha \beta} + O(a^5).
\end{split}
\end{align}
Therefore,
\begin{align} \label{Lap0k} \begin{split}
\Delta \kappa_{ij} &= - \frac{16}{3} W_{ikj\ell}(y_0) \frac{x^k x^{\ell}}{|x|^6} + O(a^5) \\
&= - \frac{8}{|x|^2} \kappa_{ij} + O(a^5).
\end{split}
\end{align}

On $\partial B$,
\begin{align} \label{Nform}
N^{\alpha} = \frac{x^{\alpha}}{|x|} + O(a^2),
\end{align}
hence
\begin{align} \label{DN1}
\nabla_N T_{ij} = \frac{x^{\alpha}}{|x|}\partial_{\alpha} T_{ij} + O(a^2)*\partial T + O(a^3)*T.
\end{align}
From (\ref{TFH2}),(\ref{dkay}), and (\ref{DN1}) (or, by reasons of homogeneity) we conclude
\begin{align} \label{ndiv} \begin{split}
\nabla_N (\overset{\circ}{\tilde{H}_2})_{ij} &= -\frac{2}{3}R_{ikj\ell}(z_0)\frac{x^k x^{\ell}}{|x|} + \frac{1}{6}R_{k \ell}(z_0)\frac{x^k x^{\ell}}{|x|}\delta_{ij} + O(a^{1-\epsilon}) \\
&= \frac{2}{|x|} (\overset{\circ}{\tilde{H}_2})_{ij} + O(a^{1-\epsilon}), \\
\nabla_N (\Delta \kappa)_{ij} &= \frac{64}{3} W_{ikj\ell}(y_0) \frac{x^k x^{\ell}}{|x|^7} + O(a^7) \\
&= \frac{32}{|x|^3} \kappa_{ij} + O(a^7).
\end{split}
\end{align}
It follows that
\begin{align} \label{last2}
\langle \nabla_N \overset{\circ}{\tilde{H}_2}, \Delta \kappa \rangle - \langle \overset{\circ}{\tilde{H}_2}, \nabla_N(\Delta \kappa) \rangle = -48  \frac{1}{|x|^3} \langle \overset{\circ}{\tilde{H}_2}, \kappa \rangle + O(a^4),
\end{align}
hence
\begin{align} \label{last2b}
\bint \langle \nabla_N \overset{\circ}{\tilde{H}_2}, \Delta \kappa \rangle - \langle \overset{\circ}{\tilde{H}_2}, \nabla_N(\Delta \kappa) \rangle = \frac{32}{3} \int_{|\xi| = 1} W_{ikj\ell}(y_0) R_{i \alpha j \beta}(z_0) \xi^k \xi^{\ell} \xi^{\alpha} \xi^{\beta}\ dS + O(a).
\end{align}
If we decompose the curvature tensor of $R_{i \alpha j \beta}(z_0)$ (again assuming $(Z,g_Z)$ is Einstein),
\begin{align*}
R_{i \alpha j \beta}(z_0) = W_{i \alpha j \beta}(z_0) + \frac{1}{12}R(z_0)\big(\delta_{ij} \delta_{\alpha \beta} - \delta_{i \beta}\delta_{j \alpha} \big).
\end{align*}
Therefore, the integrand in (\ref{last2b}) can be written
\begin{align*}
W_{ikj\ell}(y_0) R_{i \alpha j \beta}(z_0) = W_{ikj\ell}(y_0) W_{i \alpha j \beta}(z_0) - \frac{1}{12}R(z_0)W_{\beta k \alpha \ell}(y_0).
\end{align*}
hence
\begin{align} \label{last22} \begin{split}
\int_{|\xi| = 1} W_{ikj\ell}(y_0) R_{i \alpha j \beta}(z_0) \xi^k \xi^{\ell} \xi^{\alpha} \xi^{\beta}\ dS &= \int_{|\xi| = 1} W_{ikj\ell}(y_0) W_{i \alpha j \beta}(z_0) \xi^k \xi^{\ell} \xi^{\alpha} \xi^{\beta}\ dS \\
& \hskip.25in  - \frac{1}{12}R(z_0) \int_{|\xi| = 1} W_{\beta k \alpha \ell}(y_0) \xi^k \xi^{\ell} \xi^{\alpha} \xi^{\beta}\ dS.
\end{split}
\end{align}
The last integral vanishes by skew-symmetry of the Weyl tensor; therefore,
\begin{align} \label{last2W}
\bint \langle \nabla_N \overset{\circ}{\tilde{H}_2}, \Delta \kappa \rangle - \langle \overset{\circ}{\tilde{H}_2}, \nabla_N(\Delta \kappa) \rangle = \frac{32}{3} \int_{|\xi| = 1} W_{ikj\ell}(y_0) W_{i \alpha j \beta}(z_0) \xi^k \xi^{\ell} \xi^{\alpha} \xi^{\beta}\ dS + O(a).
\end{align}

We now use the identity (see \cite{Brendle2008})
\begin{align} \label{SH}
\int_{|\xi|=1} \xi^k \xi^{\ell} \xi^{\alpha} \xi^{\beta}\ dS = \frac{\omega_3}{24} \big( \delta_{k\ell} \delta_{\alpha \beta} + \delta_{k \alpha} \delta_{\beta \ell} + \delta_{k \beta} \delta_{\alpha \ell} \big).
\end{align}
Plugging this into (\ref{last2W}), we obtain
\begin{align} \label{preWeylterm}  \begin{split}
\bint \langle \nabla_N \overset{\circ}{\tilde{H}_2}, \Delta \kappa \rangle - \langle \overset{\circ}{\tilde{H}_2}, \nabla_N(\Delta \kappa) \rangle &= \frac{4}{9}\omega_3\big[  W_{ikj\ell}(y_0)W_{ikj\ell}(z_0) + W_{ikj\ell}(y_0)W_{i \ell j k}(z_0) \big] \\
&\hskip.25in + O(a),
\end{split}
\end{align}
which proves the Lemma.
\end{proof}

\begin{lemma} \label{T2Lemma} As $a \rightarrow 0$,
\begin{align} \label{2ndterm}
\int_{B} \langle \mathcal{K}[ d( \mathcal{D}_2( \tilde{H}_2)], o_1 \rangle = O(a^{2-\epsilon}).
\end{align}
\end{lemma}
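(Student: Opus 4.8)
The plan is to estimate the term $\int_B \langle \mathcal{K}[d(\mathcal{D}_2(\tilde H_2))], o_1\rangle$ by first analyzing the scalar function $\mathcal{D}_2(\tilde H_2)$, then integrating by parts to move the conformal Killing operator $\mathcal{K}$ off of $d(\mathcal{D}_2(\tilde H_2))$ and onto $o_1$. Recall from \eqref{D2def} that $\mathcal{D}_2(h) = (t + \tfrac56)\delta^2 h - (t + \tfrac{5}{24})\Delta(\operatorname{tr} h)$, which is a second-order scalar operator. Since $\tilde H_2(x) = H_2(x) + O^{(4)}(|x|^\epsilon)$ with $(H_2)_{ij} = -\tfrac13 R_{ikj\ell}(z_0)x^k x^\ell$ a polynomial of degree $2$, both $\delta^2 \tilde H_2$ and $\Delta(\operatorname{tr}\tilde H_2)$ are $O(|x|^\epsilon)$ as $|x|\to\infty$; in fact the degree-$2$ leading parts are \emph{constants} (being the second derivatives of a homogeneous degree-$2$ polynomial, modulo curvature corrections coming from the difference between $g_N$ and the flat metric, which are $O(|x|^{-2})$). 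So the plan is to show $\mathcal{D}_2(\tilde H_2) = c_* + O(|x|^{\epsilon})$ for a constant $c_*$, hence $d(\mathcal{D}_2(\tilde H_2)) = O(|x|^{\epsilon - 1})$ and $\mathcal{K}[d(\mathcal{D}_2(\tilde H_2))] = O(|x|^{\epsilon - 2})$ on $\partial B = \{|x| = a^{-1}\}$.

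Next I would integrate by parts. Using $\mathcal{K}^* = -2\delta$ (up to the conventions in \eqref{deldual}--\eqref{Kef}), we have
\begin{align*}
\int_B \langle \mathcal{K}[d(\mathcal{D}_2(\tilde H_2))], o_1\rangle
= -2\int_B \langle d(\mathcal{D}_2(\tilde H_2)), \delta o_1\rangle + (\text{boundary terms on } \partial B).
\end{align*}
By Proposition \ref{prop1} (specifically \eqref{zdivfree} and the splitting $o_1 = \kappa + fg$), we know $\delta \oc{h} = 0$ for the cokernel, so $\delta o_1 = \delta(\kappa + fg) = df$; using the expansion \eqref{fexp} of $f$, $df = O(|x|^{-3})$. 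Integrating by parts once more to move $d$ off of $\mathcal{D}_2(\tilde H_2)$ yields a bulk term $\int_B \mathcal{D}_2(\tilde H_2)\,\delta(df)\,dV = \int_B \mathcal{D}_2(\tilde H_2)\,\Delta f\,dV$ plus boundary terms. Since $\mathcal{D}_2(\tilde H_2) = O(1)$ and, from the second equation of \eqref{o1facts}, $\Delta f = -\tfrac13\langle Ric, \mathcal{K}\omega_1\rangle$ with $Ric = O(|x|^{-4})$ and $\mathcal{K}\omega_1 = \kappa = O(|x|^{-2})$ (by Theorem \ref{ALEcokethm}), we get $\Delta f = O(|x|^{-6})$, so the bulk integral converges absolutely as $a \to 0$ — in fact it is $O(1)$, but I need to be more careful to get the claimed $O(a^{2-\epsilon})$.

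To get the sharp $O(a^{2-\epsilon})$ bound I would instead keep careful track of all the boundary integrals over $\partial B$ and show that the non-decaying bulk contributions actually cancel or vanish. The cleanest route: note that $S_0(H_2) = 0$ (the leading part of $\tilde H_2$ is annihilated by the flat linearized operator, since $H_2$ is a homogeneous degree-$2$ polynomial and $S_0$ is fourth-order with constant coefficients — this is the analogue of \eqref{SH20} and \eqref{S0H2}). Writing \eqref{Sform} for $S$, the $\mathcal{K}[d\mathcal{D}_2]$ piece of $S\tilde H_2$ must by itself be $O(|x|^{-4+\epsilon})$ modulo the curvature corrections, since the full $S\tilde H_2$ is compactly supported (equal to $\lambda k_1^{(0)}$) and the $\Delta^2$ piece and the trace piece have their own leading behaviors. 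More directly: on $\partial B$ the integrand $\langle \mathcal{K}[d(\mathcal{D}_2(\tilde H_2))], o_1\rangle$ and its relevant boundary contributions are products of factors of sizes $O(a^{-\epsilon}\cdot a^{2}) = O(a^{2-\epsilon})$ paired against $o_1 = O(1)$ over a sphere of area $O(a^{-3})$, with derivative gains; tallying the homogeneities shows each surviving boundary term is $O(a^{2-\epsilon})$ (the same bookkeeping as in the boundary estimates \eqref{bdyT1} of Lemma \ref{T1Lemma}), while the bulk integral vanishes identically because $\mathcal{D}_2(\tilde H_2)$ is constant to leading order and $\delta o_1 = df$ pairs it into a total divergence up to $O(|x|^{-6})$ terms whose integral over $B$ is $O(1)$ but combines with a compensating boundary term. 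The main obstacle is this last cancellation: one must verify that the $O(1)$ contributions from the bulk and from the boundary at $|x| = a^{-1}$ exactly cancel, leaving only the genuinely decaying $O(a^{2-\epsilon})$ remainder — this requires matching the constant $c_*$ in $\mathcal{D}_2(\tilde H_2) = c_* + O(|x|^\epsilon)$ against the $c_0$ in the expansion $f = c_0 + c'|x|^{-2} + \cdots$ of \eqref{fexp} and checking the divergence theorem produces no net constant. I expect this bookkeeping, rather than any conceptual difficulty, to be the delicate part.
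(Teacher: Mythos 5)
Your overall strategy (integrate by parts and exploit that $\mathcal{D}_2(\tilde H_2)$ is constant to leading order) is the right one, but there is a genuine gap at the central step, and it stems from using the wrong adjoint. You pair $\mathcal{K}[d(\mathcal{D}_2(\tilde H_2))]$ against the full cokernel $o_1$ and integrate by parts using $\mathcal{L}^{*}=-2\delta$; but $\mathcal{K}$ is not $\mathcal{L}$, and the trace correction $-\frac12(\delta\omega)g$ in \eqref{Kef} matters precisely because $o_1=\kappa+fg$ has a non-decaying pure-trace part. The paper instead first uses that $\mathcal{K}[\cdot]$ is pointwise trace-free to replace $o_1$ by its trace-free part $\kappa$, and then integrates by parts: since $\delta\kappa=\Box\omega_1=0$ (cf. \eqref{o1facts}, \eqref{zdivfree}), the bulk term vanishes \emph{identically}, leaving only the single boundary term $2\oint_{\partial B}\kappa\big(d(\mathcal{D}_2(\tilde H_2)),N\big)$. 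In your route the spurious bulk term $\int_B \mathcal{D}_2(\tilde H_2)\,\Delta f\,dV$ is genuinely of size $O(1)$ (indeed $\int_B\Delta f\,dV=\oint_{\partial B}\partial_N f\,dS\to -2c'\omega_3$ by \eqref{fexp}), and you explicitly defer the required cancellation against $O(1)$ boundary contributions as ``the delicate part'' without carrying it out; so the claimed bound $O(a^{2-\epsilon})$ is not established. That cancellation is exactly what the trace-free reduction renders unnecessary.

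In addition, your decay estimates are not sharp enough to close even the boundary estimate. Because $H_2$ is a quadratic polynomial (constant flat second derivatives), the metric corrections on $N$ are $O(|x|^{-2})$, and the error $\tilde H_2-H_2\in C^{4,\alpha}_{\epsilon}$ loses two orders under two derivatives, one gets $\mathcal{D}_2(\tilde H_2)=(t+\tfrac{5}{12})R(z_0)+O(|x|^{\epsilon-2})$ and hence $d(\mathcal{D}_2(\tilde H_2))=O(a^{3-\epsilon})$ on $\partial B$, not merely $O(|x|^{\epsilon-1})$ as you assert. Combined with $|\kappa|=O(a^2)$ on $\partial B$ and $|\partial B|=O(a^{-3})$ this yields $O(a^{2-\epsilon})$. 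Your final tally (``$O(a^{2-\epsilon})$ paired against $o_1=O(1)$ over area $O(a^{-3})$'') would only give $O(a^{-1-\epsilon})$, which shows the bookkeeping as written does not close: you must both discard the non-decaying trace part of $o_1$ (via trace-freeness of $\mathcal{K}$) and use the sharper decay of $d(\mathcal{D}_2(\tilde H_2))$.
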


\begin{proof}
Since $\mathcal{K}[\cdot]$ is trace-free, we can rewrite the integrand in (\ref{2ndterm}) as
\begin{align*}
\int_{B} \langle \mathcal{K}[ d( \mathcal{D}_2( \tilde{H}_2)], o_1 \rangle = \int_{B} \langle \mathcal{K}[ d( \mathcal{D}_2( \tilde{H}_2)], \kappa \rangle.
\end{align*}
Integrating by parts and using the fact that $\kappa$ is divergence-free, we get
\begin{align} \label{Kbox} \begin{split}
\int_{B} \langle \mathcal{K}[ d( \mathcal{D}_2( \tilde{H}_2)], \kappa \rangle &= - 2 \int_{B} \langle d( \mathcal{D}_2( \tilde{H}_2), \delta \kappa \rangle  \\
&\hskip.25in + 2 \bint \kappa( d( \mathcal{D}_2( \tilde{H}_2)), N) \\
&= 2 \bint \kappa( d( \mathcal{D}_2( \tilde{H}_2)), N).
\end{split}
\end{align}
Using (\ref{HessT}) and computing as we did in the proof of Lemma \ref{T1Lemma}, on $\partial B$ we find
\begin{align} \label{D2H21} \begin{split}
\delta^2 \tilde{H}_2 &= \frac{1}{3}R(z_0) + O(a^{2-\epsilon}), \\
\Delta (tr\ \tilde{H}_2) &= -\frac{2}{3}R(z_0) + O(a^{2-\epsilon}).
\end{split}
\end{align}
Therefore,
\begin{align*}
\mathcal{D}_2(\tilde{H}_2) &= ( t + \frac{5}{12}) R(z_0) + O(a^{2-\epsilon}), \\
d( \mathcal{D}_2( \tilde{H}_2)) &= O(a^{3-\epsilon}).
\end{align*}
Since $\kappa = O(a^2)$ on $\partial B$, we see that the boundary term in (\ref{Kbox}) is $O(a^{2-\epsilon})$, which proves the Lemma.
\end{proof}

\begin{lemma}  \label{T3Lemma}  As $a \rightarrow 0$,
\begin{align} \label{Straceterm} \begin{split}
\frac{3}{2}t \int_{B} \langle \big[ \Delta^2 (tr\ \tilde{H}_2) - \Delta (\delta^2 \tilde{H}_2) \big]g, o_1 \rangle &= \int_{B} \langle \tilde{H}_2, 6t \big[ (\Delta^2 f)g - \nabla^2 (\Delta f)\big] \rangle  \\
& \hskip.5in + 4 t \Big( 12 A - \frac{R(y_0)}{12} \Big) \omega_3 R(z_0) + o(1).
\end{split}
\end{align}
\end{lemma}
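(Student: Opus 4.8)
The plan is to integrate by parts twice, exactly as in the proof of Lemma \ref{T1Lemma}, moving the fourth-order operator off $\tilde{H}_2$ and onto the cokernel element $o_1$ (specifically onto its trace part $fg$), and then to carefully collect the boundary terms on $\partial B$, where $B = \{|x| < a^{-1}\}$. First I would note that since $o_1 = \kappa + fg$ with $\kappa$ trace-free, and the quantity $[\Delta^2(\mathrm{tr}\ \tilde{H}_2) - \Delta(\delta^2 \tilde{H}_2)]g$ is pure trace, the $\kappa$-component drops out and we are left with $6t\int_B [\Delta^2(\mathrm{tr}\ \tilde{H}_2) - \Delta(\delta^2 \tilde{H}_2)]f\ dV$. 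Then I would integrate by parts to transfer the Laplacians from $\mathrm{tr}\ \tilde{H}_2$ and $\delta^2 \tilde{H}_2$ onto $f$, producing the interior term $\int_B \langle \tilde{H}_2, 6t[(\Delta^2 f)g - \nabla^2(\Delta f)]\rangle$ together with a collection of boundary integrals over $\partial B$. (The $-\nabla^2(\Delta f)$ arises because integrating $\delta^2 \tilde{H}_2 = \nabla^i\nabla^j (\tilde{H}_2)_{ij}$ against $\Delta f$ by parts twice lands the two derivatives on $f$ as a Hessian paired with $\tilde{H}_2$.)

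The heart of the computation is the boundary-term bookkeeping. Here I would use the asymptotic expansions already in hand: $(\tilde{H}_2)_{ij} = -\tfrac{1}{3}R_{ikj\ell}(z_0)x^kx^\ell + O^{(4)}(|x|^\epsilon)$, so that $\mathrm{tr}\ \tilde{H}_2$, $\delta^2 \tilde{H}_2$ and $\Delta(\mathrm{tr}\ \tilde{H}_2)$ are given on $\partial B$ by (\ref{D2H21}) and the computations in the proof of Lemma \ref{T1Lemma} (being Einstein, $R_{ij}(z_0) = \tfrac14 R(z_0)\delta_{ij}$, which makes several terms collapse); and the expansion (\ref{fexp}) for $f$, namely $f(x) = c_0 + c'|x|^{-2} + O'(|x|^{-4+\epsilon})$, from which $\Delta f = O(|x|^{-6})$ (since $|x|^{-2}$ is nearly harmonic on the AF end, with error from the metric being $O(|x|^{-2})$ off flat), $\nabla^2 f$ and $\nabla_N f$ behave like the $c'|x|^{-2}$ term to leading order, etc. Most of the boundary integrals will be $O(a^{2-\epsilon})$ or smaller and hence $o(1)$; the one surviving contribution is the term that pairs the constant $c_0$ (the $|x|^0$ part of $f$) against the curvature data of $\tilde{H}_2$ — concretely a boundary integral of the form $\mathrm{const}\cdot c_0 \cdot R(z_0)\int_{|\xi|=1}(\cdots)\,dS$ — and one must check that $c_0$ is exactly the constant appearing in Proposition \ref{massprop}, i.e. $c_0 = 12A - R(y_0)/12 = \mathrm{mass}(g_N)$; this is where the mass enters. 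The angular integral will reduce to a multiple of $\omega_3 R(z_0)$, yielding the claimed term $4t\big(12A - \tfrac{R(y_0)}{12}\big)\omega_3 R(z_0)$.

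I expect the main obstacle to be the boundary-term accounting: there are several boundary integrals generated by the double integration by parts, each involving products of (up to third) normal derivatives of $\tilde{H}_2$, $f$, and the metric $g$, and one must verify both that all but one are genuinely lower order — using $g = \delta + O(a^2)$, $\Gamma = O(a^3)$, $\partial\Gamma = O(a^4)$ on $\partial B$ as in (\ref{ALEg}) — and that the surviving one reassembles precisely into $4t\,c_0\,\omega_3 R(z_0)$ with the correct numerical constant. The subtlety is that $f$ does \emph{not} decay (it has the constant term $c_0$), so unlike in Lemma \ref{T1Lemma} the non-decaying part of the cokernel contributes at the boundary; tracking which pieces of $\tilde{H}_2$ pair non-trivially with the constant $c_0$ versus with $c'|x|^{-2}$ versus with $\Delta f$ (which does decay) is the delicate step. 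Once that is done, substituting $c_0 = 12A - R(y_0)/12$ from the expansion of $f$ (matched via Proposition \ref{massprop} to $\mathrm{mass}(g_N)$) gives the stated formula, with the remaining boundary errors absorbed into $o(1)$.
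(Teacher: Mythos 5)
Your setup is right, and matches the paper as far as it goes: the pure-trace factor kills the $\kappa$-part of $o_1$, leaving $6t\int_B[\Delta^2(\mathrm{tr}\,\tilde H_2)-\Delta(\delta^2\tilde H_2)]f$; a double integration by parts then produces the interior term $\int_B\langle\tilde H_2,6t[(\Delta^2 f)g-\nabla^2(\Delta f)]\rangle$ plus eight boundary integrals, and most of these are $o(1)$ by the expansions \eqref{D2H2facts}, \eqref{ALEg} together with $\Delta f=-\tfrac13\langle Ric,o_1\rangle=O(a^6)$ on $\partial B$. The gap is in your mechanism for how the mass appears. The surviving boundary contribution is $I_2+I_7=\oint_{\partial B}\big[\delta^2\tilde H_2-\Delta(\mathrm{tr}\,\tilde H_2)\big]\tfrac{\partial f}{\partial N}=R(z_0)\oint_{\partial B}\tfrac{\partial f}{\partial N}+O(a^{2-\epsilon})$, i.e. it sees the \emph{normal derivative} of $f$ (equivalently the coefficient $c'$ of $|x|^{-2}$), not the constant term $c_0$. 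For the decaying cokernel element $o_1$ actually used here one has $c_0=0$ (a nonzero constant would be the separate pure-trace element $o_2=g$, and it would also ruin the $O(a^{2-\epsilon})$ bounds on $I_1=\oint_{\partial B}f\,\partial_N\Delta(\mathrm{tr}\,\tilde H_2)$ and $I_8$, which need $f=O(a^2)$ on $\partial B$). Moreover there is no a priori identification of $c_0$ or $c'$ with $\mathrm{mass}(g_N)$: Proposition \ref{massprop} relates the mass to the Green's function constant $A$ of the metric $g_N$, not to the trace part of a cokernel element, and Theorem \ref{afker} only asserts that $c'$ exists. So as written your plan either invokes a false identification ($c_0=\mathrm{mass}(g_N)$) or silently assumes the value of $c'$, which is precisely what has to be computed.

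The missing chain, which is the actual content of the paper's proof, is: convert $\oint_{\partial B}\partial_N f=\int_B\Delta f$ by the divergence theorem, insert $\Delta f=-\tfrac13\langle Ric,\mathcal K\omega_1\rangle$ from \eqref{o1facts}, integrate by parts once more using $\kappa=\mathcal K[\omega_1]$ and the contracted second Bianchi identity (with $R_{g_N}=0$) to reduce to the boundary term $-\tfrac23 R(z_0)\oint_{\partial B}Ric(N,\omega_1)$, and then use the asymptotic expansion of the Ricci tensor of the AF metric (Proposition \ref{Ricprop}, proved in the Appendix), which paired against $\omega_1\sim x^i\partial_i$ gives $R_{ij}x^ix^j/|x|=\big(\tfrac{1}{12}R(y_0)-12A\big)|x|^{-3}+O(|x|^{-4})$; integrating over the sphere of radius $a^{-1}$ produces exactly $4t\big(12A-\tfrac{R(y_0)}{12}\big)\omega_3R(z_0)$, identified with the mass via Proposition \ref{massprop}. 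Without this step (or an independent computation of $c'=-\tfrac13\,\mathrm{mass}(g_N)$, which amounts to the same work), the stated constant cannot be obtained.
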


\begin{proof}
Since $o_1 = \kappa + fg$ with $\kappa$ trace-free, we have
\begin{align} \label{downtrace} \begin{split}
\frac{3}{2}t \int_{B} \langle \big[ \Delta^2 (tr\ \tilde{H}_2) - \Delta (\delta^2 \tilde{H}_2) \big]g, o_1 \rangle &= \frac{3}{2}t \int_{B} \langle \big[ \Delta^2 (tr\ \tilde{H}_2) - \Delta (\delta^2 \tilde{H}_2) \big]g, \kappa + f g \rangle \\
&= \frac{3}{2}t \int_{B} \langle \big[ \Delta^2 (tr\ \tilde{H}_2) - \Delta (\delta^2 \tilde{H}_2) \big]g, f g \rangle \\
&= 6t \int_{B}  \big[ \Delta^2 (tr\ \tilde{H}_2) - \Delta (\delta^2 \tilde{H}_2) \big] f.
\end{split}
\end{align}
Integrating by parts, we find
\begin{align} \label{IBP3} \begin{split}
\int_{B}  \big[ \Delta^2 (tr\ \tilde{H}_2) &- \Delta (\delta^2 \tilde{H}_2) \big] f = \int_{B} \langle \tilde{H}_2, (\Delta^2 f)g - \nabla^2 (\Delta f) \rangle \\
& + \bint f \DN \Delta (tr\ \tilde{H}_2) - \bint \Delta(tr\ \tilde{H}_2) \DN f + \bint \DN (tr\ \tilde{H}_2) \Delta f \\
& - \bint (tr\ \tilde{H}_2) \frac{\partial}{\partial N}(\Delta f)  + \bint \tilde{H}_2( N, \nabla (\Delta f)) - \bint (\Delta f) \langle \delta \tilde{H}_2 , N \rangle \\
& + \bint \DN f (\delta^2 \tilde{H}_2) - \bint f \DN(\delta^2 \tilde{H}_2) \\
&=  \int_{B} \langle \tilde{H}_2, (\Delta^2 f)g - \nabla^2 (\Delta f) \rangle + I_1 + \cdots + I_8.
\end{split}
\end{align}

By Theorem \ref{afker}, on $\partial B$
\begin{align} \label{DDffacts} \begin{split}
\Delta f &= -\frac{1}{3} \langle Ric, o_1 \rangle = O(a^6), \\
\DN (\Delta f) &= -\frac{1}{3} \langle \nabla_N Ric, o_1 \rangle - \frac{1}{3} \langle Ric, \nabla_N o_1 \rangle = O(a^7).
\end{split}
\end{align}
Also, from the preceding lemma (see (\ref{D2H21}))
\begin{align} \label{D2H2facts} \begin{split}
tr\ \tilde{H}_2 &= -\frac{1}{3}R_{k\ell}(z_0)x^k x^{\ell} + O(a^{-\epsilon}) = O(a^{-2}), \\
\DN(tr\ \tilde{H}_2) &= -\frac{2}{3}  R_{k\ell}(z_0)\frac{x^k x^{\ell}}{|x|} + O(a^{1-\epsilon}) = O(a^{-1}), \\
\delta^2 \tilde{H}_2 &= \frac{1}{3}R(z_0) + O(a^{2-\epsilon}) = O(1), \\
\Delta (tr\ \tilde{H}_2) &= -\frac{2}{3}R(z_0) + O(a^{2-\epsilon}) = O(1) \\
\DN (\delta^2 \tilde{H}_2) &= O(a^{3-\epsilon}), \\
\DN (\Delta (tr\ \tilde{H}_2)) &= O(a^{3-\epsilon}).
\end{split}
\end{align}
Therefore,
\begin{align} \label{Inot27}  \begin{split}
I_1 &= \bint f \DN \Delta (tr\ \tilde{H}_2) = O(a^{2-\epsilon}) \\
I_3 &= \bint \DN (tr\ \tilde{H}_2) \Delta f = O(a^2), \\
I_4 &= - \bint (tr\ \tilde{H}_2) \frac{\partial}{\partial N}(\Delta f) = O(a^2), \\
I_5 &=  \bint \tilde{H}_2( N, \nabla (\Delta f))= O(a^2), \\
I_6 &=  - \bint (\Delta f) \langle \delta \tilde{H}_2 , N \rangle = O(a^2), \\
I_8 &=  - \bint f \DN(\delta^2 \tilde{H}_2) = O(a^{2-\epsilon}).
\end{split}
\end{align}
Therefore, it remains to calculate $I_2$ and $I_7$.

First, using (\ref{D2H2facts}) we have
\begin{align} \begin{split}
I_2 + I_7 &= \bint \big[ \delta^2 \tilde{H}_2 - \Delta(tr\ \tilde{H}_2) \big] \DN f  \\
&= \bint \big[ R(z_0) + O(a^{2-\epsilon})\big] \DN f \\
&= R(z_0)  \bint \DN f + O(a^{2-\epsilon}) \ \ \mbox{ (by (\ref{asymo}))}\\
&= R(z_0)  \int_{B} \Delta f  + O(a^{2-\epsilon}) \ \ \mbox{(by the divergence theorem)} \\
&= -\frac{1}{3} R(z_0) \int_{B} \langle Ric, \kappa \rangle + O(a^{2-\epsilon}) \ \ \mbox{(by (\ref{o1facts})).}
\end{split}
\end{align}

Using the fact that $\kappa = \mathcal{K}[\omega_1]$, we can integrate by parts to obtain
\begin{align*}
\int_{B} \langle Ric, \kappa \rangle &= \int_{B} \langle Ric, \mathcal{K}[\omega_1] \rangle \\
&= 2 \int_{B} R_{ij} \nabla^i \omega_1^j  \\
&= - 2 \int_{B} \nabla^i R_{ij} \omega_1^j + 2 \bint R_{ij} N^i \omega_1^j
\end{align*}
Using the second Bianchi identity and the fact that the scalar curvature is zero, the solid integral above vanishes and we conclude
\begin{align} \label{RicInt}
I_2 + I_7 = -\frac{2}{3}R(z_0) \bint Ric(N, \omega_1)  + O(a^{2-\epsilon}).
\end{align}
By (\ref{asymo}) and (\ref{Nform}),
\begin{align} \label{RicNN}
Ric(N, \omega_1) = R_{ij} \frac{x^i x^j}{|x|} + O(|x|^{-5}).
\end{align}

\begin{proposition}
\label{Ricprop}
As $|x| \rightarrow \infty$,
\begin{align}
\begin{split}\label{Ricex}
R_{ij} &= - \frac{4}{3}W_{ikj\ell}(y_0) \frac{x^k x^{\ell}}{|x|^6} - \frac{1}{36}R(y_0) \frac{1}{|x|^4} \delta_{ij} + \frac{1}{9}R_(y_0)\frac{x^i x^j}{|x|^6} \\
& \ \ \ \ - \frac{16A}{|x|^6}x^i x^j + \frac{4A}{|x|^4} \delta_{ij} + O(|x|^{-5}).
\end{split}
\end{align}
\end{proposition}
\begin{proof}
This is proved in Appendix \ref{append}.
\end{proof}
Assuming the proposition, we see that
\begin{align*}
R_{ij}\frac{x^i x^j}{|x|} = \frac{1}{12} R(y_0) \frac{1}{|x|^3} -\frac{12A}{|x|^3} + O(|x|^{-4}).
\end{align*}
Therefore,
\begin{align*}
I_2 + I_7 &= -\frac{2}{3}R(z_0) \bint Ric(N, \omega_1)  + O(a^{2-\epsilon}) \\
&= -\frac{2}{3}R(z_0) \big[ \frac{1}{12}R(y_0) - 12A \big] \bint \frac{1}{|x|^3}  + O(a) \\
&= -\frac{2}{3} \omega_3 R(z_0) \big[ \frac{1}{12}R(y_0) -12A \big]  + O(a).
\end{align*}
Plugging this into (\ref{downtrace}) and (\ref{IBP3}), we arrive at (\ref{Straceterm}).
\end{proof}
Combining Lemmas \ref{T1Lemma}, \ref{T2Lemma}, and \ref{T3Lemma}, and using (\ref{3terms}), we have
\begin{align} \label{moddiv} \begin{split}
\int_{B} \langle S \tilde{H}_2, o_1 \rangle &= \int_{B} \langle \tilde{H}_2, \Delta^2 \kappa + 6t \big[ (\Delta^2 f)g - \nabla^2 (\Delta f)\big] \rangle   \\
& \hskip.25in + \frac{4}{9} \omega_3 \big[ W_{ikj\ell}(y_0) W_{ikj\ell}(z_0) + W_{ikj\ell}(y_0) W_{i\ell j k}(z_0) \big] \\
& \hskip.35in + 4t \omega_3 R(z_0) \big\{ 12 A - \frac{1}{12}R(y_0) \big\} + o(1).
\end{split}
\end{align}
By Proposition \ref{massprop}, the quantity is braces is exactly
the mass of the AF space. Proposition \ref{SadjProp} then follows from the next Lemma:

\begin{lemma} \label{Szed}  The cokernel element $o_1$ satisfies
\begin{align} \label{So1}
0 = S(o_1) = \Delta^2 \kappa + 6t \big[ (\Delta^2 f)g - \nabla^2 (\Delta f)\big] + O(|x|^{-8}).
\end{align}
\end{lemma}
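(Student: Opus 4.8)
The plan is to apply the structural formula \eqref{Sform} for the linearized operator to the cokernel element $h = o_1$, and to use that $o_1$ is, by construction, a (decaying) solution of $S^t(o_1) = 0$ (Theorem \ref{afker}). Recall that $o_1 = \kappa + f g$, where $\kappa = \mathcal{K}[\omega_1]$ is trace-free with $\delta\kappa = \Box\omega_1 = 0$ by \eqref{o1facts}, and that $o_1$ decays quadratically on the asymptotically flat end, with $|\nabla^j o_1| = O(|x|^{-2-j})$ for $j = 0,1,2$; this follows from \eqref{2decay}, the expansion \eqref{kay} for $\kappa$, and \eqref{fexp} for $f$ (the constant $c_0$ there vanishes because $o_1$ lies in the decaying space $C^{4,\alpha}_{-\epsilon}$), together with smoothness of $o_1$ (elliptic regularity, $S^t$ elliptic). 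Since $\mathrm{tr}_g(o_1) = 4f$ we have $o_1 - \tfrac14(\mathrm{tr}\,o_1)g = \kappa$, and since $\delta(fg) = df$ we have $\delta^2 o_1 = \delta^2(fg) = \Delta f$. Hence, by \eqref{D2def},
\begin{align*}
\mathcal{D}_2(o_1) = \Big(t + \tfrac56\Big)\Delta f - 4\Big(t + \tfrac5{24}\Big)\Delta f = -3t\,\Delta f .
\end{align*}
Using the exact identity $\mathcal{K}_g[d\psi] = 2\nabla^2\psi - \tfrac12(\Delta\psi)g$ valid for any function $\psi$, the middle term of \eqref{Sform} becomes $-3t\big[2\nabla^2(\Delta f) - \tfrac12(\Delta^2 f)g\big]$, while the last displayed term of \eqref{Sform} equals $\tfrac32 t\big[4\Delta^2 f - \Delta^2 f\big]g = \tfrac92 t(\Delta^2 f)g$. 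Adding these to the first term $\Delta^2\kappa$ gives
\begin{align*}
S^t(o_1) = \Delta^2\kappa + 6t\big[(\Delta^2 f)g - \nabla^2(\Delta f)\big] + (\,\cdots\,),
\end{align*}
where $(\cdots)$ denotes the lower-order terms suppressed in \eqref{Sform}.

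It then remains to show $(\cdots) = O(|x|^{-8})$. By the schematic form \eqref{Sgenform}, each of these lower-order terms carries at least one factor of curvature and at most two covariant derivatives of $h$: they are of type $Rm * \nabla^2 h$, $\nabla Rm * \nabla h$, $\nabla^2 Rm * h$, and $Rm * Rm * h$. On the asymptotically flat end of $(N,g)$, which is AF of order two, one has $|Rm| = O(|x|^{-4})$, $|\nabla Rm| = O(|x|^{-5})$, $|\nabla^2 Rm| = O(|x|^{-6})$; combining with $|\nabla^j o_1| = O(|x|^{-2-j})$ gives that each of the first three types is $O(|x|^{-8})$ (the critical one being $\nabla^2 Rm * o_1$), while $Rm * Rm * o_1 = O(|x|^{-10})$. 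Hence $(\cdots) = O(|x|^{-8})$, and since $S^t(o_1) = 0$ the previous display yields \eqref{So1}.

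The one point genuinely requiring care is the claim that the remainder in \eqref{Sform} couples curvature to at most two derivatives of $h$ — i.e. that no term such as $Rm * \nabla^3 h$ survives — since this is exactly what makes $(\cdots)$ land at order $-8$; this is precisely the content of the schematic expansion \eqref{Sgenform}, into which all commutator and curvature-coupling terms arising in the linearization of $B^t$ have been collected. (In any event such a hypothetical term would still be $O(|x|^{-9})$, so the conclusion is robust.) Everything else is the purely algebraic bookkeeping of the leading fourth-order terms carried out above.
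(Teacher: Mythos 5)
Your proposal is correct and follows essentially the same route as the paper: plug $o_1=\kappa+fg$ into the structural formula \eqref{Sform}, use $\mathrm{tr}\,o_1=4f$, $\delta\kappa=\Box\omega_1=0$, $\delta^2 o_1=\Delta f$ to get $\mathcal{D}_2(o_1)=-3t\,\Delta f$, and collect the leading terms into $\Delta^2\kappa+6t[(\Delta^2 f)g-\nabla^2(\Delta f)]$. The only difference is that you spell out why the suppressed curvature-coupled terms are $O(|x|^{-8})$ (via \eqref{Sgenform} and the decay of $Rm$ and $o_1$), a point the paper leaves implicit; this is a welcome addition rather than a divergence.
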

\begin{proof} By the formula in (\ref{Sform}), we have
\begin{align}
S o_1 = \Delta^2 [o_1 - \frac{1}{4}(tr\ o_1)g] + \mathcal{K}\big[ d (\mathcal{D}_2(o_1)) \big] + \frac{3}{2}t \Big[ \Delta^2 (tr\ o_1) - \Delta (\delta^2 o_1) \Big]g + \cdots
\end{align}
Using the properties of $o_1$ in Theorem \ref{afker}, we have
\begin{align*}
o_1 - \frac{1}{4}(tr\ o_1)g &= \kappa, \\
tr\ o_1 &= 4f, \\
\delta^2 o_1 &= \Delta f, \\
\Delta (tr\ o_1) &= 4 \Delta f.
\end{align*}
It follows that
\begin{align*}
\mathcal{D}_2(o_1) &= \big( t + \frac{5}{6}\big) \delta^2 o_1 - \big( t + \frac{5}{24}\big) \Delta (tr\ o_1) \\
&= - 3 t \Delta f,
\end{align*}
and
\begin{align*}
\mathcal{K}\big[ d (\mathcal{D}_2(o_1)) \big] &= - 3t \mathcal{K}\big[ d(\Delta f) \big] \\
&= -6t \nabla^2 (\Delta f) + \frac{3}{2}t (\Delta^2 f)g
\end{align*}
Therefore,
\begin{align*}
S o_1 &= \Delta^2 \kappa + \mathcal{K}\big[ d (\mathcal{D}_2(o_1)) \big] + \frac{9}{2}t  (\Delta^2 f)g  + \cdots  \\
&= \Delta^2 \kappa -6t \nabla^2 (\Delta f) + \frac{3}{2}t (\Delta^2 f)g  + \frac{9}{2}t  (\Delta^2 f)g  + \cdots  \\
&= \Delta^2 \kappa + 6t \big[ (\Delta^2 f)g - \nabla^2 (\Delta f)\big] + \cdots,
\end{align*}
as claimed.
\end{proof}

\section{Na\"ive approximate metric}
\label{approx}

Let $(Z, g_Z)$ be a compact $B^t$-flat manifold.
In our application, $(Z,g_Z)$ will be taken to be
either $\CP^2$ with the Fubini-Study metric, or $S^2 \times S^2$
with the product metric, with the coordinate systems
described in Subsections \ref{fssub} and \ref{s2s2sub}.

We let $z_0$ denote the base point, which is $[1,0,0]$ in the
case of $\CP^2$, or $(n,n)$ in the case of $S^2 \times S^2$.
As seen above, we have a
Riemannian normal coordinate system $\{z^i\} , i = 1 \dots 4$, satisfying
\begin{align} \label{gZexp1}
g_Z = dz^2 + \eta_Z(z),
\end{align}
where $\eta_Z$ has the expansion $\eta_Z = (\eta_Z(z))_{ij} dz^i dz^j$ with
\begin{align}
\begin{split}
\label{fullne}
(\eta_Z(z))_{ij} &= - \frac{1}{3} R_{ikjl}(z_0) z^k z^l + O(|z|^4)
\end{split}
\end{align}
as $|z| \rightarrow 0$.

Furthermore, in the case of $\CP^2$ the metric is invariant under the
standard linear action of $U(2)$ in the $\{z\}$-coordinates,
and in the case of $S^2 \times S^2$
the metric is invariant under the standard diagonal torus action,
and also invariant under the diagonal symmetry, both in the $\{z\}$-coordinates.

Next, let $(N, g_N)$ be a $B^t$-flat AF space of order $2$.
In our application $(N, g_N)$ will be taken to the either the
Burns metric or Green's function metric of the product metric
with AF coordinate system as described in Subsections \ref{burnssub}
and \ref{greensub}. The Green's function here is with respect to
the basepoint which we will denote as $y_0$,  which is $[1,0,0]$ in the
case of $\CP^2$, or either point $(n,n)$ of $S^2 \times S^2$.

We denote the AF coordinates as $\{x^i\} , i = 1 \dots 4$, and write
\begin{align}
g_N  = dx^2 + \eta_N,
\end{align}
where the tensor $\eta_N$ admits the expansion
\begin{align}
\begin{split}
\label{fullneyn}
(\eta_N)_{ij}(x) &= - \frac{1}{3} R_{ikjl}(y_0) \frac{x^k x^l}{|x|^4} + 2 A \frac{1}{|x|^2} \delta_{ij} + O( |x|^{-4 + \epsilon})
\end{split}
\end{align}
as $|x| \rightarrow \infty$.

In the case of the Burns metric, the metric is invariant under the
standard linear action of $U(2)$ in the $\{x\}$-coordinates,
and in the case of the Green's function metric on $S^2 \times S^2$,
the metric is invariant under the standard diagonal torus action,
and also invariant under the diagonal symmetry, both in the $\{x\}$-coordinates.

Let $\phi$ be the cutoff function defined in (\ref{cutdef}):
\begin{align}
\phi(t) =
\begin{cases}
1 & t \leq 1 \\
0 & t \geq 2.\\
\end{cases}
\end{align}

For $b > 0$ denote the annulus
$A_Z(b,2b) = \{ b \leq |z| \leq 2 b \} \subset Z$,
and for $a > 0$ denote the annulus
$A_N(a^{-1},2a^{-1}) \equiv \{ a^{-1} \leq |x| \leq 2 a^{-1} \} \subset N$.
Let $\iota : A_N(a^{-1},2a^{-1}) \rightarrow A_Z(b,2b)$
denote the map $\iota(x) = ab x = z$.
Identify the annular region
$A_Z( b, 2b) \subset X$ with $A_N(a^{-1},2a^{-1}) \subset N$
using the map $\iota$ to define a new manifold $X_{a,b}$.
\begin{remark}{\em
With this choice of $\iota$, the manifold $X_{a,b}$ is diffeomorphic to
$X \# \overline{N_{c}}$, where ${N}_{c}$ is
the one-point compactification of $N$.
If we instead choose $\iota$ to be defined by, for example,
$\iota(x_1, x_2, x_3, x_4) = ab ( -x_1, x_2, x_3, x_4)$,
$X_{a,b}$ will be diffeomorphic to
$X \# {N}_{c}$, which can be different topologically.
}
\end{remark}

 In the case where $(Z, g_Z)$ is the Fubini-Study metric and $(N, g_N)$ is
the Burns metric, the $U(2)$ action extends to $X_{a,b}$, since the
actions agree in the coordinate systems. In all other cases,
the torus action as well as the diagonal symmetry
extend to actions on $X_{a,b}$. For convenience, we will now
refer to this action as ``the group action'', keeping in mind
that the group depends on the example.

We compute that
\begin{align}
\begin{split}
\iota^*( a^{-2} b^{-2} g_Z) &= a^{-2} b^{-2} \{ a^2 b^2 dx^2 + (\iota^*\eta_Z)(x) \}\\
&= dx^2 + \tilde{\eta}_Z(x),
\end{split}
\end{align}
where
\begin{align}
 \tilde{\eta}_Z(x) = a^{-2} b^{-2} (\iota^{*} \eta_Z)( x ).
\end{align}
Note that  $\tilde{\eta}_Z$ admits the expansion
$\tilde{\eta}_Z = (\tilde{\eta}_Z(x))_{ij} dx^i dx^j$ with
\begin{align}
\begin{split}
\label{fullne2}
(\tilde{\eta}_Z(x))_{ij} &= - \frac{1}{3} a^2 b^2 R_{ikjl}(z_0) x^k x^l + \mbox{ (higher) }
\end{split}
\end{align}
as $b \rightarrow 0$ and for $x \in A_N(a^{-1}, 2a^{-1})$.

Define a metric $g_{a,b}^{(0)}$ on $X_{a,b}$ by
\begin{align}
\label{idmt}
g_{a,b}^{(0)} =
\begin{cases}
a^{-2} b^{-2} g_Z & |z| \geq 2b \\
dx^2 + \phi( a|x|) \eta_N(x) + [ 1 - \phi(a|x|)] \tilde{\eta}_Z(x) & a^{-1} \leq |x| \leq 2 a^{-1}\\
g_N &   |x| \leq a^{-1}.\\
\end{cases}
\end{align}
The group action is linear in the $\{x\}$-coordinates, and is
contained in $SO(4)$. Since the cutoff function is radial,
it is clear that $g_{a.b}^{(0)}$ is invariant under the group action.

On the damage zone $A_N(a^{-1}, 2a^{-1})$, we will also write the metric as
\begin{align}
g_{a,b}^{(0)} = dx^2 + \eta_1 + \eta_2,
\end{align}
where
\begin{align}
\begin{split}
\eta_1(x) &=  \phi( a|x|) \eta_N(x)\\
\eta_2(x) &= [ 1 - \phi(a|x|)] \tilde{\eta}_Z(x).
\end{split}
\end{align}
Notice that after scaling and identifying, we have
\begin{align} \label{RmsizeD}
|Rm(\iota^*(a^{-2} b^{-2} g_Z^b))| =
\begin{cases}
a^2 b^2 (|R(g_Z)| \circ \iota) & |x| \geq 2a^{-1} \\
O(a^2 b^2) & a^{-1} \leq |x| \leq 2 a^{-1} \\
0 & |x| \leq   2a^{-1}.
\end{cases}
\end{align}
This implies that
\begin{align}
|B^t(\iota^*(a^{-2} b^{-2} g_Z^b))| =
\begin{cases}
0 & |x| \geq 2a^{-1} \\
O(a^4 b^2) &  a^{-1} \leq |x| \leq 2 a^{-1} \\
0 & |x| \leq   2a^{-1}.
\end{cases}
\end{align}

 Consequently,
\begin{align}
|B^t(g_{a,b}^{(0)})| =
\begin{cases}
0 & |x| \geq 2a^{-1} \\
O(a^4 b^2) + O( a^{6}) & a^{-1} \leq |x| \leq 2 a^{-1} \\
0 & |x| \leq   2a^{-1},
\end{cases}
\end{align}
which is proved by using the expansion
\begin{align}
B^t(g_{a,b}^{(0)}) =  B^t(g_0) + (B^t)'_{g_0} ( \eta_1 + \eta_2)
+ \mathcal{Q}( \eta_1 + \eta_2),
\end{align}
where $g_0 = dx^2$ in the damage zone; see Remark \ref{QRemark}.

This estimate will not suffice for our purposes, and in
Section \ref{better} we will construct a ``better'' approximate metric.

\subsection{Gluing with one basepoint}
To summarize, $(X_{a,b}, g_{a,b}^{(0)})$ is defined in the following cases:

\begin{itemize}
\item (i)
$\CP^2 \# \overline{\CP}^2$; the Fubini-Study metric with a Burns metric attached
at one fixed point. This case admits a $U(2)$-action.
\item(ii)
$S^2 \times S^2 \# \overline{\CP}^2 = \CP^2 \# 2  \overline{\CP}^2$;
the product metric on $S^2 \times S^2$ with a Burns metric attached at
one fixed point. Alternatively, we can view this as the Fubini-Study
metric on $\CP^2$, with a Green's function $S^2 \times S^2$ metric attached at
one fixed point. For this topology, we will therefore construct two
different critical metrics.
\item(iii)
$2 \# S^2 \times S^2$; the product metric on $S^2 \times S^2$ with
a Green's function $S^2 \times S^2$ metric attached at one fixed point.
\end{itemize}
All of these cases are invariant under the torus action,
and invariant under the diagonal symmetry.

As mentioned in the introduction,
the product metric on $S^2 \times S^2$ admits the Einstein
quotient $S^2 \times S^2 / \ZZ_2$,
where $\ZZ_2$ acts by the antipodal map on both factors,
and the quotient $\RP^2 \times \RP^2$.
The diagonal symmetry clearly extends to these metrics.
Using one of these metrics as the compact factor or
the Green's function metric of one of these as the AF space,
we obtain approximate metrics on the non-simply-connected topologies
listed in Table~\ref{table2}. Note that in this table, the first
special value of $t_0$ corresponds to the the first factor
being the compact factor, and the second factor being the AF
space, while the second value of $t_0$ corresponds to the reverse.
From this, the approximate metric is clear and we need not
detail every case here.

\subsection{Gluing with multiple bubbles}

 We first consider the case when $(Z,g_Z)$ is
$(S^2 \times S^2, g_{S^2 \times S^2})$.
We can glue on an AF space at both points $(n,n)$
and $(s, s)$, but we must take the same AF space for both points.
In this case, we impose an additional symmetry.
There is an orientation-preserving involution of $S^2 \times S^2$
consisting of the product of antipodal maps.
Since both AF spaces are the same, this involution obviously extends to
an involution of $X_{a,b}$ which is an isometry of $g_{a,b}^{(0)}$,
and which we will refer to as bilateral symmetry. As in the single
bubble case, the toric action extends to an isometry of the
approximate metric on the connect sum.
We then have the following cases with toric invariance,
diagonal symmetry, and bilateral symmetry:
\begin{itemize}
\item(iv)
$3 \# S^2 \times S^2$; the product metric on $S^2 \times S^2$ with
Green's function $S^2 \times S^2$ metrics attached at two fixed points.
\item (v) $S^2 \times S^2 \# 2 \overline{\CP}^2
= \CP^2 \# 3  \overline{\CP}^2$; the product metric on $S^2 \times S^2$ with Burns
metrics attached at two fixed points.
\end{itemize}

Next, we consider the case when $(Z,g_Z)$ is
$(\CP^2, g_{FS})$. Imposing trilateral symmetry (see Figure \ref{cp2fig}),
allows us to attach the same AF space at all $3$ fixed points.
We then have the following cases with toric invariance,
diagonal symmetry at each fixed point, and trilateral symmetry:
\begin{itemize}
\item (vi) $ \CP^2 \# 3  \overline{\CP}^2$; the Fubini-Study
metric with Burns metrics attached at all fixed points.
\item (vii) $ \CP^2 \# 3(S^2 \times S^2) = 4 \CP^2 \# 3 \overline{\CP}^2 $;
the Fubini-Study metric with Green's function $S^2 \times S^2$ metrics
attached at all fixed points.
\end{itemize}

Next, we return to the case that $(Z,g_Z)$ is
$(S^2 \times S^2, g_{S^2 \times S^2})$. Imposing quadrilateral symmetry
(see Figure~\ref{s2s2fig}), allows us to attach the same
AF space at all $4$ fixed points.
We then have the following cases with toric invariance,
diagonal symmetry at each fixed point, and quadrilateral symmetry:
\begin{itemize}
\item (viii) $S^2 \times S^2 \# 4 \overline{\CP}^2 =
 \CP^2 \# 5 \overline{\CP}^2$;
the product metric on $S^2 \times S^2$ with Burns metrics
attached at all fixed points.
\item (ix) $5 \# S^2 \times S^2$ viewed as the product metric on
$S^2 \times S^2$ with Greens function $S^2 \times S^2$ metrics
attached at all fixed points.
\end{itemize}

For multiple bubbles in the non-orientable case, see Appendix \ref{appb}.

\subsection{Weight function}
For the weighted norms, we define the weight
function on $X_{a,b}$ by
\begin{align} \label{wdef}
w =
\begin{cases}
a^{-1} b^{-1} &  |z| \geq 1 \\
a^{-1} b^{-1} |z| &  1 \geq |z| \geq 2b \\
|x| & 2 a^{-1} \geq |x| \geq  1 \\
1 & 1 \geq |x|,
\end{cases},
\end{align}
where for simplicity we have assumed that the $x$ and $z$ coordinates
contain the unit spheres.
We record the inequalities
\begin{align}
1 \leq w  \leq a^{-1} b^{-1}.
\end{align}

\section{Refined approximate metric}
\label{better}

\begin{remark} \label{epvsdel2} {\em
We will now choose $\delta < 0$ satisyfing
$-\epsilon < \delta < 0$, where $\epsilon$ was previously 
chosen (see Remark \ref{epvsdel}).
}
\end{remark}

As pointed out above,
the approximate metric defined in \eqref{idmt} is insufficient
for our purposes, and needs to be refined.
To define the new approximate metric, we replace
$g_N$ with $g_N^{(1)} = g_N + a^2 b^2 \tilde{H}_{2}(x)$, so that
\begin{align}
g_N^{(1)} = g_0 + \eta_N(x) +  a^2 b^2 \tilde{H}_{2}(x), \ \ |x| \leq a^{-1}.
\end{align}
By Proposition \ref{LamProp}, for $a,b$ sufficiently small
\begin{align*}
|a^2 b^2 \tilde{H}_{2}(x)| &\lesssim a^2 b^2 |x|^2 \\
&\lesssim b^2,
\end{align*}
so that $g_N^{(1)}$ is indeed a Riemannian metric when $|x| < a^{-1}$.

Next, replace the compact metric $ g_Z$ with ${g}_Z^{(1)} = g_Z + a^2 b^2 \tilde{H}_{-2}(z)$,
so that
\begin{align} \label{gZ1def}
g_Z^{(1)} = g_0 + \eta_Z(z) +  a^2 b^2 \tilde{H}_{-2}(z), \ \ |z| \geq b.
\end{align}
By Proposition \ref{Hminus2},
\begin{align*}
| a^2 b^2 \tilde{H}_{-2}(z)| &\lesssim a^2 b^2 b^{-2} \\
&\lesssim a^2,
\end{align*}
hence $g_Z^{(1)}$ is a Riemannian metric for $|z| \geq b$.

Using these metrics, we then define the refined approximate metric $g_{a,b}^{(1)}$  on $X_{a,b}$ by
\begin{align}
\label{idmtnew}
g_{a,b}^{(1)} =
\begin{cases}
a^{-2} b^{-2} (g_Z + a^2 b^2 \tilde{H}_{-2}(z))  & |z| \geq 2b, \\
g_N + a^2 b^2 \tilde{H}_2(x)  &   |x| \leq a^{-1},\\
\end{cases}
\end{align}
while in the damage zone  $a^{-1} \leq |x| \leq 2 a^{-1}$
the metric is given by
\begin{align} \label{g1DZ}
\begin{split}
g_{a,b}^{(1)} = dx^2 &+ \phi( a|x|) \big\{ \eta_N(x) + a^2b^2 \tilde{H}_2(x)\big\} \\
&+ [ 1 - \phi(a|x|)]
\iota^* \big\{ a^{-2} b^{-2} (\eta_Z(z) + a^2 b^2 \tilde{H}_{-2}(z)) \big\}.
\end{split}
\end{align}

\begin{remark}{\em
From Propositions \ref{LamProp} and \ref{Hminus2}, it is clear that $g_{a,b}^{(1)}$ is
invariant under the group action.
}
\end{remark}
\subsection{Damage zone estimate}
We compute that
\begin{align}
\begin{split}
\iota^* \big\{ a^{-2} b^{-2} (g_Z + a^2 b^2 \tilde{H}_{-2}(z)) \big\}
&= \iota^* \big\{ a^{-2} b^{-2} ( \delta_{ij} + (\eta_Z(z))_{ij} + a^2 b^2
\tilde{H}_{-2}(z)_{ij} ) dz^i dz^j \big\} \\
&= \big(  \eta_Z( ab x)_{ij} + a^2 b^2 \tilde{H}_{-2}(ab x)_{ij} \big) dx^i dx^j.
\end{split}
\end{align}
Consequently, in the damage zone, the metric is
\begin{align}
\begin{split}
g_{a,b}^{(1)} = dx^2 &+ \phi( a|x|) \big\{ \eta_N(x) + a^2b^2 \tilde{H}_2(x)\big\} \\
&+ [ 1 - \phi(a|x|)]
\big(  \eta_Z( ab x)_{ij} + a^2 b^2 \tilde{H}_{-2}(ab x)_{ij} \big) dx^i dx^j.
\end{split}
\end{align}
We next use the the expansions
\begin{align}
\label{e1}
 a^2b^2 \tilde{H}_2(x)_{ij} =
-a^2 b^2 \Big(\frac{1}{3} R_{ikjl}(z_0)x^k x^l + O^{(4)}(|x|^{\epsilon}) \Big),
\end{align}
\begin{align}
\eta_Z(abx)_{ij} = - a^2 b^2 \frac{1}{3} R_{ikjl}(z_0)x^k x^l
+ a^4 b^4 O^{(4)}(|x|^4),
\end{align}
\begin{align}
\begin{split}
a^2 b^2 \tilde{H}_{-2}(ab x)_{ij} &=
a^2 b^2 \Big( H_{-2}(ab x)_{ij} + (ab)^{- \epsilon} O( |x|^{-\epsilon}) \Big)\\
&= - \frac{1}{3} R_{ikjl}(y_0) \frac{x^k x^l}{|x|^4}
+ 2 A \frac{1}{|x|^2} \delta_{ij} + (ab)^{2 - \epsilon} O^{(4)}( |x|^{-\epsilon}),
\end{split}
\end{align}
and
\begin{align}
\label{e4}
\eta_N(x)_{ij} = - \frac{1}{3} R_{ikjl}(y_0) \frac{x^k x^l}{|x|^4}
+ 2 A \frac{1}{|x|^2} \delta_{ij} + O^{(4)}(|x|^{-4+\epsilon}).
\end{align}
Using \eqref{e1}-\eqref{e4}, we obtain in the damage zone:
\begin{align}
\label{dzest}
\begin{split}
(g_{a,b}^{(1)})_{ij} &=  \delta_{ij}
- a^2 b^2 \frac{1}{3} R_{ikjl}(z_0)x^k x^l
- \frac{1}{3} R_{ikjl}(y_0) \frac{x^k x^l}{|x|^4}
+ 2 A \frac{1}{|x|^2} \delta_{ij}\\
&+   a^2 b^2 O^{(4)}(|x|^{\epsilon}) +  a^4 b^4 O^{(4)}(|x|^4)
+(ab)^{2 - \epsilon} O^{(4)}( |x|^{-\epsilon})+ O^{(4)}(|x|^{-4+\epsilon}).
\end{split}
\end{align}

\begin{proposition} \label{NewBtsize}
The size of the $B^t$-tensor of the refined approximate metric in the
damage zone is given by
\begin{align}
\label{dsize}
|B^t(g_{a,b}^{(1)})|_{g_{a,b}^{(1)}} =  O( b^2 a^{6 - \epsilon}) + O ( a^4 b^4) + O( a^6 b^{2 - \epsilon})
+ O(a^{8-\epsilon}),
\end{align}
as $a, b \rightarrow 0$.
\end{proposition}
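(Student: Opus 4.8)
The plan is to carry out the whole estimate in the damage zone $a^{-1}\le |x|\le 2a^{-1}$, where, by the expansion \eqref{dzest}, the refined metric is a small perturbation of the flat metric $g_0 = dx^2$. Write $g_{a,b}^{(1)} = g_0 + \theta$. The feature built into the refinement (via Propositions \ref{LamProp} and \ref{Hminus2}) is that the two glued pieces share the \emph{same} leading term, so that
\begin{align}
\theta = a^2b^2 H_2(x) + H_{-2}(x) + \mathcal{E},
\end{align}
where $H_2(x)_{ij} = -\tfrac13 R_{ikjl}(z_0)x^kx^l$, $H_{-2}(x)_{ij} = -\tfrac13 R_{ikjl}(y_0)\tfrac{x^kx^l}{|x|^4} + 2A\tfrac{1}{|x|^2}\delta_{ij}$, and $\mathcal{E}$ is the sum of the four error tensors in \eqref{dzest}, namely $a^2b^2 O^{(4)}(|x|^{\epsilon})$, $a^4b^4 O^{(4)}(|x|^4)$, $(ab)^{2-\epsilon}O^{(4)}(|x|^{-\epsilon})$, and $O^{(4)}(|x|^{-4+\epsilon})$. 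The first observation I would record is that multiplication by the cutoffs $\phi(a|x|)$, $1-\phi(a|x|)$ preserves these bounds, since in the damage zone $\partial^k\phi(a|x|) = O(a^k) = O(|x|^{-k})$, which is exactly the scaling of $\partial^k$ applied to a homogeneous degree-zero factor.

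Next I would use Proposition \ref{QRemark} to expand $B^t(g_{a,b}^{(1)}) = B^t(g_0) + (B^t_{g_0})'\theta + \mathcal{Q}_{g_0}(\theta)$. Since the flat metric has $W = 0$ and $R = 0$, we have $B^t(g_0) = 0$. The operator $(B^t_{g_0})'$ is a homogeneous constant-coefficient fourth-order operator on $\mathbb{R}^4$: it therefore annihilates the quadratic polynomial $a^2b^2 H_2$, and by \eqref{B0H2} of Lemma \ref{SBKer} it also annihilates $H_{-2}$. Hence $(B^t_{g_0})'\theta = (B^t_{g_0})'\mathcal{E}$, and since this operator converts a summand $O^{(4)}(|x|^{\gamma})$ into $O(|x|^{\gamma-4})$, evaluation at $|x|\sim a^{-1}$ turns the four summands of $\mathcal{E}$ into $O(b^2 a^{6-\epsilon})$, $O(a^4b^4)$, $O(a^6 b^{2-\epsilon})$, and $O(a^{8-\epsilon})$ respectively, which is precisely the right-hand side of \eqref{dsize}.

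For the quadratic remainder I would apply the estimate \eqref{Qsize} with $Rm_{g_0} = 0$, leaving only the terms $|\theta||\nabla^4\theta| + |\nabla\theta||\nabla^3\theta| + |\nabla^2\theta|^2 + |\nabla\theta|^2|\nabla^2\theta| + |\nabla\theta|^4$, with $\nabla$ the flat connection. Inserting the homogeneity-dictated sizes in the damage zone, namely $|\partial^k H_{-2}|\sim |x|^{-2-k}\sim a^{2+k}$ and $|\partial^k(a^2b^2 H_2)|\lesssim a^2b^2|x|^{2-k}$ (vanishing for $k\ge 3$), together with the corresponding bounds for $\partial^k\mathcal{E}$, one checks term by term that each product is controlled by one of the four quantities in \eqref{dsize}; the extremal contributions come from $|H_{-2}|\,|\nabla^4\mathcal{E}|$ and from $|\nabla^2\theta|^2$, both of size $O(a^6 b^{2-\epsilon}) + O(a^4b^4) + O(a^{8-\epsilon})$. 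Finally, as $\theta$ is uniformly small in $C^0$, the norm $|\cdot|_{g_{a,b}^{(1)}}$ is uniformly comparable to $|\cdot|_{g_0}$, which gives \eqref{dsize}.

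The essential point — the only place the refinement over the na\"ive metric of Section~\ref{approx} is used — is the cancellation $(B^t_{g_0})'(a^2b^2 H_2 + H_{-2}) = 0$, resting on Lemma \ref{SBKer} and on $H_2$ being a degree-two polynomial; for the na\"ive metric the two glued pieces do not share this leading term, so the cutoff derivatives act on it and produce the larger size $O(a^4b^2) + O(a^6)$. Everything else is bookkeeping: checking that the cutoffs preserve the error scalings and that every one of the many products in \eqref{Qsize} is absorbed into the four target terms. I expect this last, tedious-but-routine, case analysis to be the main obstacle.
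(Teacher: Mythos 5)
Your proposal is correct and follows essentially the same route as the paper: decompose $g_{a,b}^{(1)} = g_0 + a^2b^2H_2 + H_{-2} + \mathcal{E}$ in the damage zone, kill the two leading terms using the fourth-order constant-coefficient nature of $(B^t_0)'$ together with \eqref{B0H2} of Lemma \ref{SBKer}, estimate $(B^t_0)'\mathcal{E}$ by the sizes in \eqref{dzest}, control the remainder via the quadratic estimate of Proposition \ref{QRemark} with $Rm_{g_0}=0$, and conclude by uniform equivalence of $|\cdot|_{g_{a,b}^{(1)}}$ and $|\cdot|_{g_0}$. The only difference is that you spell out the term-by-term bookkeeping for $\mathcal{Q}_0(\theta)$ and the cutoff scalings, which the paper leaves implicit.
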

\begin{proof}
By (\ref{dzest}),
\begin{align*}
g_{a,b}^{(1)} = g_0 + a^2 b^2 H_2 + H_{-2} + \mathcal{E},
\end{align*}
where
\begin{align} \label{Edef}
 \mathcal{E} = a^2 b^2 O^{(4)}(|x|^{\epsilon}) +  a^4 b^4 O^{(4)}(|x|^4)
+(ab)^{2 - \epsilon} O^{(4)}( |x|^{-\epsilon})+ O^{(4)}(|x|^{-4+\epsilon}).
\end{align}
Let $\theta = g_{a,b}^{(1)} - g_0$; then using the expansion of the $B^t$-tensor in Proposition \ref{QRemark}
\begin{align} \label{P1} \begin{split}
B^t(g_{a,b}^{(1)} ) &= B^t(g_0) + (B^t_0)'(\theta) + \mathcal{Q}_0(\theta) \\
&= (B^t_0)'(\theta) + \mathcal{Q}_0(\theta).
\end{split}
\end{align}
By Lemma \ref{SBKer} and the fact that $(B^t_0)'$ is fourth order,
\begin{align*}
 (B^t_0)'(\theta) &= a^2 b^2 (B^t_0)' (H_2) + (B^t_0)'( H_{-2}) + (B^t_0)' (\mathcal{E}) \\
&= (B^t_0)'( \mathcal{E}),
\end{align*}
hence
\begin{align} \label{BexP}
B^t(g_{a,b}^{(1)} ) =  (B^t_0)'( \mathcal{E}) + \mathcal{Q}_0(\theta).
\end{align}
If we estimate the norm of $B^t(g_{a,b}^{(1)} )$ in the flat metric, then (\ref{dsize}) follows from the formula for $\mathcal{E}$
and the result of Proposition \ref{QRemark}.  However, by (\ref{dzest}) it is clear that the same estimate holds if we use the norm with respect to
$g_{a,b}^{(1)}$, since for any symmetric $(0,2)$-tensor $T = T_{ij}$
\begin{align} \label{norms} \begin{split}
| T |_{g_{a,b}^{(1)}}^2 &= \{ g_{a,b}^{(1)} \}^{ik} \{ g_{a,b}^{(1)} \}^{j \ell} T_{ij} T_{k \ell} \\
&=  \{ \delta_{ik} + O(a^2 + b^2) \} \{ \delta_{j \ell} + O(a^2 + b^2) \}^{j \ell} T_{ij} T_{k \ell} \\
& = \big( 1 + O(a^2 + b^2) \big)| T |_0^2.
\end{split}
\end{align}

\end{proof}

Next, on the asymptotically flat piece we have
\begin{proposition} \label{BtsizeN}
On $\{ |x| < a^{-1} \} \subset N$, the $B^t$-tensor satisfies
\begin{align}
\label{alesize}
B^t (g_{a,b}^{(1)}) = a^2 b^2 \lambda k_1^{(0)} - a^2 b^2 \mathcal{K}_{g_N} \delta_{g_N} \mathcal{K}_{g_N} \delta_{g_N} \overset{\circ}{\tilde{H}_2} + O(a^4b^4),
\end{align}
and
\begin{align} \label{alemodK}
B^t (g_{a,b}^{(1)}) =  a^2 b^2 \lambda k_1^{(0)} + a^2 b^2 O(|x|^{\epsilon - 4}) + O(a^4 b^4),
\end{align}
as $a,b \rightarrow 0$ and $|x| \rightarrow \infty$. 
\end{proposition}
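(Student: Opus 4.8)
The plan is to work entirely on the region $\{|x| < a^{-1}\} \subset N$, where by \eqref{idmtnew} the refined metric is simply $g_{a,b}^{(1)} = g_N + a^2 b^2 \tilde{H}_2$; this is a genuine Riemannian metric because $\|a^2 b^2 \tilde{H}_2\|_{C^0(\{|x|<a^{-1}\})} \lesssim a^2 b^2 \cdot a^{-2} = b^2$ is small (using $|\tilde H_2| \lesssim |x|^2$ from Proposition \ref{LamProp} and boundedness of $\tilde H_2$ on the compact core of $N$). First I would Taylor-expand the $B^t$-tensor about $g_N$ as in Proposition \ref{QRemark},
\begin{align*}
B^t(g_{a,b}^{(1)}) = B^t(g_N) + (B^t_{g_N})'(a^2 b^2 \tilde{H}_2) + \mathcal{Q}_{g_N}(a^2 b^2 \tilde{H}_2),
\end{align*}
and use $B^t(g_N) = 0$ since $g_N$ is $B^t$-flat. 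For the linear term, since $(B^t_g)' = B' + tC'$, the formula \eqref{linop} gives $(B^t_g)'h = S^t_g h - \mathcal{K}_g \delta_g \mathcal{K}_g \delta_g \overset{\circ}{h}$, so by linearity and Proposition \ref{LamProp},
\begin{align*}
(B^t_{g_N})'(a^2 b^2 \tilde{H}_2) = a^2 b^2\, S^t_{g_N}(\tilde{H}_2) - a^2 b^2\, \mathcal{K}_{g_N}\delta_{g_N}\mathcal{K}_{g_N}\delta_{g_N}\overset{\circ}{\tilde{H}_2} = a^2 b^2 \lambda k_1^{(0)} - a^2 b^2\, \mathcal{K}_{g_N}\delta_{g_N}\mathcal{K}_{g_N}\delta_{g_N}\overset{\circ}{\tilde{H}_2}.
\end{align*}
Thus \eqref{alesize} is reduced to the estimate $\mathcal{Q}_{g_N}(a^2 b^2 \tilde{H}_2) = O(a^4 b^4)$.

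To obtain that, I would apply the pointwise bound \eqref{Qsize} (legitimate here by Proposition \ref{QRemark}, the $C^0$-smallness having just been established) with $h = a^2 b^2 \tilde{H}_2$. The relevant inputs are the growth rates coming from $\tilde H_2 = H_2 + O^{(4)}(|x|^\epsilon)$, namely $|\tilde H_2| \lesssim |x|^2$, $|\nabla \tilde H_2| \lesssim |x|$, $|\nabla^2 \tilde H_2| \lesssim 1$, $|\nabla^3 \tilde H_2| \lesssim |x|^{\epsilon-3}$, $|\nabla^4 \tilde H_2| \lesssim |x|^{\epsilon-4}$ (the third and fourth covariant derivatives of the quadratic polynomial $H_2$ decay because $\partial^3 H_2 = \partial^4 H_2 = 0$ and only Christoffel-symbol corrections remain), together with the curvature decay of the AF metric $g_N$, which by \eqref{burnsexp}--\eqref{s2s2exp} satisfies $|Rm_{g_N}| + |x|\,|\nabla Rm_{g_N}| + |x|^2\,|\nabla^2 Rm_{g_N}| = O(|x|^{-4+\epsilon})$. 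Inserting these into the terms of \eqref{Qsize}, each term is either $O(a^4 b^4 |x|^{-\sigma})$ with $\sigma \geq 0$, or of strictly higher order in $a^2 b^2$ with at worst $|x|^2$-growth (hence $O(a^4 b^4)$ after using $|x| < a^{-1}$); the worst case occurs near $|x| \sim 1$ and equals $O(a^4 b^4)$. On the compact core of $N$ the bound is immediate. This proves \eqref{alesize}.

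Finally, \eqref{alemodK} follows from \eqref{alesize} once I verify $\mathcal{K}_{g_N}\delta_{g_N}\mathcal{K}_{g_N}\delta_{g_N}\overset{\circ}{\tilde{H}_2} = O(|x|^{\epsilon-4})$ as $|x| \to \infty$. The argument mirrors the computation in the proof of Lemma \ref{SBKer}: the leading part of $\overset{\circ}{\tilde H_2}$ is the degree-two homogeneous polynomial tensor $H_2 - \tfrac14 (\operatorname{tr}_{g_0} H_2)\, g_0$, and for the flat operators $\delta_0$ of it is linear in $x$, $\mathcal{K}_0$ of that is a constant symmetric tensor, and $\delta_0$ of a constant symmetric tensor vanishes, so $\mathcal{K}_0 \delta_0 \mathcal{K}_0 \delta_0$ annihilates this leading term exactly. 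The remaining contributions are (i) the $O^{(4)}(|x|^\epsilon)$ error in $\tilde H_2$ and the $O^{(4)}(|x|^0)$ discrepancy between $\operatorname{tr}_{g_N}(\tilde H_2)\,g_N$ and $\operatorname{tr}_{g_0}(H_2)\,g_0$, which become $O(|x|^{\epsilon-4})$ and $O(|x|^{-4})$ respectively after the four derivatives; and (ii) the replacement of $\delta_0,\mathcal{K}_0$ by $\delta_{g_N},\mathcal{K}_{g_N}$, whose discrepancies involve $g_N - g_0 = O(|x|^{-2})$, $\Gamma_{g_N} = O(|x|^{-3})$, and $\partial\Gamma_{g_N} = O(|x|^{-4})$, contributing $O(|x|^{-4})$. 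Altogether these give the claimed $O(|x|^{\epsilon-4})$, which combined with \eqref{alesize} yields \eqref{alemodK}.

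The step I expect to be the main obstacle is the second paragraph's bookkeeping: confirming that every one of the many terms of \eqref{Qsize}, with the competing growth of $a^2 b^2 \tilde H_2$ playing against the decay of $Rm_{g_N}$, is $O(a^4 b^4)$ uniformly over the whole region $\{|x| < a^{-1}\}$ (so that one must check the worst case, which for most terms sits at $|x| \sim 1$). Closely related, and also delicate, is the purely algebraic verification that the flat fourth-order gauge operator $\mathcal{K}_0\delta_0\mathcal{K}_0\delta_0$ kills the quadratic leading term of $\overset{\circ}{\tilde H_2}$ — this cancellation is exactly what produces the sharp exponent $|x|^{\epsilon-4}$ in \eqref{alemodK}.
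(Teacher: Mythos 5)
Your proposal is correct and follows essentially the same route as the paper: expand $B^t$ about the $B^t$-flat metric $g_N$ with $\theta = a^2b^2\tilde H_2$, convert the linearization to $S_{g_N}\theta - \mathcal{K}_{g_N}\delta_{g_N}\mathcal{K}_{g_N}\delta_{g_N}\overset{\circ}{\theta}$ and invoke Proposition \ref{LamProp}, bound the remainder by Propositions \ref{quadest}/\ref{QRemark}, and estimate the gauge term using that the flat part of the third/fourth-order operator annihilates the quadratic $H_2$, leaving only Christoffel corrections and the $O^{(4)}(|x|^{\epsilon})$ error, which give $O(|x|^{\epsilon-4})$. Your explicit bookkeeping of the terms in \eqref{Qsize} and the algebraic cancellation $\delta_0(\mathcal{K}_0(\delta_0 H_2^{\circ}))=0$ are just more detailed renderings of the steps the paper states tersely.
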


\begin{proof} The proof proceeds along the same lines as the proof of Proposition \ref{NewBtsize}.  On $\{ |x| < a^{-1} \} \subset N$ we have
\begin{align} \label{gvsN}
g_{a,b}^{(1)} = g_N + a^2 b^2 \tilde{H}_2.
\end{align}
Let $\theta = g_{a,b}^{(1)} - g_N = a^2 b^2 \tilde{H}_2$.  Using the expansion of the $B^t$-tensor again,
\begin{align} \label{BNexp1} \begin{split}
B^t(g_{a,b}^{(1)} ) &= B^t(g_N) + (B^t_{g_N})'\theta + \mathcal{Q}_N(\theta) \\ 
&= (B^t_{g_N})'\theta + \mathcal{Q}_N(\theta),
\end{split}
\end{align}
since $g_N$ is $B^t$-flat. From the formula for the linearized operator $S$ in (\ref{linop}) and Proposition \ref{LamProp}
it follows that 
\begin{align} \label{BtvS} \begin{split}
(B^t_{g_N})'\theta &= S_{g_N}\theta  - \mathcal{K}_{g_N} \delta_{g_N} \mathcal{K}_{g_N} \delta_{g_N} \overset{\circ}{\theta} \\
&= a^2 b^2 S(  \tilde{H}_2) - a^2 b^2 \mathcal{K}_{g_N} \delta_{g_N} \mathcal{K}_{g_N} \delta_{g_N} \overset{\circ}{\tilde{H}_2} \\
&= a^2 b^2 \lambda k_1^{(0)} - a^2 b^2 \mathcal{K}_{g_N} \delta_{g_N} \mathcal{K}_{g_N} \delta_{g_N} \overset{\circ}{\tilde{H}_2}.
\end{split}
\end{align}
Substituting this into (\ref{BNexp1}) gives 
\begin{align} \label{BNexp2} 
B^t(g_{a,b}^{(1)} ) = a^2 b^2 \lambda k_1^{(0)} - a^2 b^2 \mathcal{K}_{g_N} \delta_{g_N} \mathcal{K}_{g_N} \delta_{g_N} \overset{\circ}{\tilde{H}_2} + \mathcal{Q}_N(\theta).
\end{align}

By part $(i)$ of Proposition \ref{quadest},
\begin{align} \label{QN}
|Q_N (  a^2 b^2 \tilde{H}_2)|_{g_N} = O(a^4 b^4),
\end{align}
and from our observations above the same estimate holds if we estimate with respect to the norm induced by $g_{a,b}^{(1)}$.  Therefore, (\ref{alesize}) follows 
from this estimate and (\ref{BNexp2}).

To estimate the gauge-fixing term in (\ref{BNexp2}), we first observe that $\delta_{g_N} \mathcal{K}_{g_N} \delta_{g_N} : S^2(T^{*}N) \rightarrow T^{*}N$  is a
third order differential operator, while
\begin{align*}
\nabla_N^3 = \partial^3 + \Gamma(g_N)  \partial^2 + [ \partial \Gamma(g_N) + \Gamma(g_N)^2 ] \partial + [\partial^2 \Gamma(g_N) + \Gamma(g_N) \partial \Gamma(g_N)],
\end{align*}
where $\nabla_N$ denotes the covariant derivative and $\Gamma(g_N)$ the Christoffel symbols in the $g_N$-metric. Since
\begin{align*}
\tilde{H}_2 &= H_2 + O^{(4)}(|x|^{\epsilon}), \\
\Gamma_N &= O^{(3)}(|x|^{-3}),
\end{align*}
it follows that
\begin{align*}
a^2 b^2 \mathcal{K}_{g_N} \delta_{g_N} \mathcal{K}_{g_N} \delta_{g_N} \overset{\circ}{\tilde{H}_2} 
= a^2 b^2  O(|x|^{\epsilon - 4}),
\end{align*}
and combining this with (\ref{QN}) and (\ref{BNexp2}) we obtain (\ref{alemodK}).
\end{proof}

Next, on the compact piece, we have
\begin{proposition}
On $\{ |z| \geq 2b \} \subset Z$, we have
\begin{align}
\label{alesize2}
|B^t (g_{a,b}^{(1)})|_{g_{a,b}^{(1)}} =  a^6 b^6 O(|z|^{-\epsilon - 4}).
\end{align}
as $a,b,|z| \rightarrow 0$.  
\end{proposition}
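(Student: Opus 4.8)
The plan is to mirror the structure of the proof of Proposition~\ref{BtsizeN} (the asymptotically flat piece), but now working on the compact factor $\{|z| \geq 2b\} \subset Z$. On this region the refined approximate metric is $g_{a,b}^{(1)} = a^{-2}b^{-2}(g_Z + a^2 b^2 \tilde{H}_{-2}(z))$, and since $B^t$ is scale-invariant in dimension four (it is the gradient of a scale-invariant functional), we have $B^t(g_{a,b}^{(1)}) = a^2 b^2 \, \iota_*\big(B^t(g_Z + a^2 b^2 \tilde{H}_{-2})\big)$ up to the appropriate identification. So it suffices to estimate $B^t(g_Z + a^2 b^2 \tilde{H}_{-2}(z))$ on $\{|z| \geq 2b\}$ and then track the scaling factor. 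I would first expand, writing $\theta = a^2 b^2 \tilde{H}_{-2}(z)$ and using the Taylor expansion of the $B^t$-tensor from Proposition~\ref{QRemark}:
\begin{align*}
B^t(g_Z + \theta) = B^t(g_Z) + (B^t_{g_Z})'(\theta) + \mathcal{Q}_{g_Z}(\theta) = (B^t_{g_Z})'(\theta) + \mathcal{Q}_{g_Z}(\theta),
\end{align*}
since $g_Z$ is $B^t$-flat.

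For the linear term, the key input is Lemma~\ref{SBKer}, which gives $(B_0^t)'(H_{-2}) = 0$ for the flat-metric linearization. Following the argument in Proposition~\ref{SZProp}, since $\{z^i\}$ are Riemannian normal coordinates, $g_Z = g_0 + \eta_Z(z)$ with $\eta_Z(z) = O^{(4)}(|z|^2)$, and since $(B^t_{g_Z})'$ is a fourth-order operator acting on a tensor $H_{-2}$ that blows up inverse-quadratically, one gets $(B^t_{g_Z})'(H_{-2}) = (B_0^t)'(H_{-2}) + O(|z|^{-4}) \cdot |z|^{-2}$-type corrections — more precisely, $(B^t_{g_Z})'(H_{-2}) = O(|z|^{-4})$ after using the vanishing from Lemma~\ref{SBKer}. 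Actually one must be slightly more careful: the relevant statement is that $(B^t_{g_Z})'(\tilde H_{-2})$, where $\tilde H_{-2} = H_{-2} + O^{(4)}(|z|^{-\epsilon})$ by Proposition~\ref{Hminus2}, and the genuine cancellation comes from $S(\tilde{H}_{-2}) = 0$ (that Proposition), combined with $(B^t_{g_Z})' = S_{g_Z} - \mathcal{K}_{g_Z}\delta\mathcal{K}\delta(\overset{\circ}{\cdot})$ from \eqref{linop}. So $(B^t_{g_Z})'(\tilde H_{-2}) = -\mathcal{K}_{g_Z}\delta_{g_Z}\mathcal{K}_{g_Z}\delta_{g_Z}\overset{\circ}{\tilde H_{-2}}$, and I would estimate this gauge term exactly as in Proposition~\ref{BtsizeN}: it is a third-order operator applied to $\tilde{H}_{-2}$ whose trace-free part, after the Weyl/Schouten decomposition and using the skew-symmetry of $W$ (just as in the proof of Lemma~\ref{SBKer}, equation~\eqref{gftH}), produces a harmonic leading term so that $\delta\mathcal{K}\delta\overset{\circ}{\tilde H_{-2}}$ decays faster than naive homogeneity counting suggests, yielding $\mathcal{K}_{g_Z}\delta_{g_Z}\mathcal{K}_{g_Z}\delta_{g_Z}\overset{\circ}{\tilde H_{-2}} = O(|z|^{-4-\epsilon})$ or similar. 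For the quadratic remainder, Proposition~\ref{quadest}(i) gives $|\mathcal{Q}_{g_Z}(a^2 b^2 \tilde H_{-2})| = O(a^4 b^4 \cdot |z|^{-4})$ since $\tilde{H}_{-2} = O(|z|^{-2})$ and its derivatives scale accordingly.

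Assembling: $B^t(g_Z + a^2 b^2 \tilde H_{-2}) = a^2 b^2 O(|z|^{-4-\epsilon}) + O(a^4 b^4 |z|^{-4})$ on $\{|z| \geq 2b\}$; on this region $|z|^{-4} \leq (2b)^{-4} \cdot (|z|/2b)^{-4} \cdot \ldots$ — more usefully, I would keep the $|z|$-dependence and then multiply by the scaling factor $a^2 b^2$ coming from $B^t(a^{-2}b^{-2}\cdot) = a^2 b^2 B^t(\cdot)$, giving $|B^t(g_{a,b}^{(1)})|_{g_{a,b}^{(1)}} = a^4 b^4 O(|z|^{-4-\epsilon}) + a^6 b^6 O(|z|^{-4})$. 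To match the claimed form $a^6 b^6 O(|z|^{-\epsilon-4})$, I would note that on $\{|z| \geq 2b\}$ one has $|z|^{-4-\epsilon} \geq (2b)^{\text{stuff}}$... — here I expect the main obstacle: reconciling the bookkeeping of the $a,b$ powers with the stated exponents. The resolution is that on the compact piece the \emph{relevant} region where $B^t$ is nonzero is near $\{|z| \sim b\}$ (the tensor $H_{-2}$ was cut off at a fixed radius $R'$ and the error terms in $\tilde H_{-2}$ are the dominant contributions, which by Proposition~\ref{Hminus2} satisfy $S(\tilde H_{-2}) = 0$ exactly), so that actually the leading behavior is controlled by how $\tilde{H}_{-2} - H_{-2} = O(|z|^{-\epsilon})$ interacts with the curvature of $g_Z$ rather than $g_0$; the finer analysis shows the $a^2 b^2$ contributions cancel against Lemma~\ref{SBKer} and only the genuinely higher-order pieces (order $a^4 b^4$ in the linearization error, which when rescaled become $a^6 b^6$) survive, with the $|z|$-profile being $|z|^{-\epsilon-4}$. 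I would therefore present the estimate by carefully separating (a) the exact cancellation $S(\tilde H_{-2}) = 0$ and $(B_0^t)'(H_{-2})=0$, (b) the gauge term decaying like $|z|^{-4-\epsilon}$ but carrying a factor which after rescaling is $a^6 b^6$ because of an extra $a^2b^2$ hidden in $\tilde H_{-2} - \phi H_{-2}$'s construction relative to $g_Z$'s scale, and (c) the quadratic term $O(a^4 b^4 |z|^{-4})$ which rescales to $O(a^8 b^8)$-type, absorbed into the claimed bound on $\{|z|\geq 2b\}$. The cleanest writeup follows Proposition~\ref{NewBtsize}'s template verbatim with $H_2 \leftrightarrow H_{-2}$, $x \leftrightarrow z$, $|x|\to\infty \leftrightarrow |z|\to 0$ interchanged, and the scaling factor $a^2b^2$ from $B^t(a^{-2}b^{-2}g) = a^2 b^2 B^t(g)$ inserted at the end.
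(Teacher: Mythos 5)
Your overall strategy is sound and close to the paper's: write $g_Z^{(1)} = g_Z + \theta$ with $\theta = a^2b^2\tilde H_{-2}$, expand $B^t(g_Z+\theta) = (B^t_{g_Z})'(\theta) + \mathcal{Q}_{g_Z}(\theta)$ using $B^t$-flatness of $g_Z$, exploit the cancellation $(B^t_0)'(H_{-2})=0$ of Lemma \ref{SBKer}, and rescale at the end. (Your variant for the linear term --- using $S_{g_Z}(\tilde H_{-2})=0$ from Proposition \ref{Hminus2} to reduce $(B^t_{g_Z})'(\tilde H_{-2})$ to the gauge term $-\mathcal{K}_{g_Z}\delta_{g_Z}\mathcal{K}_{g_Z}\delta_{g_Z}\overset{\circ}{\tilde H_{-2}}$ --- is a legitimate alternative to the paper's direct comparison of $(B^t_{g_Z})'$ with $(B^t_0)'$; either way one gets $|(B^t_{g_Z})'(\theta)| = a^2b^2\, O(|z|^{-\epsilon-4})$.)

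The genuine gap is in the final power counting, which you flag as ``the main obstacle'' and then resolve incorrectly. There is no hidden cancellation upgrading the linearization error from $a^2b^2$ to $a^4b^4$: the term $(B^t_{g_Z})'(\theta)$ is exactly linear in $\theta$ and carries precisely one factor of $a^2b^2$. The two factors you are missing both come from the rescaling $g_{a,b}^{(1)} = a^{-2}b^{-2}g_Z^{(1)}$: one from the scaling law $B^t(a^{-2}b^{-2}g) = a^2b^2\,B^t(g)$ (which you have), and one from converting the pointwise norm, since for a covariant $2$-tensor $T$ one has $|T|_{cg} = c^{-1}|T|_g$, hence $|T|_{g_{a,b}^{(1)}} = a^2b^2\,|T|_{g_Z^{(1)}}$. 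Altogether $|B^t(g_{a,b}^{(1)})|_{g_{a,b}^{(1)}} = a^2b^2\cdot a^2b^2\,|B^t(g_Z^{(1)})|_{g_Z^{(1)}} = a^4b^4\cdot a^2b^2\,O(|z|^{-\epsilon-4}) = a^6b^6\,O(|z|^{-\epsilon-4})$, which is the claim; the speculative discussion of the region $|z|\sim b$ and of extra factors hidden in the construction of $\tilde H_{-2}$ should be deleted. A secondary error: by homogeneity the quadratic remainder is $|\mathcal{Q}_{g_Z}(\theta)| = a^4b^4\,O(|z|^{-8})$ (schematically $\theta * \nabla^4\theta$), not $a^4b^4\,O(|z|^{-4})$; after rescaling it contributes $a^8b^8\,O(|z|^{-8})$, which is absorbed into the claimed bound using $|z|\geq 2b$ and $a=b$ small.
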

\begin{proof}
Recall the metric $g_Z^{(1)}$ defined in (\ref{gZ1def}):
\begin{align*}
g_Z^{(1)} = g_0 + \eta_Z(z) +  a^2 b^2 \tilde{H}_{-2}(z), \ \ |z| \geq 2b,
\end{align*}
so that on the compact piece $\{ |z| \geq 2b \} \cap Z$ the refined approximate metric $g_{a,b}^{(1)}$ is just a rescaling of $g_Z^{(1)}$:
\begin{align} \label{gzab}
g_{a,b}^{(1)} = a^{-2} b^{-2} g_Z^{(1)}.
\end{align}
We can then essentially repeat the arguments of the preceding propositions and write $g_Z^{(1)} = g_Z + \theta$, where $\theta = a^2 b^2 \tilde{H}_{-2}$,
then expand $B^t$:
\begin{align} \label{BZexp} \begin{split}
B^t(g_Z^{(1)}) &= B^t(g_Z) + (B^t_Z)'(\theta) + \mathcal{Q}_Z(\theta) \\
&= (B^t_Z)'(\theta) + \mathcal{Q}_Z(\theta),
\end{split}
\end{align}
where as usual the subscript $Z$ indicates that the tensor is with respect to the metric $g_Z$.

We first estimate the term involving the linearization of $B^t$, by comparing $(B^t_Z)'(\theta)$ and
$(B^t_0)'(\theta)$, i.e., the linearized operator with respect to the flat metric acting on $\theta$.  Recall from Lemma \ref{SBKer} and Proposition \ref{Hminus2} that
\begin{align} \label{BtpFlat} \begin{split}
(B_0^t)'(\theta) &=  a^2 b^2 (B_0^t)'(\tilde{H}_{-2}) \\
&= a^2 b^2 (B_0^t)'\big(H_{-2} + O(|z|^{-\epsilon})\big) \\
&= a^2 b^2 O(|z|^{-\epsilon - 4}).
\end{split}
\end{align}
Clearly, for any metric $g$ the operators $(B^t)'$ and $S$ have the same general form, as given in (\ref{Sgenform}):
\begin{align}
\begin{split} \label{Bpgenform}
(B^t)' \theta &= ( g^{-2} + g* g^{-3} ) *\nabla^4 \theta + g* g^{-3} *Rm * \nabla^2 \theta
+ g* g^{-3}* \nabla Rm * \nabla \theta \\
&+ ( g^{-2} + g * g^{-3})*( \nabla^2 Rm + Rm * Rm) * \theta.
\end{split}
\end{align}
Using this, we can estimate the difference
\begin{align*}
[(B^t_Z)' - (B^t_0)'](\theta).
\end{align*}
We will need the following estimates, which follow from (\ref{DTId}), (\ref{gZexp1}), and (\ref{fullne}):
\begin{align} \label{ZEucdiffs} \begin{split}
g_Z - g_0 &= O(b^2),  \\
\nabla_Z^m T - \nabla_0^m T &= O(b) * \nabla^{m-1}_0 T + O(1) * \{ \nabla_0^{m-2} T + \cdots  +  T \}. 
\end{split}
\end{align}
Then by (\ref{Bpgenform}),
\begin{align} \label{Btpdiffs}
|[(B^t_Z)' - (B^t_0)'](\theta)| = a^2 b^2 O(|z|^{-4}).
\end{align}
Combining with (\ref{BtpFlat}), we obtain
\begin{align} \label{BpZest}
|(B^t_Z)'(\theta)|_{0} = a^2 b^2 O(|z|^{-\epsilon - 4}).
\end{align}
It is easy to see that the same estimate holds if we estimate with respect to the norm induced by $g_Z^{(1)}$.

For the remainder term $\mathcal{Q}$ in (\ref{BZexp}) we use Proposition \ref{QRemark} to show
\begin{align} \label{QZed}
|\mathcal{Q}_Z(\theta)|_{g_Z} = a^4 b^4 O(|z|^{-8}),
\end{align}
with the same estimate in the $g_Z^{(1)}$-metric.  Combining (\ref{BpZest}) and (\ref{QZed}) gives
\begin{align} \label{BttEst}
| B^t(g_Z^{(1)})|_{g_Z^{(1)}} =  a^2 b^2 O(|z|^{-\epsilon - 4}) +  a^4 b^4 O(|z|^{-6}).
\end{align}
Since $|z| \geq b$, this implies
\begin{align} \label{BttEst2}
| B^t(g_Z^{(1)})|_{g_Z^{(1)}} =  a^2 b^2 O(|z|^{-\epsilon - 4}).
\end{align}

By the scaling properties of the $B^t$-tensor,
\begin{align*}
B^t (g_{a,b}^{(1)}) &=  B^t( a^{-2} b^{-2} g_Z^{(1)}) \\
&= a^2 b^2 B^t( g_Z^{(1)}).
\end{align*}
Using (\ref{BttEst2}) and (\ref{gzab}) we conclude
\begin{align*}
|B^t (g_{a,b}^{(1)})|_{g_{a,b}^{(1)}} &= | a^2 b^2 B^t (g_Z^{(1)} )|_{g_{a,b}^{(1)}} \\
&= a^2 b^2 | a^2 b^2 B^t(g_Z^{(1)})|_{g_Z^{(1)}} \\
&= a^4 b^4 | B^t(g_Z^{(1)})|_{g_Z^{(1)}} \\
&=  a^6 b^6 O(|z|^{-\epsilon - 4}).
\end{align*}
\end{proof}

Finally, we have
\begin{proposition}
\label{Btsize}
Choosing $a = b$, we have
\begin{align}
\Vert B^t (g_{a,a}^{(1)}) - a^4 \lambda k_1^{(0)} \Vert_{C^{0, \alpha}_{\delta - 4}(X_{a,a})}
 = O(a^{4 + \delta - \epsilon})
\end{align}
as $a \rightarrow 0$. 
\end{proposition}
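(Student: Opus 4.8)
The plan is to assemble the bound from the three region-by-region estimates already in hand: Proposition \ref{NewBtsize} on the damage zone $\{a^{-1}\le |x|\le 2a^{-1}\}$, Proposition \ref{BtsizeN} on the asymptotically flat piece $\{|x|<a^{-1}\}\subset N$, and the estimate \eqref{alesize2} on the compact piece $\{|z|\ge 2b\}\subset Z$. Under the identification $z=abx$ these three regions cover all of $X_{a,b}$, overlapping only along the gluing spheres, so it suffices to bound the weighted $C^{0,\alpha}_{\delta-4}$ norm of $B^t(g^{(1)}_{a,a})-a^4\lambda k_1^{(0)}$ on each one and take the maximum. Recall that $k_1^{(0)}$ is supported in $\{|x|\le R_0\}\subset\{|x|<a^{-1}\}$, so the correction term $a^4\lambda k_1^{(0)}$ only appears on the asymptotically flat piece; on the other two regions the quantity to estimate is simply $B^t(g^{(1)}_{a,a})$, and there it already equals $a^2b^2\lambda k_1^{(0)}=a^4\lambda k_1^{(0)}$ plus error once we set $b=a$.

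Setting $b=a$ converts every error term above into a power of $a$, and the remaining work is purely to pair each rate with the value of the weight function $w$ from \eqref{wdef} on the corresponding region, noting that $\|h\|_{C^0_{\delta-4}}=\|w^{4-\delta}h\|_{C^0}$. On the damage zone $w\sim|x|\sim a^{-1}$, so $w^{4-\delta}\sim a^{\delta-4}$; by Proposition \ref{NewBtsize} the pointwise size is $O(a^{8-\epsilon})$, giving $O(a^{4+\delta-\epsilon})$. On $\{|x|<a^{-1}\}$, Proposition \ref{BtsizeN} gives $B^t(g^{(1)}_{a,a})-a^4\lambda k_1^{(0)}=a^4 O(|x|^{\epsilon-4})+O(a^8)$; multiplying by $w^{4-\delta}=|x|^{4-\delta}$ the first term becomes $a^4 O(|x|^{\epsilon-\delta})$, which since $\epsilon-\delta>0$ is largest at $|x|\sim a^{-1}$ and so is $O(a^{4+\delta-\epsilon})$, while the second is $O(a^{4+\delta})$. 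On the compact piece $\{|z|\ge 2b\}$, where $w\sim a^{-2}|z|$ (capped at $a^{-2}$), the estimate \eqref{alesize2} gives pointwise size $O(a^{12}|z|^{-\epsilon-4})$; multiplying by $w^{4-\delta}$ yields $O(a^{4+2\delta}|z|^{-\delta-\epsilon})$, and since $-\delta-\epsilon<0$ this is largest at $|z|\sim 2a$, again giving $O(a^{4+\delta-\epsilon})$. Using $\delta>-\epsilon$ to absorb the subdominant contributions, the $C^0_{\delta-4}$ part of the claim follows.

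It remains to upgrade to the full weighted Hölder norm. Here I would use that the proofs of Propositions \ref{NewBtsize} and \ref{BtsizeN} and of \eqref{alesize2} each write $B^t(g^{(1)}_{a,a})=(B^t_0)'(\mathcal{E})+\mathcal{Q}(\theta)$ on the relevant piece (with $\theta=a^2b^2\tilde{H}_2$ on $N$, $\theta=a^2b^2\tilde{H}_{-2}$ on $Z$, and $\mathcal{E}$ as in \eqref{Edef} on the damage zone), where the error tensor is controlled with four derivatives by the $O^{(4)}$ bounds used there; applying the fourth-order linear operator $(B^t_0)'$ and invoking the scale invariance of the weighted Hölder norm (whose seminorm at $x$ is taken over balls of radius $\sim w(x)$) reproduces the same decay rates for the $C^{0,\alpha}$ seminorm, while the quadratic remainder $\mathcal{Q}$ is directly controlled in $C^{0,\alpha}_{\delta-4}$ by part $(ii)$ of Proposition \ref{quadest} together with the curvature bounds \eqref{Rmw} for $g^{(1)}_{a,a}$. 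Taking the maximum over the three regions then gives the stated estimate. No new analytic input is needed; the step requiring the most care is the weight bookkeeping across the piecewise definition of $w$, and in particular verifying that the fixed choice $-\epsilon<\delta<0$ leaves exactly enough slack for the non-leading terms to be absorbed into $O(a^{4+\delta-\epsilon})$.
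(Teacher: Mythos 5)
Your proposal is correct and follows essentially the same route as the paper: the same three-region decomposition (damage zone via Proposition \ref{NewBtsize}, AF piece via \eqref{alemodK}, compact piece via \eqref{alesize2}), the same weight bookkeeping with $-\epsilon<\delta<0$ yielding $O(a^{4+\delta-\epsilon})$ on each piece, and the same sketched treatment of the H\"older seminorm via the structure of the expansion from Proposition \ref{quadest}. The only blemish is the garbled remark that on the damage zone and compact piece $B^t$ "already equals $a^2b^2\lambda k_1^{(0)}$ plus error" — there $k_1^{(0)}$ vanishes, as your actual estimates correctly use — so nothing of substance is affected.
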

\begin{proof}
We begin estimating the leading term in the $C^{0, \alpha}_{\delta - 4}$-norm.
On the damage zone, $k_1^{(0)}=0$, so by (\ref{dsize})
\begin{align} \label{wnDZ}
w^{4 - \delta} |  B^t (g_{a,a}^{(1)}) - a^4 \lambda k_1^{(0)}  | = O( a^{\delta-4 } a^{8 - \epsilon})
= O( a^{4 + \delta - \epsilon}).
\end{align}
By (\ref{alemodK}), on the AF piece
\begin{align} \label{wnALE1} \begin{split}
w^{4 - \delta} |   B^t (g_{a,a}^{(1)}) - a^4 \lambda k_1^{(0)}  | & = |x|^{4 - \delta}\{ a^4 O(|x|^{\epsilon - 4}) + O(a^8) \} \\
&= a^4 O(|x|^{\epsilon - \delta})+ a^8 O(|x|^{4 - \delta}).
\end{split}
\end{align}

Recall from Remark \ref{epvsdel2} that $-\epsilon < \delta < 0$; 
hence $\epsilon - \delta > 0$ and 
\begin{align} \label{wnALE} 
w^{4 - \delta} |   B^t (g_{a,a}^{(1)}) - a^4 \lambda k_1^{(0)}  | = O( a^{4 + \delta - \epsilon}).
\end{align}

On the compact piece, by (\ref{alesize2})
\begin{align}
\begin{split}
w^{4 - \delta} |   B^t (g_{a,a}^{(1)}) - a^4 \lambda k_1^{(0)} | &= a^{2\delta - 8} O(|z|^{4 - \delta}) \{ a^{12} O(|z|^{-\epsilon - 4})\} \\
&= a^{2\delta + 4} O(|z|^{ - \delta - \epsilon}) \\
&= O(a^{4 + \delta - \epsilon}),
\end{split}
\end{align}
since $|z| \geq 2a$ and $-\epsilon < \delta < 0$ (see Remark \ref{epvsdel2}).  

For estimating the H\"older part of the weighted norm, one must use the
formula \eqref{Qexp} in the proof of Proposition \ref{quadest}.
For example, the term with $h * \nabla^4 h$ is estimated like
\begin{align}
\begin{split}
&w^{ 4 - \delta} (x_0) \frac{ | (h * \nabla^4 h) (x_1) - (h * \nabla^4 h)(x_2)|}{|d(x_1, x_2)|^{\alpha}}\\
&\ \ \ \ \leq
 w^{-\delta} (x_0) |h(x_1)| \cdot  w^{4 - \delta}(x_0) \frac{ | \nabla^4 h (x_1) -
\nabla^4 h(x_2) |} { d(x_1, x_2)^{\alpha}} \\
& \ \ \ \ + w^{- \delta}(x_0) \frac{ | h(x_1) - h(x_2) |}{|d(x_1, x_2)|^{\alpha}}
\cdot w^{4 - \delta}(x_0) |\nabla^4 h (x_2)|,
\end{split}
\end{align}
and all other terms are estimated similarly, the complete computation is
lengthy but straightforward, so is omitted.
\end{proof}
\subsection{The approximate cokernel}
In this subsection, we define tensors $o_1, o_2, o_3$ and
$k_1, k_2, k_3$ which will be crucial in the Lyapunov-Schmidt
reduction in Section \ref{Lyap}.

\begin{remark}{\em
It is clear that all of the tensors in this section may be
chosen to be invariant under the group action, so we will
do this automatically without mention in every case.}
\end{remark}

Recall from Section \ref{afkersec} we denoted the cokernel of the asymptotically flat
manifold $(N,g_N)$ by $o_1$, and it is given by
\begin{align} \label{ALEcoke1}
o_1 = \mathcal{K}\omega_1 + f g_N.
\end{align}
In Section \ref{auxiliary} we defined a compactly supported symmetric $(0,2)$-tensor $k_1^{(0)} \in C_{\delta - 4}$ which satisfies
\begin{align} \label{k1props}
\| k_1^{(0)} \|_{C_{\delta - 4}} &\leq C_1, \\
\label{k1props2}
\int \langle k_1^{(0)} , o_1 \rangle_{g^{(0)}}\ dV_{g^{(0)}} &= 1,
\end{align}
where $C_1$ is independent of $a,b$.  Note that the quantities in (\ref{k1props}) and (\ref{k1props2}) are all computed with respect to the ``naive'' approximate metric $g^{(0)}$ defined
in Section \ref{approx}.  Since from now on we will be working in the refined approximate metric $g^{(1)}$ defined in Section \ref{better}, we will need to slightly rescale
$k_1^{(0)}$ so that (\ref{k1props}) and (\ref{k1props2}) hold with respect to $g^{(1)}$.  To this end, define
\begin{align} \label{k1redef}
k_1 = \Big\{ \int \langle k_1^{(0)} , o_1 \rangle_{g^{(1)}}\ dV_{g^{(1)}} \Big\}^{-1} k_1^{(0)}.
\end{align}
Then by (\ref{k1props2}) and (\ref{k1redef}),
\begin{align} \label{k1renorm}
\int \langle k_1, o_1 \rangle_{g^{(1)}}\ dV_{g^{(1)}} = 1.
\end{align}

\begin{claim} \label{k1claim}
We have
\begin{align} \label{k10}
k_1^{(0)} = \big( 1 + O(b^2) ) k_1.
\end{align}
\end{claim}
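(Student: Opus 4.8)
\textbf{Proof proposal for Claim \ref{k1claim}.}

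The plan is to compare the two normalizations of $k_1$ directly through their defining relations. By definition \eqref{k1redef}, $k_1 = c^{-1} k_1^{(0)}$ where $c = \int \langle k_1^{(0)}, o_1\rangle_{g^{(1)}}\, dV_{g^{(1)}}$, so the claim \eqref{k10} is equivalent to showing $c = 1 + O(b^2)$. Since by \eqref{k1props2} we have $\int \langle k_1^{(0)}, o_1\rangle_{g^{(0)}}\, dV_{g^{(0)}} = 1$, it suffices to estimate the difference
\begin{align}
c - 1 = \int \langle k_1^{(0)}, o_1\rangle_{g^{(1)}}\, dV_{g^{(1)}} - \int \langle k_1^{(0)}, o_1\rangle_{g^{(0)}}\, dV_{g^{(0)}}.
\end{align}

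The key observation is that $k_1^{(0)}$ has compact support in the ball $B(x_0, R_0) \subset N$ (as recorded just before Proposition \ref{LamProp} and in the discussion preceding \eqref{k1props}), which lies strictly inside the region $\{|x| \leq a^{-1}\}$ of the AF piece for $a$ small. On that region, the naive approximate metric $g^{(0)}_{a,b}$ equals $g_N$ exactly, while the refined metric $g^{(1)}_{a,b}$ equals $g_N + a^2 b^2 \tilde H_2(x)$. Thus on the support of $k_1^{(0)}$ we have $g^{(1)} - g^{(0)} = a^2 b^2 \tilde H_2 = O(a^2 b^2)$ in $C^{4,\alpha}$ on this fixed compact set (using Proposition \ref{LamProp}, which gives $\tilde H_2(x) = H_2(x) + O^{(4)}(|x|^\epsilon)$, bounded on $B(x_0,R_0)$). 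Since $a \leq b$ (we are in the regime $a = b$, or more generally $a \le b$), this is $O(b^2)$.

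The remaining step is routine: the integrand $\langle k_1^{(0)}, o_1\rangle_g\, dV_g$ depends smoothly on the metric $g$ through the inverse metric components (appearing in the two index contractions of the pairing) and through $\sqrt{\det g}$ in the volume form; both $k_1^{(0)}$ and $o_1$ are fixed tensors independent of $a,b$ on this compact set. A first-order Taylor expansion in the metric, exactly as in \eqref{norms} in the proof of Proposition \ref{NewBtsize}, gives
\begin{align}
\langle k_1^{(0)}, o_1\rangle_{g^{(1)}}\, dV_{g^{(1)}} = \big(1 + O(b^2)\big)\, \langle k_1^{(0)}, o_1\rangle_{g^{(0)}}\, dV_{g^{(0)}}
\end{align}
pointwise on $B(x_0, R_0)$. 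Integrating and using \eqref{k1props2} yields $c = 1 + O(b^2)$, and hence $k_1^{(0)} = c\, k_1 = (1 + O(b^2)) k_1$, which is \eqref{k10}. There is no real obstacle here — the only point requiring care is confirming that the support of $k_1^{(0)}$ sits inside the pure $g_N$ region so that the metric difference is $O(a^2 b^2) = O(b^2)$ rather than something larger coming from the damage zone; this is immediate from the construction in Section \ref{auxiliary}.
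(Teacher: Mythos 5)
Your proposal is correct and follows essentially the same route as the paper: both reduce the claim to showing the renormalizing integral equals $1+O(b^2)$, observe that on the compact support of $k_1^{(0)}$ one has $g^{(1)}=g^{(0)}+O(b^2)$ (indeed $O(a^2b^2)$ there, since $\tilde H_2$ is bounded on that fixed set), and then Taylor-expand the inner product and volume form. The only cosmetic difference is that the paper bounds $a^2b^2\tilde H_2$ by $O(b^2)$ uniformly on $\{|x|\le a^{-1}\}$ rather than restricting to the support first; nothing of substance changes.
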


\begin{proof}
By (\ref{idmtnew}), on the support of $k_1^{(0)}$ we have
\begin{align}  \label{gNg} \begin{split}
g = g^{(1)} &= g_N + O(b^2) \\
&= g^{(0)} + O(b^2),
\end{split}
\end{align}
hence the volume forms satisfy
\begin{align}
dV_{g^{(1)}} &=   \big(1 + O( b^2) \big) dV_{g^{(0)}}.
\end{align}
Therefore,
\begin{align} \begin{split}
\int \langle k_1^{(0)} , o_1 \rangle_{g^{(1)}}\ dV_{g^{(1)}} &= \int [g^{(1)}]^{ik} [g^{(1)}]^{j \ell} [k_1^{(0)}]_{ij} [o_1]_{k \ell}\ dV_{g^{(1)}} \\
& \hskip-.5in = \int \big\{ [g^{(0)}]^{ik} + O(b^2) \big\} \big\{  [g^{(0)}]^{j \ell} + O(b^2) \big\}  [k_1^{(0)}]_{ij} [o_1]_{k \ell}\  \big(1 + O( b^2) \big) dV_{g^{(0)}} \\
& \hskip-.5in = \int \langle k_1^{(0)} , o_1 \rangle_{g^{(0)}}\ dV_{g^{(0)}} + O(b^2) \\
&\hskip-.5in = 1 + O(b^2).
\end{split}
\end{align}
Substituting this into (\ref{k1redef}) gives (\ref{k10}).
\end{proof}
\begin{remark}{\em
From now on, all metric-dependent quantities will be with respect to $g = g^{(1)}$.  To simplify the notation,
we will suppress the superscript.
}
\end{remark}
Let $o_3$ denote the cokernel element on the compact manifold $(Z,g_Z)$ given by scaling of the metric:
\begin{align} \label{Zcoke}
o_3 = (ab)^{-2 + \delta} g_Z.
\end{align}
(The reason for the scale factor will become apparent in a moment).  Fix a smooth positive cut-off function $\phi_3$ supported in
$Z \setminus \{ z_0 \}$ with
\begin{align} \label{p3normal}
\int \phi_3 | g_{Z}|^2\ dV = 1,
\end{align}
where we again emphasize that the volume form and inner product are with respect to $g = g^{(1)}$.
Define
\begin{align} \label{k2def}
k_3 =   (ab)^{2-\delta} \phi_3 g_Z.
\end{align}
We claim that there is a constant $C_2$  such that
\begin{align} \label{k2bound}
\| k_3 \|_{C_{\delta - 4}} \leq C_2.
\end{align}
To see this, first recall that by (\ref{idmtnew}),
(\ref{idmtnew}), on the support of $\phi_3$ we have
\begin{align*}
[g^{(1)}]_{ij} &= (ab)^{-2} \big[ (g_Z)_{ij} + O( b^2) \big], \\
[g^{(1)}]^{ij} &= (ab)^{2} \big[ (g_Z)^{ij} + O( b^2) \big].
\end{align*}
Also, on the support of $\phi_3$ the weight $w(x) \approx (ab)^{-2}$.  It follows that
\begin{align*}
\sup |k_3|_{g^{(1)}}  w^{4 - \delta}  &= c_2 (ab)^{2-\delta} \sup |\phi_3|| g_Z|_{g^{(1)}}w^{4 - \delta}  \\
&   = c_2 (ab)^{2-\delta} \sup |\phi_3| \big\{ [g^{(1)}]^{ik} [g^{(1)}]^{j \ell} (g_Z)_{ij} (g_Z)_{k \ell} \big\}^{1/2} w^{4 - \delta} \\
&  = c_2 (ab)^{2-\delta} \sup \Big\{  |\phi_3| \big\{ (ab)^{4} \big[ (g_Z)^{ik} + O(b^2) \big] \\
& \hskip.25in \times  \big[ (g_Z)^{j \ell} + O(b^2) \big] (g_Z)_{ij} (g_Z)_{k \ell}\big\}^{1/2} w^{4 - \delta}  \Big\} \\
&\leq  C_2.
\end{align*}
This estimate clarifies the choice of scaling in the definitions of $k_3$ and $o_3$: the scale factor $(ab)^{2-\delta}$ in (\ref{k2def}) is necessary to get the bound (\ref{k2bound}), while
the factor $(ab)^{-2 + \delta}$ in the definition of $o_3$ was chosen to give the normalization
\begin{align} \label{2normal} \begin{split}
\int \langle k_3, o_3 \rangle\ dV &= \int \langle (ab)^{2-\delta} \phi_3 g_Z, (ab)^{-2 + \delta} g_{Z} \rangle\ dV \\
& = \int \phi_3 | g_{Z}|^2\ dV = 1.
\end{split}
\end{align}

Next, denote
\begin{align} \label{o3def}
o_2  = g = g^{(1)}.
\end{align}
We claim that there is a tensor $k_2$, compactly supported in $N$, which satisfies the normalization
\begin{align} \label{3normal}
\int \langle k_2, o_2 \rangle\ dV = \int \mbox{tr}_{g} k_2\ dV = 1
\end{align}
and the orthogonality condition
\begin{align} \label{13ortho}
\int \langle k_2, o_1 \rangle\ dV = 0.
\end{align}
(Note that the integral in (\ref{13ortho}) makes sense, since $k_2$ is compactly supported in $N$, even though $o_1$ is not globally defined.)

To see that such a tensor exists, just take two smooth, positive cut-off functions
$\eta_1$, $\eta_2$ with compact support in $N$ and let
\begin{align*}
k_2 = (c_1 \eta_1  + c_2 \eta_2)o_2 = (c_1 \eta_1  + c_2 \eta_2)g,
\end{align*}
where $c_1$ and $c_2$ are constants to be determined.  Then
\begin{align} \label{intfacts1}  \begin{split}
\int \langle k_2, o_1 \rangle\ dV &= c_1 \int \eta_1 (\mbox{tr}_{g} o_1)\ dV + c_2 \int \eta_2  (\mbox{tr}_{g} o_1)\ dV, \\
\int \langle k_2, o_2 \rangle\ dV &= 4 c_1 \int \eta_1\ dV + 4 c_2 \int \eta_2\ dV. \\
\end{split}
\end{align}
By (\ref{gNg}), on the support of $k_2$
\begin{align*}
g = g^{(1)} = g_N + O(b^2).
\end{align*}
Therefore,
\begin{align*}
\mbox{tr}_{g} o_1  &= [g^{(1)}]^{k \ell}       (o_1)_{k \ell} \\
&= [ g_N + O(b^2) ]^{k \ell}   [\mathcal{K}\omega_1 + f g_N]_{k \ell}  \\
&= 4 f + O(b^2).
\end{align*}
Therefore, we can estimate the integrals in (\ref{intfacts1}) by
\begin{align} \label{intfacts2}  \begin{split}
\int \langle k_2, o_1 \rangle\ dV &=  c_1 \Big\{ 4 \int \eta_1 f\ dV  + O(b^2) \Big\} + c_2 \Big\{ 4 \int \eta_2 f\ dV + O(b^2) \Big\}, \\
\int \langle k_2, o_2 \rangle\ dV &=  c_1 \Big\{ 4 \int \eta_1\ dV   \Big\} +  c_2 \Big\{ 4 \int \eta_2\ dV   \Big\}. \\
\end{split}
\end{align}
Consequently, once $a,b$ are small enough it is possible to choose the cut-off functions $\eta_1, \eta_2$ and the
constants $c_1, c_2$ so that (\ref{3normal}) and (\ref{13ortho}) hold.

\section{Lyapunov-Schmidt reduction}
\label{Lyap}
In this section, we perform the main reduction of the problem
from an infinite-dimensional problem to a finite-dimensional problem.

\begin{remark} \label{equiRemark} {\em Since we are carrying out an equivariant gluing construction,
from now on all operators are understood to act on sections of the relevant bundle which are invariant
under the group actions described above.}
\end{remark}
\subsection{The modified nonlinear map} Let
\begin{align}
\mathcal{D} = \Big\{ h \in C^{4,\alpha}_{\delta}(X_{a,b}) \ :\ \int \langle h, k_1 \rangle = 0,\  \int \langle h, k_3 \rangle = 0\ \Big\}.
\end{align}
Define the mapping $H^t : \mathbb{R} \times \mathbb{R} \times \mathcal{D} \rightarrow C^{0,\alpha}_{\delta - 4}(X_{a,b})$ by
\begin{align} \label{Htdef}
H^t(\lambda_1, \lambda_2, \theta) = P_{g^{(1)}}(\theta) - \lambda_1 k_1 - \lambda_2 k_2.
\end{align}
Let $(H^t)' : \mathbb{R} \times \mathbb{R} \times \mathcal{D} \rightarrow C^{0,\alpha}_{\delta - 4}$ denote the linearization of $H$ at $(0,0,0)$:
\begin{align} \label{Htp}
(H^t)'(\lambda_1, \lambda_2, h) = \frac{d}{ds} H^t(s\lambda_1, s\lambda_2, s h )\Big|_{s=0}.
\end{align}
Then $(H^t)'$ is given by
\begin{align} \label{Htpform}
(H^t)'(\lambda_1, \lambda_2, h) = S(h) - \lambda_1 k_1 - \lambda_2 k_2,
\end{align}
where $S = S^t$ is the linearization of $P$ at $g^{(1)}$.

\begin{proposition}
\label{unifinj}
For $a,b$ sufficiently small,
the map $(H^t)' : \mathbb{R} \times \mathbb{R} \times \mathcal{D} \rightarrow C^{0,\alpha}_{\delta - 4}$ is uniformly injective: i.e., there is a constant $\delta_0 > 0$
which is independent of $a,b$ such that
\begin{align} \label{Hbelow}
\| (H^t)'(\lambda_1, \lambda_2, h) \|_{C^{\alpha}_{\delta - 4}} \geq \delta_0 \big(  |\lambda_1| + |\lambda_2| + \| h \|_{C^{4, \alpha}_{\delta}} \big).
\end{align}
\end{proposition}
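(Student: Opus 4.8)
**

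The statement to prove is Proposition~\ref{unifinj}: uniform injectivity of $(H^t)'$ on $X_{a,b}$ as $a,b \to 0$. This is the standard ``gluing estimate'' that makes Lyapunov--Schmidt reduction work, and the plan is a proof by contradiction combined with a patching argument, exploiting the rigidity theorems (Theorems~\ref{comker} and~\ref{afker}) on each building block.

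\textbf{Overall strategy.} Suppose~\eqref{Hbelow} fails. Then there are sequences $a_i, b_i \to 0$ and $(\lambda_1^{(i)}, \lambda_2^{(i)}, h_i)$ with $|\lambda_1^{(i)}| + |\lambda_2^{(i)}| + \|h_i\|_{C^{4,\alpha}_\delta} = 1$ but $\|S(h_i) - \lambda_1^{(i)} k_1 - \lambda_2^{(i)} k_2\|_{C^{0,\alpha}_{\delta-4}} \to 0$. First I would pair the equation $(H^t)'(\lambda_1^{(i)},\lambda_2^{(i)},h_i) = \eta_i$ (with $\eta_i \to 0$) with the approximate cokernel elements $o_1$ and $o_2$; using the normalizations~\eqref{k1renorm}, \eqref{3normal}, \eqref{13ortho} and the fact that $h_i \in \mathcal{D}$ satisfies $\int\langle h_i,k_1\rangle = 0$, integration by parts (moving $S$ onto $o_1, o_2$, whose images under $S$ are $O$ of a high power of $a$ by the asymptotic analysis leading to Theorem~\ref{afker} and the scale-flatness computations) should force $\lambda_1^{(i)}, \lambda_2^{(i)} \to 0$. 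This reduces matters to showing $\|h_i\|_{C^{4,\alpha}_\delta} \to 0$ when $\|S(h_i)\|_{C^{0,\alpha}_{\delta-4}} \to 0$, contradicting $\|h_i\|_{C^{4,\alpha}_\delta} \to 1$.

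\textbf{The patching/blow-up argument.} To get the contradiction on $h_i$, I would decompose $X_{a_i,b_i}$ into its three regions: the compact piece $\{|z| \geq \text{const}\} \subset Z$, the neck/damage zone $\{a^{-1} \lesssim |x| \lesssim 2a^{-1}\}$, and the AF piece $\{|x| \leq a^{-1}\} \subset N$. On each region one argues that the weighted norm of $h_i$ must decay. The key point: rescale and extract a limit. If the weighted norm of $h_i$ concentrates on the compact piece, the limit is a nonzero $C^{4,\alpha}_\delta$ solution of $S^t_{g_Z} h = 0$ on $Z \setminus \{z_0\}$, which by Theorem~\ref{comker} (using toric + diagonal invariance, and $t < 0$, $t \neq -1/3$ handled) must vanish --- contradiction. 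If it concentrates on the AF piece, the limit solves $S^t_{g_N} h = 0$ with $h = O(|x|^\delta)$, $\delta > 0$ after appropriate rescaling; Theorem~\ref{afker} in the $\delta > 0$ regime forces $h = c \cdot g_N$, but that is excluded by the constraint $\int\langle h_i, k_3\rangle = 0$ (which, after rescaling, rules out the metric direction on $Z$; one needs the analogous orthogonality to rule out the $g_N$ direction, which is why $k_2$ and the trace normalization were set up). If it concentrates on the neck, the limit is a bounded solution of the flat-space operator $S_0$ on $\mathbb{R}^4 \setminus \{0\}$ decaying at both ends, hence zero by the flat rigidity used in the proof of Theorem~\ref{comker} (the relative index theorem argument). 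The three cases being exhausted gives the contradiction.

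\textbf{Technical backbone and main obstacle.} Making the blow-up argument rigorous requires uniform elliptic (Schauder) estimates for $S^t$ in the weighted norms that are \emph{independent of $a,b$} --- this is where the hypotheses $w \geq 1$, $\delta < 0$, and the curvature decay bounds~\eqref{Rmw} on each piece (verified via the metric expansions~\eqref{burnsexp}, \eqref{s2s2exp}, \eqref{dzest}) enter, together with ellipticity of $S^t$ for $t \neq 0$ (Proposition~\ref{ellprop}). One covers $X_{a,b}$ by balls of radius comparable to $w$, applies rescaled interior estimates on each, and sums; the weight function is designed precisely so these glue. The main obstacle, I expect, is the bookkeeping in the neck region: one must show that the interpolation/cutoff errors in passing between the $g_N$-model and the rescaled $g_Z$-model on the overlap annulus are lower order, so that a would-be concentrating sequence genuinely produces a nontrivial limit on one of the three models rather than escaping into the transition. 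Handling this cleanly typically requires choosing the blow-up point carefully (where $w^{-\delta}|h_i|$ is nearly maximized) and tracking how the weight behaves near that point; the rigidity inputs then do the rest.
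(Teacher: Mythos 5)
Your overall strategy is the same as the paper's: contradiction, pairing with cutoff cokernel elements to force $\lambda_1^i,\lambda_2^i \to 0$, then a three-case blow-up (AF piece, compact piece, neck) killed by Theorems \ref{afker}, \ref{comker} and a flat-model statement. However, your handling of the AF-piece case contains a genuine misstep. Since $\|h_i\|_{C^{4,\alpha}_{\delta}} \leq 1$ with $\delta<0$ and $w \geq 1$, the limit $h_\infty$ on $(N,g_N)$ lies in $C^{4,\alpha}_{\delta}$ with the \emph{same negative} weight; there is no rescaling that places you in the ``$\delta>0$ regime'' of Theorem \ref{afker}, and the conclusion there is not $h_\infty = c\, g_N$ (the metric direction cannot appear in a decaying limit). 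The correct statement is that, with the invariance, $h_\infty = c \cdot o_1$, and the constant is killed by the constraint $\int \langle h_i, k_1\rangle\, dV = 0$ built into $\mathcal{D}$, which passes to the limit because $k_1$ is compactly supported in $N$. The constraint you invoke instead, $\int\langle h_i,k_3\rangle = 0$, is useless here: $k_3$ is supported on the compact piece $Z \setminus \{z_0\}$ and says nothing about the limit on $N$; and the trace orthogonality against $k_2$ that you fall back on is not among the defining conditions of $\mathcal{D}$, so it is not available for $h_i$ (it enters only in the orthogonality $\int\langle k_2,o_1\rangle = 0$ used when pairing the equation with $\eta o_1$, and later in the surjectivity/cokernel argument). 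As written, this case of your contradiction argument does not close.

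Two smaller corrections in the other cases. On the compact piece, Theorem \ref{comker} alone gives $\tilde h_\infty = c\, g_Z$, not $\tilde h_\infty = 0$; you must then use the $k_3$-orthogonality (which survives the rescaling $\tilde h_i = (a_ib_i)^{2+\delta}h_i$ precisely because of the $(ab)^{2-\delta}$ normalization of $k_3$) to conclude $c=0$ --- so the constraint you misapplied to the AF piece is in fact the one needed here. On the neck, the limit in $C^{4,\alpha}_{\delta}(\RR^4\setminus\{0\})$ with weight $|x|$ is not ``decaying at both ends'' (it is bounded by $|x|^{\delta}$, which grows mildly at the origin), and the relevant fact is not the rigidity of bounded solutions on all of $\RR^4$ but that $-1<\delta<0$ is not an indicial root of $S_0$ (Proposition \ref{indclaim}), so $S_0 : C^{4,\alpha}_{\delta} \to C^{0,\alpha}_{\delta-4}$ is an isomorphism on the punctured space and $h_\infty \equiv 0$. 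With these repairs your argument coincides with the paper's proof.
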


\begin{proof}  We argue via contradiction: if (\ref{Hbelow}) does not hold, then there is a sequence $(\lambda_1^i, \lambda_2^i, h_i) \in \mathcal{D}$ with
\begin{align} \label{assumption} \begin{split}
 |\lambda_1^i| + |\lambda_2^i| + \| h_i \|_{C^{4, \alpha}_{\delta}} &= 1 \ \ \forall i, \\
 \epsilon_i \equiv (H^t)'(\lambda_1^i, \lambda_2^i, h_i)  & \rightarrow 0 \ \ \mbox{in } C^{0,\alpha}_{\delta - 4}.
 \end{split}
 \end{align}
If we pair $\epsilon_i$ with $ \eta o_1$ and integrate, where $\eta$ is a cut-off function with
\begin{align} \label{etad}
\eta(x) =
\begin{cases}
1  &  |x| \leq a^{-1} \\
0  &  |x| > 2a^{-1},
\end{cases}
\end{align}
then
\begin{align} \label{pairo1}
\int \langle \epsilon_i, \eta o_1 \rangle\ dV = \int_{B} \langle Sh_i,\eta  o_1 \rangle\ dV  - \lambda_1^i \int \langle k_1, \eta o_1 \rangle\ dV - \lambda_2^i \int \langle k_2, \eta o_1 \rangle\ dV.
\end{align}
Since $\eta \equiv 1$ on the support of $k_1$ and $k_2$, by the normalization (\ref{k1props}) and the orthogonality condition (\ref{13ortho}) we can rewrite this as
\begin{align} \label{lam1form}
\lambda_1^i = - \int \langle \epsilon_i, \eta o_1 \rangle\ dV + \int \langle Sh_i,\eta o_1 \rangle\ dV.
\end{align}
For the first term on the right-hand side, note that
\begin{align} \label{eisup}
\| \epsilon_i \|_{C^{0,\alpha}_{\delta - 4}} \geq \sup \{ |\epsilon_i| w^{4 - \delta} \},
\end{align}
where $w$ is the weight function.  According to (\ref{wdef}), on the support of $\eta$ the weight function is $w(x) = |x|$ (for $|x|$ large).  Also, by Theorem \ref{ALEcokethm} the cokernel $o_1$ satisfies
\begin{align*}
|o_1| \leq C |x|^{-2}.
\end{align*}
Therefore,
\begin{align} \label{epest} \begin{split}
\big| \int \langle \epsilon_i, \eta o_1 \rangle\ dV \big| &\leq \int |\epsilon_i| |\eta o_1|\ dV  \\
&\leq C \| \epsilon_i \|_{C^{0,\alpha}_{\delta - 4}} \int_{1 << |x| \leq a^{-1}/2 } |x|^{\delta - 4} |x|^{-2}\ dV \\
&\leq C \| \epsilon_i \|_{C^{0,\alpha}_{\delta - 4}} \rightarrow 0
\end{split}
\end{align}
as $i \rightarrow \infty$.

For the second term on the right-hand side of (\ref{lam1form}) we integrate by parts, using the fact that $S$ is self-adjoint:
\begin{align} \label{Sadjab}
\int \langle Sh_i,\eta o_1 \rangle\ dV = \int \langle h_i , S(\eta o_1) \rangle\ dV.
\end{align}
Using the formula for $S$ in (\ref{Sgenform}) and the Leibniz rule, write
\begin{align} \label{SLeibniz} \begin{split}
S(\eta o_1) &= ( g^{-2} + g* g^{-3} ) *\nabla^4 (\eta o_1)+ g* g^{-3} *Rm * \nabla^2 (\eta o_1) \\
& + g* g^{-3}* \nabla Rm * \nabla (\eta o_1) + ( g^{-2} + g * g^{-3})*( \nabla^2 Rm + Rm * Rm) *(\eta o_1) \\
&= \eta S o_1 + ( g^{-2} + g* g^{-3} ) * \sum_{j \geq 1}^4 \nabla^{4-j} o_1 * \nabla^j \eta \\
&\ \ \ \ + g* g^{-3} *Rm * \sum_{j \geq 1}^2 \nabla^{2-j} o_1 * \nabla^j \eta
+ g* g^{-3}* \nabla Rm * o_1 * \nabla \eta.
\end{split}
\end{align}
By (\ref{fdecay}) and Theorem \ref{ALEcokethm}, on the support of $|\nabla \eta|$
\begin{align*}
|\nabla^{m} o_1| &= O(a^{m + 2}), \\
|\nabla^m \eta| &= O(a^{m}), \\
|\nabla^m Rm| &= O(a^{m+4}).
\end{align*}
Therefore, from (\ref{SLeibniz}) we have
\begin{align} \label{Srem}
S(\eta o_1) = \eta S o_1 + \{ \mbox{Error} \},
\end{align}
where the error is supported on $\{ a^{-1}/4 \leq |x| \leq a^{-1}/2 \}$ and satisfies
\begin{align} \label{Err}
|\{ \mbox{Error}\}| = O(a^6).
\end{align}
It follows from (\ref{Sadjab})
\begin{align} \label{Sadjabc}
\int \langle Sh_i,\eta o_1 \rangle\ dV = \int \langle h_i , \eta S(o_1) \rangle\ dV + \int \langle h_i, \{ \mbox{Error} \} \rangle\ dV.
\end{align}
Since $\| h_i \|_{C_{\delta}^{4,\alpha}} \leq 1$,
\begin{align} \label{hiwt}
|h_i| \leq C w^{\delta},
\end{align}
hence on the support of $\mbox{Error}$
\begin{align} \label{hisize}
|h_i| \leq C a^{-\delta},
\end{align}
hence by (\ref{Err})
\begin{align}
\Big| \int_{ \{ a^{-1}/4 \leq |x| \leq a^{-1}/2 \} } \langle h_i, \{ \mbox{Error} \} \rangle\ dV \Big| \leq C a^{2-\delta}.
\end{align}
Therefore,
\begin{align} \label{Sadjr} \begin{split}
 \int \langle Sh_i,\eta o_1 \rangle\ dV = \int \langle h_i , \eta S o_1 \rangle\ dV +  O(a^{2-\delta}).
\end{split}
\end{align}

Let $S_N$ denote the linearized operator with respect to the metric $g_N$.  Then $S_N o_1 = 0$, hence
\begin{align} \label{SNdiff} \begin{split}
S o_1 &= (S - S_N)o_1 + S_N o_1 \\
&= (S-S_N) o_1.
\end{split}
\end{align}
Using (\ref{Sgenform}) with (\ref{gNg}), we can estimate
\begin{align} \label{SNS1} \begin{split}
|(S-S_N) o_1| &\lesssim b^2 |\nabla^4 o_1| + b^2 a |\nabla^3 o_1| + a^2 b^2 |\nabla^2 o_1|+  a^3 b^2 |\nabla o_1|  \\
& \ + a^4 b^2 |o_1| + b^2 |Rm| |\nabla^2 o_1| + b^2 |\nabla Rm||\nabla o_1| + b^2 \big( |\nabla^2 Rm| + |Rm|^2 \big)|o_1|.
\end{split}
\end{align}
Therefore, by (\ref{hisize}),
\begin{align} \label{SNSdiff} \begin{split}
\Big| \int \langle h_i , \eta S o_1 \rangle\ dV  \Big| &= \Big| \int \langle h_i , \eta (S-S_N) o_1 \rangle\ dV \Big| \\
&\leq C \int w^{\delta} |(S-S_N) o_1|\ dV \\
&\leq C b^2 a^{2-\delta}.
\end{split}
\end{align}
Combining the above, we conclude
\begin{align} \label{lamone}
\int \langle Sh_i,\eta o_1 \rangle\ dV =  O(a^{2-\delta}),
\end{align}
hence by (\ref{lam1form}),
\begin{align} \label{Lam1zed}
\lambda_1^i \rightarrow 0 \ \ \mbox{as } i \rightarrow \infty.
\end{align}

Next, pair $\epsilon_i$ with $\eta o_2$ and integrate:
\begin{align} \label{pairo3}
\int \langle \epsilon_i, \eta o_2 \rangle\ dV = \int \langle Sh_i, \eta o_2 \rangle\ dV  - \lambda_1^i \int \langle k_1, \eta o_2 \rangle\ dV - \lambda_2^i \int \langle k_2, \eta o_2 \rangle\ dV.
\end{align}
By the normalization (\ref{3normal}), we can rewrite this as
\begin{align} \label{lam2form}
\lambda_2^i = - \int  \langle \epsilon_i, \eta o_2 \rangle\ dV + \int  \langle Sh_i, \eta o_2 \rangle\ dV  - \lambda_1^i \int  \langle k_1, \eta o_2 \rangle\ dV
\end{align}
As in (\ref{epest}), we can estimate the first integral on the right as
\begin{align} \label{epest2} \begin{split}
\big| \int \langle \epsilon_i, \eta o_2 \rangle\ dV \big| &\leq \int |\epsilon_i| |\eta o_2|\ dV  \\
&\leq C \| \epsilon_i \|_{C^{0,\alpha}_{\delta - 4}} \int_{1 < |x| \leq a^{-1}/2 } |x|^{\delta - 4}  \ dV \\
&\leq C_{\delta} \| \epsilon_i \|_{C^{0,\alpha}_{\delta - 4}} \ \ \mbox{(since $\delta < 0$)},
\end{split}
\end{align}
which limits to $0$ as $i \rightarrow \infty.$  The second term on the right we estimate as we did above; namely,
\begin{align} \label{Sadj2}
\int \langle Sh_i,\eta o_2 \rangle\ dV = \int \langle h_i , S(\eta o_2) \rangle\ dV.
\end{align}
Using the fact that $\nabla o_2 = 0$, we can estimate as in (\ref{SLeibniz}):
\begin{align*}
S(\eta o_2) &= ( g^{-2} + g* g^{-3} ) *\nabla^4 (\eta o_2)+ g* g^{-3} *Rm * \nabla^2 (\eta o_2) \\
& + g* g^{-3}* \nabla Rm * \nabla (\eta o_2) + ( g^{-2} + g * g^{-3})*( \nabla^2 Rm + Rm * Rm) *(\eta o_2) \\
&= \eta S o_2 + ( g^{-2} + g* g^{-3} )  * \nabla^4 \eta * o_2 + g* g^{-3} * Rm  * \nabla^2 \eta * o_2 \\
& \ \ \ \ + g* g^{-3}* \nabla Rm * o_2 * \nabla \eta.
\end{align*}
Since $S o_2 = S g = 0$,
\begin{align} \label{Srem2}  \begin{split}
S(\eta o_2) &= \eta S o_2 + \{ \mbox{Error} \}, \\
&= \{ \mbox{Error} \},
\end{split}
\end{align}
where the error is supported on $\{ a^{-1}/4 \leq |x| \leq a^{-1}/2 \}$ and satisfies
\begin{align} \label{Err2}
|\{ \mbox{Error}\}| = O(a^4).
\end{align}
Using (\ref{hisize}), we can therefore estimate
\begin{align*}
\Big| \int_{ \{ a^{-1}/4 \leq |x| \leq a^{-1}/2 \} } \langle h_i, \{ \mbox{Error} \} \rangle\ dV \Big| \leq C  a^{-\delta}.
\end{align*}
Hence,
\begin{align} \label{Sadjr2} \begin{split}
 \int \langle Sh_i,\eta o_2 \rangle\ dV =  O( a^{-\delta}).
\end{split}
\end{align}

For the last term in (\ref{lam2form}), we use the fact that $\lambda_1^i \rightarrow 0$, and that $k_1$ is compactly supported:
\begin{align*}
\Big| \lambda_1^i \int  \langle k_1, \eta o_2 \rangle\ dV \Big| &\leq   | \lambda_1^i | \int \eta |k_1| |o_2|\ dV \\
&\leq C | \lambda_1^i | \int_{\mathrm{supp}\{ k_1\}}  \ dV \\
&\leq C | \lambda_1^i |   \rightarrow 0,
\end{align*}
as $i \rightarrow 0$.
Combining with (\ref{Sadjr2}), (\ref{epest2}), and (\ref{lam2form}), we see that
\begin{align} \label{l2zed}
\lambda_2^i \rightarrow 0
\end{align}
as $i \rightarrow \infty$.

Consequently, by (\ref{assumption}) we now know
\begin{align}
\label{hlimit}
  \| h_i \|_{C^{4, \alpha}_{\delta}} &\rightarrow 1,  \\
\label{hlimit2}
\| Sh_i \|_{C^{0,\alpha}_{\delta - 4}} & \rightarrow 0,
\end{align}
as $i \rightarrow \infty$.

 The remainder of the proof is a standard ``blow-up'' argument, which
we only briefly outline. Let $(a_i, b_i)$ be a sequence of gluing parameters with  $(a_i, b_i) \rightarrow (0,0)$ as $i \rightarrow \infty$,
and let $p_i \in X_{a_i,b_i}$ a sequence of points at which the supremum
in \eqref{hlimit} is attained.  We have the three possibilities:
\begin{itemize}
\item (1) $p_i \rightarrow p \in N$. In this case, standard elliptic
estimates produce a nontrivial solution of the limiting equation
$S h_{\infty} = 0$ on $(N, g_N)$ with $h \in C^{4, \alpha}_{\delta}$.
By Theorem \ref{afker}, $h_{\infty} = c \cdot o_1$ for some $c \in \RR$.
Since
\begin{align}
\int \langle h_i, k_1 \rangle dV_{(a_i,b_i)} = 0,
\end{align}
$k_1$ has compact support on $N$, and $h_i \leq C w^{\delta}$,
the integrand is bounded.  Therefore,
\begin{align}
\int \langle h_\infty, k_1 \rangle dV_{g_N} = 0,
\end{align}
which implies that $c = 0$, a contradiction.

\item (2) $p_i \rightarrow p \in Z \setminus \{z_0\}$.  In this case, define
$\tilde{h}_i =  (ab)^{2 + \delta} h_i$. It is easy to see that
this scaling preserves the $C^{4,\alpha}_{\delta}$ norm, with
respect to the metric $\tilde{g}^{(1)}_{a,b} = (ab)^2 {g}^{(1)}_{a,b}$.
Standard elliptic estimates produce a nontrivial solution of the limiting equation
$S \tilde{h}_{\infty} = 0$ on $(Z, g_Z)$ with $h \in C^{4, \alpha}_{\delta}$.
By Theorem \ref{comker}, $h_{\infty} = c \cdot g_Z$ for some $c \in \RR$.
Since
\begin{align}
\int \langle h_i, k_3 \rangle dV_{(a_i,b_i)} = 0,
\end{align}
scaling shows that
\begin{align}
\int \langle \tilde{h}_i , \phi_3 g_Z \rangle_{\tilde{g}} dV_{\tilde{g}}
= (ab)^{2 \delta} \cdot 0 = 0.
\end{align}
Since $\phi_3$ has compact support on $Z \setminus \{z_0\}$,
and $\tilde{h}_i \leq C \tilde{w}^{\delta} = C (ab w)^{\delta}$,
the integrand is bounded, which implies that
\begin{align}
\int \langle \tilde{h}_\infty, k_3 \rangle dV_{g_Z} = 0,
\end{align}
which implies that $c = 0$, a contradiction.

\item (3) If neither of the above cases happen, then there are two possibilities:
a subsequence can approach the damage zone from the AF side, or
from the compact side. We give the argument in the former case,
the proof of the latter case is similar.
Fix a point $O \in N$ and let
\begin{align*}
d_i = \mbox{dist}_{g^{(1)}_{a_i,b_i}}(O,p_i).
\end{align*}
Clearly, $d_i \rightarrow \infty$ as $i \rightarrow \infty$ (otherwise we are in case (1) above).
For $i >> 1$ we can view the sequence $\{ p_i \} \subset N_i = N \cap A_i$, where $A_i$ is the
annulus $\{ R_0 < |x| < 2a_i^{-1} \}$ and $N_i$ is equipped with the metric
$g_i = g^{(1)}_{a_i,b_i}$.  Let $\psi_i : A_i  \rightarrow N$ denote dilation,
\begin{align*}
\psi_i : x \mapsto d_i x,
\end{align*}
and define
\begin{align} \label{tgit}
\tilde{g}_i = d_i^{-2} \psi_i^{\ast} g_i \Big|_{ \{ m_i  d_i \leq |x| \leq M_i   d_i \}},
\end{align}
where $m_i \to 0$ and $M_i \rightarrow \infty$ are chosen so that the
annulus $\{ m_i  d_i \leq |x| \leq M_i   d_i \} \subset N_i$.  Denote the dilated
coordinates by $\tilde{x}^i$; then $\tilde{g}_i$ is defined on the annulus $\{ m_i \leq |\tilde{x}^i| \leq M_i \}$.
Finally, define
\begin{align*}
\tilde{h}_i = d_i^{-2 + \delta} \psi_i^{\ast}h_i,
\end{align*}
which preserves the  $C^{4,\alpha}_{\delta}$-norm.  Taking the limit as $i \rightarrow \infty$ we have $\tilde{g}_i \rightarrow ds^2$,
the flat metric on $\mathbb{R}^4 \setminus \{ 0 \}$,  $\tilde{h}_i \rightarrow h_{\infty} $, where $h_{\infty}$ satisfies
\begin{align} \label{S0facts} \begin{split}
S_0 h_{\infty} &= 0\ \ \mbox{on } \RR^4 \setminus \{0\}, \\
h_{\infty} &\in C^{4,\alpha}_{\delta}(\RR^4 \setminus \{0\}),
\end{split}
\end{align}
and $S_0$ is the linearized operator with respect to the flat metric (see (\ref{Sform1})).  Note the weight function in the limit is given by $w = |x|$.
Since $-1 < \delta < 0$, $\delta$ is not an indicial root by Proposition
\ref{indclaim}. This implies that $S_0 : C^{4,\alpha}_{\delta}
\rightarrow C^{0, \alpha}_{\delta -4}$ is an isomorphism (see
\cite{Bartnik, LockhartMcOwen}),
so $h_{\infty} \equiv 0$, which is a contradiction.
\end{itemize}
This contradiction argument finishes the proof of Proposition \ref{unifinj}.
\end{proof}

We next quote without proof the following standard implicit
function theorem:
\begin{lemma}
\label{ift}
Let $H : E \rightarrow F$ be a smooth map between Banach spaces.
Define $Q = H - H(0) - H'(0)$. Assume that there are positive constants
$C_1, s_0, C_2$ so that the following are satisfied:
\begin{itemize}
\item $(1)$ The nonlinear term $Q$ satisfies
\begin{align}
\label{iquad}
\Vert Q(x) - Q(y) \Vert_F \leq C_1 (\Vert x \Vert_E + \Vert y \Vert_E)
\Vert x - y \Vert_E
\end{align}
for every $x, y \in B_E (0, s_0)$.
\item $(2)$ The linearized operator at $0$, $H'(0) : E \rightarrow F$
is an isomorphism with inverse bounded by $C_2$.
\end{itemize}
If
\begin{align}
s < \min \Big( s_0, \frac{1}{2 C_1 C_2} \Big)
\end{align}
and
\begin{align}
\Vert H(0) \Vert_F < \frac{ s}{2 C_2},
\end{align}
Then there is a unique solution $x \in B_E(0,s)$ of the
equation $H(x) = 0$.
\end{lemma}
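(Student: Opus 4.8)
The plan is to recast the equation $H(x)=0$ as a fixed-point problem and apply the Banach contraction principle on a closed ball. Since $H'(0)$ is an isomorphism, the identity $H(x) = H(0) + H'(0)x + Q(x)$ shows that $H(x)=0$ is equivalent to $x = T(x)$, where I would set
\begin{align}
T(x) = - \big( H'(0) \big)^{-1} \big( H(0) + Q(x) \big).
\end{align}
The first thing I would record is the elementary fact that $Q(0) = H(0) - H(0) - H'(0)(0) = 0$, so that hypothesis $(1)$ applied with $y = 0$ gives $\| Q(x) \|_F \le C_1 \| x \|_E^2$ for all $x \in B_E(0,s_0)$.

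Next I would check that $T$ maps the closed ball $\overline{B_E(0,s)}$ into itself. For $\| x \|_E \le s \le s_0$, hypothesis $(2)$ gives $\| T(x) \|_E \le C_2 \big( \| H(0) \|_F + \| Q(x) \|_F \big) \le C_2 \big( \| H(0) \|_F + C_1 s^2 \big)$, and the two smallness hypotheses $\| H(0) \|_F < s/(2C_2)$ and $s < 1/(2C_1C_2)$ then yield
\begin{align}
\| T(x) \|_E < \frac{s}{2} + C_1 C_2 s^2 < \frac{s}{2} + \frac{s}{2} = s,
\end{align}
so $T$ maps $\overline{B_E(0,s)}$ into the open ball $B_E(0,s)$. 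Then I would verify the contraction estimate: for $x, y \in \overline{B_E(0,s)}$, using $(2)$ and then $(1)$,
\begin{align}
\| T(x) - T(y) \|_E \le C_2 \| Q(x) - Q(y) \|_F \le C_1 C_2 \big( \| x \|_E + \| y \|_E \big) \| x - y \|_E \le 2 C_1 C_2 s\, \| x - y \|_E,
\end{align}
and $2 C_1 C_2 s < 1$ by the choice of $s$, so $T$ is a contraction on $\overline{B_E(0,s)}$.

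Since $\overline{B_E(0,s)}$ is a closed subset of the Banach space $E$, it is a complete metric space, so the contraction mapping theorem produces a unique fixed point $x \in \overline{B_E(0,s)}$ of $T$, which is then the unique zero of $H$ in that closed ball; because the self-mapping estimate was strict, $x$ in fact lies in the open ball $B_E(0,s)$, and conversely any zero of $H$ in $B_E(0,s)$ is a fixed point of $T$ in $\overline{B_E(0,s)}$, hence equals $x$. This gives exactly the asserted existence and uniqueness in $B_E(0,s)$. There is no genuine obstacle here, as this is the standard quantitative implicit/inverse function theorem; the only point requiring care is bookkeeping the constants so that both the self-mapping and contraction inequalities come out strictly, which is precisely what the hypotheses $s < \min\big(s_0, \tfrac{1}{2C_1C_2}\big)$ and $\| H(0) \|_F < \tfrac{s}{2C_2}$ are designed to guarantee.
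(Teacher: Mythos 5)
Your proof is correct. The paper states this lemma without proof (``We next quote without proof the following standard implicit function theorem''), so there is no argument in the paper to compare against; your contraction-mapping argument --- rewriting $H(x)=0$ as the fixed-point equation $x = -\big(H'(0)\big)^{-1}\big(H(0)+Q(x)\big)$, checking the self-mapping and contraction estimates from the two smallness hypotheses, and invoking Banach's fixed point theorem on the closed ball --- is exactly the standard proof the authors are implicitly relying on, and your constant bookkeeping (using $Q(0)=0$ to get $\Vert Q(x)\Vert_F \le C_1 \Vert x\Vert_E^2$, then $C_2\Vert H(0)\Vert_F < s/2$ and $2C_1C_2 s <1$) is accurate.
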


We end this section with the following existence theorem:
\begin{theorem}Let $a = b$. Then for all $a$ sufficiently small,
there exist constants $\lambda_1, \lambda_2 \in \RR$
and $\theta \in \mathcal{D}$ satisyfying
\begin{align}
\label{thetanorm}
\Vert \theta \Vert_{C^{4, \alpha}_{\delta}} < C a^{4 + \delta - \epsilon}
\end{align}
so that
\begin{align}
\label{bigeqn}
P_{g^{(1)}}(\theta) = \lambda_1 k_1 + \lambda_2 k_2.
\end{align}
\end{theorem}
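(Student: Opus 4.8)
The plan is to apply the implicit function theorem of Lemma \ref{ift} to the map $H=H^t:E\to F$, where $E=\mathbb{R}\times\mathbb{R}\times\mathcal{D}$ with the product norm and $F=C^{0,\alpha}_{\delta-4}(X_{a,a})$, keeping careful track of the dependence of every constant on the gluing parameter $a$. First I would identify the three quantities appearing in Lemma \ref{ift}. Since $\mathcal{K}_{g^{(1)}}\delta_{g^{(1)}}\mathcal{K}_{g^{(1)}}\delta_{g^{(1)}}\overset{\circ}{0}=0$, the value at the origin is $H(0)=P_{g^{(1)}}(0)=B^t(g^{(1)}_{a,a})$. The linearization at the origin is $H'(0)(\lambda_1,\lambda_2,h)=S(h)-\lambda_1k_1-\lambda_2k_2$ as in \eqref{Htpform}, with $S=S^t$ the linearization of $P$ at $g^{(1)}$. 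Finally, because $H$ differs from $P_{g^{(1)}}$ only by the fixed linear term $-\lambda_1k_1-\lambda_2k_2$ (see \eqref{Htdef}), the nonlinear remainder $Q=H-H(0)-H'(0)$ equals $Q_{g^{(1)}}$, the quadratic-and-higher part of $P_{g^{(1)}}$ in the expansion \eqref{Pexp}. Also, $H$ is smooth between Banach spaces since $P_g$ is analytic in $\theta$ and its derivatives up to order four.

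For hypothesis $(1)$ I would invoke part $(ii)$ of Proposition \ref{quadest}, which gives exactly the estimate \eqref{iquad} for $Q=Q_{g^{(1)}}$ with a constant $C_1$ independent of $a$, provided the weight $w$ on $X_{a,a}$ satisfies \eqref{wdoes} and the curvature bounds \eqref{Rmw}. The conditions $w\ge1$ and $\delta<0$ are immediate from \eqref{wdef} and Remark \ref{epvsdel2}. The bounds $w^2|Rm_{g^{(1)}}|\le C_0$, $w^3|\nabla Rm_{g^{(1)}}|\le C_0$, $w^4|\nabla^2 Rm_{g^{(1)}}|\le C_0$, with $C_0$ independent of $a$, follow by checking the three regions separately: on the compact piece, by the scaling in \eqref{RmsizeD} one has $|Rm|\sim a^2b^2$ while $w\sim(ab)^{-1}$ or $(ab)^{-1}|z|$, so $w^2|Rm|$ is bounded, and similarly for the derivatives; on the AF piece $g_N$ is AF of order $2$, so $|\nabla^k Rm_{g_N}|=O(|x|^{-4-k})$ and $w=|x|$; and in the damage zone the expansion \eqref{dzest} shows the corrections $a^2b^2\tilde H_{\pm2}$ contribute curvature of the same or lower order. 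Since $a=b$, all of this is uniform in $a$.

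For hypothesis $(2)$ I would first note that Proposition \ref{unifinj} shows $H'(0)$ is uniformly injective with lower bound $\delta_0$ independent of $a$; it remains to see that $H'(0)$ is surjective, whence its inverse is bounded by $C_2=1/\delta_0$, uniformly in $a$. This is a Fredholm index count: for fixed $a$ the weight is bounded above and below, so the weighted norms on the compact manifold $X_{a,a}$ are equivalent to the ordinary H\"older norms, and by Proposition \ref{ellprop} the operator $S:C^{4,\alpha}_\delta\to C^{0,\alpha}_{\delta-4}$ is elliptic and self-adjoint, hence Fredholm of index zero. Restricting $h$ to the codimension-two subspace $\mathcal{D}$ and enlarging the source by the two-dimensional span of $k_1,k_2$ leaves the index unchanged, so $(\lambda_1,\lambda_2,h)\mapsto S(h)-\lambda_1k_1-\lambda_2k_2$ has index zero; being injective by Proposition \ref{unifinj}, it is an isomorphism with inverse bounded by $C_2=1/\delta_0$.

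It remains to estimate $\|H(0)\|_F=\|B^t(g^{(1)}_{a,a})\|_{C^{0,\alpha}_{\delta-4}}$. By Proposition \ref{Btsize}, $\|B^t(g^{(1)}_{a,a})-a^4\lambda k_1^{(0)}\|_{C^{0,\alpha}_{\delta-4}}=O(a^{4+\delta-\epsilon})$, and $\|k_1^{(0)}\|_{C^{0,\alpha}_{\delta-4}}\le C_1$ by \eqref{k1props}, so $\|H(0)\|_F=O(a^4)+O(a^{4+\delta-\epsilon})=O(a^{4+\delta-\epsilon})$, using that $\epsilon-\delta>0$ and $0<a<1$. Choosing $s=C'a^{4+\delta-\epsilon}$ with $C'$ a fixed, sufficiently large constant, and noting that $4+\delta-\epsilon>0$ so that $s\to0$ as $a\to0$, we get $s<\min\!\big(s_0,(2C_1C_2)^{-1}\big)$ and $\|H(0)\|_F<s/(2C_2)$ for all $a$ small. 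Lemma \ref{ift} then produces a unique $(\lambda_1,\lambda_2,\theta)\in B_E(0,s)$ with $H(\lambda_1,\lambda_2,\theta)=0$, that is, $P_{g^{(1)}}(\theta)=\lambda_1k_1+\lambda_2k_2$ with $\theta\in\mathcal{D}$ and $\|\theta\|_{C^{4,\alpha}_\delta}<s=C'a^{4+\delta-\epsilon}$, giving \eqref{thetanorm}. By Remark \ref{equiRemark} the entire construction is carried out in the invariant category, so $\theta,\lambda_1,\lambda_2$ are automatically group-invariant. The main obstacle is not any single step but ensuring that $C_1$, $C_2$, and the bound on $\|H(0)\|_F$ are all controlled \emph{uniformly} in $a$ — precisely what Propositions \ref{quadest}, \ref{unifinj}, and \ref{Btsize} were designed to deliver — and, within that, verifying that the weighted-norm curvature hypotheses \eqref{Rmw} hold for the refined approximate metric with constants independent of $a$.
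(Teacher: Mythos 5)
Your proof is correct, and its skeleton coincides with the paper's: apply Lemma \ref{ift} with hypothesis $(1)$ supplied by Proposition \ref{quadest} (after verifying \eqref{Rmw} region by region, exactly as in Lemma \ref{checkQuad}), the bound on the inverse supplied by Proposition \ref{unifinj}, and the smallness of $H(0)$ supplied by Proposition \ref{Btsize}. Where you genuinely diverge is in proving surjectivity of $H'(0)$ on $\RR\times\RR\times\mathcal{D}$. The paper does this the long way: it first shows $H'$ is surjective on the larger domain $\RR\times\RR\times C^{4,\alpha}_{\delta}$ by computing the formal adjoint and killing its kernel with a three-case blow-up argument at the weight $-\delta$, then shows via Claim \ref{hclm} that $\dim\ker(H')=2$ exactly, and finally uses the $L^2$-decomposition \eqref{l2d} and a $2\times 2$ linear-algebra argument to conclude that the restriction to $\mathcal{D}$ remains surjective. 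You replace all of this with a Fredholm index count: for fixed $a$ the weighted norms on the compact manifold $X_{a,a}$ are equivalent to the ordinary H\"older norms, so $S$ is Fredholm of index zero (elliptic and formally self-adjoint, a fact the paper itself invokes in \eqref{Sadjab} and in Claim \ref{hclm}); passing to the codimension-two subspace $\mathcal{D}$ and adjoining the $\RR^2$ factor are each index-preserving in combination, so $H'(0)$ has index zero and injectivity forces surjectivity. This is shorter and loses nothing needed downstream (the precise identification of $\ker S$ established by the paper's argument is not used elsewhere). Two points worth making explicit if you write this up: first, the index-zero claim is the load-bearing input, and since $g^{(1)}$ is only approximately critical, $S$ is self-adjoint only up to a zeroth-order (hence compact) perturbation — which still preserves the index, but should be said; second, per Remark \ref{equiRemark} everything lives in the invariant category, and since $S$ commutes with the compact group action and $k_1,k_3$ are invariant, the index remains zero and $\mathcal{D}$ remains codimension two there, so the count goes through unchanged.
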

\begin{proof}
We denote the refined approximate metric by $g^{(1)} = g^{(1)}_{a}$, or by $g$ if the context is clear.

We will find a zero of $H$, so we need to verify the assumptions
in Lemma \ref{ift} with $E = \RR \times \RR \times \mathcal{D}$
and $F = C^{0, \alpha}_{\delta}$, beginning with $(1)$:

\begin{lemma} \label{checkQuad} The quadratic estimate \eqref{iquad}
holds for $H : \RR \times \RR \times \mathcal{D} \rightarrow C^{0, \alpha}_{\delta}$.
\end{lemma}

\begin{proof}

This follows from Proposition \ref{quadest}, once we verify the assumptions (\ref{Rmw})
(the assumptions (\ref{wdoes}) clearly hold).  We need to verify the estimate on each of
the three regions: the asymptotically flat piece, the damage zone, and the compact piece.
Recall that the weight is given by (\ref{wdef}).

On the asymptotically flat piece, i.e., for $|x| \leq a^{-1}$,
\begin{align*}
g^{(1)} = g_N + a^4 \tilde{H}_2.
\end{align*}
Let $h_1 = g^{(1)} - g_N = a^4 \tilde{H}_2$, then using the formula (\ref{e1rm}) we have
\begin{align*}
Rm_{g^{(1)}} = Rm_{g_N} + (g^{(1)})^{-1} * \nabla_N^2 h_1 +  (g^{(1)})^{-2} * \nabla_N h_1 * \nabla h_1.
\end{align*}
By Proposition \ref{LamProp} and the fact that $g_N$ is asymptotically flat of order $2$,
\begin{align} \label{Rmg1N} \begin{split}
Rm_{g^{(1)}} &= O(|x|^{-4}) + O(a^4) + O(a^8 |x|^2) \\
&= O(|x|^{-4}) + O(a^4).
\end{split}
\end{align}
Since $w(x) = |x|$ for $|x| >> 1$, it follows that
\begin{align} \label{wRmg1N}
w(x)^2 | Rm_{g^{(1)}}| \leq C_0.
\end{align}
Similarly, using (\ref{DRmh})
\begin{align} \label{DRm1}
\nabla_{g^{(1)}} Rm_{g^{(1)}} = O(|x|^{-5}) + O(a^4 |x|^{-3}) + O(a^4 |x|^{ \epsilon - 3}) + O(a^8 |x|) + O(a^{12}|x|^3),
\end{align}
hence
\begin{align} \label{wDRm1}
w(x)^3 |\nabla_{g^{(1)}} Rm_{g^{(1)}}| \leq C_1.
\end{align}
Finally, (\ref{D2Rmh})
\begin{align} \label{DRm2} \begin{split}
\nabla_{g^{(1)}}^2 Rm_{g^{(1)}} &= O(|x|^{-6}) + O(a^4 |x|^{-4}) + O(a^8 |x|^{-2}) + O(a^4 |x|^{-4 + \epsilon}) \\
& + O(a^8 |x|^{\epsilon -2}) + O(a^8 |x|^{2\epsilon - 4}) + O(a^{12} |x|^2)  + O(a^{16}|x|^4).
\end{split}
\end{align}
Therefore,
\begin{align} \label{wDRm2}
w(x)^4 |\nabla_{g^{(1)}}^2 Rm_{g^{(1)}}| \leq C_2.
\end{align}

The estimates for the other regions are verified in a similar manner, so we omit the details.
\end{proof}

It remains to show that
\begin{align}
H' : \RR \times \RR \times \mathcal{D} \rightarrow C^{0, \alpha}_{\delta - 4}
\end{align}
is an isomorphism with bounded inverse.
This will follow once we prove surjectivity; the bound on the
inverse will then follow immediately from Proposition \ref{unifinj}.

In the following, let us view $H'$ as a map
\begin{align}
H' : \RR \times \RR \times C^{4, \alpha}_{\delta}  \rightarrow C^{0, \alpha}_{\delta - 4}.
\end{align}
Then the formal adjoint of $H'$ maps from
\begin{align}
(H')^* : C^{4, \alpha}_{-\delta} \rightarrow  \RR \times \RR \times C^{0, \alpha}_{-\delta - 4}
\end{align}
and is given by
\begin{align}
(H')^* (h) = \Big(  \int \langle h, k_1 \rangle dV, \int \langle h, k_2 \rangle dV ,
S h \Big)
\end{align}
since $S$ is self-adjoint (the duals of H\"older spaces are not
H\"older spaces, but this slight abuse of notation should not
cause confusion). We claim that for $a$ sufficiently small,
$Ker((H')^*) = 0$. To see this argue by contradiction: let
$h_i$ be a sequence of kernel elements corresponding to
a sequence $a_i \rightarrow 0$ as $i \rightarrow \infty$.
Normalize $h_i$ so that $\Vert h_i \Vert_{C^{4,\alpha}_{- \delta}} = 1$.
We then have a sequence $h_i$ satisfying
\begin{align}
Sh_i = 0,\ \ \  \int \langle h, k_1 \rangle dV &= 0,\ \ \  \int \langle h, k_2 \rangle dV =0,\\
\label{hinorm}
\Vert h_i \Vert_{C^{4,\alpha}_{- \delta}} &= 1.
\end{align}
The limiting argument in the proof of Proposition \ref{unifinj} is then
modified as follows. Let $p_i$ be a sequence of points
in $X_{a_i,b_i}$ for a sequence
$a_i \rightarrow 0$ as $i \rightarrow \infty$
at which the supremum in the norm \eqref{hinorm} is attained.
We have the three possibilities.

\begin{itemize}
\item (1) $p_i \rightarrow p \in N$. In this case, standard elliptic
estimates produce a nontrivial solution of the limiting equation
$S h_{\infty} = 0$ on $(N, g_N)$ with $h \in C^{4, \alpha}_{-\delta}$.
By Theorem \ref{afker}, $h_{\infty} = c_1 \cdot o_1 + c_2 g_N$ for some $c_1, c_2 \in \RR$.
Since
\begin{align}
\int \langle h_i, k_1 \rangle dV_{g^{(1)}_{a_i}} = 0, \ \ \
\int \langle h_i, k_2 \rangle dV_{g^{(1)}_{a_i}} = 0,
\end{align}
and $k_1, k_2$ both have compact support on $N$, and $h_i \leq C w^{-\delta}$,
the integrand is bounded, which implies that
\begin{align}
\int \langle h_\infty, k_1 \rangle dV_{g_N} = 0, \ \ \
\int \langle h_\infty, k_2 \rangle dV_{g_N} = 0,
\end{align}
which implies that $c_1 = c_2 = 0$, a contradiction.
\item (2) $p_i \rightarrow p \in Z \setminus \{z_0\}$.  In this case, defined
$\tilde{h}_i =  (a_i)^{4 + 2 \delta} h_i$. It is easy to see that
this scaling preserves the $C^{4,\alpha}_{\delta}$ norm, with
respect to the metric $\tilde{g}^{(1)}_{a_i} = a^4 {g}^{(1)}_{a_i}$.
Standard elliptic estimates produce a nontrivial solution of the limiting equation
$S \tilde{h}_{\infty} = 0$ on $(Z, g_Z)$ with $h \in C^{4, \alpha}_{-\delta}$.
By Theorem \ref{comker}, $h_{\infty} = 0$ which is a contradiction.
\item (3) If neither of the above cases happen, then as above one
can rescale both the metric and $h_i$ to find a solution $h_{\infty} \in
C^{4, \alpha}_{-\delta}$ of the equation
$S h_{\infty} = 0$ on $\RR^4 \setminus \{0\}$ with weight
function $w = r$. Since $0< - \delta < 1$, $\delta$
is not an indicial root so $S : C^{4,\alpha}_{\delta} \rightarrow C^{0, \alpha}_{\delta -4}$
is an isomorphism, therefore $h_{\infty} = 0$.
\end{itemize}
This contradiction proves that $Ker((H')^*) = \{0\}$, and by
standard Fredholm Theory, we conclude that
\begin{align*}
H' : \RR \times \RR \times C^{4, \alpha}_{\delta}  \rightarrow C^{0, \alpha}_{\delta - 4}.
\end{align*}
is surjective.

\begin{claim}
\label{hclm}
For $a$ sufficiently small, the dimension of the kernel of $H' : \RR \times \RR \times C^{4, \alpha}_{\delta}  \rightarrow C^{0, \alpha}_{\delta - 4}$
is at least 2.
\end{claim}
\begin{proof}
To see this, we claim that $k_1$ and $k_2$ are not in the image
of $S$. If, for example $Sh_i = k_1$, then a limiting argument
as above would produce a solution of $S (h_{\infty}) = k_1$ on $(N, g_N)$,
which is a contradiction. Similarly, if $S h_i = k_2$, the same
argument yields a contradiction. We have found $2$ linearly
independent elements not in the image of $S$; by Fredholm theory the cokernel of $S$ must
be at least two-dimensional. Since $S$ is a self-adjoint
operator, we must have $\dim(Ker(S)) \geq 2$.
Obviously $\{ 0 \} \times \{ 0 \} \times Ker(S) \subset Ker(H')$, so the claim follows.
\end{proof}

To finish, by standard $L^2$-decomposition
\begin{align}
\label{l2d}
L^2 \cap C^{4,\alpha}_{\delta} = \mathrm{span}\{k_1, k_3\} \oplus \mathcal{D},
\end{align}
where $\mathcal{D} =  C^{4,\alpha}_{\delta} \cap (\mathrm{span}\{k_1, k_3\})^{\perp}$.
Let $h_1, h_2, \dots h_j$ be a basis for $Ker(H')$, where $j = \dim(Ker(H'))$.
Then we can write
\begin{align}
h_i = c_{i1} k_1 + c_{i2} k_3 + m_i,
\end{align}
where $m_i \in \mathcal{D}$.
If $j > 2$, then obviously we can take a nontrivial linear combination to obtain
\begin{align}
\label{lineqn}
\sum_i c_i h_i = \sum_i c_i m_i
\end{align}
for some constants $c_i$. The left hand side is in the
kernel of $H'$, but Proposition \ref{unifinj} shows
that the left hand side cannot be, which is a contradiction.
Consequently, from Claim \ref{hclm} we conclude that $\dim(Ker (H')) = 2$.
So we have the equations
\begin{align}
h_1 &= c_{11} k_1 + c_{12} k_3 + m_1\\
h_2 &= c_{21} k_1 + c_{22} k_3 + m_2.
\end{align}
The matrix of coefficients must be an invertible $2 \times 2$ matrix,
since otherwise we could again find a nontrivial solution of \eqref{lineqn}.
Consequently, we can solve
\begin{align}
k_1 &= c_{11}' h_1 + c_{12}' h_2 + m_1'\\
k_3 &= c_{21}' h_1 + c_{22}' h_2 + m_2'.
\end{align}
which, together with \eqref{l2d}, proves the vector space decomposition
\begin{align}
L^2 \cap C^{4,\alpha}_{\delta} = Ker(H') \oplus \mathcal{D}.
\end{align}
Clearly, this proves that $H' : \RR \times \RR \times \mathcal{D} \rightarrow C^{0,\alpha}_{\delta-4}$ is also
surjective.

Finally, the estimate on the size of $\theta$ follows from
Proposition \ref{Btsize}:
\begin{align}
\begin{split}
\Vert H(0,0,0) \Vert_{C^{0,\alpha}_{\delta - 4}}
&= \Vert P(0) \Vert_{C^{0,\alpha}_{\delta - 4}}
= \Vert B^t ( g^{(1)}_{a,b}) \Vert_{C^{0,\alpha}_{\delta - 4}}\\
&\leq \lambda a^4  \Vert k_1  \Vert_{C^{0,\alpha}_{\delta - 4}} +  C a^{4 + \delta - \epsilon}
\leq C a^{4 + \delta - \epsilon}.
\end{split}
\end{align}
\end{proof}

\section{Completion of proofs}
\label{Kuranishi}
The following result immediately implies Theorem \ref{SadjThm}:
\begin{theorem}
\label{Kurthm}
Let  $a = b$ and $\theta \in \mathcal{D}$ be the unique solution of (\ref{bigeqn}):
\begin{align*}
P_{g^{(1)}}(\theta) = \lambda_1 k_1 + \lambda_2 k_2.
\end{align*}
Then
\begin{align}
\label{l1form30}
\lambda_1 =  \lambda a^4 + O(a^{6 - \epsilon})
\end{align}
as $a \rightarrow 0$, where
\begin{align}
\lambda =
 \Big( \frac{2}{3} W(y_0) \circledast W(z_0)  + 4t R(z_0) \mathrm{mass}(g_N)  \Big) \omega_3.
\end{align}
\end{theorem}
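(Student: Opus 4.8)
The plan is to pair the defining equation $(\ref{bigeqn})$ for $\lambda_1$ with the cokernel element $o_1$ (extended by the cutoff $\eta$ of $(\ref{etad})$ so the pairing makes sense globally on $X_{a,a}$) and extract the leading-order term. Concretely, I would start from
\begin{align}
\label{lam1plan}
\lambda_1 = \int_{X_{a,a}} \langle P_{g^{(1)}}(\theta), \eta o_1 \rangle\, dV,
\end{align}
which follows from $(\ref{bigeqn})$, the normalization $\int \langle k_1, o_1 \rangle\, dV = 1$ from $(\ref{k1renorm})$, and the orthogonality $\int \langle k_2, o_1 \rangle\, dV = 0$ from $(\ref{13ortho})$ (together with $\eta \equiv 1$ on the supports of $k_1$ and $k_2$). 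Then I expand $P_{g^{(1)}}(\theta) = P_{g^{(1)}}(0) + S\theta + Q(\theta) = B^t(g^{(1)}_{a,a}) + S\theta + Q(\theta)$, so $(\ref{lam1plan})$ breaks into three pieces: the approximate-metric error $\int \langle B^t(g^{(1)}_{a,a}), \eta o_1 \rangle$, the linear term $\int \langle S\theta, \eta o_1 \rangle$, and the quadratic remainder $\int \langle Q(\theta), \eta o_1 \rangle$.

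Next I would estimate each piece. For the quadratic term, Proposition \ref{quadest} together with the bound $\Vert \theta \Vert_{C^{4,\alpha}_{\delta}} < C a^{4+\delta-\epsilon}$ from $(\ref{thetanorm})$ gives $\Vert Q(\theta) \Vert_{C^{0,\alpha}_{\delta-4}} = O(a^{8 + 2\delta - 2\epsilon})$, and pairing against $\eta o_1$ (which decays like $|x|^{-2}$ by Theorem \ref{ALEcokethm}) and integrating over the region where $w = |x| \leq a^{-1}$ yields a contribution that is $O(a^{6-\epsilon})$ or smaller once $-\epsilon < \delta < 0$; this is routine. For the linear term, I use self-adjointness of $S$ to move it onto $\eta o_1$, $\int \langle S\theta, \eta o_1 \rangle = \int \langle \theta, S(\eta o_1) \rangle$, and then the Leibniz-rule expansion exactly as in $(\ref{SLeibniz})$–$(\ref{Srem})$: since $S o_1 = 0$ in the limit (it vanishes up to $O(|x|^{-8})$ by Lemma \ref{Szed}) and the cutoff derivatives are supported where $|x| \sim a^{-1}$ with $|\nabla^m o_1| = O(a^{m+2})$ and $|\nabla^m Rm| = O(a^{m+4})$, one gets $S(\eta o_1) = \eta S o_1 + \{\mathrm{Error}\}$ with the error $O(a^6)$ and supported on a set of bounded volume in the rescaled sense; combined with $|\theta| \le C w^{\delta}$ this term is again $O(a^{6-\epsilon})$. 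The main term is thus $\int \langle B^t(g^{(1)}_{a,a}), \eta o_1 \rangle\, dV$.

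For the main term I would use the precise structure of $B^t(g^{(1)}_{a,a})$ established in Section \ref{better}: by Proposition \ref{BtsizeN}, on the AF piece $B^t(g^{(1)}_{a,a}) = a^4 \lambda k_1^{(0)} - a^4 \mathcal{K}_{g_N}\delta_{g_N}\mathcal{K}_{g_N}\delta_{g_N}\overset{\circ}{\tilde H_2} + O(a^8)$, and on the damage zone and compact piece it is smaller than $a^{6-\epsilon}$ in the relevant weighted norm by Propositions \ref{NewBtsize} and the compact-piece estimate $(\ref{alesize2})$. Pairing $a^4 \lambda k_1^{(0)}$ with $o_1$ gives $a^4 \lambda \cdot (1 + O(a^2)) = a^4\lambda + O(a^6)$, using Claim \ref{k1claim}, while the gauge term $\mathcal{K}_{g_N}\delta_{g_N}\mathcal{K}_{g_N}\delta_{g_N}\overset{\circ}{\tilde H_2}$ pairs trivially with $o_1$ after integration by parts since $o_1$ is divergence-free on its trace-free part and $\delta \overset{\circ}{o_1} = \Box \omega_1 = 0$ (Theorem \ref{afker}); the boundary terms at $|x| \sim a^{-1}$ are lower order by the same decay estimates used throughout Section \ref{auxiliary}. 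Finally, $\lambda$ is exactly the constant computed in Proposition \ref{SadjProp}, namely $\lambda = \frac{4}{9}\omega_3[W_{ikj\ell}(y_0)W_{ikj\ell}(z_0) + W_{ikj\ell}(y_0)W_{i\ell jk}(z_0)] + 4t\omega_3 R(z_0)\,\mathrm{mass}(g_N)$, which in the $\circledast$-notation is $\big(\frac{2}{3}W(y_0)\circledast W(z_0) + 4t R(z_0)\,\mathrm{mass}(g_N)\big)\omega_3$. Assembling the three estimates gives $\lambda_1 = \lambda a^4 + O(a^{6-\epsilon})$, which is $(\ref{l1form30})$, and hence Theorem \ref{SadjThm}. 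The main obstacle is bookkeeping the error terms in the boundary integrals on $\partial B$ (the sphere $|x| = a^{-1}$) when integrating by parts against $\eta o_1$ — one must track the $O(a^2)$ corrections in the metric and Christoffel symbols there (as in $(\ref{ALEg})$) and confirm every boundary contribution is $o(a^4)$ relative to the leading term; this is essentially the same analysis already carried out in Lemmas \ref{T1Lemma}–\ref{T3Lemma}, so the work is reusing those estimates rather than new ideas.
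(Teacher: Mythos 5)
Your proposal is correct and follows essentially the same route as the paper: pairing (\ref{bigeqn}) with $\eta o_1$, expanding $P(\theta)=B^t(g^{(1)})+S\theta+Q(\theta)$, estimating the linear term via self-adjointness and the cutoff-error analysis of (\ref{SLeibniz})--(\ref{lamone}), the quadratic term via Proposition \ref{quadest} and (\ref{thetanorm}), and extracting the leading term from Proposition \ref{BtsizeN} together with Claim \ref{k1claim}, the integration by parts killing the gauge term via $\Box_{g_N}\omega_1=0$, and Proposition \ref{SadjProp}. The only minor imprecision is attributing the vanishing of $S o_1$ to Lemma \ref{Szed}; what is actually used is $S_{g_N} o_1 = 0$ exactly, with $(S - S_{g_N})o_1$ estimated separately as in (\ref{SNS1}), but this does not change the argument.
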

\begin{proof}
Let $\theta \in \mathcal{D}$ be a solution of (\ref{bigeqn}):
\begin{align*}
P_{g^{(1)}}(\theta) = \lambda_1 k_1 + \lambda_2 k_2.
\end{align*}
Pairing both sides with $\eta o_1$, where $\eta$ is given in (\ref{etad}), and
integrating (all with respect to the metric $g = g^{(1)}_{a}$) gives
\begin{align}
\int \langle P(\theta), \eta o_1 \rangle dV
 = \lambda_1 \int \langle k_1 , \eta o_1 \rangle dV
+ \lambda_2 \int \langle k_2 , \eta o_1 \rangle dV.
\end{align}
The last integral is identically zero by \eqref{13ortho},
and by \eqref{k1props2}, we obtain
\begin{align}
\label{l1form000}
\lambda_1 = \int \langle P(\theta), \eta o_1 \rangle dV.
\end{align}
Using Proposition \ref{quadest} we expand $P(\theta)$ as
\begin{align*}
P(\theta) &= P(0) + S(\theta) + Q(\theta) \\
&= B^t( g^{(1)}) + S(\theta) + Q(\theta).
\end{align*}
Substituting this into (\ref{l1form000}),
\begin{align}
\label{L1terms}
\lambda_1 = \int \langle B^t( g^{(1)}), \eta o_1 \rangle dV
+ \int \langle S(\theta), \eta o_1 \rangle dV
+ \int \langle Q(\theta), \eta o_1 \rangle dV.
\end{align}
Using \eqref{lamone} (replacing $\h_i$ with $\theta$ in that computation),
we estimate
\begin{align}
\int \langle S(\theta), \eta o_1 \rangle dV
= O(a^{2 - \delta}) O(a^{4 + \delta - \epsilon})
= O(a^{6 - \epsilon})
\end{align}
as $a \rightarrow 0$.
The estimate \eqref{thetanorm} implies the pointwise estimates:
\begin{align}
\label{pointwise}
| \nabla^m \theta | &\leq C a^{4 + \delta - \epsilon} w^{\delta-m},
\end{align}
for $0 \leq m \leq 4$.  Using Proposition \ref{quadest}, the nonlinear term in (\ref{L1terms}) is then estimated
\begin{align}
\int \langle Q(\theta), \eta o_1 \rangle dV = O(a^{8 + 2 \delta - 2 \epsilon}).
\end{align}
We conclude
\begin{align}
\label{l1form20}
\lambda_1 =  \int \langle B^t( g^{(1)}), \eta o_1 \rangle dV + O(a^{6 - \epsilon}),
\end{align}
as $a \rightarrow 0$.

Notice that from (\ref{dsize}),
\begin{align}
\int_{DZ} \langle B^t( g^{(1)}), \eta o_1 \rangle dV = O(a^{6 - \epsilon}),
\end{align}
so we can rewrite \eqref{l1form20} as
\begin{align}
\label{l1form2}
\lambda_1 =  \int_{B } \langle B^t( g^{(1)}), \eta o_1 \rangle dV + O(a^{6 - \epsilon}),
\end{align}
as $a \rightarrow 0$, where $B$ is the same as in \eqref{balldef}.

On $B \subset N$, $\eta \equiv 1$, and from Proposition \ref{BtsizeN} we have
\begin{align} \label{BtS1} \begin{split}
 \int_{B }  \langle B^t( g^{(1)}),   o_1 \rangle\ dV 
&= \int_{B } \langle a^4 \lambda k_1^{(0)} - a^4 \mathcal{K}_{g_N} \delta_{g_N} \mathcal{K}_{g_N} \delta_{g_N} \overset{\circ}{\tilde{H}_2} + O(a^8), o_1 \rangle dV \\
&=  a^4 \lambda \int_{B } \langle k_1^{(0)}, o_1 \rangle\ dV  - a^4  \int_{B } \langle \mathcal{K}_{g_N} \delta_{g_N} \mathcal{K}_{g_N} \delta_{g_N} \overset{\circ}{\tilde{H}_2}, o_1 \rangle\ dV  \\ 
& \hskip.25in + \int_{B } \langle O(a^8), o_1 \rangle\ dV. \\
\end{split}
\end{align}
By Claim \ref{k1claim}, the first integral in (\ref{BtS1}) is 
\begin{align} \begin{split} \label{BtS2}
a^4 \lambda \int_{B } \langle k_1^{(0)}, o_1 \rangle\ dV 
&= a^4 \lambda \int_{B } \langle  \big( 1 + O(a^2) \big) k_1,   o_1 \rangle dV \\
&= a^4 \lambda \int \langle k_1, o_1 \rangle\ dV + O(a^6) \\
&= a^4 \lambda + O(a^6). 
\end{split}
\end{align}

To estimate the second integral in (\ref{BtS1}), we use the fact that on $B \subset N$,
\begin{align*}
g &= g_N + O(a^2).
\end{align*}
In particular, for tensors $T_1, T_2$ we have 
\begin{align*}
\langle T_1, T_2 \rangle &= \big( 1 + O(a^2)\big) \langle T_1 , T_2 \rangle_{g_N},\\
dV &= \big( 1 + O(a^2)\big)dV_{g_N}.
\end{align*}
Therefore, 
\begin{align} \label{reftoN1} \begin{split}
- a^4 \int_{B } & \langle \mathcal{K}_{g_N} \delta_{g_N} \mathcal{K}_{g_N} \delta_{g_N} \overset{\circ}{\tilde{H}_2}, o_1 \rangle\ dV  \\
&= - a^4 \int_{B } \langle \mathcal{K}_{g_N} \delta_{g_N} \mathcal{K}_{g_N} \delta_{g_N} \overset{\circ}{\tilde{H}_2}, o_1 \rangle_{g_N}\ dV_{g_N}  \\
& \hskip.25in +  O(a^6) \int_{B } \big|\mathcal{K}_{g_N} \delta_{g_N} \mathcal{K}_{g_N} \delta_{g_N} \overset{\circ}{\tilde{H}_2}\big|_{g_N} \big| o_1 \big|_{g_N}\ dV_{g_N}.
\end{split}
\end{align}
For the second integral above on the right-hand side of (\ref{reftoN1}), we note that 
\begin{align*}
\big|\mathcal{K}_{g_N} \delta_{g_N} \mathcal{K}_{g_N} \delta_{g_N} \overset{\circ}{\tilde{H}_2}\big|_{g_N} = O(|x|^{\epsilon - 4}),
\end{align*}
see the proof of Proposition \ref{BtsizeN}.  Also, by Theorem \ref{ALEcokethm}, $o_1$ decays quadratically, hence 
\begin{align*}
\int_{B } \big|\mathcal{K}_{g_N} \delta_{g_N} \mathcal{K}_{g_N} \delta_{g_N} \overset{\circ}{\tilde{H}_2}\big|_{g_N} \big| o_1 \big|_{g_N}\ dV_{g_N} = O(1).
\end{align*}

For the first integral on the right in (\ref{reftoN1}) we recall from Theorems \ref{afker} and \ref{ALEcokethm} that the trace-free part of $o_1$ is given by 
\begin{align} \label{Kom1} \begin{split}
\mathcal{K}_{g_N}\omega_1 &= \frac{2}{3} W_{ikj\ell}(y_0) \frac{x^k x^{\ell}}{|x|^4} + O(|x|^{-4 + \epsilon}) \\
&= O(|x|^{-2}) 
\end{split}
\end{align}
as $|x| \rightarrow \infty$.
Moreover, 
\begin{align} \label{Boxkill}
\Box_{g_N} \omega_1 = \delta_{g_N} \mathcal{K}_{g_N} \omega_1 = 0.
\end{align}
Therefore, integration by parts gives 
\begin{align} \label{gaugetail} \begin{split}
- a^4 \int_{B } & \langle \mathcal{K}_{g_N} \delta_{g_N} \mathcal{K}_{g_N} \delta_{g_N} \overset{\circ}{\tilde{H}_2}, o_1 \rangle_{g_N}\ dV_{g_N}  \\
&= - a^4 \int_{B } \langle \mathcal{K}_{g_N} \delta_{g_N} \mathcal{K}_{g_N} \delta_{g_N} \overset{\circ}{\tilde{H}_2}, \mathcal{K}_{g_N}\omega_1 \rangle_{g_N}\ dV_{g_N}  \\
&=  2 a^4 \int_{B } \langle \delta_{g_N} \mathcal{K}_{g_N} \delta_{g_N} \overset{\circ}{\tilde{H}_2}, \delta_{g_N} \mathcal{K}_{g_N}\omega_1 \rangle_{g_N}\ dV_{g_N}  \\
& \hskip.25in - 2 a^4 \oint_{\partial B} \mathcal{K}_{g_N}\omega_1 \big( N, \delta_{g_N} \mathcal{K}_{g_N} \delta_{g_N} \overset{\circ}{\tilde{H}_2} \big)\ dS \\
&= - 2 a^4 \oint_{\partial B} \mathcal{K}_{g_N}\omega_1 \big( N, \delta_{g_N} \mathcal{K}_{g_N} \delta_{g_N} \overset{\circ}{\tilde{H}_2} \big)\ dS. \\
\end{split}
\end{align}
Using (\ref{Kom1}), the integrand of the boundary integral above is 
\begin{align*}
\big| \mathcal{K}_{g_N}\omega_1 \big( N, \delta_{g_N} \mathcal{K}_{g_N} \delta_{g_N} \overset{\circ}{\tilde{H}_2} \big) \big|
= O(|x|^{-2}) \cdot O(|x|^{\epsilon - 3}) = O(|x|^{\epsilon - 5}),
\end{align*}
and it follows that the boundary integral in (\ref{gaugetail}) is of the order 
\begin{align*}
- 2 a^4 \oint_{\partial B} \mathcal{K}_{g_N}\omega_1 \big( N, \delta_{g_N} \mathcal{K}_{g_N} \delta_{g_N} \overset{\circ}{\tilde{H}_2} \big)\ dS = O(a^{6 - \epsilon}). 
\end{align*}
Consequently, 
\begin{align} \label{reftoN2} 
- a^4 \int_{B } & \langle \mathcal{K}_{g_N} \delta_{g_N} \mathcal{K}_{g_N} \delta_{g_N} \overset{\circ}{\tilde{H}_2}, o_1 \rangle\ dV = O(a^{6- \epsilon}).
\end{align}

We can also use the fact that $o_1$ decays quadratically to estimate the last term in (\ref{BtS1}) as 
\begin{align} \label{lastT}
\int_{B } \langle O(a^8), o_1 \rangle\ dV = O(a^6).
\end{align}
Combining (\ref{BtS1}), (\ref{BtS2}), (\ref{reftoN2})  , and (\ref{lastT}) we obtain
\begin{align}
 \int_{B } \langle B^t( g^{(1)}), \eta o_1 \rangle dV
= \lambda a^4  + O(a^{6-\epsilon}).
\end{align}
Proposition \ref{SadjProp} then completes the proof.
\end{proof}
\begin{proof}[Proof of Theorem \ref{thm2}]
From Theorem \ref{Kurthm}, it is clear that for $a$ sufficiently small,
there are two possibilities.
The first is that the remainder term in \eqref{l1form30} is
identically zero for all $a$ sufficiently small.
Choosing $t_0$ as in \eqref{t0}, we
have that $\lambda_1 = 0$. The second possibility is that
the remainder term in \eqref{l1form30} is not zero.
In this case, by an application of the intermediate value
theorem, we may perturb $t$ slightly to again conclude that
$\lambda_1 = 0$. We now have a solution of the equation
\begin{align}
\label{bigeqn2}
P_{g^{(1)}}(\theta) = \lambda_2 k_2.
\end{align}
Recalling the definition of $P$, this is
\begin{align}
B^t ( g^{(1)} + \theta) + \mathcal{K}_{ g^{(1)} + \theta}
\delta_{g^{(1)}} \mathcal{K}_{g^{(1)}} \delta_{g^{(1)}} \overset{\circ}{\theta}
= \lambda_2 k_2.
\end{align}
With respect to the metric  $g^{(1)} + \theta$,
the trace of the left hand side of this equation has mean value zero,
so we have
\begin{align}
0 = \lambda_2 \int tr_{g^{(1)} + \theta} k_2 dV_{ g^{(1)} + \theta}.
\end{align}
Since $k_2$ has compact support in the region where the weight
function is bounded, expanding the trace and volume element 
and using \eqref{pointwise}, we have
\begin{align}
0= \lambda_2  \Big( \int tr_{g^{(1)}} k_2 dV_{ g^{(1)}} + O(a^{4 + \delta - \epsilon}) \big))
= \lambda_2  (1 + O(a^{4 + \delta - \epsilon})),
\end{align}
as $ a \rightarrow 0$, by \eqref{3normal},
which implies that $\lambda_2 = 0$.  We have therefore found
a solution of
\begin{align}
P_{g^{(1)}}(\theta) = 0,
\end{align}
which is a smooth $B^t$-flat metric from Proposition \ref{smoothprop}.

 In the cases of multiple gluing points, imposing the bilateral, trilateral,
or quadrilateral symmetries in the respective cases, reduces
the argument to that of a single gluing point, so the argument is
the same as above.
\end{proof}

\begin{proof}[Proof of Theorem \ref{nonexist}]
In the Bach-flat case, we may restrict all above arguments to pointwise
traceless tensors. The pure-trace kernel and cokernel
elements are then not required in the Lyapunov-Schmidt
reduction in Section \ref{Lyap}.
We then add a $1$-dimensional kernel parameter to the map $H$.
That is, we let
\begin{align}
\mathcal{D} = \Big\{ h \in C^{4,\alpha}_{\delta}\ :\ \int \langle h, k_1 \rangle = 0 \Big\}.
\end{align}
where $k_1$ is of compact support chosen to pair non-trivially
with  $\overset{\circ}{o_1}$, and define the mapping $H :
\mathbb{R} \times \mathbb{R} \times \mathcal{D} \rightarrow C^{\alpha}_{\delta - 4}$ by
\begin{align} \label{Htsdef}
H(s, \lambda_1, \theta) =
P_{g^{(1)}}(\theta + s \eta \overset{\circ}{o_1}) - \lambda_1 k_1.
\end{align}
For gluing parameter $a$ sufficiently small,
the Kuranishi map is then the map
\begin{align}
\Psi: s \mapsto \lambda_1(s)
\end{align}
Using the gauging
argument from \cite[Section 2.3]{GV11},
the fixed point argument in Section~\ref{Lyap} is easily extended to show that
{\em{any}} equivariant Bach-flat metric in a sufficiently small
$C^{4,\alpha}$-neighborhood of the approximate metric will correspond
to a zero of $\Psi$ for some $s$.
If $W(y_0) \circledast W(z_0) \neq 0$, then the leading term of
$\Psi$ is non-zero, so obviously there
can be no equivariant Bach-flat metric in a sufficiently small
$C^{4,\alpha}$ neighborhood of the approximate metric.
\end{proof}
\subsection{Computation of values in Table \ref{table}}

Assume $W^{\pm}, \tW^{\pm}$ are trace-free endomorphisms of $\Lambda^{\pm}_2(V^{*})$, where $V$ is a real, oriented, four-dimensional inner product space,
and write
\begin{align*}
W &= W^{+} + W^{-} : \Lambda^2(V^{*}) \rightarrow \Lambda^2(V^{*}), \\
\tW &= \tW^{+} + \tW^{-} : \Lambda^2(V^{*}) \rightarrow \Lambda^2(V^{*}).
\end{align*}
Assume further that $W$ and $\tW$ can be simultaneously
diagonalized: that is, there is an orthogonal basis of eigenvectors (two-forms) for $W$ \underline{and} $\tW$ denoted
\begin{align} \label{Wbasis}
\omega, \eta, \theta, \omega^{-}, \eta^{-}, \theta^{-},
\end{align}
where the first three are a basis of $\Lambda_2^{+}(V^{\ast})$ and the last three a basis of $\Lambda_2^{-}(V^{\ast})$.
Denote the
eigenvalues of $W$ and $\tW$ as
\begin{align} \label{Wevalues} \begin{split}
\mbox{spec}(W) &= \{ \lambda, \mu, \nu, \lambda^{-}, \mu^{-}, \nu^{-} \}, \\
\mbox{spec}(\tW) &= \{ \tlambda, \tmu, \tnu, \tlambda^{-}, \tmu^{-}, \tnu^{-} \}. \end{split}
\end{align}
We will further assume  that $W^{\pm}$ and $\tW^{\pm}$ are trace-free; i.e.,   \begin{align} \label{WtWtrace} \begin{split}
\lambda + \mu + \nu &= 0, \ \lambda^{-} + \mu^{-} + \nu^{-} = 0,  \\
\tlambda + \tmu + \tnu &= 0, \ \tlambda^{-} + \tmu^{-} + \tnu^{-} = 0.
\end{split}
\end{align}
Using this basis in (\ref{Wbasis}) we can write
\begin{align} \label{WtWforms} \begin{split}
W &=  \frac{1}{2} \big\{ \lambda \omega \otimes \omega + \mu \eta \otimes \eta + \nu \theta \otimes \theta  \\
&\hskip.25in + \lambda^{-} \omega^{-} \otimes \omega^{-} + \mu^{-} \eta^{-} \otimes \eta^{-} + \nu^{-} \theta^{-} \otimes \theta^{-} \big\}, \\
\tW &=  \frac{1}{2} \big\{ \tlambda \omega \otimes \omega + \tmu \eta \otimes \eta + \tnu \theta \otimes \theta  \\
&\hskip.25in + \tlambda^{-} \omega^{-} \otimes \omega^{-} + \tmu^{-} \eta^{-} \otimes \eta^{-} + \tnu^{-} \theta^{-} \otimes \theta^{-} \big\}.
\end{split}
\end{align}

We normalize the eigenforms to have length $\sqrt{2}$; this convention gives the identities
\begin{align} \label{multrules1}
\omega^2 = \eta^2 = \cdots = (\theta^{-})^2 = Id.
\end{align}

We also point out two more important algebraic facts: first, the product of any SD basis element with any ASD basis element gives
a symmetric trace-free two-tensor, whose square is the identity.  Thus, for example,
\begin{align} \label{multrules2}
h = \omega \omega^{-} \Rightarrow \ tr\ h = 0,\ h^2 = Id,\ |h|^2 = 4.
\end{align}
Also, the bases of $\Lambda_2^{\pm}$ give a quaternionic structure satisfying the following multiplication rules:
\begin{align}
\omega \eta = \theta, \ \eta \theta = \omega, \ \theta \omega = \eta.
\end{align}

\begin{lemma} \label{WIdLemma} Fix an orthonormal basis $\{ v_1,\dots,v_4 \}$  of $V$, and let $W_{i  k j \ell}$ (resp., $\tW_{i \ell j k}$) denote the components of $W$ (resp., $\tW$) with respect to this basis.  Then
\begin{align} \label{WW} \begin{split}
W_{ikj\ell} \tW_{i\ell j k} &=  2 \big\{ \lambda \tlambda + \mu \tmu + \nu \tnu + \lambda^{-} \tlambda^{-} + \mu^{-} \tmu^{-} +  \nu^{-} \tnu^{-}\big\} \\
     &=      2 \langle W, \tW \rangle
= \frac{1}{2} W_{ikj\ell} \tW_{ikj\ell}.
     \end{split}
\end{align}
In particular, the answer is independent of the choice of basis.
\end{lemma}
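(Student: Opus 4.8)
The plan is to break the claimed chain of equalities into a part that is purely tensorial (valid for \emph{any} pair of tensors with the algebraic symmetries of a Weyl tensor) and a short part that uses the simultaneous diagonalization together with the normalization $|\cdot| = \sqrt{2}$ of the eigenforms.

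\emph{Step 1: the tensorial identity.} First I would show that for any algebraic curvature-type tensors $W,\tilde W$ with the usual pair- and skew-symmetries and the first Bianchi identity,
\begin{align*}
W_{ikj\ell}\tilde W_{i\ell jk} = \tfrac12\, W_{ikj\ell}\tilde W_{ikj\ell}.
\end{align*}
This is an index-bookkeeping argument: writing $X := W_{ikj\ell}\tilde W_{i\ell jk}$ and using the first Bianchi identity in the form $W_{ikj\ell} = -W_{ij\ell k} - W_{i\ell kj}$, one substitutes; the first resulting term equals $-X$ (swap the dummies $j\leftrightarrow k$ and use skew-symmetry in the last pair of $\tilde W$, after first using skew-symmetry to turn $W_{ij\ell k}$ into $-W_{ijk\ell}$), and the second equals the full contraction $W_{abcd}\tilde W_{abcd}$ (use skew-symmetry in the last pair of $\tilde W$). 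Thus $X = -X + W_{abcd}\tilde W_{abcd}$, which gives the identity. Note that this uses only the Weyl-type symmetries, not the eigenstructure.

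\emph{Step 2: from tensor contraction to endomorphism trace.} Viewing the Weyl tensor as a symmetric endomorphism of $\Lambda^2$ via $(W\beta)_{ij} = \tfrac12 W_{ijkl}\beta_{kl}$, a short computation with the orthonormal basis $\{e^i\wedge e^j\}_{i<j}$ of $\Lambda^2$ gives $\langle W,\tilde W\rangle = \operatorname{tr}_{\Lambda^2}(W\circ\tilde W) = \tfrac14 W_{ijkl}\tilde W_{ijkl}$. Combined with Step 1 this already yields $W_{ikj\ell}\tilde W_{i\ell jk} = \tfrac12 W_{ikj\ell}\tilde W_{ikj\ell} = 2\langle W,\tilde W\rangle$, and in particular the quantity depends only on the induced inner products on $\Lambda^2_\pm$, hence is basis-independent.

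\emph{Step 3: evaluation in the eigenbasis.} To recover the first expression in the Lemma I would insert the diagonal form \eqref{WtWforms}. The $(0,4)$-tensor attached to the rank-one endomorphism $\alpha\otimes\alpha$ is $\tfrac12\alpha_{ij}\alpha_{kl}$, the factor being consistent with the normalization: since the eigenforms have length $\sqrt2$ and are mutually orthogonal, $(\alpha_p)_{ij}(\alpha_q)_{ij} = 2\langle\alpha_p,\alpha_q\rangle = 4\delta_{pq}$. Hence
\begin{align*}
W_{ijkl}\tilde W_{ijkl} = \tfrac14\sum_{p,q}\lambda_p\tilde\lambda_q\,(\alpha_p)_{ij}(\alpha_q)_{ij}\,(\alpha_p)_{kl}(\alpha_q)_{kl} = \tfrac14\sum_{p,q}\lambda_p\tilde\lambda_q\,(4\delta_{pq})^2 = 4\sum_a\lambda_a\tilde\lambda_a,
\end{align*}
and dividing by $2$ gives $W_{ikj\ell}\tilde W_{i\ell jk} = 2\sum_a\lambda_a\tilde\lambda_a = 2\langle W,\tilde W\rangle$. (One could instead bypass Step 1 and compute $W_{ikj\ell}\tilde W_{i\ell jk}$ directly in the eigenbasis: each term reduces to $\tfrac14\lambda_p\tilde\lambda_q\operatorname{tr}\!\big((\alpha_p\alpha_q)^2\big)$, and using the multiplication rules \eqref{multrules1}--\eqref{multrules2} one finds $\operatorname{tr}\!\big((\alpha_p\alpha_q)^2\big)$ equals $\operatorname{tr}(\mathrm{Id})$ for $p=q$, $-\operatorname{tr}(\mathrm{Id})$ for distinct eigenforms of the same chirality, and $+\operatorname{tr}(\mathrm{Id})$ for opposite chiralities; the trace-free relations \eqref{WtWtrace} then make the off-diagonal sums collapse, and the net effect is to \emph{double} the diagonal contribution.)

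The computation is elementary throughout; the only genuine care needed is the bookkeeping — keeping the index orderings straight in Step 1, matching the factor $\tfrac12$ in the endomorphism/tensor correspondence with the $|\alpha|=\sqrt2$ convention in Step 3, and, if one runs the direct eigenbasis computation, noting that the individually nonzero off-diagonal cross terms cancel only after invoking trace-freeness \eqref{WtWtrace}.
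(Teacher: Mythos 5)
Your proposal is correct, and it reaches the identity by a genuinely different route from the paper. The paper works entirely in the simultaneous eigenbasis: it expands $4W_{ikj\ell}\tW_{i\ell jk}$ into all thirty-six cross-terms, evaluates each one with the quaternionic multiplication rules \eqref{multrules1}--\eqref{multrules2} for the eigenforms (obtaining $\pm 4$ according to chirality and coincidence), and only then invokes the trace-free relations \eqref{WtWtrace} to collapse the off-diagonal sums. You instead prove the purely tensorial identity $W_{ikj\ell}\tW_{i\ell jk}=\tfrac12 W_{ikj\ell}\tW_{ikj\ell}$ by a two-line first-Bianchi symmetrization, after which the eigenstructure is needed only for the trivial diagonal evaluation of the full contraction; this is shorter, makes the basis-independence transparent, and cleanly separates the universal algebra from the normalization bookkeeping. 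The one point you should make explicit is that the tensors of the Lemma --- simultaneously diagonalizable trace-free endomorphisms of $\Lambda^2_{\pm}$ --- do satisfy the first Bianchi identity, which is exactly where the hypothesis \eqref{WtWtrace} enters your Step 1: the Bianchi obstruction of $\sum_p \lambda_p\, \alpha_p\otimes\alpha_p$ lies in $\Lambda^4$ and equals a multiple of $\sum_p \lambda_p\, (\alpha_p\wedge\alpha_p)=\pm 2\bigl(\sum_p\lambda_p\bigr)dV$, which vanishes precisely because $W^{\pm}$ and $\tW^{\pm}$ are trace-free. With that remark added, your argument is complete and all the normalization factors ($|\alpha|=\sqrt2$, the $\tfrac12$ in the endomorphism/tensor correspondence) check against the paper's conventions.
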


\begin{proof}
By (\ref{WtWforms}),
\begin{align} \label{Wprod1} \begin{split}
4 W_{ikj\ell} \tW_{i \ell j k} &= \big\{ \lambda \omega_{ik} \omega_{j\ell} + \mu \eta_{ik} \eta_{j\ell} + \nu \theta_{ik} \theta_{j\ell}  \\
&\hskip.25in + \lambda^{-} \omega_{ik}^{-} \omega_{j\ell}^{-} + \mu^{-} \eta_{ik}^{-} \eta_{j\ell}^{-} + \nu^{-} \theta_{ik}^{-} \theta_{j\ell}^{-} \big\} \\
& \times \big\{ \tlambda \omega_{i\ell} \omega_{jk} + \tmu \eta_{i\ell} \eta_{jk} + \tnu \theta_{i\ell} \theta_{jk}  \\
&\hskip.25in + \tlambda^{-} \omega_{i\ell}^{-} \omega_{jk}^{-} + \tmu^{-} \eta_{i\ell}^{-} \eta_{jk}^{-} + \tnu^{-} \theta_{i\ell}^{-} \theta_{jk}^{-} \big\}.
\end{split}
\end{align}
As we multiply and distribute we see that there are six kinds of terms, which we represent schematically as
\begin{align} \label{pairs} \begin{split}
(A_{ik}  A_{j\ell}) \cdot (A_{i \ell} A_{jk}), & \ \ (A^{-}_{ik} A^{-}_{j \ell} \cdot (A^{-}_{i\ell} A_{jk}^{-}) \\
(A_{ik} A_{j\ell}) \cdot (B_{i\ell} B_{jk}), & \ \ (A^{-}_{ik} A^{-}_{j\ell}) \cdot (B^{-}_{i\ell} B^{-}_{jk}) \\
(A_{ik} A_{j\ell} ) \cdot ( C^{-}_{i\ell} C^{-}_{jk}), &\ \ (A^{-}_{ik} A^{-}_{j\ell}) \cdot (C_{i\ell} C_{jk}),
\end{split}
\end{align}
where $A,B,C$ are self-dual and $A^{-}, B^{-}$, and $C^{-}$ are anti-self-dual. Using the multiplication rules in (\ref{multrules1}) and
(\ref{multrules2}), we find
\begin{align} \label{pairrules} \begin{split}
(A_{ik}  A_{j\ell}) \cdot (A_{i \ell} A_{jk}) = 4, & \ \ (A^{-}_{ik} A^{-}_{j \ell} \cdot (A^{-}_{i\ell} A_{jk}^{-}) = 4,\\
(A_{ik} A_{j\ell}) \cdot (B_{i\ell} B_{jk}) = -4, & \ \ (A^{-}_{ik} A^{-}_{j\ell}) \cdot (B^{-}_{i\ell} B^{-}_{jk}) = -4, \\
(A_{ik} A_{j\ell} ) \cdot ( C^{-}_{i\ell} C^{-}_{jk}) = -4, &\ \ (A^{-}_{ik} A^{-}_{j\ell}) \cdot (C_{i\ell} C_{jk}) = -4.
\end{split}
\end{align}
Therefore, after multiplying out and collecting all the terms in (\ref{Wprod1}), we find
\begin{align} \label{Wprod2} \begin{split}
4 W_{ikj\ell} \tW_{i \ell j k} &= 4\lambda ( \tlambda -  \tmu - \tnu) + 4 \lambda (-\tlambda^{-} -  \tmu^{-} - \tnu^{-} ) \\
& + 4 \mu (-\tlambda + \tmu - \tnu) + 4\mu (-\tlambda^{-}  - \tmu^{-} -  \tnu^{-} ) \\
& + 4\nu ( -\tlambda - \tmu + \tnu) + 4 \nu( -\tlambda^{-} - \tmu^{-} - \tnu^{-} ) \\
& + 4 \lambda^{-} ( -\tlambda - \tmu - \tnu) + 4 \lambda^{-} (\tlambda^{-} -  \tmu^{-} - \tnu^{-} ) \\
& + 4 \mu^{-} (- \tlambda - \tmu - \tnu) + 4\mu^{-} (-\lambda^{-} + \mu^{-} - \nu^{-} ) \\
& + 4\nu^{-} ( - \tlambda - \tmu - \tnu) + 4 \nu^{-}( -\tlambda^{-} - \tmu^{-} +  \tnu^{-} ).
\end{split}
\end{align}
By (\ref{WtWtrace}), this gives \begin{align} \label{Wnorm1} \begin{split}
4 W_{ikj\ell} \tW_{i \ell j k} &= 8 \big\{ \lambda \tlambda + \mu \tmu + \nu \tnu + \lambda^{-} \tlambda^{-} + \mu^{-} \tmu^{-} +  \nu^{-} \tnu^{-}\big\} \\
&=  8 \langle W, \tW \rangle,
\end{split}
\end{align}
and (\ref{WW}) follows.
\end{proof}

To compute the values of $t_0$, we note that
in the coordinate system $\{z^i\}$ given in Section \ref{bb}, letting
\begin{align*}
\omega^{\pm} &= e^1 \wedge e^2 \pm  e^3 \wedge e^4,\\
\eta^{\pm} & = e^1 \wedge e^3 \mp  e^2 \wedge e^4,\\
\theta^{\pm} & = e^1 \wedge e^4 \pm  e^2 \wedge e^3,
\end{align*}
with $(e^1, e^3, e^3, e^4) = (dz^1, dz^2, dz^3, dz^4)$,
we have
\begin{align*}
W^+(g_{FS}) &= \mathrm{diag} ( R/6, - R/12, - R/12) = \mathrm{diag}( 4, -2, -2),\\
W^-(g_{FS}) &= \mathrm{diag}(0,0,0),\\
W^+(g_{S^2 \times S^2}) &= \mathrm{diag} ( R/6, - R/12, - R/12)
= \mathrm{diag}( 2/3, -1/3, -1/3),\\
W^-(g_{S^2 \times S^2}) & = \mathrm{diag}( 2/3, -1/3, -1/3).
\end{align*}
In case (i), since mass$(\hat{g}_{FS}) =2$,
\begin{align}
t_0 = \frac{-1}{6 \cdot 24 \cdot 2} 4 ( 16 + 4 + 4) = - \frac{1}{3}.
\end{align}
In case (ii) with a Burns metric attached, we have
\begin{align}
t_0 = \frac{-1}{6 \cdot 4 \cdot 2} 4 ( 4\cdot (2/3) + 2 \cdot(1/3) + 2 \cdot (1/3))
= - \frac{1}{3}.
\end{align}
Case (v) has the same value as this.

In case (ii) with a Green's function $S^2 \times S^2$ attached,
\begin{align}
\begin{split}
t_0 &= \frac{-1}{6 \cdot 24 \cdot \mathrm{mass}(\hat{g}_{S^2 \times S^2}) }
4 ( (2/3) \cdot 4 + (1/3) \cdot 2 + (1/3) \cdot 2) \\
&=
- \frac{1}{9 \cdot \mathrm{mass}(\hat{g}_{S^2 \times S^2})}.
\end{split}
\end{align}
In case (iii),
\begin{align}
\begin{split}
t_0 & =  \frac{-1}{6 \cdot 4 \cdot   \mathrm{mass}(\hat{g}_{S^2 \times S^2})}
 ( (2/3)^2 + (1/3)^2 + (1/3)^2 + (2/3)^2 + (1/3)^2 + (1/3)^2) \\
& = - \frac{2}{9 \cdot \mathrm{mass}(\hat{g}_{S^2 \times S^2})}.
\end{split}
\end{align}
Case (iv) has the same value of $t_0$ as does case (iii).

 All other cases are computed similarly as the above cases, so it
is not necessary to write every case here. We only need mention the
fact that in all non-orientable cases,
the answer does not depend on choice of local orientation.
\appendix
\section{Proof of Proposition \ref{Ricprop}}
\label{append}
To prove the proposition,
we use the expansion of the metric in AF coordinates,
\begin{align} \label{gALEexp}
g_{\mu \nu} = \delta_{\mu \nu} - \frac{1}{3} R_{\mu k \nu \ell}(y_0)\frac{x^k x^{\ell}}{|x|^4} + \frac{2A}{|x|^2}\delta_{\mu \nu} + O(|x|^{-3}).
\end{align}
In terms of the Christoffel symbols, the Ricci tensor is given by
\begin{align} \label{ChrtoRic} \begin{split}
R_{ij} &= \partial_{m} \Gamma_{ij}^{m} - \partial_i \Gamma_{j m}^{m} + \Gamma * \Gamma \\
&= -\frac{1}{2} g^{m p} \big\{ \partial_{m} \partial_{p} g_{ij} - \partial_{m} \partial_i g_{ j p} - \partial_{m} \partial_j g_{i p} + \partial_{i} \partial_j g_{m p} \big\} + \partial g * \partial g.
\end{split}
\end{align}
By the expansion (\ref{gALEexp}),
\begin{align} \label{gALEup}
g^{m p} = \delta_{m p}  + O(|x|^{-2}),
\end{align}
hence
\begin{align} \label{ChrtoRic2} \begin{split}
R_{ij} &= -\frac{1}{2} \big\{ \delta_{m p} + O(|x|^{-2})\big\} \big\{ \partial_{m} \partial_{p} g_{ij} - \partial_{m} \partial_i g_{ j p} - \partial_{m} \partial_j g_{i p} + \partial_{i} \partial_j g_{m p} \big\} + \partial g * \partial g \\
&= -\frac{1}{2}  \big\{ \partial_{m} \partial_{m} g_{ij} - \partial_{m} \partial_i g_{ jm} - \partial_{m} \partial_j g_{i m} + \partial_{i} \partial_j g_{m m} \big\} + (\partial^2 g)*(|x|^{-2}) +  \partial g * \partial g.
\end{split}
\end{align}
In AF coordinates, $\partial g = O(|x|^{-3}), \partial^2 g = O(|x|^{-4})$; hence
\begin{align} \label{ChrtoRic3} \begin{split}
R_{ij} &= -\frac{1}{2}  \big\{ \partial_{m} \partial_{m} g_{ij} - \partial_{m} \partial_i g_{ jm} - \partial_{m} \partial_j g_{i m} + \partial_{i} \partial_j g_{m m} \big\} +  O(|x|^{-6}).
\end{split}
\end{align}

By (\ref{gALEexp}),
\begin{align} \label{DgALE} \begin{split}
\partial_{\beta} g_{\mu \nu} &= -\frac{1}{3} R_{\mu \beta \nu k}(y_0)\frac{x^k}{|x|^4} - \frac{1}{3} R_{\mu k \nu \beta}(y_0)\frac{x^k}{|x|^4} \\
&\hskip.25in + \frac{4}{3} R_{\mu k \nu \ell}(y_0) \frac{ x^k x^{\ell} x^{\beta}}{|x|^6} - \frac{4A}{|x|^4} x^{\beta} \delta_{\mu \nu} + O(|x|^{-4}),
\end{split}
\end{align}
\begin{align} \label{D2gALE} \begin{split}
\partial_{\alpha} \partial_{\beta} g_{\mu \nu} &= -\frac{1}{3}R_{\mu \beta \nu \alpha}(y_0)\frac{1}{|x|^4} - \frac{1}{3}R_{\mu \alpha \nu \beta}(y_0)\frac{1}{|x|^4}  \\
&\hskip.2in + \frac{4}{3} R_{\mu \beta \nu k}(y_0) \frac{x^k x^{\alpha}}{|x|^6} + \frac{4}{3}R_{\mu k \nu \beta}(y_0)\frac{x^k x^{\alpha}}{|x|^6} \\
&\hskip.2in + \frac{4}{3} R_{\mu \alpha \nu k}(y_0) \frac{x^k x^{\beta}}{|x|^6} + \frac{4}{3}R_{\mu k \nu \alpha}(y_0)\frac{x^k x^{\beta}}{|x|^6}  \\
&\hskip.2in + \frac{4}{3}R_{\mu k \nu \ell}(y_0)\frac{x^k x^{\ell}}{|x|^6}\delta_{\alpha \beta} - 8 R_{\mu k \nu \ell}(y_0)\frac{x^k x^{\ell} x^{\alpha} x^{\beta}}{|x|^8}  \\
& \hskip.2in - \frac{4A}{|x|^4}\delta_{\alpha \beta} \delta_{\mu \nu} + \frac{16A}{|x|^6}x^{\alpha} x^{\beta} \delta_{\mu \nu} + O(|x|^{-5}).
\end{split}
\end{align}
Consequently, the first term in (\ref{ChrtoRic3}) is
\begin{align} \label{1Ric1} \begin{split}
\partial_m \partial_m g_{ij} &= -\frac{1}{3}R_{i m j m}(y_0)\frac{1}{|x|^4} - \frac{1}{3}R_{i m j m}(y_0)\frac{1}{|x|^4}  \\
&\hskip.2in + \frac{4}{3} R_{i m j k}(y_0) \frac{x^k x^{m}}{|x|^6} + \frac{4}{3}R_{i k j m}(y_0)\frac{x^k x^{m}}{|x|^6} \\
&\hskip.2in + \frac{4}{3} R_{i m j k}(y_0) \frac{x^k x^{m}}{|x|^6} + \frac{4}{3}R_{i k j m}(y_0)\frac{x^k x^{m}}{|x|^6}  \\
&\hskip.2in + \frac{4}{3}R_{i k j \ell}(y_0)\frac{x^k x^{\ell}}{|x|^6}\delta_{m m} - 8 R_{i k j \ell}(y_0)\frac{x^k x^{\ell} x^{m} x^{m}}{|x|^8}  \\
& \hskip.2in - \frac{4A}{|x|^4}\delta_{m m} \delta_{i j} + \frac{16A}{|x|^6}x^{m} x^{m} \delta_{i j} + O(|x|^{-5}).
\end{split}
\end{align}
The first two terms combine to give a Ricci curvature term, while the third through the eighth terms are all the same (though with different
coefficients); adding up we get
\begin{align} \label{1Ric}
\partial_m \partial_m g_{ij} = -\frac{2}{3} R_{ij}(y_0) \frac{1}{|x|^4} + \frac{8}{3}R_{i k j \ell}(y_0) \frac{ x^k x^{\ell}}{|x|^6} + O(|x|^{-5}).
\end{align}
The second term is
\begin{align} \label{2Ric} \begin{split}
\partial_{m} \partial_i g_{ jm} &= -\frac{1}{3}R_{j i m m}(y_0)\frac{1}{|x|^4} - \frac{1}{3}R_{j m m i}(y_0)\frac{1}{|x|^4}  \\
&\hskip.2in + \frac{4}{3} R_{j i m k}(y_0) \frac{x^k x^{m}}{|x|^6} + \frac{4}{3}R_{j k m i}(y_0)\frac{x^k x^{m}}{|x|^6} \\
&\hskip.2in + \frac{4}{3} R_{j m m k}(y_0) \frac{x^k x^{i}}{|x|^6} + \frac{4}{3}R_{j k m m}(y_0)\frac{x^k x^{i}}{|x|^6}  \\
&\hskip.2in + \frac{4}{3}R_{j k m \ell}(y_0)\frac{x^k x^{\ell}}{|x|^6}\delta_{m i} - 8 R_{j k m \ell}(y_0)\frac{x^k x^{\ell} x^{m} x^{i}}{|x|^8}  \\
& \hskip.2in - \frac{4A}{|x|^4}\delta_{m i} \delta_{j m} + \frac{16A}{|x|^6}x^{m} x^{i} \delta_{j m} + O(|x|^{-5}),
\end{split}
\end{align}
notice that the fourth and seventh terms cancel each other, while the first, sixth, and eight terms vanish because of the skew-symmetry of the curvature tensor.  Also,
the second and fifth terms are traces.  Therefore,
\begin{align} \label{3Ric}
\partial_{m} \partial_i g_{ jm} = \frac{1}{3} R_{ij}(y_0) \frac{1}{|x|^4} - \frac{4}{3} R_{jk}(y_0) \frac{x^i x^k}{|x|^6} - \frac{4A}{|x|^4} \delta_{ij} + \frac{16A}{|x|^6}x^i x^j + O(|x|^{-5}),
\end{align}
while the third term is
\begin{align} \label{3bRic}
\partial_{m} \partial_j g_{ im} = \frac{1}{3} R_{ij}(y_0) \frac{1}{|x|^4} - \frac{4}{3} R_{ik}(y_0) \frac{x^j x^k}{|x|^6} - \frac{4A}{|x|^4} \delta_{ij} + \frac{16A}{|x|^6}x^i x^j + O(|x|^{-5}).
\end{align}
The last term in (\ref{ChrtoRic3}) is
\begin{align} \label{4Ric1} \begin{split}
\partial_i \partial_j g_{mm} &= -\frac{1}{3}R_{m j m i}(y_0)\frac{1}{|x|^4} - \frac{1}{3}R_{m i m j}(y_0)\frac{1}{|x|^4}  \\
&\hskip.2in + \frac{4}{3} R_{m j m k}(y_0) \frac{x^k x^{i}}{|x|^6} + \frac{4}{3}R_{m k m j}(y_0)\frac{x^k x^{i}}{|x|^6} \\
&\hskip.2in + \frac{4}{3} R_{m i m k}(y_0) \frac{x^k x^{j}}{|x|^6} + \frac{4}{3}R_{m k m i}(y_0)\frac{x^k x^{j}}{|x|^6}  \\
&\hskip.2in + \frac{4}{3}R_{m k m \ell}(y_0)\frac{x^k x^{\ell}}{|x|^6}\delta_{i j} - 8 R_{m k m \ell}(y_0)\frac{x^k x^{\ell} x^{i} x^{j}}{|x|^8}  \\
& \hskip.2in - \frac{4A}{|x|^4}\delta_{i j} \delta_{m m} + \frac{16A}{|x|^6}x^{i} x^{j} \delta_{m m} + O(|x|^{-5}).
\end{split}
\end{align}
In this case all the curvature terms involve traces, so we get
\begin{align} \label{4Ric} \begin{split}
\partial_i \partial_j g_{mm} &= -\frac{2}{3} R_{ij}(y_0) \frac{1}{|x|^4} + \frac{8}{3} R_{jk}(y_0) \frac{x^i x^k}{|x|^6} + \frac{8}{3} R_{ik}(y_0) \frac{x^j x^k}{|x|^6} \\
& \hskip.2in + \frac{4}{3}R_{k \ell}(y_0) \frac{ x^k x^{\ell}}{|x|^6} \delta_{ij} - 8 R_{k \ell}(y_0) \frac{x^k x^{\ell} x^i x^j }{|x|^8} \\
& \hskip.2in - \frac{16A}{|x|^4} \delta_{ij} + \frac{64A}{|x|^4}x^i x^j + O(|x|^{-5}).
\end{split}
\end{align}
Combining (\ref{1Ric})--(\ref{4Ric}),
\begin{align} \label{sumg1} \begin{split}
\partial_{m} \partial_{m} g_{ij} & - \partial_{m} \partial_i g_{ jm} - \partial_{m} \partial_j g_{i m} + \partial_{i} \partial_j g_{m m} = \\
& \frac{8}{3} R_{ikj\ell}(y_0) \frac{x^k x^{\ell}}{|x|^6}  - 2 R_{ij}(y_0) \frac{1}{|x|^4} + 4 R_{jk}(y_0) \frac{x^i x^k}{|x|^6} +  4 R_{ik}(y_0) \frac{x^j x^k}{|x|^6} \\
& \hskip.2in + \frac{4}{3}R_{k \ell}(y_0) \frac{ x^k x^{\ell}}{|x|^6} \delta_{ij} - 8 R_{k \ell}(y_0) \frac{x^k x^{\ell} x^i x^j }{|x|^8} - \frac{8A}{|x|^4} \delta_{ij} + \frac{32A}{|x|^6}x^i x^j + O(|x|^{-5}).
\end{split}
\end{align}
We now use the fact that $(Y, g_Y)$ is Einstein, and that $\{ y^i \}$ are normal coordinates centered at $y_0$:
\begin{align*}
R_{ij}(y_0) &= \frac{1}{4}R(y_0)\delta_{ij}, \\
R_{ij k \ell}(y_0) &= W_{i j k \ell} (y_0) + \frac{1}{12}R(y_0)\big( \delta_{ik} \delta_{j \ell} - \delta_{i \ell} \delta_{j k} \big).
\end{align*}
Substituting these gives
\begin{align}  \label{sumg2} \begin{split}
\partial_{m} \partial_{m} g_{ij} - \partial_{m} \partial_i g_{ jm} &- \partial_{m} \partial_j g_{i m} + \partial_{i} \partial_j g_{m m} = \\
& \frac{8}{3} W_{ikj\ell}(y_0) \frac{x^k x^{\ell}}{|x|^6} + \frac{1}{18}R(y_0) \frac{1}{|x|^4} \delta_{ij} - \frac{2}{9}R_(y_0)\frac{x^i x^j}{|x|^6}\\
&  - \frac{8A}{|x|^4} \delta_{ij} + \frac{32A}{|x|^6}x^i x^j + O(|x|^{-5}),
\end{split}
\end{align}
and (\ref{Ricex}) follows.

\section{Non-simply-connected examples}
\label{appb}
\begin{table}[h]
\caption{Non-simply-connected examples with more than one bubble}
\centering
\begin{tabular}{lll}
\hline\hline
Topology of connected sum& Value of $t_0$ & Symmetry\\
\hline
$ S^2 \times S^2 \# 2 (S^2 \times S^2 / \ZZ_2)$ &  $- 2(9 m_2)^{-1}$
& bilateral \\
$ S^2 \times S^2 \# 2( \RP^2 \times \RP^2)$  &  $- 2(9 m_3)^{-1}$
& bilateral\\
$\CP^2 \# 3 (S^2 \times S^2 / \ZZ_2)$ &  $- (9 m_2)^{-1}$
& trilateral\\
$\CP^2 \# 3 ( \RP^2 \times \RP^2)$ &  $- (9 m_3)^{-1}$
& trilateral\\
$S^2 \times S^2 \# 4 (S^2 \times S^2 / \ZZ_2)$ & $- 2(9 m_2)^{-1}$
& quadrilateral\\
$S^2 \times S^2 \# 4( \RP^2 \times \RP^2) $ & $- 2(9 m_3)^{-1}$
& quadrilateral\\
$(S^2 \times S^2 / \ZZ_2) \# 2 \overline{\CP}^2$ & $- 1/3$ & bilateral
\\
$(S^2 \times S^2 / \ZZ_2) \# 2 (S^2 \times S^2)$ &  $- 2(9 m_1)^{-1}$  & bilateral
\\
$3 \# (S^2 \times S^2 / \ZZ_2)$ &  $- 2(9 m_2)^{-1}$  & bilateral
\\
$(S^2 \times S^2 / \ZZ_2) \# 2( \RP^2 \times \RP^2)$  &  $- 2(9 m_3)^{-1}$  & bilateral
\\
\hline
\end{tabular}\label{tableapp}
\end{table}
All non-simply-connected possiblities with more than one bubble
are listed in Table~\ref{tableapp}.
The approximate metric in each case is obtained by using
the first factor as the compact manifold, with the
AF space clear from the latter factors.
The first two cases are analogous to Cases (iv) and (v).
The third and fourth are analogous to Cases (vi) and (vii).
The fifth and six are analogous to Cases (vii) and (ix).
The last four cases require a short explanation.
In the case of $S^2 \times S^2 / \ZZ_2$, there are two fixed
points: the equivalence classes of $(n,n)$ and $(n,s)$.
The diagonal symmetry descends to the quotient, and fixes
both of these points. The symmetry of reflection in a horizontal
line descends to the quotient, and this interchanges the fixed
points, and we again call this invariance bilateral symmetry.
We may therefore glue on the same AF space at each fixed point,
and require bilateral symmetry, which yields the last
four cases in Table \ref{tableapp}.

\bibliography{Critical_references}
\end{document}